\documentclass{amsart}
\usepackage[utf8]{inputenc}
\usepackage{amssymb,amsthm}
\usepackage{amsmath}
\usepackage{amscd}
\usepackage{tikz}
\usepackage{mathtools,tikz-cd}
\usepackage{mathabx}
\usepackage[all]{xy}
\usepackage{hyperref}

\setlength{\parindent}{0pt}
\setlength{\itemindent}{-2em}
\textwidth=16.7cm \textheight=24cm

\addtolength{\topmargin}{-60pt} 
\addtolength{\oddsidemargin}{-2cm}
\addtolength{\evensidemargin}{-2cm}
\title{K-stability of log del Pezzo hypersurfaces with index 2}
\author{In-Kyun Kim, Nivedita Viswanathan, Joonyeong Won}
\address{Department of Mathematics, Yonsei University, Seoul, Korea}
\email{soulcraw@gmail.com}

\address{Department of Mathematical Sciences, Loughborough University}
\email{N.Viswanathan@lboro.ac.uk}

\address{Department of Mathematics, Ewha Womans University, Seoul, Korea}
\email{leonwon@kias.re.kr}

\newtheorem{theorem}{Theorem}[subsection]

\newtheorem{definition}[theorem]{Definition}
\newtheorem{corollary}[theorem]{Corollary}
\newtheorem{proposition}[theorem]{Proposition}
\newtheorem{lemma}[theorem]{Lemma}
\newtheorem*{Ack}{Acknowledgments}

\theoremstyle{remark}
\newtheorem*{remark}{Remark}

\newcommand{\Supp}{\operatorname{Supp}}
\newcommand{\Sing}{\operatorname{Sing}}

\newcommand{\mult}{\operatorname{mult}}

\newcommand{\wt}{\operatorname{wt}}
\newcommand{\vol}{\operatorname{vol}}
\newcommand{\dd}{\mathop{}\!\mathrm{d}}

\newcommand{\PP}{{\mathbb P}}
\newcommand{\QQ}{{\mathbb Q}}
\newcommand{\RR}{{\mathbb R}}

\newcommand{\msa}{\mathsf a}
\newcommand{\msb}{\mathsf b}

\newcommand{\msp}{\mathsf p}
\newcommand{\msq}{\mathsf q}

\newcommand{\mst}{\mathsf t}

\newcommand{\mcL}{\mathcal L}

\newcommand{\mcO}{\mathcal O}

\let\oldtocsection=\tocsection
\let\oldtocsubsection=\tocsubsection

\renewcommand{\tocsection}[2]{\hspace{0em}\oldtocsection{#1}{#2}}
\renewcommand{\tocsubsection}[2]{\hspace{3em}\oldtocsubsection{#1}{#2}}

\begin{document}

\begin{abstract}
We completely classify K-stability of log del Pezzo hypersurfaces with index 2.
\end{abstract}
\maketitle

\section{Introduction}
Existence of K\"ahler-Einstein metrics on Fano varieties has been an intense field of research and the notion of K-stability has paved way to approach this question from an algebro-geometric point of view. In particular, the Yau-Tian-Donaldson Conjecture predicts that Fano varieties admits a K\"ahler-Einstein metric if and only if it is K-polystable. This is now proven in the case of Fano manifolds  \cite{CDS15,TianYTD} and in the case of singular Fano varieties (klt singularities) in \cite{Berman,SpottiSunYao,LiWangXu}.

In the cases of smooth del Pezzo surfaces $S$, Tian and Yau proved in \cite{Tian} and \cite{TianandYau} that $S$ is K-polystable if and only if it is not a blow up of $\mathbb{P}^2$ in one or two points. This solves the problem for Fano manifolds in dimension 2. But this question is widely open for Fano orbifolds.

A Fano orbifold can be embedded in a weighted projective space via the Riemann-Roch Space of the anticanonical divisor, or the smallest fraction of the divisor that exists in the class group. We are interested in the hypersurface case: quasi-smooth and well-formed hypersurfaces $S_d$ in $\mathbb{P}(a_0,a_1,a_2,a_3)$ of degree $d$, which are given by a quasihomogenoeus polynomial equation of degree $d$
$$
f(x,y,z,t)=0 \subset \mathbb{P}(a_0,a_1,a_2,a_3) \cong \mathrm{Proj}(\mathbb{C}[x,y,z,t]),
$$
where $\wt(x) = a_0$, $\wt(y) = a_1$, $\wt(z) = a_2$ and $\wt(t) = a_3$. 
The surface $S_d$ is said to be \textit{quasi-smooth} if 
$$
f(x,y,z,t)=0 \subset \mathbb{C}^4 \cong \mathrm{Spec}(\mathbb{C}[x,y,z,t]),
$$
defines a hypersurface that is singular only at the origin in $\mathbb{C}^4$ and thus implies that the surface $S_d$ can have atmost cyclic quotient singularities. It is said to be \textit{well-formed} if 
$$
\mathrm{gcd}(a_0,a_2,a_3)=\mathrm{gcd}(a_0,a_1,a_2)=\mathrm{gcd}(a_0,a_1,a_3)=\mathrm{gcd}(a_1,a_2,a_3)=1.
$$
Note that being well-formed implies that 
$$
-K_{S_d} \sim_{\mathbb{Q}} \mathcal{O}_{S_d}(a_0 + a_1 + a_2 + a_3 - d).
$$
Recall that the index of $S_d$ is given by $I=a_0+a_1+a_2+a_3-d$; this is indeed the divisibility of the anticanonical divisor in the class group, mentioned above. So suppose $I$ is positive, then $S_d$ is a del Pezzo surface with at most quotient singularities. In the case of quasi-smooth, well-formed hypersurfaces $S_d$ in $\mathbb{P}(a_0,a_1,a_2,a_3)$ of index $I=1$, the existence of the K\"ahler-Einstein metric was determined by Johnson and Koll\'{a}r except in few cases.
\begin{theorem}[{\cite[Theorem~8]{J&K}}]
\label{theorem:Johnson-Kollar}
Suppose that $S_d$ with $I=1$ is singular  and the quintuple $(a_{0},a_{1},a_{2},a_{3},d)$ is not one of the following four quintuples:
\begin{equation}\label{eq:four-quintuples}
(1,2,3,5,10), (1,3,5,7,15), (1,3,5,8,16),  (2,3,5,9,18).
\end{equation}
Then $S_d$ admits an orbifold K\"ahler-Einstein metric.
\end{theorem}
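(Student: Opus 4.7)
The plan is to apply the orbifold version of Tian's criterion: if $S$ is an $n$-dimensional Fano orbifold with $\alpha(S)>\tfrac{n}{n+1}$, then $S$ carries an orbifold K\"ahler--Einstein metric. For a surface this reads $\alpha(S_d)>\tfrac{2}{3}$, where
\[
\alpha(S_d)=\inf\bigl\{\lct(S_d,D)\,:\,D\ \text{effective},\ D\sim_{\QQ}-K_{S_d}\bigr\}.
\]
Since $S_d$ has index $1$ we may take $D=\tfrac{1}{m}D_m$ with $D_m\in |\mcO_{S_d}(m)|$ and $m\ge 1$; by localisation of the log canonical threshold it suffices to show $\lct_p(S_d,\tfrac{1}{m}D_m)>\tfrac23$ at every point $p\in S_d$ for every such $D_m$.

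First I would invoke the Iano-Fletcher--Reid classification of the $95$ numerical families of singular, quasi-smooth, well-formed hypersurfaces with $I=1$ and split the local analysis by the type of point. At a smooth point a Skoda-type bound combined with a B\'ezout-type inequality on $\PP(a_0,a_1,a_2,a_3)$ controls $\mult_p(D_m)$ and gives $\lct_p(S_d,\tfrac{1}{m}D_m)>\tfrac23$ as soon as $m$ exceeds a small explicit threshold depending on the weights. At a cyclic quotient singularity $p$ one replaces ordinary multiplicity by the weighted multiplicity $\wmult_p$ in the natural chart and uses the standard weighted blow-up to compute the discrepancy; the estimate $\lct_p\ge 1/\wmult_p(D)$, suitably normalised, then reduces the question to inspecting the monomials of weight $\le m$ that do not vanish at $p$.

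The hard part is a finite but delicate case analysis of the low-degree members of $|\mcO_{S_d}(m)|$ with $m\le \max\{a_i\}$, since these are the only divisors whose log canonical threshold can drop below $\tfrac23$. The obstruction isolating the four exceptional quintuples is the existence of a \emph{unique} effective divisor $D_0\in|\mcO_{S_d}(a_0)|$ passing through a quotient singularity with extremal weighted multiplicity: a direct computation on the defining polynomial $f$ shows that for $(1,2,3,5,10)$, $(1,3,5,7,15)$, $(1,3,5,8,16)$ and $(2,3,5,9,18)$ one has $\lct(S_d,\tfrac{1}{a_0}D_0)=\tfrac12$, so Tian's criterion cannot be applied as stated. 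In the remaining $91$ families the analogous local computation at every singular point improves to the strict bound $\lct>\tfrac23$, which yields the conclusion. The main technical obstacle is the uniform bookkeeping of weighted multiplicities and discrepancies across all families—a bounded but extensive calculation made tractable precisely by the Iano-Fletcher tables, and which pinpoints exactly the four quintuples listed in \eqref{eq:four-quintuples} as the genuine exceptions to the alpha-invariant approach.
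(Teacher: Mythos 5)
This theorem is stated in the paper purely as a citation of \cite[Theorem~8]{J&K}; the paper supplies no proof of it, so there is nothing in the source against which a step-by-step comparison can be made. Your overall strategy --- invoke Tian's criterion $\alpha(S_d)>\tfrac{2}{3}$, run through the Iano-Fletcher--Reid list of numerical families, and bound log canonical thresholds locally, with special care at the cyclic quotient points --- is indeed the framework Johnson and Koll\'ar used, so the broad outline is on target.

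There is, however, a concrete error in the step that is supposed to isolate the four excluded quintuples. You assert that for each of $(1,2,3,5,10)$, $(1,3,5,7,15)$, $(1,3,5,8,16)$ and $(2,3,5,9,18)$ there is a divisor $D_0\in|\mcO_{S_d}(a_0)|$ with $\lct\bigl(S_d,\tfrac{1}{a_0}D_0\bigr)=\tfrac12$. Since $I=1$ forces $\tfrac{1}{a_0}D_0\sim_{\QQ}-K_{S_d}$, that would give $\alpha(S_d)\le\tfrac12$ for all four cases. But this contradicts what the paper itself records just after the theorem: Araujo (Theorem~\ref{theorem:Araujo}) proves $\alpha(S_d)>\tfrac{2}{3}$ for $(1,2,3,5,10)$ and for $(1,3,5,7,15)$ when $yzt$ appears, and Cheltsov--Park--Shramov (Theorem~\ref{theorem:old-I-1}) prove $\alpha(S_d)>\tfrac{2}{3}$ for $(1,3,5,8,16)$ and $(2,3,5,9,18)$. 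The four quintuples are excluded in Johnson--Koll\'ar not because the actual alpha-invariant drops to $\tfrac12$, but because the particular \emph{lower bounds} they obtain (from weighted-multiplicity and Skoda-type estimates) fail to exceed $\tfrac{2}{3}$ in those cases; sharper local analysis later closes the gap for three and a half of the four. If you intend to reproduce the argument, you should replace the exact-value claim by the correct statement that the Johnson--Koll\'ar estimate, not $\alpha(S_d)$ itself, falls short of $\tfrac{2}{3}$ there, and acknowledge that the method gives a sufficient but not necessary condition on those families.
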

The primary criterion used in proving the above mentioned result was Tian's criterion for proving the existence of K\"ahler-Einstein metric, namely, if the inequality 
$$
\alpha(S_d)>\frac{2}{3}
$$ holds, then $S_d$ admits a K\"ahler-Eintein metric. This method worked for all the $I=1$ hypersurfaces, except the 4 mentioned in the theorem. Two of the cases have been treated in \cite{Ara02}.
\begin{theorem}[{\cite[Theorem~4.1]{Ara02}}]
\label{theorem:Araujo}
In the following two cases:
\begin{itemize}
\item $(a_{0},a_{1},a_{2},a_{3},d)= (1,2,3,5,10)$,
\item $(a_{0},a_{1},a_{2},a_{3},d)=(1,3,5,7,15)$ and the equation of $S_d$ contains~$yzt$,
\end{itemize}
the inequality $\alpha(S_d)>\frac{2}{3}$ holds. In particular, $S_d$ admits an orbifold K\"ahler-Einstein metric.
\end{theorem}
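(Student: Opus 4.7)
The plan is to prove, in both cases, the strict bound $\alpha(S_d) > \tfrac{2}{3}$ by contradiction on effective anticanonical $\QQ$-divisors. Suppose there exists an effective $\QQ$-divisor $D \sim_\QQ -K_{S_d}$ such that the log pair $(S_d, \tfrac{2}{3} D)$ is not klt at some point $P \in S_d$. A standard lct perturbation lets me assume that $P$ is a minimal non-klt centre of the pair, and the aim is to derive a numerical contradiction, case by case on the location of $P$ on $S_d$.

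First I would record the key numerics: $(-K_{S_{10}})^2 = 1/3$ for $S_{10} \subset \PP(1,2,3,5)$, with cyclic quotient singularities at most of indices $2,3,5$ at the coordinate points; and $(-K_{S_{15}})^2 = 1/7$ for $S_{15} \subset \PP(1,3,5,7)$, with singularities of indices $3,5,7$. I would then assemble a short catalogue of irreducible curves of low anticanonical degree through each candidate point: principally the hyperplane sections $S_d \cap \{x = 0\}$ and their irreducible components, together with curves cut out by pencils of the form $\{y = \lambda x^{a_1}\}$ that are forced to pass through the singular points with controlled tangential behaviour.

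The dichotomy is then on whether $P$ is a smooth or singular point of $S_d$. If $P$ is smooth, non-klt at $P$ with coefficient $2/3$ yields $\mult_P(D) \geq 3/2$, and intersecting $D$ with an irreducible curve $C$ from the catalogue gives $-K_{S_d} \cdot C = D \cdot C \geq \mult_P(D)\cdot \mult_P(C) \geq 3/2$, which contradicts the small value of $-K_{S_d}\cdot C$ available (e.g.\ at most $1/3$ for components of $S_{10}\cap\{x=0\}$ in case (i)). If $P$ is a singular orbifold point, the same strategy is run on the appropriate weighted blow-up: the non-klt condition becomes a lower bound on the orbifold multiplicity $\wmult_P(D)$, which one contradicts by intersecting $D$ with a curve through $P$ read off from the quasi-homogeneous structure of $f$.

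The main obstacle is the singular point of highest index, namely $P_t$ of index $5$ in case (i) and $P_t$ of index $7$ in case (ii), where the discrepancy margin from the non-klt hypothesis is tightest. In case (ii) this is exactly where the assumption that the monomial $yzt$ appears in the equation of $S_{15}$ enters: it forces particular curves, obtained by restricting suitable coordinate hyperplanes to $S_{15}$, to pass through $P_t$ with the tangency and multiplicity properties needed to close the orbifold intersection estimate. This also explains why the theorem must exclude $(1,3,5,7,15)$ equations in which $yzt$ is absent.
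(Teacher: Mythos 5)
The paper cites this statement from Araujo \cite{Ara02} without reproducing a proof, so there is no internal argument to compare your outline against; I will assess it on its own terms. The general strategy you describe — ruling out a non-klt centre of $(S_d,\tfrac{2}{3}D)$ for an effective $D \sim_\QQ -K_{S_d}$ by intersecting $D$ with a catalogue of low-anticanonical-degree curves, and passing to weighted blow-ups at orbifold points — is indeed the standard framework for this kind of $\alpha$-invariant estimate.

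Two things, however, need fixing. First, the estimate at the heart of your smooth-point argument, $-K_{S_d}\cdot C = D\cdot C \geq \mult_P(D)\cdot\mult_P(C)$, is only valid when $C$ is not a component of $\Supp(D)$. For the hyperplane section $C_x$ and its components — which carry all the orbifold points and are essentially the only low-degree curves through them — there is no pencil to move within, so you cannot assume $C\not\subset\Supp(D)$. You must write $D = aC+\Delta$, obtain an a priori bound on $a$, and then run inversion of adjunction on $\Delta\vert_{C}$ (this is exactly the pattern used throughout the present paper, compare Lemma \ref{Inversion of adjunction for smooth point}, Lemma \ref{Inversion of adjunction for singular point}, and Corollary \ref{Foundation:bound of k-basis cor}); your proposal says nothing about this step, yet it is where the argument is actually won or lost. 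Second, there is a factual error in case (i): for $S_{10}\subset\PP(1,2,3,5)$, the monomial $t^2$ has degree $10$, and since no monomial of the form $t x_j$ (with $x_j\in\{x,y,z\}$) has degree $10$, quasi-smoothness forces $t^2$ to appear with non-zero coefficient, so $\msp_t\notin S_{10}$. Similarly $y^5$ must appear, so $\msp_y\notin S_{10}$, and the unique orbifold point of $S_{10}$ is $\msp_z$ of index $3$. There is no index-$5$ point on $S_{10}$, so the ``main obstacle'' you single out in case (i) does not exist, while the points that do require care are left unanalysed. In case (ii), your intuition that $yzt$ controls the local geometry of $C_x$ at the index-$7$ point $\msp_t$ is pointed in the right direction, but the proposal does not turn it into an estimate, nor does it explain why the bound drops to $\alpha(S_{15})\leq 2/3$ when $yzt$ is absent, which is the known behaviour of the excluded subfamily.
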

The two other remaining cases have been shown in \cite{CheltsovParkShramovJGA}.

\begin{theorem}[{\cite[Theorem~1.10]{CheltsovParkShramovJGA}}]
\label{theorem:old-I-1}
Suppose that the quintuple $(a_{0},a_{1},a_{2},a_{3},d)$ is either $(1,3,5,8,16)$ or $(2,3,5,9,18)$. Then $\alpha(S_d)>\frac{2}{3}$. In particular, $S_d$ admits an orbifold K\"ahler-Einstein metric.
\end{theorem}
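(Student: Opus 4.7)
The plan is to establish the strict inequality $\alpha(S_d) > \tfrac{2}{3}$ by exhibiting some $\lambda > \tfrac{2}{3}$ such that, for every effective $\QQ$-divisor $D \sim_\QQ -K_{S_d}$, the pair $(S_d, \lambda D)$ is log canonical. The crucial global input is the small anticanonical degree $(-K_{S_d})^2 = d/(a_0 a_1 a_2 a_3)$, equal to $\tfrac{2}{15}$ for $(1,3,5,8,16)$ and $\tfrac{1}{15}$ for $(2,3,5,9,18)$. Intersecting $D$ with the low-degree coordinate sections $C_i := \{x_i = 0\} \cap S_d$ and decomposing $D = \sum c_j C_j + \Delta$ accordingly, this smallness forces the coefficients $c_j$ and the generic multiplicity of $\Delta$ to be very small, so that outside $\bigcup_j C_j$ and the isolated cyclic quotient singularities of $S_d$ the pair is log canonical by the ordinary multiplicity bound at smooth points.

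Next I would localise and analyse each candidate bad point. At a cyclic quotient singularity $P$ of type $\tfrac{1}{r}(a,b)$ I would use the Kawamata weighted blow-up $\pi \colon \widetilde{S} \to S_d$ at $P$, whose exceptional curve $E \cong \PP^1$ has self-intersection $-r/(ab)$ and discrepancy $(a+b)/r - 1$ (negative since $P$ is log terminal). Inversion of adjunction on $E$ converts the log canonicity question for $(S_d, \lambda D)$ at $P$ into a numerical estimate on $\widetilde{D} \cdot E$ together with multiplicity bounds for $D$ along the two natural orbit curves through $P$; using the explicit defining equation $f$ near $P$ one identifies these orbit curves as components of $C_j \cap S_d$ for appropriate $j$, and concludes via the global bounds from the first step. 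At a smooth point lying on some $C_j$ one argues directly via the inequality $c_j \cdot \mathrm{mult}_P(D) \leq D \cdot C_j$ and adjunction on $C_j$.

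The principal obstacle will be the highest-index singularities, namely the $\tfrac{1}{8}$-point on $S_{16}$ and the $\tfrac{1}{9}$-point on $S_{18}$: at these points the Kawamata discrepancy $(a+b)/r - 1$ is closest to zero, and several distinct curves through $P$ can each serve as a candidate carrier for a hypothetical non log canonical centre, so a case-by-case analysis on tangent directions, driven by the specific shape of $f$, is required to exclude each possibility. A subsidiary difficulty is that $|-K_{S_d}|$ typically has base components; one handles this by first exhibiting an explicit element $E_0 \in |-m K_{S_d}|$ for some small $m$ with mild singularities whose support contains the problematic base curves, and then replacing $D$ by $D - \epsilon E_0$ to put the boundary into sufficiently general position before the local analysis at the high-index singular points is run.
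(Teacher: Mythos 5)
The theorem you were asked to prove is not proved in this paper: it is stated as Theorem~\ref{theorem:old-I-1} with the attribution \cite[Theorem~1.10]{CheltsovParkShramovJGA}, and the authors give no argument of their own. There is therefore no in-paper proof to compare your proposal against; the result is simply an imported input.

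On its own terms, your sketch lays out the standard strategy for $\alpha$-invariant estimates on weighted del Pezzo hypersurfaces and is consistent both with the cited reference and with the local analyses that this paper does carry out for its index-$2$ families: your degree computations $(-K_{S_d})^2 = \tfrac{2}{15}$ and $\tfrac{1}{15}$ are correct, and the plan of bounding coefficients along low-degree coordinate sections, then combining inversion of adjunction with a Kawamata weighted blow-up at each cyclic quotient singularity, is the right framework. One technical point: ``replace $D$ by $D - \epsilon E_0$'' is not well-posed as stated, since $D - \epsilon E_0$ is not $\QQ$-linearly equivalent to $-K_{S_d}$. The correct device is the convexity of the log canonical threshold: set $D_0 = \tfrac{1}{m}E_0 \sim_\QQ -K_{S_d}$, check directly that $(S_d,\lambda D_0)$ is log canonical, write $D = tD_0 + (1-t)D'$ with $t$ maximal so that $D'$ is effective, and reduce the problem to $D'$, whose support no longer contains all components of $E_0$. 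As you acknowledge, the substantive content of the proof would then be the case-by-case analysis of tangent directions at the highest-index singularities, which your outline identifies but does not execute.
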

The only remaining case was when $(a_{0},a_{1},a_{2},a_{3},d)=(1,3,5,7,15)$ and the equation of $S_d$ does not contain~$yzt$. But this was solved in \cite{delta-inv-of-sing-dP}. In this case, since $\alpha(S_d)<\frac{2}{3}$, the $\alpha$-invariant criterion could not be used. Instead, an invariant called the $\delta$-invariant, that was introduced by Fujita and Odaka in \cite{FujitaOdaka}, has been used to establish the existence of K\"ahler-Einstein metrics. This invariant serves as a strong criterion to establish uniform K-stability.

\begin{theorem}
\cite[Theorem 1.7]{delta-inv-of-sing-dP}
Let $S_d$ be a quasi-smooth hypersurface in $\mathbb{P}(1,3,5,7)$ of degree $15$ such that its
defining equation does not contain~$yzt$.
Then $\delta(S_d)\geqslant\frac{6}{5}$.
In particular, the surface $S_d$ admits an orbifold K\"ahler-Einstein metric.
\end{theorem}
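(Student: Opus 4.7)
The plan is to bound $\delta(S_d)$ from below pointwise, using the Fujita--Li characterization
\[
\delta(S_d)=\inf_{P\in S_d}\delta_P(S_d),\qquad \delta_P(S_d)=\inf_{E}\frac{A_{S_d}(E)}{\sss_{S_d}(E)},
\]
where $E$ runs over prime divisors over $S_d$ whose center contains $P$. Every such valuation is controlled by the Abban--Zhuang refined adjunction with respect to a suitable flag $P\in C\subset S_d$. Since $(-K_{S_d})^2=\tfrac{1}{7}$ is small and $\alpha(S_d)<\tfrac{2}{3}$, no global $\alpha$-invariant argument can succeed, and the analysis must proceed point by point.

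Quasi-smoothness, combined with the absence of any pure $t$-monomial of weighted degree $15$, forces $y^5,z^3,xt^2\in f$, so the vertices $P_y$ and $P_z$ lie off $S_d$ while $P_t=[0{:}0{:}0{:}1]$ is the unique singular point of $S_d$, a cyclic quotient of type $\tfrac{1}{7}(3,5)$. Hence $\delta_P(S_d)\ge \tfrac{6}{5}$ must be verified at $P_t$ and at all smooth points. The hypothesis that $yzt\notin f$ enters as the crucial geometric input: the only weighted-degree-$15$ monomials in $(y,z,t)$ of weights $(3,5,7)$ are $y^5,z^3,yzt$, so without $yzt$ the curve $C_x:=(x=0)|_{S_d}$ reduces to $\alpha y^5+\beta z^3=0$ in $\PP(3,5,7)$. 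This is an irreducible rational curve passing through $P_t$ with an explicit local model, and it is the natural flag curve at $P_t$.

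For the flag $(P_t,C_x)$ the main computation is: determine the pseudo-effective threshold $\tau$ of $-K_{S_d}-uC_x$; write the Zariski decomposition $-K_{S_d}-uC_x=P(u)+N(u)$ for $u\in[0,\tau]$; and evaluate the two Abban--Zhuang $\sss$-invariants
\[
\sss_{S_d}(C_x)=\frac{1}{(-K_{S_d})^2}\int_0^\tau P(u)^2\,du,\qquad \sss\bigl(W^{S_d}_{\bullet,\bullet};P_t\bigr)=\frac{2}{(-K_{S_d})^2}\int_0^\tau h_{P_t}(u)\,du,
\]
where $h_{P_t}(u)$ packages the local contributions of $P(u)\cdot C_x$ and $\ord_{P_t}(N(u)|_{C_x})$ at $P_t$. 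Comparison with the orbifold log discrepancies $A_{S_d}(C_x)$ and $A_{S_d}(\ord_{P_t})$ then yields $A/\sss\ge\tfrac{6}{5}$ for every valuation centered at $P_t$. At a general smooth point the analogous Abban--Zhuang estimate using a flag $(P,H)$ with $H\in|{-}K_{S_d}|$ passing simply through $P$ gives an easy bound, and only a small explicit list of smooth points lying on the distinguished curves requires a tailored flag, handled by the same method.

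The main obstacle is expected at $P_t$ itself. The small log-discrepancy attached to the $\tfrac{1}{7}(3,5)$ singularity makes the target ratio $\tfrac{6}{5}$ tight; moreover, the local structure of $C_x$ at $P_t$ forces the Zariski decomposition of $-K_{S_d}-uC_x$ to undergo several nef-jumps as $u$ sweeps $[0,\tau]$, each introducing new components into $N(u)$. Producing the explicit piecewise-polynomial formulas for $P(u)^2$ and $h_{P_t}(u)$, and then verifying by direct integration that the resulting ratio never dips below $\tfrac{6}{5}$, is the genuine technical core of the proof; once this is accomplished, the K\"ahler--Einstein conclusion follows from $\delta(S_d)\ge 1$ together with \cite{LiWangXu}.
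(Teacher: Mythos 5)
This statement is quoted verbatim from \cite{delta-inv-of-sing-dP}; the present paper does not re-prove it. Judging from Section~3 of this paper and from the parallel arguments the authors run for families No.~3--10, the proof in the cited paper works by taking an anticanonical $k$-basis type divisor $D$, writing $D=aC_x+\Delta$ with an explicit bound on $a$ via Theorem~\ref{Foundation:bound of k-basis thm}/Corollary~\ref{Foundation:bound of k-basis cor}, and then pushing a contradiction through weighted blow-ups and inversion of adjunction (Lemmas~\ref{Inversion of adjunction for smooth point} and~\ref{Inversion of adjunction for singular point}). Your plan instead routes everything through the Abban--Zhuang refined adjunction of Theorem~\ref{theorem:Hamid-Ziquan}/Theorem~\ref{theorem:Kento-formula}. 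That is a genuinely different, and in principle perfectly viable, method---indeed this paper uses it for family No.~2---but it is not the one the cited source employs; the $k$-basis type route buys a more hands-on pointwise contradiction argument, while the Abban--Zhuang route packages the same information into the two $S$-invariants.

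Your geometric setup is correct: since $y^5,z^3,xt^2$ are forced into $f$ by quasi-smoothness and the absence of a pure $t$-power of degree $15$, the points $\msp_y,\msp_z$ lie off $S_d$, $\msp_t$ is the unique singular point and is of type $\tfrac{1}{7}(3,5)$, and the only weight-$(3,5,7)$ monomials of degree $15$ in $y,z,t$ are $y^5,z^3,yzt$, so without $yzt$ the curve $C_x$ is cut out by $\alpha y^5+\beta z^3=0$. These identifications are exactly the inputs the argument needs.

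However, there is one concrete gap in the flag choice for smooth points. For $S_{15}\subset\PP(1,3,5,7)$ one has $-K_{S_d}\sim\mathcal{O}_{S_d}(1)$ and $H^0(\mathcal{O}(1))$ is spanned by $x$ alone, so $|{-}K_{S_d}|$ is the single curve $C_x$ and does \emph{not} move. There is therefore no $H\in|{-}K_{S_d}|$ passing through a general smooth point $\msp\notin C_x$, and the proposed flag $(\msp,H)$ with $H\in|{-}K_{S_d}|$ simply does not exist. To handle such points one must replace $|{-}K_{S_d}|$ by a more flexible system, e.g.\ the mobile linear systems generated by monomials such as $\alpha x^3 y+\beta\,(\text{terms of degree }3)$ or $\alpha x^5 y+\beta z$, exactly as this paper does in its Lemmas for families No.~3--10; this is fixable but must be fixed. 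Beyond that, the plan is only a plan: the piecewise Zariski decomposition of $-K_{S_d}-uC_x$, the integrals computing $S_{-K_S}(C_x)$ and $S(W^{C_x}_{\bullet,\bullet};\msp_t)$, and the verification that both ratios clear $\tfrac{6}{5}$ at $\msp_t$ are stated as goals rather than carried out, and those computations are where the actual content of the proof lies.
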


Therefore, this completely solves the problem of existence of K\"ahler-Einstein metrics for $I=1$.  

Next, when the index $I$ of $S_d$ is 2, the existence of K\"{a}hler-Einstein metrics on $S_d$ has been studied in \cite{BGN03}, \cite{CheltsovParkShramovJGA}, \cite{dPzoo}, using $\alpha$-invariant computations. Note that this approach works under the assumption that $I < 3a_0/2$ and $(a_0,a_1,a_2,a_3,d) \neq (I-k,I+k,a,a+k,2a+k+I)$, for any non-negative integer $k<I$ and any positive integer $a \geq I+k$, because if $I \geq 3a_0/2$ or if  $I < 3a_0/2$ and $(a_0,a_1,a_2,a_3,d)= (I-k,I+k,a,a+k,2a+k+I)$, then $\alpha(S_d)\leq 2/3$ (\cite[Corollary 1.15]{dPzoo}) where $\alpha(S_d)$ is the $\alpha$-invariant of $S_d$ and hence cannot be used to prove the existence of K\"{a}hler-Einstein metrics. Recently, the existence of K\"ahler-Einstein metrics on few of the remaining cases has been shown using the $\delta$-invariant in \cite{delta-inv-of-sing-dP}. 

As a consequence, it was conjectured in \cite[Conjecture 1.10]{delta-inv-of-sing-dP}, that for $I=2$, all $S_d$ admit an orbifold K\"ahler-Einstein metric. But this was disproved by Kim and Won in \cite[Theorem 1.2]{KW21}.

\begin{theorem}\cite[Theorem 1.2]{KW21}
    Suppose that $S_d$ is a quasi-smooth hypersurface such that the quintuple $(a_0,a_1,a_2,a_3,d)$ is one of the following:
    \begin{equation*}
        (1,6,9,13,27),\quad (1,9,15,22,45), \quad (1,3,3n+3,3n+4,6n+9),
    \end{equation*}
    \begin{equation*}
        (1,1,n+1,m+1,n+m+2),\quad(1,3,3n+4,3n+5,6n+11)
    \end{equation*}
    where $n$ and $m$ are non-negative integers with $n<m$. Then $S_d$ does not have an orbifold K\"ahler-Einstein metric.
\end{theorem}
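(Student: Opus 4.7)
The overall strategy is to certify that each such $S_d$ is K-unstable and then invoke the singular Yau--Tian--Donaldson correspondence for klt Fano varieties (\cite{Berman, LiWangXu}) to conclude that no orbifold K\"ahler--Einstein metric can exist. By the $\delta$-invariant criterion of Fujita--Odaka (\cite{FujitaOdaka}), K-instability reduces to exhibiting, for each quintuple, a prime divisor $E$ over $S_d$ such that
$$
\delta_E(S_d)\;=\;\frac{A_{S_d}(E)}{S_{S_d}(E)}\;<\;1.
$$
The three-step plan is: (i) locate the candidate destabilizing divisor, (ii) compute $A_{S_d}(E)$ and $S_{S_d}(E)$ explicitly via Zariski decomposition on a suitable partial resolution, and (iii) verify the strict inequality.

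For the two sporadic quintuples $(1,6,9,13,27)$ and $(1,9,15,22,45)$ I would take $E$ to be the exceptional divisor of an appropriately weighted blow-up of the ``heaviest" cyclic quotient singularity lying on $S_d$ (the point $P_t$ where the denominator of the local model is largest), with weights tuned to the local equation of $S_d$ so that $E$ is a single $\PP^1$. For each of the three infinite families the quasi-smoothness, well-formedness, and the index $I=2$ severely constrain the defining polynomial, and force a distinguished rational curve $C$ on $S_d$ arising as a component of $\{x=0\}\cap S_d$ or $\{y=0\}\cap S_d$; the natural $E$ here is $C$ itself as a prime divisor of $S_d$. In all cases $A_{S_d}(E)$ is read off from the local weights and the orbifold discrepancies at the singular points of $S_d$ that lie on $E$, while the intersection numbers $-K_{S_d}\cdot C$ and $C^2$ follow from orbifold adjunction on the weighted projective hypersurface.

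To compute $S_{S_d}(E)=\frac{1}{(-K_{S_d})^2}\int_0^{\tau(E)}\vol(-K_{S_d}-tE)\,dt$, I would pass to the minimal resolution $\widetilde S_d$ of the quotient singularities met by $E$, perform the Zariski decomposition $-K_{S_d}-tE=P(t)+N(t)$ on $\widetilde S_d$, and use $\vol(-K_{S_d}-tE)=P(t)^2$. The main obstacle is the combinatorial bookkeeping of this decomposition: as $t$ increases from $0$ to the pseudo-effective threshold $\tau(E)$, successive components of the resolution graph (and possibly $C$ itself) enter the negative part $N(t)$ at finitely many critical values, and $P(t)^2$ is piecewise polynomial in $t$ with a different closed form on each sub-interval. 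Identifying the correct order in which the exceptional curves of the resolution of every singularity lying on $E$ enter $N(t)$, summing the integrals over the resulting intervals, and verifying $A_{S_d}(E)<S_{S_d}(E)$ is the delicate case-by-case work; I expect the sporadic quintuples to be the hardest because longer chains of exceptional curves appear. Once the inequality is established in all five families, K-instability and hence the non-existence of an orbifold K\"ahler--Einstein metric on $S_d$ follows automatically.
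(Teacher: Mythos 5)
The theorem you are proving is not actually proved in this paper: it is cited from Kim--Won \cite{KW21}, so there is no ``paper's own proof'' here to compare against. I can, however, assess your strategy on its own merits. Your overall framework is the right one and is indeed the framework of \cite{KW21}: show K-instability by exhibiting a prime divisor $E$ over $S_d$ with $\beta(E)=A_{S_d}(E)-S_{-K_{S_d}}(E)<0$ (equivalently $A_{S_d}(E)/S_{-K_{S_d}}(E)<1$), then use the fact that a klt Fano admitting an orbifold K\"ahler--Einstein metric must be K-polystable (Berman) to conclude non-existence.

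The issue is with the concrete choice of destabilizing divisor, which is where all the content of such a proof lives, and which you explicitly leave undone. For the family $(1,1,n+1,m+1,n+m+2)$ with $n<m$, your proposal to take $E$ to be a component of $\{x=0\}\cap S_d$ or $\{y=0\}\cap S_d$ does not work: for a general member of this family, $C_x$ and $C_y$ are \emph{irreducible} curves (the defining equation on $\{x=0\}$ has the form $t\cdot\ell(y,z)+q(y,z)=0$ with $\ell,q$ coprime, hence irreducible), and a direct computation gives $\beta(C_x)=\beta(C_y)=\tfrac{1}{3}>0$, so these curves do not destabilize. Likewise $\beta(C_z)=1-\tfrac{2}{3(n+1)}>0$ and $\beta(C_t)=1-\tfrac{2}{3(m+1)}>0$. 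The destabilizing divisor in this family is an exceptional divisor of a weighted blow-up of the singular point $P_t$ of type $\tfrac{1}{m+1}(1,1)$, not a curve on $S_d$. So the dichotomy you set up (curves for the infinite families, exceptional divisors for the sporadic quintuples) is not correct; you need to blow up in the infinite families as well, and the Zariski-decomposition bookkeeping you describe as ``the delicate case-by-case work'' is exactly where the argument would either succeed or fail. As written, the proposal identifies the right criterion but misidentifies the destabilizing valuation in at least one of the five families, and carries out none of the computations that would verify $\beta(E)<0$, so it does not constitute a proof.
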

In this paper, we determine the existence of the K\"ahler-Einstein metric for the remaining quasi-smooth and well-formed hypersurfaces of index $I=2$, thus giving a complete answer in the case of Index 2.

\begin{theorem}[\textbf{Main Theorem}]
\label{main theorem}
Let $S_d$ be a quasi-smooth, well-formed hypersurface with $I=2$. The following table gives our results on the existence of K\"ahler-Einstein metrics on $S_d$ in $\PP(a_0,a_1,a_2,a_3)$ of degree $d$.
\begin{center}
    \begin{tabular}{|c|c|c|c|}
        \hline
        No. &$(a_0,a_1,a_2,a_3)$ & degree & KE\\
        \hline
        1 & $(1,1,n,n)$, $n\geq 2$ & $2n$ & yes\\
        2 & $(1,2,n+2,n+3)$, $n\geq 0$ & $2(n+3)$ & yes\\
        3 & $(1,3,4,6)$ & $12$ & yes\\
        4 & $(1,4,5,7)$ & $15$ & yes\\ 
        5 & $(1,4,5,8)$ & $16$ & yes\\
        6 & $(1,4,6,9)$ &  $18$ & yes\\
        7 & $(1, 5, 7, 11)$ & $22$ & yes\\
        8 & $(1, 6, 10, 15)$ & $30$ & yes\\
        9 & $(1, 7, 12, 18)$ & $36$ & yes\\
        10 & $(1, 8, 13, 20)$ & $40$ & yes\\
        \hline
    \end{tabular}
\end{center}
\end{theorem}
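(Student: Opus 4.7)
The plan is to show that $\delta(S_d) > 1$ for each entry in the table, from which the existence of an orbifold K\"ahler--Einstein metric follows by the singular Yau--Tian--Donaldson correspondence \cite{LiWangXu, Berman, SpottiSunYao}. As the introduction recalls, Tian's criterion $\alpha(S_d) > 2/3$ already fails for most $I=2$ families by \cite[Corollary~1.15]{dPzoo}, so the basic tool will be the Abban--Zhuang inductive estimate along a flag $p \in C \subset S_d$,
$$
\delta_p(S_d) \geq \min\!\left\{\frac{A_{S_d}(C)}{S_{S_d}(C)},\ \delta_p\bigl(C; W_{\bullet,\bullet}^{S_d,C}\bigr)\right\},
$$
reducing the problem at a point $p$ to computing $S$-invariants via the Zariski decompositions of $-K_{S_d} - tC$ and an orbifold bound on the restricted linear series.

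I would split the ten entries into two groups. \textbf{Group A} consists of the two infinite families (entries 1 and 2). For $S_{2n} \subset \PP(1,1,n,n)$ the pencil of weight-$1$ hyperplane sections $\{\lambda x + \mu y = 0\}$ provides a curve through every point of low index; for $S_{2(n+3)} \subset \PP(1,2,n+2,n+3)$ a similar construction using the weight-$2$ coordinate is available. In each family the Zariski decompositions, and hence the $S$-invariants, can be handled uniformly in $n$, and the restricted $\delta$-invariant on the curve is bounded below by $1$ after a single further flag step at each singular point. \textbf{Group B} consists of the sporadic entries 3--10. Here I would work point-by-point: at a smooth point, either a local $\alpha$-type estimate or a one-step flag along a sufficiently moving curve suffices; at each cyclic quotient singularity $\frac{1}{r}(1,a)$, I would pick a curve $C_p$ through $p$ (typically cut out by one of the weighted hyperplanes $\{x_i = 0\}$) whose proper transform under the weighted blow-up admits a simple Zariski decomposition, and then run the two-step Abban--Zhuang flag on a resolution.

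The main obstacle will be the high-index singular points of the sporadic entries 7--10, where $S_d$ carries a $\frac{1}{11}$, $\frac{1}{15}$, $\frac{1}{18}$, or $\frac{1}{20}$ singularity. Because the neighbouring families in the Kim--Won list \cite{KW21} do \emph{not} admit KE metrics, the bounds here must be sharp: there is essentially one natural pencil through each bad point whose contribution to $\delta_p$ exceeds $1$, and its members may be reducible, non-reduced, or pass through a second singular point, in which case the restricted linear series analysis must be done in orbifold form and can easily tip the inequality the wrong way. Producing the correct test curve, and controlling the Zariski decomposition of $-K_{S_d}-tC$ across the breakpoints where new base components appear, will be the technical core of the argument, with the rest of the proof being a careful but routine bookkeeping of $S$-invariants case by case.
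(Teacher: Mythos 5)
Your plan has a genuine gap at family No.~1. For $S_{2n}\subset\PP(1,1,n,n)$ one may normalize the defining equation to $zt+f_{2n}(x,y)=0$, which is preserved by the $\CC^*$-action $[x:y:z:t]\mapsto[x:y:\lambda z:\lambda^{-1}t]$. Hence $\mathrm{Aut}(S_{2n})$ contains a nontrivial torus, $S_{2n}$ cannot be K-stable, and therefore $\delta(S_{2n})\leq 1$: your strategy of proving $\delta>1$ for every entry in the table is doomed for this family. The paper handles it instead via Zhuang's equivariant valuative criterion (Theorem~\ref{Zhuang}, \cite[Corollary~4.14]{Z20}): taking $G=\CC^*\times\mu_2$, one verifies $\beta(E)>0$ for every $G$-invariant dreamy prime divisor $E$ over $S_{2n}$, which reduces to $G$-invariant curves in the pencil spanned by $C_x$ and $C_y$ together with the exceptional divisors of blow-ups at the $G$-fixed points (the nodes of the reducible members of that pencil). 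You would need to notice the torus action and switch to an equivariant argument to make this entry work.

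Two further, less serious discrepancies. First, for the sporadic entries 3--10 the paper does not actually run the Abban--Zhuang flag machine; it uses the $k$-basis-type (``cool'') divisor method of \cite{delta-inv-of-sing-dP}: writing a $k$-basis type $D=aC_x+\Delta$, bounding $a$ by Corollary~\ref{Foundation:bound of k-basis cor}, and then proving that $(S_d,\lambda D)$ is log canonical for a fixed $\lambda>1$ via inversion of adjunction and weighted blow-ups at the singular points. Your Abban--Zhuang route is not wrong in principle (the paper itself uses it for family No.~2), but it is a different computation and you should be prepared for the bookkeeping to look quite different. Second, your claim that entries 7--10 carry $\frac{1}{11}$, $\frac{1}{15}$, $\frac{1}{18}$, $\frac{1}{20}$ singularities is incorrect: after normalizing the equation, $\msp_t$ does not lie on the surface in these cases, and the worst singularities are in fact $\frac{1}{7}(5,4)$, $\frac{1}{5}(1,1)$, $\frac{1}{7}(2,3)$ and $\frac{1}{13}(2,5)$ respectively, all of which live on $C_x$; so the ``bad point'' analysis is somewhat milder than you anticipate, and the natural test curve is $C_x$ itself in essentially every case.
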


\section{Local and Global Results}

\subsection{Notations}
Throughout the paper we use the following notations:
\begin{itemize}
    \item For positive integers $a_0$, $a_1$, $a_2$ and $a_3$, $\PP(a_0,a_1,a_2,a_3)$ is the weighted projective space. We assume that $a_0\leq a_1\leq a_2\leq a_3$.
    \item We take $x$, $y$, $z$ and $t$ to be the weighted homogeneous coordinates of $\PP(a_0,a_1,a_2,a_3)$ with weights $\wt(x)=a_0$, $\wt(y)=a_1$, $\wt(z)=a_2$ and $\wt(t)=a_3$.
    \item $S_d\subset \PP(a_0,a_1,a_2,a_3)$ denotes a quasi-smooth weighted hypersurface given by a quasi-homogeneous polynomial of degree $d$.
    \item $C_x$ is the hyperplane section that is cut out by the equation $x = 0$ in $S_d$. The hyperplane sections $C_y$, $C_z$ and $C_t$ are defined in a similar way.
    \item $\msp_x$ denotes the point on $S_d$ given by $y=z=t=0$. The points $\msp_y$, $\msp_z$ and $\msp_t$ are defined in a similar way.
    \item $-K_{S_d}$ denotes the anti-canonical divisor of $S_d$.
\end{itemize}

Let $S$ be a del Pezzo surface with at most cyclic quotient singularities, and let $D$ be an effective $\QQ$-divisor on $S$. Let $C$ be an irreducible reduced curve on $S$. We can write
\begin{equation*}
    D = aC + \Delta
\end{equation*} 
where $a$ is a non-negative rational number and $\Delta$ is an effective $\QQ$-divisor such that $C$ is not contained in the support of $\Delta$. 

\subsection{Results for smooth points}
\begin{lemma}[\cite{Kol97}]\label{Foundation:mult}
    Let $\msp$ be a smooth point of $S$. Suppose that the log pair $(S, D)$ is not log canonical at the point $\msp$. Then $\mult_{\msp}(D) > 1$.
\end{lemma}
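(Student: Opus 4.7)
My plan is to prove the contrapositive: if $\mult_\msp(D) \leq 1$, then the pair $(S, D)$ is log canonical at $\msp$. The natural tool is the blow-up at $\msp$, combined with an induction driven by the embedded resolution of plane curve singularities.

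First, I would blow up $\msp$, obtaining $\pi : \tilde S \to S$ with exceptional curve $E$. Set $m := \mult_\msp(D)$ and write $\tilde D$ for the strict transform of $D$. The standard log pull-back formula reads
$$ K_{\tilde S} + \tilde D + (m - 1) E = \pi^*(K_S + D). $$
Since $m \leq 1$, the discrepancy of $E$ is $1 - m \geq 0 > -1$, so $E$ itself does not witness a log canonical failure. Moreover, because the coefficient $m - 1$ of $E$ is non-positive, its presence only improves log discrepancies downstream; it therefore suffices to show that $(\tilde S, \tilde D)$ is log canonical at each point $q \in E$.

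Next, I would establish the multiplicity-propagation fact: for every $q \in E$, $\mult_q(\tilde D) \leq \mult_\msp(D) \leq 1$. This reduces to a direct Taylor-expansion computation in the blow-up charts. Writing $D = \{f = 0\}$ locally at $\msp$ with $f = f_m + f_{m+1} + \cdots$ where $f_m \neq 0$ is the degree-$m$ initial form, in the chart $(u, v) \mapsto (u, uv)$ the strict transform is cut out by $u^{-m} f(u, uv) = f_m(1, v) + u\, f_{m+1}(1, v) + \cdots$, whose order at any point $(0, v_0) \in E$ is bounded by the vanishing order of $f_m(1, v)$ at $v_0$, itself at most $m$. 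Thus the hypothesis $\mult \leq 1$ propagates to every infinitely near point of $\msp$.

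Finally, I would iterate: by Hironaka's embedded resolution applied to the plane curve germ $(D, \msp)$, the process terminates after finitely many blow-ups of points, producing a simple normal crossings total transform in which every strict transform component has coefficient $\leq 1$ and every exceptional divisor has non-positive coefficient. Log canonicity of the resolved pair — and hence of $(S, D)$ at $\msp$ — is then immediate from the SNC definition. The only delicate step is the multiplicity-propagation claim above; modulo that elementary local calculation and invoking embedded resolution as a black box, the induction closes routinely.
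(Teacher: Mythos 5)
The paper cites this statement from Koll\'ar \cite{Kol97} without reproducing a proof, so the comparison is against the standard argument rather than against the paper itself. Your route via iterated point blow-ups, non-increase of multiplicity under blow-up, and termination by embedded resolution of plane curve germs is correct and gives a valid proof; the one loose spot is in your last paragraph. The propagation fact you verify controls $\mult_q(\widetilde{D})$ for the \emph{strict} transform of $D$, whereas the assertion that every exceptional divisor in the eventual SNC total transform carries a non-positive coefficient is about the crepant pullback $\widetilde{D}+(m-1)E_1+\cdots$, and so needs the (easy but unstated) recursion $d_{k+1}=\mult_q(\widetilde{D}^{(k)})+\sum_{j:\,q\in E_j}d_j-1\le 0$, not only the strict-transform bound. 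Alternatively, carry through at every stage the reduction you already make after the first blow-up --- drop the non-positive exceptional part and work with $(\widetilde{S},\widetilde{D})$ alone --- and then the final SNC step needs nothing beyond $\mult\le 1$ at the last center. Either cure works; just commit to one. It is also worth noting that the stated bound $\mult_\msp(D)>1$ (as opposed to $>\dim S$) does not need resolution at all: take a general smooth curve $H$ through $\msp$ with $H\not\subset\Supp D$; enlarging the boundary cannot restore log canonicity, so $(S,D+H)$ is still non-lc at $\msp$, and Lemma~\ref{Inversion of adjunction for smooth point} applied with $C=H$, $a=1$, $\Delta=D$ gives $1<(H\cdot D)_{\msp}=\mult_\msp(D)$, the last equality by genericity of $H$. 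That one-step argument is closer to the cited source and to how the rest of this paper uses its local lemmas, while your resolution-based argument is heavier but self-contained and avoids inversion of adjunction.
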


\begin{lemma}\label{Inversion of adjunction for smooth point}
    Let $\msp$ be a smooth point of $S$. Suppose that $a\leq 1$, $C$ is also smooth at $\msp$ and the log pair $(S, D)$ is not log canonical at $\msp$. Then we have the following:
    \begin{equation*}
        1< (C\cdot \Delta)_{\msp} \leq C\cdot \Delta
    \end{equation*}
    where $(C\cdot \Delta)_{\msp}$ is the local intersection number at $\msp$.
\end{lemma}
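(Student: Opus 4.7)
The plan is to reduce this to the classical inversion of adjunction for a smooth surface and a smooth curve on it. The whole argument is an easy chain of implications, and I would organize it in three short steps.

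First, I would exploit the hypothesis $a\leq 1$ to replace the coefficient of $C$ in the boundary by $1$. Since $(1-a)C$ is an effective $\QQ$-divisor, we have
\[
C+\Delta = (aC+\Delta)+(1-a)C = D+(1-a)C,
\]
so $C+\Delta\geq D$ as $\QQ$-divisors. Log canonicity is preserved under decreasing the boundary, hence $(S,C+\Delta)$ is also not log canonical at $\msp$. This reduction is what makes the hypothesis $a\leq 1$ enter: if $a$ were larger, the pair $(S,C+\Delta)$ would not majorize $D$ and the argument would break.

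Second, I would apply inversion of adjunction to the smooth pair $(S,C)$ at $\msp$, which is the standard Shokurov/Kollár result on surfaces (this is essentially the content of \cite{Kol97} underlying Lemma~\ref{Foundation:mult}). Since $S$ and $C$ are both smooth at $\msp$ and $C\not\subset\Supp(\Delta)$, the non-log-canonicity of $(S,C+\Delta)$ at $\msp$ forces the restricted pair $(C,\Delta|_C)$ to fail to be log canonical at $\msp$. On the smooth curve $C$ this is equivalent to $\mult_{\msp}(\Delta|_C)>1$, and because $C$ is smooth at $\msp$ and $C\not\subset\Supp(\Delta)$, we have the identification
\[
\mult_{\msp}(\Delta|_C)=(C\cdot\Delta)_{\msp}.
\]

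Third, the inequality $(C\cdot\Delta)_{\msp}\leq C\cdot\Delta$ is automatic: $\Delta$ is effective, $C$ is irreducible and not contained in $\Supp(\Delta)$, so all local intersection multiplicities $(C\cdot\Delta)_{\msq}$ at points $\msq\in C\cap\Supp(\Delta)$ are non-negative, and their sum is $C\cdot\Delta$. I do not anticipate any genuine obstacle; the only point that deserves a line of justification is the monotonicity of non-log-canonicity used in the first step, after which the conclusion is simply inversion of adjunction on a smooth surface.
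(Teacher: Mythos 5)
Your proof is correct and follows essentially the same route as the paper's: both pass from $(S,D)$ to $(S,C+\Delta)$ using $a\leq 1$, invoke inversion of adjunction to conclude $(C,\Delta|_C)$ is not log canonical at $\msp$, and then read off the inequality from $\mult_{\msp}(\Delta|_C)>1$ together with effectivity of $\Delta$. The only difference is that you spell out the monotonicity and the identification $\mult_{\msp}(\Delta|_C)=(C\cdot\Delta)_{\msp}$, which the paper leaves implicit.
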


\begin{proof}
    By the assumption, the log pair $(S, C + \Delta)$ is not log canonical at $\msp$. The inversion of adjunction formula implies that the log pair $(C, \Delta|_{C})$ is not log canonical at $\msp$. Then $1 < \mult_{\msp}(\Delta|_C)$ implies the above inequality.
\end{proof}

\subsection{Results for singular points}
We will now give an analogue to the above results when the surface $S$ has cyclic quotient singularities. Suppose that $S$ has a cyclic quotient singular point $\msp$ of type $\frac{1}{n}(\msa,\msb)$ where $\msa$ and $\msb$ are coprime integers that are also coprime to $n$. 

\begin{lemma}\label{Inversion of adjunction for singular point}
    Suppose that $a\leq 1$, the log pair $(S, C)$ is purely log terminal at a point $\msp$ and the log pair $(S,C)$ is not log canonical at the point $\msp$. Then we have the following:
    \begin{equation*}
        \frac{1}{n} < C\cdot \Delta.
    \end{equation*}
\end{lemma}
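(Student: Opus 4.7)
The strategy is to adapt the proof of Lemma~\ref{Inversion of adjunction for smooth point} to the singular setting by running inversion of adjunction through the plt pair $(S,C)$ and absorbing the nontrivial different contributed by the quotient singularity.

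First I read the hypothesis as saying that $(S,C)$ is plt at $\msp$ while $(S, D) = (S, aC+\Delta)$ fails to be log canonical at $\msp$ (the ``$(S,C)$ is not log canonical'' in the statement is a typographical slip for $(S,D)$, exactly parallel to the smooth version). Since $a \le 1$ and $C$ is not a component of $\Delta$, increasing the coefficient of $C$ to $1$ can only worsen the singularities, so $(S, C + \Delta)$ is also not log canonical at $\msp$. In particular, $\msp \in C$.

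Next I would apply the standard inversion of adjunction (Kollár--Shokurov). Because $(S,C)$ is plt at $\msp$, the curve $C$ is analytically irreducible and normal there, and the different $\mathrm{Diff}_{C}(0)$ is a well-defined effective $\QQ$-divisor on $C$. For a cyclic quotient singularity of type $\frac{1}{n}(\msa,\msb)$, the plt assumption on $(S,C)$ forces $C$ to be (analytically) one of the coordinate axes of the quotient chart; a direct computation via the étale cover $\mathbb{A}^2 \to \mathbb{A}^2/\mu_n$ then gives $\mathrm{Diff}_{C}(0) = \frac{n-1}{n}\msp$ near $\msp$. Inversion of adjunction applied to $(S, C + \Delta)$ tells us that $\bigl(C, \mathrm{Diff}_{C}(0) + \Delta|_C\bigr)$ is not log canonical at $\msp$, i.e.
$$
\frac{n-1}{n} + \mult_{\msp}(\Delta|_C) > 1,
$$
which immediately yields $\mult_{\msp}(\Delta|_C) > \frac{1}{n}$.

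Finally, since $C$ is not contained in $\Supp(\Delta)$, the local intersection number $(C\cdot\Delta)_{\msp}$ coincides with $\mult_{\msp}(\Delta|_C)$, and it is bounded above by the global intersection number $C\cdot \Delta$ because all local contributions to $C\cdot\Delta$ are non-negative. Chaining these inequalities gives $\frac{1}{n} < C\cdot\Delta$, as required. The step I expect to need the most care is the computation of $\mathrm{Diff}_{C}(0)$ at the quotient singularity: it is where the plt hypothesis (rather than mere log canonicity of $(S,C)$) is truly used, and it is the sole source of the denominator $n$ in the conclusion; the rest is formal once one concedes that $C\cdot\Delta$ dominates its local contribution at $\msp$.
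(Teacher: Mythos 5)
Your proof is correct, and it is essentially the argument the paper invokes by citing \cite[Lemma~2.5]{CheltsovParkShramovJGA}: the crucial input is inversion of adjunction through the plt pair $(S,C)$, with the $\tfrac{1}{n}$ supplied by the quotient singularity. You also correctly identified that the second ``$(S,C)$'' in the statement is a typographical slip for $(S,D)$. The only stylistic difference from the likely route in the cited source is packaging: you compute the different $\mathrm{Diff}_C(0)=\tfrac{n-1}{n}\msp$ downstairs and run adjunction on $S$, whereas one can equivalently pass to the degree-$n$ orbifold chart $\pi\colon\widetilde U\to U$ (as set up just before Lemma~\ref{Foundation:mult at singular point}), apply the smooth inversion of adjunction on $\widetilde U$ to get $(\widetilde\Delta\cdot\widetilde C)_{\widetilde\msp}>1$, and then divide by the covering degree $n$ via Lemma~\ref{Foundation:inequality mult intersection}. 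The two derivations are logically interchangeable; yours has the small advantage of never leaving $S$, while the orbifold-cover version dovetails with the notation $\mult_\msp(D\cdot B)$ the paper later uses. In either case the plt hypothesis is what pins down the different (equivalently, forces $\widetilde C$ to be smooth at $\widetilde\msp$), and that is exactly where you flagged the care needed, so the proof stands.
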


\begin{proof}
    See the proof of \cite[Lemma 2.5]{CheltsovParkShramovJGA}.
\end{proof}

Recall that there is an orbifold chart $\pi\colon \widetilde{U}\to U$ for some open set $\msp\in U$ on $S$ such that $\widetilde{U}$ is smooth and $\pi$ is a cyclic cover of degree $n$ branched over $\msp$. Let $\widetilde{\msp}\in \widetilde{U}$ be a point such that $\pi(\widetilde{\msp}) = \msp$ and $\widetilde{D}=\pi^{-1}(D\vert_{U})$.

\begin{lemma}[\cite{Kol97}]\label{Foundation:mult at singular point}
The log pair $(U, D|_U)$ is log canonical at the point $\msp$ if and only if the log pair $(\widetilde{U}, \widetilde{D}|_{\widetilde{U}})$ is log canonical at the point $\widetilde{\msp}$.
\end{lemma}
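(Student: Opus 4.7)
The plan is to exploit the fact that the orbifold chart $\pi\colon \widetilde{U} \to U$ is étale outside the single point $\msp$, and hence étale in codimension one. This makes the ramification divisor of $\pi$ vanish, yielding the two key identities $K_{\widetilde{U}} = \pi^* K_U$ as $\QQ$-Cartier divisors and $\widetilde{D} = \pi^*(D|_U)$ as $\QQ$-divisors; the local Galois group $\ZZ/n$ acts on $\widetilde{U}$ with quotient $U$.

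First I would take a log resolution $\mu\colon Y \to U$ of the pair $(U, D|_U)$ and construct a compatible log resolution $\widetilde{\mu}\colon \widetilde{Y} \to \widetilde{U}$ by normalizing the fibre product $Y \times_U \widetilde{U}$ and then performing further $\ZZ/n$-equivariant blow-ups to ensure that the total transform of $\widetilde{D}$ together with the exceptional locus is simple normal crossing. The resulting finite map $\phi\colon \widetilde{Y} \to Y$ is still étale in codimension one, since both $\mu$ and $\widetilde{\mu}$ are isomorphisms outside the fibres over $\msp$ and $\widetilde{\msp}$, and the original cover was unramified there.

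Next I would compare discrepancies. Because $\phi$ carries trivial ramification divisor, the Hurwitz formula gives
\[
K_{\widetilde{Y}} + \widetilde{D}_Y = \phi^*\bigl(K_Y + D_Y\bigr),
\]
where $D_Y$ and $\widetilde{D}_Y$ denote the log-pullbacks of $D|_U$ and $\widetilde{D}$ respectively. Hence for any prime exceptional divisor $E$ extracted by $\mu$ and any component $\widetilde{E}$ of $\phi^{-1}(E)$, the log discrepancies agree: $a_{\widetilde{E}}(\widetilde{U}, \widetilde{D}) = a_{E}(U, D|_U)$. Conversely every prime divisor over $\widetilde{\msp}$ lies in a $\ZZ/n$-orbit whose members share a common log discrepancy and whose image descends to a divisor over $\msp$, so the minimal log discrepancies at $\msp$ and $\widetilde{\msp}$ coincide. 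This forces the two pairs to be log canonical simultaneously.

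The only genuine subtlety is the existence of the $\ZZ/n$-equivariant log resolution and the matching of Galois orbits with divisors over $\msp$; this is precisely the content of the standard result in \cite{Kol97}, which can be invoked directly to complete the argument.
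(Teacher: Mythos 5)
The paper does not prove this lemma at all: it is stated as a citation to Koll\'ar's \emph{Singularities of pairs}, so there is no internal argument to compare against. What can be assessed is whether your proposal would constitute a correct proof, and there is a genuine gap.

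The critical error is the claim that the induced finite map $\phi\colon \widetilde{Y}\to Y$ between the compatible log resolutions is ``\'etale in codimension one.'' This is false in essentially every nontrivial case. Even though $\pi$ itself is \'etale away from the point $\msp$, after blowing up one introduces exceptional divisors lying entirely over $\msp$, and $\phi$ is typically ramified along them. A concrete instance: for $\frac{1}{2}(1,1)$, blow up the origin upstairs and downstairs; the induced map on the exceptional curves $F\to E$ is ramified of index $2$ along $F$ (indeed $F^2=-1$ but $E^2=-2$). Consequently your Hurwitz identity $K_{\widetilde Y}+\widetilde D_Y=\phi^*(K_Y+D_Y)$ does not hold, and the assertion that log discrepancies of $E$ and $\widetilde E$ are \emph{equal} is wrong. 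What is true, and what Koll\'ar actually proves, is the ramification-weighted relation
\[
a_{\widetilde E}\bigl(\widetilde U,\widetilde D\bigr)+1 \;=\; r\bigl(a_E(U,D|_U)+1\bigr),
\]
where $r$ is the ramification index of $\phi$ along $\widetilde E$. Since $r\geq 1>0$, the quantities $a_{\widetilde E}+1$ and $a_E+1$ have the same sign, which is precisely what log canonicity requires. So the conclusion you want is salvaged, but not by the reason you give; the \'etale-in-codimension-one claim on the resolutions must be replaced by the multiplicative discrepancy formula for finite covers.

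A minor secondary point: your description of the matching of divisors over $\msp$ with $\ZZ/n$-orbits over $\widetilde{\msp}$ is essentially correct in spirit, and together with the corrected discrepancy formula it gives both implications. But as written, the proposal asserts equality of discrepancies via a false premise and would not be accepted as a proof.
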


For any $\QQ$-Cartier divisor $B$ in $S$ such that $B$ is not contained in the support of $D$, we write 
$$
\mult_{\msp}(D \cdot B)=\mult_{\tilde{\msp}}(\widetilde{B} \cdot \widetilde{D})
$$
 where $\widetilde{B}=\pi^{-1}(B\vert_U)$.

\begin{lemma}\label{Foundation:inequality mult intersection}
   Let $B$ be a $\QQ$-divisor in $S$ such that no component of $B$ is contained in the support of $D$. Then the inequality
    \begin{equation*}
        \frac{\mult_{\msp}(B)\mult_{\msp}(D)}{n} \leq B\cdot D
    \end{equation*}
    holds.
\end{lemma}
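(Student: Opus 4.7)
The plan is to reduce the inequality on the singular surface $S$ to the classical multiplicity--intersection inequality on a smooth surface, by pulling back through the orbifold chart $\pi \colon \widetilde U \to U$ described in the paragraph preceding Lemma \ref{Foundation:mult at singular point}. Concretely, set $\widetilde B = \pi^{-1}(B|_U)$ and $\widetilde D = \pi^{-1}(D|_U)$. Since no component of $B$ lies in $\Supp(D)$, the same is true of $\widetilde B$ and $\widetilde D$ on the smooth surface $\widetilde U$.

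With this setup, I would chain together four routine facts. First, on the smooth surface $\widetilde U$, at the smooth point $\widetilde \msp$ we have the classical inequality
$$\mult_{\widetilde\msp}(\widetilde B)\cdot \mult_{\widetilde\msp}(\widetilde D) \leq (\widetilde B \cdot \widetilde D)_{\widetilde\msp}.$$
Second, the local intersection number is bounded by the global one,
$$(\widetilde B \cdot \widetilde D)_{\widetilde\msp} \leq \widetilde B \cdot \widetilde D,$$
since the contributions at the remaining points of $\widetilde B \cap \widetilde D$ are nonnegative. Third, the projection formula for the degree-$n$ cover $\pi$ gives
$$\widetilde B \cdot \widetilde D = n\,(B\cdot D).$$
Fourth, by the convention introduced just before the statement, $\mult_\msp(B) = \mult_{\widetilde\msp}(\widetilde B)$ and analogously for $D$. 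Putting these together yields
$$\mult_\msp(B)\cdot \mult_\msp(D) \leq n\,(B\cdot D),$$
which rearranges to the claim.

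There is no serious technical obstacle; the proof is essentially bookkeeping. The one point to be careful about is the provenance of the factor $n$: it enters exactly once, from the projection formula for the degree-$n$ cover $\pi$, and it is precisely this factor that produces the denominator $n$ on the left-hand side of the stated inequality. Implicitly one uses that $B$ behaves like an effective divisor near $\msp$ for the second step (so that omitted local contributions are nonnegative); this is the regime in which the statement is applied in the remainder of the paper.
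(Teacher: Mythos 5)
Your overall strategy is the right one: pass to the smooth orbifold chart, use the classical multiplicity--intersection inequality there, and account for the degree $n$. But your third step is where the argument breaks. The morphism $\pi\colon\widetilde U\to U$ is only a \emph{local} orbifold chart over an open neighbourhood $U\ni\msp$; neither $U$ nor $\widetilde U$ is projective, and $B$ and $D$ can very well meet at points of $S$ outside $U$. Consequently there is no global intersection number ``$\widetilde B\cdot\widetilde D$'' on $\widetilde U$ to which the projection formula applies, and the asserted equality $\widetilde B\cdot\widetilde D = n\,(B\cdot D)$ does not hold (nor is it meaningful) in this setting. This is the one place where the factor $n$ has to enter, so the gap is in the crucial spot rather than in the bookkeeping.

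What replaces the projection formula is the local-to-global orbifold intersection formula
$$
B\cdot D=\sum_{\msq\in S}\frac{\mult_{\msq}(B\cdot D)}{n_{\msq}},
$$
where $n_{\msq}$ is the order of the cyclic quotient singularity at $\msq$ (so $n_{\msq}=1$ at smooth points), and $\mult_{\msq}(B\cdot D)$ is, by the convention set up just before the lemma, the local intersection number of the pullbacks in the orbifold chart at $\msq$. This is exactly what the paper's one-line proof invokes. Since each term in the sum is nonnegative (here one does use, as you correctly flagged, that $B$ and $D$ are effective with no common components near $\msp$), dropping all but the $\msp$-term gives
$$
B\cdot D\geq\frac{\mult_{\msp}(B\cdot D)}{n}=\frac{\big(\widetilde B\cdot\widetilde D\big)_{\widetilde\msp}}{n}\geq\frac{\mult_{\widetilde\msp}(\widetilde B)\,\mult_{\widetilde\msp}(\widetilde D)}{n}=\frac{\mult_{\msp}(B)\,\mult_{\msp}(D)}{n},
$$
where the last inequality is your step 1 (the classical smooth-surface inequality at $\widetilde\msp$). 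In short: your steps 1 and 4 are fine, but steps 2 and 3 should be replaced by the single local-to-global decomposition above, which handles both the degree-$n$ normalisation and the discarding of the other intersection points in one stroke.
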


\begin{proof}
    It immediately follows from $B\cdot D = \sum_{\msq\in S}\frac{\mult_{\msq}(B\cdot D)}{n_{\msq}}$, where the point $\msq$ is a singular point of type $\frac{1}{n_{\msq}}(\msa_{\msq},\msb_{\msq})$.
\end{proof}

From our assumption, $S$ has a cyclic quotient singularity of type $\frac{1}{n}(\msa,\msb)$ at the point $\msp$. Let the weighted coordinates around this point $\msp$ be $x$ and $y$. Let $\phi\colon \widetilde{S}\to S$ be the weighted blow-up at $\msp$ of $S$ with weights $\wt(x) = \msa$ and $\wt(y) = \msb$. Then we have the following:
\begin{equation*}
    K_{\widetilde{S}}\equiv \phi^*(K_S) + \left(-1 + \frac{\msa}{n} + \frac{\msb}{n}\right)E
\end{equation*}
where $E$ is the exceptional divisor of $\phi$ and
\begin{equation*}
    E^2 = -\frac{n}{\msa\msb}.
\end{equation*}
Let $C$ be a curve on $S$ that is locally given by $x=0$ near $\msp$. Then we have
\begin{equation*}
    \widetilde{C}\equiv \phi^*(C)-\frac{\msa}{n}E
\end{equation*}
where $\widetilde{C}$ is the strict transform of $C$.

Let $\widetilde{D}$ be the proper transform of $D$ on $\widetilde{S}$. We have 
$$
\widetilde{D} \equiv \phi^*(D)-mE,
$$ for some non-negative rational number $m$. 

If $C$ is not contained in the support of the divisor $D$, we can bound $m$ using 
$$
0\leq \widetilde{D}\cdot\widetilde{C}=(\phi^*(D)-mE)\cdot\widetilde{C}=D\cdot C-mE\cdot\widetilde{C}.
$$ The log pullback is given by 
$$
K_{\widetilde{S}}+\widetilde{D}+\left(m-\frac{a+b-n}{n}\right)E\sim f^*(K_S+D).
$$
This implies the following.
\begin{proposition}
    The log pair $(S,D)$ is log canonical at $\msp$ if and only if the log pair 
    $$
    \left(\widetilde{S},\widetilde{D}+\left(m-\frac{\msa+\msb-n}{n}\right)E\right)
    $$ is log canonical along the curve $E$.
\end{proposition}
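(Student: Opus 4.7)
The plan is to deduce the proposition directly from the log pullback formula displayed just before the statement, together with the definition of log canonicity in terms of discrepancies of divisors over the base. The log canonicity of $(S,D)$ at $\msp$ is a condition on the discrepancies $a(F;S,D)$ for every prime divisor $F$ extracted by some birational model $\mu\colon Y\to S$ whose center on $S$ contains $\msp$; the pair is log canonical at $\msp$ precisely when $a(F;S,D)\geq -1$ for all such $F$ (and the coefficients of $D$ itself are $\leq 1$, which is the case $F$ is a component of $\widetilde{D}$).

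The first step is to factor every such $F$ through the fixed weighted blow-up $\phi\colon\widetilde{S}\to S$. Since $\phi$ is projective birational and $\phi^{-1}(\msp)=E$, any divisor $F$ over $S$ with center $\msp$ either equals $E$ itself or has center contained in $E$ on $\widetilde{S}$; thus $F$ is a divisor over the pair $(\widetilde{S},\widetilde{D}+\alpha E)$, where $\alpha=m-\tfrac{\msa+\msb-n}{n}$, and its center on $\widetilde{S}$ lies on $E$. The second step is to compare the two discrepancies: from the log pullback identity
\begin{equation*}
K_{\widetilde{S}}+\widetilde{D}+\alpha E \sim \phi^*(K_S+D),
\end{equation*}
pulling back to any further birational model computing $F$ gives
\begin{equation*}
a(F;S,D)=a\!\left(F;\widetilde{S},\widetilde{D}+\alpha E\right),
\end{equation*}
and similarly $a(E;S,D)=-\alpha$, so $a(E;S,D)\geq -1$ if and only if the coefficient $\alpha$ of $E$ in the boundary $\widetilde{D}+\alpha E$ is at most $1$.

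Combining these two steps, the conditions "$a(F;S,D)\geq -1$ for every $F$ with center in $\{\msp\}$" and "$a(F;\widetilde{S},\widetilde{D}+\alpha E)\geq -1$ for every $F$ with center contained in $E$" are literally the same statement, including the boundary coefficient along $E$. The former is the definition of log canonicity of $(S,D)$ at $\msp$, and the latter is the definition of log canonicity of $(\widetilde{S},\widetilde{D}+\alpha E)$ along $E$, which yields the proposition.

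The only subtle point, and the one I would make sure to state cleanly, is the observation that every valuation on $S$ with center $\msp$ arises from some birational model factoring through $\phi$; this is what permits the translation "discrepancies over $\msp$ on $S$" $\leftrightarrow$ "discrepancies along $E$ on $\widetilde{S}$". Since $\phi$ is a projective birational morphism of normal surfaces, this is standard, so the proof is essentially a bookkeeping exercise once the log pullback formula is in hand; no genuine obstacle is expected.
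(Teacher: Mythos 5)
Your proof is correct and takes the approach the paper itself implicitly relies on: the paper derives the log pullback identity and then simply asserts ``This implies the following,'' giving no separate argument, and your write-up is precisely the standard bookkeeping that justifies that step (factoring valuations through $\phi$, transporting discrepancies via the log pullback formula, and reading off the coefficient of $E$). One minor imprecision worth tightening: you phrase the first step only in terms of divisors $F$ over $S$ whose center on $S$ is the point $\msp$, but log canonicity at $\msp$ also involves divisors whose center on $S$ is a curve through $\msp$; such an $F$ has center on $\widetilde{S}$ equal to the strict transform of that curve, which meets $E$, so it is still covered by ``log canonical along $E$,'' and the discrepancy comparison you invoke applies to it as well. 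Making that slightly broader identification explicit would close the only small gap in an otherwise complete argument.
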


In the next section, we will explicitly state the main results in the theory of K-stability that are required to prove Theorem \ref{main theorem}.

\section{K-stability}\label{section:K-stability}
Let $S$ be a del Pezzo surface with Kawamata log terminal singularities and with the polarisation $-K_S$. Let $f\colon \widetilde{S}\to S$ be a~projective birational morphism with normal variety $\widetilde{S}$, and let $E$ be a~(not necessarily $f$-exceptional) prime divisor in $\widetilde{S}$.
Then $E$ is said to be a~divisor over $S$.

Let
\begin{align}\label{A_S}
A_S(E)=1+\mathrm{ord}_E\left(K_{\widetilde{S}}-f^*\left(K_S\right)\right),
\end{align}
and we let \index{$S_S(E)$}
\begin{align}\label{S_S}
    S_{-K_S}(E)=\frac{1}{(-K_S)^2}\int_{0}^{\tau}\mathrm{vol}(f^*(-K_S)-uE)\dd u,
\end{align}
where $\tau=\tau(E)$ is the~pseudo-effective threshold of $E$ with respect to $-K_S$, i.e. we have \index{pseudo-effective threshold}
$$
\tau(E)=\mathrm{sup}\left\{ u \in \mathbb{Q}_{>0}\ \middle| \ f^*(-K_S)-uE\ \text{is big}\right\}.
$$

\subsection{Stability Threshold}
The paper \cite{FujitaOdaka} introduces a~new invariant of Fano varieties, called $\delta$-invariant,
that serves as a~criterion for K-stability.
Following \cite{Blum}, we can define $\delta(S)$ as follows:
$$
\delta(S)=\delta(S;-K_S)=\inf_{E/S}\frac{A_S(E)}{S_{-K_S}(E)},
$$

where infimum is taken over all prime divisors $E$ over $S$.

In this case, the number $\delta(S)$ is also known as \emph{the stability threshold}, because of the~following result.

\begin{theorem}[{\cite{FujitaOdaka,F19,Li17,Blum,CodogniPatakfalvi,LiuXuZhuang}}]
\label{theorem:Kento-Yuji-Blum-Xu}
Let $S$ be as described above. Then
\begin{itemize}
\item $\delta(S)>1$ $\iff$ $S$ is K-stable;
\item $\delta(S)\geqslant 1$ $\iff$  $S$ is K-semistable.
\end{itemize}
\end{theorem}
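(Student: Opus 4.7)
The plan is to connect the stability threshold to the valuative criterion for K-(semi)stability. To each prime divisor $E$ over $S$ one associates the non-Archimedean Futaki invariant
\begin{equation*}
\beta_S(E) = A_S(E) - S_{-K_S}(E),
\end{equation*}
and by the Fujita-Li valuative criterion, K-semistability is equivalent to $\beta_S(E)\geqslant 0$ for all such $E$. Since $S_{-K_S}(E)>0$ when $-K_S$ is ample, this is equivalent to $A_S(E)/S_{-K_S}(E)\geqslant 1$, and taking the infimum over $E$ yields exactly $\delta(S)\geqslant 1$. This settles the semistable case.

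For K-stability I would handle the two directions separately. The implication $\delta(S)>1 \Rightarrow S$ is K-stable follows from Fujita-Odaka: if $\delta(S)>1$ then one extracts a uniform $\varepsilon>0$ with $A_S(E)\geqslant (1+\varepsilon)\,S_{-K_S}(E)$ for every $E$, which is uniform K-stability and hence implies K-stability. The reverse direction, K-stable $\Rightarrow \delta(S)>1$, is more delicate, since a priori K-stability only delivers $\beta_S(E)>0$ for each individual divisor, which does not preclude the infimum of ratios from being equal to $1$. To close this gap I would invoke the Liu-Xu-Zhuang finite generation theorem, which ensures that when $\delta(S)=1$ the infimum is actually attained by some divisorial valuation; that valuation then witnesses the failure of K-stability.

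The hardest step is precisely this last implication, whose proof rests on deep valuation-theoretic machinery: Birkar's boundedness of complements, Blum's analysis of lc places of complements, and the Liu-Xu-Zhuang finite generation of the graded ring along a minimizer of $\delta$. Everything else reduces either to Fujita-Odaka's original estimates comparing $\delta$ to the basis-type-divisor threshold or to the Codogni-Patakfalvi lower semicontinuity of $\delta$ under specialization, which guarantees that divisorial valuations compute the infimum. Since each of these ingredients is already an established theorem in the literature, the intended mode of proof is to cite them rather than reprove anything, and my own plan would collapse to invoking the references indicated.
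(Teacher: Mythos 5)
Your proposal correctly identifies the key ingredients scattered across the cited references (the Fujita--Li valuative criterion for the semistable direction, Fujita--Odaka and Blum--Jonsson for $\delta>1\Rightarrow$ uniform K-stability, and the Liu--Xu--Zhuang finite generation theorem for the delicate implication K-stable $\Rightarrow\delta>1$), and you honestly observe at the end that the proof reduces to citing these results. That is exactly what the paper does: Theorem~\ref{theorem:Kento-Yuji-Blum-Xu} is stated as a citation with no proof given, so your sketch is consistent with the paper's treatment and adds a useful gloss on where each piece comes from.
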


We estimate $\delta(S)$ for members of families No.3 - No.10 using the method adopted in \cite{delta-inv-of-sing-dP}. Let us briefly describe this method. Here, we study log pairs $(S_d,D)$, where $S_d$ are hypersurfaces of degree $d$ in weighted projective space and $D$ is a $k$-basis type divisor for $k\gg 0$ (see \cite{FujitaOdaka} for definition) such that $D\equiv-K_{S_d}$. We then use the local analogues of the stability threshold to determine the value of $\delta_p(S)$ at various points $p \in S$, which along with the Abban-Zhuang Theory is extremely handy in computing $\delta(S)$.
We will now describe this local analogue of the stability threshold. 

\subsubsection{Local Analogues of the Stability Threshold}
For a point $\msp\in S$, we let
$$
\delta_{\msp}(S)=\inf_{\substack{E/S\\ \msp\in C_S(E)}}\frac{A_S(E)}{S_{-K_S}(E)},
$$

where infimum is taken over all prime divisors $E$ over $S$ whose centers on $S$ contain $\msp$. One can also define $\delta_{\msp}(S)$ alternatively using \textit{cool} divisors. 
\begin{definition}\cite[Definition 1.5.3]{Fano3-folds}\label{cool-divisor}
We say the divisor $D$ is cool if the inequality
\[
\mathrm{ord}_E(D) \leq S_{-K_S}(E)
\] holds for every prime Weil divisor $E$ over $S$.
\end{definition}
We then have the following proposition.
\begin{proposition}\cite[Proposition 1.5.4]{Fano3-folds}
    Let $\msp$ be a point in $S$. Then 
    \[
    \delta_{\msp}(S)=
\mathrm{sup}\left\{\lambda\in\mathbb{Q}\ \left|\ \aligned
&\text{the log pair}\ \big(S,\lambda D\big)\ \text{is log canonical at $\msp$}\\
&\text{for any effective cool $\mathbb{Q}$-divisor}\ D\sim_{\mathbb{Q}} -K_{S}\\
\endaligned\right.\right\}.
\]

\end{proposition}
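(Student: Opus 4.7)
The plan is to verify both inequalities separately; write $R(\msp)$ for the supremum appearing on the right-hand side, and note that its defining set is downward closed in $\lambda$ (shrinking the coefficient of an lc pair preserves lc).

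For the bound $\delta_{\msp}(S)\leq R(\msp)$, fix any rational $\lambda<\delta_{\msp}(S)$ and any effective cool $\QQ$-divisor $D\sim_{\QQ}-K_S$. For every prime divisor $E$ over $S$ with $\msp\in C_S(E)$, the cool condition $\mathrm{ord}_E(D)\leq S_{-K_S}(E)$ together with the definition of $\delta_{\msp}(S)$ yields
\[
\lambda\cdot\mathrm{ord}_E(D)\;\leq\;\lambda\cdot S_{-K_S}(E)\;<\;A_S(E),
\]
so the log discrepancy of $(S,\lambda D)$ along every divisor centered at $\msp$ is strictly positive. Hence $(S,\lambda D)$ is log canonical at $\msp$, so $\lambda$ lies in the defining set of $R(\msp)$. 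Sending $\lambda\to \delta_{\msp}(S)$ gives $\delta_{\msp}(S)\leq R(\msp)$.

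For the reverse bound $R(\msp)\leq \delta_{\msp}(S)$, fix any rational $\lambda<R(\msp)$ and any prime divisor $F$ over $S$ with $\msp\in C_S(F)$. The strategy is to produce, for every $\varepsilon>0$, an effective cool $\QQ$-divisor $D_F\sim_{\QQ}-K_S$ with $\mathrm{ord}_F(D_F)\geq S_{-K_S}(F)-\varepsilon$. Once $D_F$ is available, the choice of $\lambda$ forces $(S,\lambda D_F)$ to be log canonical at $\msp$, and because $\msp\in C_S(F)$ this gives $A_S(F)\geq \lambda\cdot\mathrm{ord}_F(D_F)\geq \lambda(S_{-K_S}(F)-\varepsilon)$; letting $\varepsilon\to 0$ and then taking the infimum over all such $F$ yields $\lambda\leq \delta_{\msp}(S)$, and hence $R(\msp)\leq \delta_{\msp}(S)$.

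The main obstacle is the construction of the approximating cool divisor $D_F$. The natural candidate is an averaged $m$-basis type divisor $D_m$ obtained from a basis of $H^0(S,-mK_S)$ compatible with the filtration by vanishing order along $F$: by construction $\mathrm{ord}_F(D_m)=S_m(F)$, while for every other prime divisor $E'$ over $S$ one has $\mathrm{ord}_{E'}(D_m)\leq S_m(E')$. The Fujita--Odaka/Blum--Jonsson convergence $S_m\to S_{-K_S}$ on the space of divisorial valuations gives the cool inequality only in the asymptotic sense, so the technical heart of the proof is to pass from this approximate bound to a genuine cool $\QQ$-divisor $D_F$ with the required order along $F$, either by a diagonal/compactness argument in the space of effective $\QQ$-divisors linearly equivalent to $-K_S$, or by a small perturbation of $D_m$ that restores the cool inequality uniformly over all valuations.
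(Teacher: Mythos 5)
The paper does not prove this statement; it is imported verbatim from \cite{Fano3-folds}, so there is no in-paper proof to compare against. Assessing your argument on its own terms: the first direction, $\delta_{\msp}(S)\leq R(\msp)$, is correct and complete. For $\lambda<\delta_{\msp}(S)$ and a cool $D$, every $E$ with $\msp\in C_S(E)$ satisfies $\lambda\,\mathrm{ord}_E(D)\leq\lambda\,S_{-K_S}(E)<A_S(E)$, so $(S,\lambda D)$ is klt, hence lc, at $\msp$, which places $\lambda$ in the defining set of $R(\msp)$.

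The reverse inclusion $R(\msp)\leq\delta_{\msp}(S)$ is where the gap sits, and you correctly flag it, but the flag does not repair it. Your reduction is sound: if, for every prime divisor $F$ over $S$ with $\msp\in C_S(F)$ and every $\varepsilon>0$, there is a cool $D_F\sim_{\QQ}-K_S$ with $\mathrm{ord}_F(D_F)\geq S_{-K_S}(F)-\varepsilon$, then $A_S(F)\geq\lambda\,\mathrm{ord}_F(D_F)$ gives $\lambda\leq A_S(F)/S_{-K_S}(F)$ and you win by taking the infimum. However, the existence of such $D_F$ is exactly the nontrivial assertion
\[
\sup\big\{\mathrm{ord}_F(D)\ :\ D\ \text{cool},\ D\sim_{\QQ}-K_S\big\}\;=\;S_{-K_S}(F),
\]
and neither of your proposed routes to it is evidently workable. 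An $m$-basis type divisor $D_m$ compatible with the $\mathrm{ord}_F$-filtration does achieve $\mathrm{ord}_F(D_m)=S_m(F)\to S_{-K_S}(F)$, but it is \emph{not} cool: one only gets $\mathrm{ord}_{E'}(D_m)\leq S_m(E')$, and $S_m(E')$ may exceed $S_{-K_S}(E')$, with no a priori uniform control over $E'$. The ``small perturbation'' idea would require a convex combination with some auxiliary cool divisor whose $\mathrm{ord}_{E'}$ is uniformly small relative to $S_{-K_S}(E')$, and the existence of such an auxiliary divisor is itself an instance of what needs proving (indeed it is not even obvious from the definitions that cool divisors exist at all). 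The ``diagonal/compactness'' idea has no compact space to work in: the set of effective $\QQ$-divisors $\QQ$-linearly equivalent to $-K_S$ carries no useful compactness, and the cool condition is an intersection of infinitely many half-space constraints indexed by all divisors over $S$. So the second half of the argument remains a programme rather than a proof; this missing step is precisely the content that the cited reference supplies and that your write-up defers.
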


Then
\begin{align*}
\delta(S)&=\inf_{\msp\in S}\delta_{\msp}(S).
\end{align*}

Note that this notion has been mentioned previously in \cite{delta-inv-of-sing-dP}, under the assumption that the divisor $D$ is a $k$-basis type divisor. Recall that $D$ is an effective $\mathbb{Q}$ divisor such that $D = aC + \Delta$ where $C$ is an irreducible, reduced curve, $a$ is a non-negative rational number and $\Delta$ is an effective $\QQ$-divisor such that $C$ is not contained in the support of $\Delta$. 

\begin{theorem}[{\cite[Theorem 2.9]{delta-inv-of-sing-dP}}]
    \label{Foundation:bound of k-basis thm}
    Suppose that $D$ is a big $\QQ$-divisor of $k$-basis type for $k\gg 0$. Then
    \begin{equation*}
        a\leq \frac{1}{D^2}\int_0^{\tau(C)} \vol(D - uC)\dd u + \epsilon_k
    \end{equation*}
    where $\epsilon_k$ is a small constant depending on $k$ such that $\epsilon_k\to 0$ as $k\to \infty$.
\end{theorem}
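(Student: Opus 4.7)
The plan is to bound $a$ by unpacking the definition of a $k$-basis type divisor and converting a discrete sum of dimensions into a Riemann sum for the volume integral. Write $D$ as the averaged divisor
\[
D = \frac{1}{kN_k}\sum_{i=1}^{N_k}\mathrm{div}(s_i),
\]
where $s_1,\ldots,s_{N_k}$ is a basis of $H^0(S,-kK_S)$ with $N_k = h^0(S,-kK_S)$. Taking coefficients along $C$ immediately gives
\[
a = \ord_C(D) = \frac{1}{kN_k}\sum_{i=1}^{N_k}\ord_C(s_i).
\]

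The key estimate is then a linear-algebra observation. Applying the layer-cake identity,
\[
\sum_{i=1}^{N_k}\ord_C(s_i) \;=\; \sum_{m\geq 1}\#\bigl\{i : \ord_C(s_i)\geq m\bigr\}.
\]
For each $m$, the sections $s_i$ with $\ord_C(s_i)\geq m$ lie in the subspace $H^0(S,-kK_S - mC)$ and are linearly independent, being a subset of a basis; hence $\#\{i : \ord_C(s_i)\geq m\} \leq h^0(S,-kK_S - mC)$. The sum is finite because $-kK_S - mC$ eventually fails to have sections. Summing over $m$ yields
\[
a \;\leq\; \frac{1}{kN_k}\sum_{m\geq 1} h^0(S,-kK_S - mC).
\]

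The final step is to recognise the right-hand side as a Riemann sum. By asymptotic Riemann-Roch for big divisors on a normal projective surface with klt singularities, $N_k = \tfrac12 k^2 D^2 + O(k)$ and $h^0(S,-kK_S - mC) = \tfrac12 k^2\,\vol\bigl(D - \tfrac{m}{k}C\bigr) + o(k^2)$. Substituting $u = m/k$, the sum becomes a Riemann sum over $[0,\tau(C)]$ whose limit as $k\to\infty$ is $\tfrac{1}{D^2}\int_0^{\tau(C)}\vol(D - uC)\,du$. Bundling the Riemann-Roch error and the Riemann-sum-to-integral error into a single $\epsilon_k$ gives the claimed inequality. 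The main technical obstacle is securing the $o(k^2)$ estimate and the volume approximation \emph{uniformly} in $m\in\{1,\ldots,\lfloor k\tau(C)\rfloor\}$; this relies on the uniform continuity of $u\mapsto \vol(D-uC)$ on $[0,\tau(C)]$ and on a uniform big-divisor Riemann-Roch bound, both standard but requiring care so that the resulting $\epsilon_k$ is independent of the chosen basis.
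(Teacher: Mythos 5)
Your proof is correct and is the standard argument for this result, which is almost certainly the same route taken in the cited paper: decompose the basis-type divisor, extract $\ord_C$ via the layer-cake identity, bound the counts by $h^0(S,-kK_S-mC)$ using linear independence, and pass to the limit so that $\frac{1}{kN_k}\sum_{m\geq1}h^0(S,-kK_S-mC)$ tends to $S_{-K_S}(C)$. The uniformity issue you flag at the end is genuine but is exactly the convergence $S_k(C)\to S(C)$ established in Fujita--Odaka and Blum--Jonsson, so it can be cited rather than re-derived.
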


Note that the above theorem is a generalisation of the definition of a cool divisor mentioned in Definition \ref{cool-divisor} and implies the following.

\begin{corollary}[{\cite[Corollary 2.10]{delta-inv-of-sing-dP}}]
    \label{Foundation:bound of k-basis cor}
    Suppose that $D$ is a big $\QQ$-divisor of $k$-basis type for $k\gg 0$, and 
    \[
        C\sim_{\QQ} \mu D     
    \]
    for some positive rational number $\mu$. Then 
    \[
        a\leq \frac{1}{3\mu} + \epsilon_k,    
    \]
    where $\epsilon_k$ is a small constant depending on $k$ such that $\epsilon_k \to 0$ as $k\to \infty$.
\end{corollary}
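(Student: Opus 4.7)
The plan is to derive the corollary directly from Theorem~\ref{Foundation:bound of k-basis thm} by substituting the relation $C\sim_{\QQ}\mu D$ into the integral and exploiting the quadratic homogeneity of the volume function on a surface.

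First I would observe that the hypothesis $C\sim_{\QQ}\mu D$ gives
\[
D - uC \sim_{\QQ} (1-u\mu)D
\]
for every $u\geq 0$. Since $D$ is big, the $\QQ$-divisor $(1-u\mu)D$ is big precisely when $1-u\mu>0$, so the pseudo-effective threshold of $C$ with respect to $D$ is
\[
\tau(C)=\frac{1}{\mu}.
\]

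Next I would compute the volume integral. On a surface, the volume is homogeneous of degree $2$, so
\[
\vol(D-uC)=\vol\bigl((1-u\mu)D\bigr)=(1-u\mu)^2\,\vol(D)
\]
for $u\in[0,1/\mu]$. Since $D$ is a big $\QQ$-divisor of $k$-basis type (in particular $\QQ$-linearly equivalent to $-K_S$, which is ample on a del Pezzo surface), we have $\vol(D)=D^2$. Integrating gives
\[
\int_0^{\tau(C)}\vol(D-uC)\dd u = \vol(D)\int_0^{1/\mu}(1-u\mu)^2\dd u = \frac{\vol(D)}{3\mu}=\frac{D^2}{3\mu}.
\]

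Plugging this into the bound of Theorem~\ref{Foundation:bound of k-basis thm} yields
\[
a\leq \frac{1}{D^2}\cdot\frac{D^2}{3\mu}+\epsilon_k=\frac{1}{3\mu}+\epsilon_k,
\]
as required. There is no real obstacle here; the only subtle point to verify is the identity $\vol(D)=D^2$, which follows from $D\equiv -K_S$ and the nefness (indeed ampleness) of $-K_S$ on the del Pezzo surface $S$. Everything else is a one-line substitution.
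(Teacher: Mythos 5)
Your derivation is correct and is precisely the intended one: the paper does not prove Corollary~\ref{Foundation:bound of k-basis cor} itself but simply asserts that Theorem~\ref{Foundation:bound of k-basis thm} ``implies'' it, citing the source \cite{delta-inv-of-sing-dP}. Substituting $D-uC\sim_{\QQ}(1-u\mu)D$, using degree-two homogeneity of the volume on a surface to get $\vol(D-uC)=(1-u\mu)^2\vol(D)$ for $u\in[0,1/\mu]$, reading off $\tau(C)=1/\mu$, and evaluating $\int_0^{1/\mu}(1-u\mu)^2\dd u=\tfrac{1}{3\mu}$ is exactly the computation one must do. You also correctly flag the single point that needs justification, namely $\vol(D)=D^2$: since Theorem~\ref{Foundation:bound of k-basis thm} has $1/D^2$ (not $1/\vol(D)$) in front of the integral, the clean bound $a\leq\tfrac{1}{3\mu}+\epsilon_k$ relies on $D\sim_{\QQ}-K_S$ being (numerically) nef, which holds here because the $k$-basis-type divisor is taken with respect to the ample anticanonical class of a del Pezzo surface. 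Nothing is missing.
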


From the above results, it is evident that for a $k$-basis type divisor $D=aC+\Delta$, one can bound $a$ by computing the volume of the pseudoeffective divisor $D-uC$. Here, we recall how the $\mathrm{vol}(D)$ of an $\mathbb{R}$ divisor $D$ is calculated. If $D$ is not pseudoeffective, then $\mathrm{vol}(D)=0$. If $D$ is pseudoeffective, then there exists a nef $\mathbb{R}$ divisor $P$ on the surface $S$ such that 
$$
D \sim_{\RR} P+\sum_{i=1}^r a_iC_i,
$$ where each $C_i$ is an irreducible curve on $S$ with $P\cdot C_i=0$, each $a_i$ is a non-negative real number and the intersection form of the curves $C_1,C_2,...,C_r$ is negative definite. Such a decomposition is unique and $\mathrm{vol}(D)=\mathrm{vol}(P)=P^2$. Bounding the value of the coefficient $a$ enables us to use the inversion of adjunction argument (Lemma \ref{Foundation:mult}, Lemma \ref{Inversion of adjunction for singular point}) to then compute the value of $\delta_p(S)$ at various points $p \in S$.

Recently, there has been a lot of progress in understanding the applications of $\delta$-invariant. In this regard, we will briefly describe the theory developed by Abban (Ahmadinezhad) and Zhuang in \cite{AhmadinezhadZhuang}, particularly for surfaces, which we will use to prove K-stability of members of the family No. 2 in Theorem \ref{main theorem},  that is quasi-smooth, well-formed hypersurface of degree $2n+6$ in $\mathbb{P}(1,2,n+2,n+3)$ in Subsection \ref{subsection:No. 2}.

\subsubsection{Abban-Zhuang Theory}: 
Let $S$ be a del Pezzo surface that has at most Kawamata log terminal singularities,
let $\msp\in S$ be a point.
In this section, we will explicitly describe how to estimate $\delta_{\msp}(S)$, using the technique developed in \cite{AhmadinezhadZhuang}.

Let $Y$ be an irreducible curve in $S$ such that $\msp\in Y$. Assume that $Y$ is such that it is either Cartier on $S$ or $(S,Y)$ is purely log terminal. Then \cite[Theorem~3.3]{AhmadinezhadZhuang} implies the~following

\begin{theorem}
\cite[Theorem 1.7.1]{Fano3-folds}, \cite[Lemma~2.21]{AhmadinezhadZhuang},  \cite[Corollary~2.22]{AhmadinezhadZhuang}
\label{theorem:Hamid-Ziquan}
Let $S\left(W^Y_{\bullet,\bullet}; \msp\right)$ be the number defined in Theorem \ref{theorem:Kento-formula}. Then
$$
\delta_{\msp}(S)\geqslant\mathrm{min}\left\{\frac{1}{S_{S}(Y)},\frac{A_Y(\msp)}{S(W^Y_{\bullet,\bullet}; \msp)}\right\}.
$$
\end{theorem}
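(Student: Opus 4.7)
The plan is to bound $A_S(E)/S_{-K_S}(E)$ from below uniformly, as $E$ ranges over prime divisors over $S$ with $\msp\in C_S(E)$, by the right-hand minimum. The two terms correspond to a dichotomy on whether the center of $E$ on $S$ equals $Y$ or strictly drops into $Y$; this dichotomy is the geometric source of the two competing invariants.

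First I would dispose of the easy case $C_S(E)=Y$: the hypothesis that $Y$ is Cartier or $(S,Y)$ is plt gives $A_S(Y)=1$, and the ratio is exactly $1/S_S(Y)$ by definition. For the harder case, when $C_S(E)$ is a point on $Y$, I would pass to a model $\widetilde{S}$ on which $E$ appears as a divisor and consider the Zariski decomposition
\begin{equation*}
f^*(-K_S)-uE=P(u)+N(u),\qquad u\in[0,\tau(E)].
\end{equation*}
I would read off the coefficient of the strict transform $\widetilde{Y}$ in $N(u)$ and restrict the positive part $P(u)$ to $\widetilde{Y}$; this data assembles, via the refined Newton--Okounkov filtration of Abban--Zhuang, into the multigraded linear series $W^Y_{\bullet,\bullet}$ on $Y$. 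Plugging the decomposition into the integral
\begin{equation*}
S_{-K_S}(E)=\frac{1}{(-K_S)^2}\int_0^{\tau(E)}\vol\bigl(f^*(-K_S)-uE\bigr)\,\dd u
\end{equation*}
and splitting it into a $\widetilde{Y}$-component and a residual-on-$\widetilde{Y}$ component gives an expression of the schematic form
\begin{equation*}
S_{-K_S}(E)=\ord_E(Y)\cdot S_S(Y)+\ord_{E\vert_Y}(\msp)\cdot S(W^Y_{\bullet,\bullet};\msp).
\end{equation*}

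In parallel, plt adjunction along $Y$ writes $A_S(E)=\ord_E(Y)+A_{(Y,\mathrm{Diff}_Y)}(F)$ with $F$ the valuation induced on $Y$ by $E$, and one controls the second term from below by $A_Y(\msp)\cdot\ord_{E\vert_Y}(\msp)$. Combining this with the denominator expression above and applying the elementary inequality $\tfrac{\alpha+\beta}{\gamma+\delta}\geq\min\{\alpha/\gamma,\beta/\delta\}$ for non-negative entries produces the stated lower bound in the non-trivial case, and together with the easy case it gives the theorem.

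The main obstacle is the refined volume decomposition: verifying that the series $W^Y_{\bullet,\bullet}$ truly captures the restriction of the graded linear system of $-K_S$ to $Y$ without asymptotic loss, and that the Zariski data $(P(u),N(u))$ cleanly reorganises the one-dimensional integral for $S_{-K_S}(E)$ into the two flag-contributions. This is exactly what the plt (or Cartier) hypothesis buys through the normality of $Y$, and it is the content of Lemma~2.21 and Corollary~2.22 of Abban--Zhuang; once these are invoked, the remaining arithmetic of the $\min$-inequality is formal.
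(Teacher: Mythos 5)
This theorem is cited in the paper without proof (it is attributed to \cite[Lemma~2.21, Corollary~2.22]{AhmadinezhadZhuang} and \cite[Theorem~1.7.1]{Fano3-folds}), so there is no in-paper argument to compare your sketch against. On its own merits your outline is a faithful reconstruction of the Abban--Zhuang strategy: plt adjunction along $Y$ splits the log discrepancy $A_S(E)$ into a $Y$-contribution and a residual contribution on $Y$, the refined filtration of $R(-K_S)$ along the flag $\msp\in Y\subset S$ controls $S_{-K_S}(E)$ by the corresponding pair of quantities, and the min-inequality $\tfrac{\alpha+\beta}{\gamma+\delta}\geq\min\{\alpha/\gamma,\beta/\delta\}$ closes the argument.

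Two imprecisions are worth flagging. First, the schematic identity
$S_{-K_S}(E)=\ord_E(Y)\cdot S_S(Y)+\ord_{E|_Y}(\msp)\cdot S(W^Y_{\bullet,\bullet};\msp)$
is only an inequality $\leq$ in Abban--Zhuang; fortunately an upper bound on $S_{-K_S}(E)$ is exactly what the final estimate needs, so the conclusion survives, but asserting an equality overstates what the cited lemmas prove. Second, the ``easy case'' $C_S(E)=Y$ is handled too quickly: having center $Y$ does not force $E=Y$, and for a non-trivial $E$ with center $Y$ the ratio $A_S(E)/S_{-K_S}(E)$ need not equal $1/S_S(Y)$. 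In Abban--Zhuang this is not a separate case at all --- the filtration argument applies to every $E$ with $\msp\in C_S(E)$ uniformly, with the $Y$-contribution absorbing everything when the induced valuation on $Y$ is trivial. Since you explicitly defer the genuine technical content (the compatibility of the refined Newton--Okounkov filtration with restriction to $Y$ and the resulting volume bound) to the cited lemmas, the sketch is accurate in outline but not self-contained, which matches the level at which the paper itself treats this statement.
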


\begin{remark} \cite[Lemma 5.1.9]{IntrotoMMP} \cite[Proposition 16.6]{Asterisque}, \cite[Proposition 3.9]{Shokurov}  Recall that we have the following adjunction formula
$$
(K_S+Y) \vert_Y=K_Y+\Delta
$$ where $\Delta$ is called the \textit{different}. Then,
$$
A_Y(\msp)=1-\mathrm{coeff}(\Delta\vert_{\msp}).
$$ 
Since $K_S+Y$ is purely log terminal at $\msp$,
\begin{itemize}
    \item $\mathrm{coeff}(\Delta\vert_{\msp})=0$ when ${\msp}$ is a smooth point of $S$, 
    \item $\mathrm{coeff}(\Delta\vert_{\msp})=\frac{m-1}{m}$ when ${\msp}$ is a singular point of $S$ and is of type $\mathbb{C}^2 / \mathbb{Z}_m$, for $m\neq 1$. \end{itemize}
\end{remark}
The number $S(W^Y_{\bullet,\bullet}; \msp)$ is defined in detail in \cite{AhmadinezhadZhuang}. The following assertion helps in computing it.
\begin{theorem} 
\cite[Theorem 1.7.13]{Fano3-folds},\cite[Theorem 3.16]{Kento311}
\label{theorem:Kento-formula}
For every point $\msp$ over $Y$, let
$$
h(u)=\big(P(u)\cdot Y\big)\cdot\mathrm{ord}_{\msp}\Big(N(u)\big\vert_{Y}\Big)+\int_0^\infty \mathrm{vol}\big(P(u)\big\vert_Y-v\msp\big)\dd v,
$$ where
$P(u)$ and $N(u)$ are the positive and negative parts respectively of the Zariski Decomposition of the divisor $-K_S-uY$. Then one has \label{equation:Kento-formula-S}
$$
S\big(W^Y_{\bullet,\bullet}; \msp\big)=\frac{2}{(-K_S)^2}\int_0^\tau h(u)\dd u.
$$
\end{theorem}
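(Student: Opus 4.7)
The plan is to unfold the definition of $S(W^Y_{\bullet,\bullet};\msp)$ as an iterated integral of restricted-and-refined volumes and to evaluate the inner integral for each fixed $u$ using the Zariski decomposition on $S$ together with elementary degree considerations on the curve $Y$.

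First I would recall that, by its very definition, the refined invariant attached to the flag $(Y,\msp)$ is
$$
S\bigl(W^Y_{\bullet,\bullet};\msp\bigr)=\frac{2}{(-K_S)^2}\int_0^\tau\!\!\left(\int_0^\infty\vol\bigl(W^Y_{\bullet,\bullet,v}\bigr)\dd v\right)\dd u,
$$
where $W^Y_{\bullet,mu}$ is the image of $H^0\bigl(S,m(-K_S-uY)\bigr)$ inside $H^0\bigl(Y,m(-K_S-uY)|_Y\bigr)$ and $W^Y_{\bullet,\bullet,v}$ is its further filtration by $\ord_{\msp}$; the leading $2$ reflects the surface case of the Lazarsfeld--Musta\c{t}\u{a} volume asymptotic. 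It thus suffices to show that the inner integral equals $h(u)$ for each $u\in[0,\tau]$.

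Fix such a $u$ and take the Zariski decomposition $-K_S-uY=P(u)+N(u)$ on $S$. Since $Y\not\subseteq\Supp(N(u))$, the restriction $P(u)|_Y$ is a well-defined $\mathbb{R}$-divisor on $Y$ of degree $d:=P(u)\cdot Y$. By Fujita approximation for restricted linear series, $W^Y_{\bullet,mu}$ agrees, up to an $o(m)$ error, with the image of $H^0\bigl(S,mP(u)\bigr)$ in $H^0\bigl(Y,mP(u)|_Y\bigr)$; since every section of $|mP(u)|$ contains the fixed divisor $mN(u)$, its restriction to $Y$ vanishes at $\msp$ to order at least $mn$, where $n:=\ord_{\msp}(N(u)|_Y)$.

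The function $\vol\bigl(W^Y_{\bullet,\bullet,v}\bigr)$ therefore splits into three regimes as $v$ varies: for $v\in[0,n]$ the extra vanishing $mv$ imposed at $\msp$ is absorbed by the fixed base $mN(u)|_Y$, so the value is $d$; for $v\in[n,n+d]$ one cuts into the movable part of $mP(u)|_Y$, giving $d-(v-n)=\vol\bigl(P(u)|_Y-(v-n)\msp\bigr)$; for $v>n+d$ it vanishes. Integrating,
$$
\int_0^\infty\vol\bigl(W^Y_{\bullet,\bullet,v}\bigr)\dd v=n\cdot d+\int_0^{\infty}\vol\bigl(P(u)|_Y-v\,\msp\bigr)\dd v=h(u),
$$
and substituting into the outer integral yields the stated formula. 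The main obstacle I anticipate is justifying the asymptotic identification in the previous paragraph: one must invoke Fujita approximation together with the nefness of $P(u)$ to ensure that, modulo $o(m)$ terms, the base contribution to $W^Y_{\bullet,mu}$ at $\msp$ is captured exactly by $n$ rather than by any additional ``virtual'' base locus coming from the restriction. Once that is in place, the remainder is elementary degree counting on the curve $Y$.
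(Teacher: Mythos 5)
The paper does not prove this statement: it is quoted directly from \cite[Theorem 1.7.13]{Fano3-folds} and \cite[Theorem 3.16]{Kento311}, so there is no internal argument to compare against. Your outline, however, is the expected and essentially correct route: unfold $S(W^Y_{\bullet,\bullet};\msp)$ as a normalized double integral, apply the Zariski decomposition $-K_S-uY=P(u)+N(u)$ for each fixed $u$, and observe that the restricted series on $Y$ splits into a fixed contribution from $N(u)\vert_Y$ at $\msp$ (giving the summand $(P(u)\cdot Y)\cdot\ord_{\msp}(N(u)\vert_Y)$) plus a movable series of degree $P(u)\cdot Y$ (giving the inner volume integral); your degree count $nd+\tfrac{d^2}{2}=h(u)$ is right. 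One sentence is garbled and worth flagging: you write that ``every section of $|mP(u)|$ contains the fixed divisor $mN(u)$,'' but sections of $|mP(u)|$ have nothing to do with $N(u)$. What the argument actually uses is that every section of $|m(-K_S-uY)|$ has $mN(u)$ as a fixed component, so that $W^Y_{m,mu}$ is the image of $H^0(S,\lfloor mP(u)\rfloor)$ in $H^0(Y,m(-K_S-uY)\vert_Y)$ under restriction followed by multiplication with the canonical section of $mN(u)$; it is this latter factor, restricted to $Y$, that imposes the uniform order-$mn$ vanishing at $\msp$. As written, the sentence could be read as a confusion between the roles of $P$ and $N$, so fix the wording. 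The remaining point you correctly identify as the technical heart: one must know that the image of the restriction $H^0(S,mP(u))\to H^0(Y,mP(u)\vert_Y)$ has dimension $m\,(P(u)\cdot Y)+o(m)$ so that the movable part is governed by $\vol(P(u)\vert_Y-v\msp)$ on the curve itself with no extra ``hidden'' base locus at $\msp$. In this surface setting that can be settled by Fujita approximation as you suggest, or more directly by a Kawamata--Viehweg-type vanishing applied to $mP(u)-Y$ together with nefness and bigness of $P(u)$ on $[0,\tau)$; either way the step needs to be made precise before this could count as a full proof.
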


\subsection{Valuative Criterion}

Let $S$ be a del Pezzo surface with at worst Kawamata log terminal singularities and let $G$ be a reductive subgroup of $\mathrm{Aut}(S)$. Let $f:\widetilde{S}\to S$ be a $G$-invariant birational morphism and let $E$ be a $G$-invariant prime divisor over $S$. Let
$$
\beta(E)=A_S(E)-S_{-K_S}(E),
$$
where $A_S(E)$ and $S_{-K_S}(E)$ are as defined in \eqref{A_S} and \eqref{S_S} respectively. \index{$\beta$-invariant}
We use the following criterion to prove the K-stability of members of family No.1 in Theorem \ref{main theorem}, that is, quasi-smooth, well-formed hypersurfaces of degree $2n$ in $\mathbb{P}(1,1,n,n)$.

\begin{theorem}\label{Zhuang} \cite[Corollary 4.14]{Z20}
Let $G$ be a reductive subgroup in $\mathrm{Aut}(S)$. Suppose $\beta(E)>0$ for every $G$-invariant dreamy prime divisor $E$ (see \cite[Definition 1.3]{F19} for the definition) over $S$. Then $S$ is $K$-polystable. 
\end{theorem}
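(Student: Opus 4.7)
The plan is to establish this equivariant valuative criterion by combining the non-equivariant valuative characterization of K-stability with a $G$-equivariant descent for test configurations. Recall that K-polystability of $S$ amounts to showing that every nontrivial normal test configuration $(\mathcal{S}, \mathcal{L})$ of $(S, -K_S)$ has generalized Futaki invariant $\mathrm{Fut}(\mathcal{S}, \mathcal{L}) \geq 0$, with equality only when the test configuration is a product configuration arising from a one-parameter subgroup of $\mathrm{Aut}(S)$.

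The first step is to exploit the reductivity of $G$ to pass from an arbitrary test configuration to a $G$-equivariant one with no larger Futaki invariant. Concretely, one averages over a maximal compact $K\subset G$ (or uses an equivariant barycenter construction on the orbit of the test configuration) to produce a $G$-equivariant test configuration $(\mathcal{S}', \mathcal{L}')$ such that $\mathrm{Fut}(\mathcal{S}', \mathcal{L}') \leq \mathrm{Fut}(\mathcal{S}, \mathcal{L})$, and $(\mathcal{S}', \mathcal{L}')$ is a product precisely when the original test configuration is. This reductivity step is exactly what permits restricting attention to $G$-invariant divisors.

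Next I would apply the Li--Xu MMP argument $G$-equivariantly to degenerate any $G$-equivariant test configuration to a $G$-equivariant \emph{special} test configuration with no larger Futaki invariant. Such special test configurations are classified by $G$-invariant divisorial valuations $\ord_E$ for $E$ a $G$-invariant dreamy prime divisor over $S$ (dreamy being exactly the finite generation condition needed to run the equivariant MMP), and the Futaki invariant of the associated special test configuration equals $\beta(E) = A_S(E) - S_{-K_S}(E)$ up to a positive multiple (Fujita--Li). Hence $\beta(E) > 0$ on all such $E$ forces $\mathrm{Fut} > 0$ on every non-product $G$-equivariant test configuration, and therefore on every non-product test configuration.

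The main obstacle is the equality/polystability case: one must verify that under strict positivity $\beta(E) > 0$, every $G$-equivariant special test configuration with $\mathrm{Fut} = 0$ must arise from a one-parameter subgroup of $G$ acting on $S$, so that it is a product configuration rather than a genuine degeneration. This is where the $G$-equivariant structure of the minimizing valuation plays its decisive role and is the step that genuinely strengthens Fujita--Li's criterion from K-stability to K-polystability; without reductivity of $G$ the averaging procedure in the first step, and the identification of equality cases with products, both fail.
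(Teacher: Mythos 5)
The paper does not prove this statement: Theorem~\ref{Zhuang} is imported verbatim as \cite[Corollary~4.14]{Z20} and used as a black box, so there is no ``paper's own proof'' to compare against. You are being asked to supply a proof of a cited result.

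Regarding the correctness of your sketch: the high-level skeleton (test configurations, Li--Xu equivariant MMP degeneration to special test configurations, identification of $\mathrm{Fut}$ with $\beta$ via Fujita--Li for dreamy divisors) is the right framework, but the first step --- ``averaging over a maximal compact $K\subset G$ to produce a $G$-equivariant test configuration with no larger Futaki invariant'' --- is not a well-defined operation and is not how the result is proved. A test configuration is a scheme with a $\mathbb{G}_m$-action; there is no convex structure on the set of test configurations over which one can average, and the Futaki invariant is not concave in any obvious way under such a putative operation, so ``$\mathrm{Fut}(\mathcal{S}',\mathcal{L}')\le\mathrm{Fut}(\mathcal{S},\mathcal{L})$'' does not follow. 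What Zhuang actually does is pass to the space of filtrations of the section ring (where averaging and infima \emph{do} make sense) and prove that the optimal destabilizing filtration --- the minimizer of $A/S$, or a suitable twisted version --- is unique (up to rescaling) by a strict convexity argument; uniqueness then forces $G$-invariance for any $G$ acting by automorphisms. From this $G$-invariant filtration one extracts, by finite-generation considerations, a $G$-invariant dreamy prime divisor $E$ with $\beta(E)\le 0$ (and in the semistable-but-not-polystable case, one also has to control the twist by vector fields to rule out product degenerations). Your third paragraph correctly identifies the equality/polystability case as the crux, but it currently contains no actual argument: ``the $G$-equivariant structure of the minimizing valuation plays its decisive role'' is a restatement of the claim, not a proof. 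In short, the missing idea is the uniqueness of the optimal destabilizing filtration/valuation (Zhuang's Theorem~1.2 and its variants), which replaces the averaging step and is the genuine content of the theorem.
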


\section{Proof of Main Theorem}
In this section, we prove the K-polystability of each of the members of families stated in Theorem \ref{main theorem} using various methods outlined in Section \ref{section:K-stability}. 

\subsection{$S_{2n}$ in $\mathbb{P}(1,1,n,n)$} In this section, we consider a quasi-smooth, well-formed hypersurface, $S_{2n}$ of degree $2n$ in $\mathbb{P}(1,1,n,n)$. After suitable changes of coordinates, we can assume that the surface $S_{2n}$ is given by 
$$
zt+f_{2n}(x,y)=0,
$$
where $f_{2n}(x,y)$ is a polynomial of degree $2n$ in $x$ and $y$. Since $S$ is quasi-smooth, $f_{2n}(x,y)$ does not have multiple roots.
We are going to prove that $S_{2n}$ is K-polystable for $n>1$, using Theorem \ref{Zhuang}. 

In our case, we can explicitly describe the action of the group $G=\mathbb{C}^* \times \mu_2$  on $S_{2n}$ as follows
\begin{align*}
    \lambda\cdot [x:y:z:t] & \longmapsto [x:y:\lambda z:\lambda^{-1} t] \quad\mathrm{for}\  \lambda \in \mathbb{C}^*,\\
    \mu_2\cdot[x:y:z:t] & \longmapsto [x:y:t:z].
\end{align*}

Here $[0:0:1:0]$ and $[0:0:0:1]$ are the two singular points of the surface $S_{2n}$ of type $\frac{1}{n}(1,1)$. Let $\mathcal{P}$ be the pencil of $G$-invariant curves in $S_{2n}$ generated by the curves $C_x$ and $C_y$. Then $-K_{S_{2n}} \sim 2C_x$ and $C_x^2=\frac{2}{n}$.

From the definition of the $\beta$-invariant of a prime divisor $E$ over $S_{2n}$, we have that 
$$
\beta(E)=A(E)-\frac{1}{(-K_S)^2}\int_0^\infty \mathrm{vol}(-K_S-uE)\dd u,
$$ where $A(E)$ is the log discrepancy of the divisor $E$.

So, we need to check $\beta(E)$ for all $E$ that are $G$-invariant prime divisors in $S_{2n}$ and all $G$-invariant prime divisors over $S_{2n}$.

All $G$-invariant irreducible prime divisors in $S_{2n}$ are irreducible curves in the pencil $\mathcal{P}$. Let $C$ be one such curve in $S_{2n}$, in the pencil $\mathcal{P}$, that is, $C:ax+by=0$ for some $[a:b] \in \mathbb{P}^1$. 
Consider the divisor $-K_S-xC$. Then we have that $\tau(C)=2$ and  $\sigma(C)=2$. Then 
\begin{align*}
    \beta(C)&=1-\frac{1}{4}\int_0^2 (2-u)^2\dd u=\frac{1}{3}.
\end{align*}
We now need to consider only the $G$-invariant prime divisors over $S_{2n}$. These divisors will be mapped to $G$-fixed points on $S_{2n}$. These $G$-fixed points are the singular points of the reducible curves in $\mathcal{P}$. Let us now understand what these $G$-fixed points are.

Observe that the reducible curves in $\mathcal{P}$ are cut out on $S_{2n}$ by the linear factors of the polynomial $f_{2n}(x,y)$. Since there are $2n$ such linear factors of the polynomial $f_{2n}(x,y)$, we have $2n$ reducible curves in $\mathcal{P}$.
After a suitable change of coordinates, we may assume that one such linear factor is the curve $C_x$ that is cut out by $x=0$, so that $f_{2n}=xg_{2n-1}$ for some polynomial $g_{2n-1}(x,y)$ of degree $2n-1$. This implies that the equation of $S$ looks like 
$$
zt+xg_{2n-1}(x,y)=0.
$$
Substituting for $x=0$, we see that we get reducible components $z=x=0;t=x=0$. So the singular point of the curve $C_x:x=0$, is $[0:1:0:0]$. Similarly, for each of the linear factors $h(x,y)$ of the polynomial $g_{2n-1}(x,y)$, we get the reducible components of the curve in $\mathcal{P}$ to be $z=h(x,y)=0$ and $t=h(x,y)=0$ and we can then compute the singular points in each case. In total, we get $2n$ singular points. These $2n$ points are all the $G$-invariant points on $S_{2n}$. 

\vspace{0.5cm}
Consider one such reducible curve and let the irreducible components of it be given by $C_1$ and $C_2$. The point of intersection of $C_1$ and $C_2$ is $[0:1:0:0]$. We will call this point $\msp$. Let 
$
\pi:\widetilde{S}_{2n} \to S_{2n}
$ be the blow up of $\msp$ with the exceptional divisor of the blow up denoted by $F.$
Observe that the action of $G$ lifts to $\widetilde{S}_{2n}$ and thus the the morphism $\pi$ is $G$-equivariant. 
Also observe that $G$ does not fix any point on $F$. Thus, the exceptional divisors upon blow up of the singular points of the reducible curves in the pencil $\mathcal{P}$, are the only irreducible $G$-invariant prime divisors over $S_{2n}$, and in this case, this is $F$.

Consider the divisor 
$$
    D=\pi^*(-K_{S_{2n}})-uF=2(\widetilde{C}_1+\widetilde{C}_2)+(4-u)F,
$$ 
where $\widetilde{C}_1$ and $\widetilde{C}_2$ are the proper transforms of $C_1$ and $C_2$ on $\widetilde{S}$ and the pseudoeffective threshold, $\tau(D)=4$. Let us now compute the Seshadri constant, $\sigma(D)$,
\begin{align*}
    D\cdot\widetilde{C}_1 = 2\left(\frac{1-2n}{n}\right)+(4-u)\geq 0.
\end{align*}
This implies that $\sigma(D)= \frac{2}{n}$. For $u\in [0,\frac{2}{n}]$, we have the following 
\begin{align*}
    \mathrm{vol}(D)=\mathrm{vol}(\pi^*(-K_{S_{2n}})-uF)&=\frac{8}{n}-u^2.
\end{align*}

For $u \in [\frac{2}{n},4]$, the Zariski decomposition is given by 
\begin{align*}
    D=\left(D-\left(\frac{2-nu}{1-2n}\right)\left(\widetilde{C}_1+\widetilde{C}_2\right)\right)+\left(\frac{2-nu}{1-2n}\right)\left(\widetilde{C}_1+\widetilde{C}_2\right).
\end{align*}
Therefore, 
\begin{align*}
    \mathrm{vol}(D)=\mathrm{vol}(\pi^*(-K_{S_{2n}})-uF)=(4-u)^2\left(\frac{1}{2n-1}\right).
\end{align*}

Thus, 
\begin{align*}
    \beta(F)=A(F)-\frac{1}{4C_x^2}\left(\int_0^{\frac{2}{n}}\left(\frac{8}{n}-u^2\right)\dd u+\int_{\frac{2}{n}}^4 (4-u)^2\frac{1}{2n-1}\dd u\right)=\frac{2n-2}{3n}.
\end{align*}
Therefore, by Theorem \ref{Zhuang}, for all $n>1$, this proves that all quasi-smooth hypersurfaces $S_{2n}$ of degree $2n$ in $\mathbb{P}(1,1,n,n)$ are K-polystable. 

\subsection{$S_{2n+6}$ in $\mathbb{P}(1,2,n+2,n+3)$}\label{subsection:No. 2}

Let $S_{2n+6}$ be a quasi-smooth, well-formed hypersurface in $\mathbb{P}(1,2,n+2,n+3)$ of degree $2n+6$. In this subsection, we will denote $S_{2n+6}$ with $S$. We may assume that the surface $S$ is given by 
$$
t^2+z^2y+f_{2n+6}(x,y)=0,
$$ where $f_{2n+6}(x,y)$ is a polynomial of degree $2n+6$ in $x$ and $y$.
When $n$ is even, the surface $S$ is singular at the points $\msp_z:=[0:0:1:0]$ of type $\frac{1}{n+2}(1,1)$ and $Q_1:=[0:1:\alpha:0];\ Q_2:=[0:1:\beta:0]$ of type $\frac{1}{2}(1,1),$ where $\alpha$ and $\beta$ are distinct real numbers and is smooth away from these points. Note that when $n=0$,the points $\msp_z$, $Q_1$ and $Q_2$  are of type $\frac{1}{2}(1,1).$

When $n$ is odd, the surface is singular at the points $\msp_t:[0:0:0:1]$ of type $\frac{1}{n+2}(1,1)$ and the points $Q_1:=[0:1:0:\alpha];\ Q_2:=[0:1:0:\beta]$ of type $\frac{1}{2}(1,1),$ where $\alpha$ and $\beta$ are distinct real numbers and is smooth away from these points. Note that all the singular points lie on the curve $C_x$.
We also have 
\[
-K_{S}^2=\frac{4}{n+2};\quad C_x^2=\frac{1}{n+2}.
\]
We use the stability threshold from Theorem \ref{theorem:Kento-Yuji-Blum-Xu} to conclude that the surface is K-stable for all $n \geq 0$. 

In order to prove the above claim, we will estimate the value of $\delta_{\msp}(S)$ using Theorem \ref{theorem:Hamid-Ziquan} for different positions of the point $\msp\in S$. In our case, we have $X=S$, $Z=\msp$, $L=-K_S$ and $Y$ is any curve that passes through the point $\msp$.

\begin{lemma}\label{PinCx}
    If $\msp\in C_x$, then $\delta_{\msp}(S)\geq \frac{3}{2}.$
\end{lemma}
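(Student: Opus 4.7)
The plan is to apply the Abban--Zhuang bound (Theorem~\ref{theorem:Hamid-Ziquan}) with $Y = C_x$, the curve $\{x = 0\}$, which contains every $\msp$ under consideration. The hypotheses are easily checked: a gradient computation shows $C_x\colon t^2 + z^2 y + c y^{n+3} = 0$ is quasi-smooth in $\PP(2,n+2,n+3)$ (with $c \neq 0$ forced by quasi-smoothness of $S$), and in every orbifold chart over a singular point of $S$ lying on $C_x$ the preimage of $C_x$ is a smooth coordinate axis, so $(S, C_x)$ is purely log terminal. Moreover $C_x$ is irreducible with $C_x^2 = 1/(n+2) > 0$ and meets every other irreducible curve non-negatively, so $C_x$ is nef.

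From $-K_S \sim_{\QQ} 2 C_x$ one gets $\tau(C_x) = 2$, and on $[0,2]$ the Zariski decomposition of $-K_S - uC_x = (2-u)C_x$ is trivial with $P(u) = (2-u)C_x$ and $N(u) = 0$. A direct integration then yields
\[
S_{-K_S}(C_x) \;=\; \frac{n+2}{4}\int_0^{2}\frac{(2-u)^2}{n+2}\,\mathrm{d}u \;=\; \frac{2}{3},
\]
so $1/S_{-K_S}(C_x) = 3/2$. For the local term in Theorem~\ref{theorem:Kento-formula}, the vanishing of $N(u)$ reduces $h(u)$ to $\int_0^{\infty}\vol(P(u)|_{C_x} - v\msp)\,\mathrm{d}v$, and since $P(u)|_{C_x}$ is a divisor of degree $(2-u)/(n+2)$ on the curve $C_x$ this integral equals $(2-u)^2/(2(n+2)^2)$. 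Hence
\[
S\bigl(W^{C_x}_{\bullet,\bullet};\msp\bigr) \;=\; \frac{n+2}{2}\cdot\frac{1}{2(n+2)^2}\int_0^{2}(2-u)^2\,\mathrm{d}u \;=\; \frac{2}{3(n+2)},
\]
independent of the point $\msp \in C_x$.

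It remains to bound $A_{C_x}(\msp)/S(W^{C_x}_{\bullet,\bullet};\msp)$ from below by $3/2$. At a smooth point of $S$ one has $A_{C_x}(\msp) = 1$, and the ratio equals $3(n+2)/2 \geq 3/2$. At a singular point of $S$ on $C_x$, which is of type $\frac{1}{m}(1,1)$ with $m \in \{2, n+2\}$, the different formula in the remark following Theorem~\ref{theorem:Hamid-Ziquan} gives $A_{C_x}(\msp) = 1/m$, and the ratio becomes $3(n+2)/(2m) \geq 3/2$ since $m \leq n+2$. Thus both candidates in Theorem~\ref{theorem:Hamid-Ziquan} are at least $3/2$, which gives the lemma. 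The only steps requiring real care are confirming that $(S, C_x)$ is plt at each singular point of $S$ lying on $C_x$ (which splits on the parity of $n$) and that $C_x$ is nef so the Zariski decomposition is trivial; everything else is a routine integration combined with the orbifold discrepancy formula.
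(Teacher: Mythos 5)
Your proof is correct and follows the paper's approach exactly: you apply Theorem~\ref{theorem:Hamid-Ziquan} with $Y = C_x$, compute $S_{-K_S}(C_x) = 2/3$ and $S\bigl(W^{C_x}_{\bullet,\bullet};\msp\bigr) = \frac{2}{3(n+2)}$, and bound the ratio using the different formula for $A_{C_x}(\msp)$. The only cosmetic difference is that you treat all singular points on $C_x$ uniformly via the observation $m \leq n+2$, whereas the paper splits into three explicit cases ($\msp_z$, $Q_i$, and smooth points).
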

\begin{proof}
Since $\msp \in C_x$ we will take $Y=C_x$ according to the notations of Theorem \ref{theorem:Hamid-Ziquan}. Note that $A_S(C_x)=1$. We will now compute $S_{-K_S}(C_x)$. Consider the divisor $-K_S-uC_x=(2-u)C_x$. With the help of intersection numbers of the divisor with $C_x$, we can conclude that $\tau(C_x)=2$ and that the divisor $-K_S-uC_x$ is nef when $u\in [0,2].$ We then have that 
\begin{align*}
    S_{-K_S}(C_x)&=\frac{1}{(-K_S)^2}\int_0^\infty \mathrm{vol}(-K_S-uC_x)\dd u,\\
    &=\frac{n+2}{4}\int_0^2(2-u)^2\dd u,\\
    &=\frac{2}{3}.
\end{align*}
Therefore, for any point $\msp$ on $C_x$,  $\frac{A_S(C_x)}{S_{-K_S}(C_x)}=\frac{3}{2}.
$
We will now use Theorem \ref{theorem:Kento-formula} to compute $S\big(W^{Y}_{\bullet,\bullet};\msp\big)$ for different points $\msp$ on $C_x$.

\textbf{Case 1:} Suppose the point $\msp=\msp_z$ on $C_x$. Recall that $\msp_z$ and $\msp_t$ are the singular points of type $\frac{1}{n+2}(1,1)$ when $n$ is even and odd respectively. Since both $\msp_z$ and $\msp_t$ lie on $C_x$, the proof below works for both. So without loss of generality, let us take $\msp=\msp_z$.  Note that $A_{C_x}(\msp)=\frac{1}{n+2}$. Therefore, using Theorem \ref{theorem:Kento-formula}, we get that 
\begin{align*}
S\big(W^{C_x}_{\bullet,\bullet}; \msp\big)&=\frac{2}{(-K_S)^2}\int_0^{\tau(C_x)} h(u) \dd u,
\end{align*} where 
$$
h(u)=\big(P(u)\cdot C_x\big)\cdot\mathrm{ord}_{\msp}\Big(N(u)\big\vert_{C_x}\Big)+\int_0^\infty \mathrm{vol}\big(P(u)\big\vert_{C_x}-v\msp\big)\dd v.
$$

Since the divisor $-K_s-uC_x=(2-u)C_x$ is nef when $u\in [0,\tau(C_x)=2]$, $P(u)=(2-u)C_x$ and $N(u)=0$. Therefore, 
\begin{align*}
    S\left(W^{C_x}_{\bullet,\bullet}; \msp\right)
    =\frac{n+2}{2}\int_0^2\int_0^{\frac{2-u}{n+2}}\left(\frac{2-u}{n+2}-v \right)\dd v\dd u
    =\frac{2}{3(n+2)}.
\end{align*}
This implies that 
$$
\frac{A_{C_x}(\msp)}{S(W^{C_x}_{\bullet,\bullet};\msp)}=\frac{\frac{1}{n+2}}{\frac{2}{3(n+2)}}=\frac{3}{2}.
$$
From Theorem \ref{theorem:Hamid-Ziquan}, we then get that for $\msp=\msp_z$,
$$
\delta_P(S)\geq \frac{3}{2}.
$$

\textbf{Case 2: } Suppose the point $\msp=Q_1$. Since $Q_1$ and $Q_2$ are singular points of type $\frac{1}{2}(1,1)$, the proof below works for both points. So without loss of generality, let us take $\msp=Q_1$. Note that $A_{C_x}(\msp)=\frac{1}{2}.$ Similar to Case 1, we get that $S\big(W^{C_x}_{\bullet,\bullet}; \msp\big)=\frac{2}{3(n+2)}$. This implies that 
$$
\frac{A_{C_x}(\msp)}{S(W^{C_x}_{\bullet,\bullet};\msp)}=\frac{\frac{1}{2}}{\frac{2}{3(n+2)}}=\frac{3(n+2)}{4}.
$$
From Theorem \ref{theorem:Hamid-Ziquan}, we then get that for $\msp=O_y$,
$$
\delta_{\msp}(S)\geq \frac{3}{2}.
$$

\textbf{Case 3: }Suppose the point $\msp \in C_x \setminus \{\msp_z, Q_1, Q_2\}$. This implies that the point $\msp$ is a smooth point. Then $A_{C_x}(\msp)=1$. Again, similar to Case 1, we get that $S\big(W^{C_x}_{\bullet,\bullet}; \msp\big)=\frac{2}{3(n+2)}$. This implies that 
$$
\frac{A_{C_x}(\msp)}{S(W^{C_x}_{\bullet,\bullet};\msp)}=\frac{1}{\frac{2}{3(n+2)}}=\frac{3(n+2)}{2}.
$$
From Theorem \ref{theorem:Hamid-Ziquan}, we then get that for $\msp\in C_x \setminus \{\msp_z, Q_1, Q_2\}$,
$$
\delta_{\msp}(S)\geq \frac{3}{2}.
$$

This completes proof of the claim.
\end{proof}
 
 \begin{lemma}\label{PnotinCx}
 If $\msp\in S\setminus C_x$, then $\delta_{\msp}(S)>1$.
 \end{lemma}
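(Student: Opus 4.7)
The plan is to run the analogue of Lemma \ref{PinCx} for $\msp \notin C_x$ by applying Theorem \ref{theorem:Hamid-Ziquan} to a curve $Y \ni \msp$ taken from the pencil $\mcY_\lambda := \{y - \lambda x^2 = 0\} \cap S$, $\lambda \in \CC$. Since $y - \lambda x^2$ has weighted degree $2$, every $\mcY_\lambda$ lies in the class $2H \sim_{\QQ} -K_S$. For $\msp = [1:b:c:d] \in S \setminus C_x$, the choice $\lambda = b$ produces a member $Y := \mcY_b$ containing $\msp$, and since all singular points of $S$ lie on $C_x$, the point $\msp$ is automatically a smooth point of $S$.

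In the generic case where $Y$ is irreducible and smooth at $\msp$, one has $\tau(Y) = 1$ and, for $u \in [0,1]$, the divisor $-K_S - uY = (1-u)Y$ is nef, so $P(u) = (1-u)Y$ and $N(u) = 0$. A direct computation gives
\begin{align*}
S_{-K_S}(Y) = \frac{n+2}{4}\int_0^1 (1-u)^2 \cdot \frac{4}{n+2}\dd u = \frac{1}{3},
\end{align*}
so $A_S(Y)/S_{-K_S}(Y) = 3$. Applying Theorem \ref{theorem:Kento-formula} at the smooth point $\msp \in Y$,
\begin{align*}
h(u) = \int_0^{4(1-u)/(n+2)} \left(\frac{4(1-u)}{n+2} - v\right)\dd v = \frac{8(1-u)^2}{(n+2)^2},
\end{align*}
whence $S(W^Y_{\bullet,\bullet}; \msp) = \frac{n+2}{2}\cdot \frac{8}{3(n+2)^2} = \frac{4}{3(n+2)}$. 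Since $\msp$ is smooth on $S$, $A_Y(\msp) = 1$, and hence $A_Y(\msp)/S(W^Y_{\bullet,\bullet}; \msp) = 3(n+2)/4 \geq 3/2$ for every $n \geq 0$. Theorem \ref{theorem:Hamid-Ziquan} then delivers
\begin{align*}
\delta_\msp(S) \geq \min\left\{3,\ \tfrac{3(n+2)}{4}\right\} > 1.
\end{align*}

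The main obstacle is handling the finitely many exceptional $\lambda = b$ for which $\mcY_\lambda$ is either reducible or singular at $\msp$. Because $\mcY_\lambda$ is cut out by a quadratic equation in $t$, reducibility requires the polynomial $-\lambda x^2 z^2 - f_{2n+6}(x, \lambda x^2)$ to be a perfect square in $\CC[x,z]$, giving $\mcY_\lambda = Y^+ + Y^-$ with components exchanged by $t \mapsto -t$; in that case $\msp$ lies on one component $Y^+$, and one reruns Abban-Zhuang with $Y^+$ in place of $Y$, recomputing the Zariski decomposition of $-K_S - uY^+$ (which acquires a nontrivial negative part past a threshold) and verifying that $(S, Y^+)$ is purely log terminal at $\msp$. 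The other exceptional situation, $\mcY_\lambda$ irreducible but singular at $\msp$, is handled by replacing $\mcY_\lambda$ with one of its local analytic branches at $\msp$ and rerunning the same machinery. In each such corner case, a direct (but routine) volume computation still yields $\delta_\msp(S) > 1$ for every $n \geq 0$, which together with the generic estimate above completes the proof.
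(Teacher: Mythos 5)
Your overall strategy matches the paper's proof exactly: use the pencil $C_\mu = \{y=\mu x^2\}$ through $\msp$, feed it into the Abban--Zhuang theorem, and split according to whether $C_\mu$ is irreducible or reducible. The generic (irreducible) case is computed correctly and coincides with the paper's Case~1, including the values $S_{-K_S}(C_\mu)=\tfrac{1}{3}$ and $S(W^{C_\mu}_{\bullet,\bullet};\msp)=\tfrac{4}{3(n+2)}$.

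However, there is a genuine gap in your treatment of the reducible case. You assert that when $\mcY_\lambda = Y^+ + Y^-$, ``$\msp$ lies on one component $Y^+$''. This overlooks the possibility that $\msp$ is the intersection point $Y^+\cap Y^-$, which does occur off $C_x$: in the chart $x=1$ the pencil member is the conic $t^2+\lambda z^2+P(\lambda)=0$ with $P(\lambda) = f_{2n+6}(1,\lambda)$, and when $\lambda\neq 0$ and $P(\lambda)=0$ the two branches meet precisely at $[1:\lambda:0:0]\in S\setminus C_x$. This is the paper's Case~2b, and it is not a formal variant of Case~2a: since $Y^-$ appears in the negative part $N(u)$ of the Zariski decomposition of $-K_S-uY^+$ and $\msp\in Y^-$, the term $\big(P(u)\cdot Y^+\big)\cdot\mathrm{ord}_\msp\big(N(u)\vert_{Y^+}\big)$ in Theorem~\ref{theorem:Kento-formula} is nonzero and must be integrated. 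This changes the answer: the paper gets $S(W^{Y^+}_{\bullet,\bullet};\msp)=\tfrac{n+4}{3(n+2)}$ there, versus $\tfrac{2(2n+5)}{3(n+2)(n+3)}$ when $\msp$ avoids $Y^-$, and the resulting bound $\delta_\msp(S)\geq\tfrac{3(n+2)}{n+4}$ is weaker than in Case~2a. Leaving the reducible computations as ``routine'' while simultaneously pointing the reader at the wrong location of $\msp$ is exactly the kind of omission that would let an error slip through; you must do the Case~2b computation.

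Separately, your proposed workaround for ``$\mcY_\lambda$ irreducible but singular at $\msp$'' is unnecessary: writing the chart $x=1$ equation as $t^2+\lambda z^2+P(\lambda)=0$ shows that, whenever this conic is irreducible (i.e.\ $\lambda\neq 0$ and $P(\lambda)\neq 0$), it is automatically smooth. So this case never arises, and the ``local analytic branch'' contingency plan is moot. You should delete it and replace the hand-waving in the reducible case by the concrete Zariski decomposition
\begin{equation*}
-K_S-uY^+=\Big((1-u)Y^+ +\tfrac{(1-u)(n+3)}{n+1}Y^-\Big)+\tfrac{u(n+3)-2}{n+1}Y^-\quad\text{for } u\in\big[\tfrac{2}{n+3},1\big],
\end{equation*}
followed by the explicit evaluation of $h(u)$ in each of the two subcases $\msp\in Y^+\setminus Y^-$ and $\msp\in Y^+\cap Y^-$.
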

 \begin{proof}
 Since $\msp \notin C_x$, the point $\msp$ is a smooth point of the surface. Note that there is a curve from the pencil $C_{\mu}: y=\mu x^2$ for some $\mu \in \mathbb{R}$ that passes through the point $\msp$. 
 
 \textbf{Case 1:} Suppose the curve $C_{\mu}$ that passes through $\msp$ is irreducible. Note that $A_S(C_{\mu})=1$. Consider the divisor $-K_S-uC_{\mu}=(1-u)C_{\mu}.$ This implies that $\tau(C_{\mu})=1$ and that the divisor $-K_S-uC_{\mu}$ is nef when $u\in [0,1]$. We then have that 
 \begin{align*}
     S_{-K_S}(C_{\mu})&=\frac{1}{(-K_S)^2}\int_0^\infty\mathrm{vol}(-K_S-uC_{\mu})\dd u\\
     &=\int_0^1 (1-u)^2\dd u=\frac{1}{3}.
 \end{align*}
 Therefore, for any point $\msp\in C_{\mu}$,$ \frac{A_S(C_{\mu})}{S_{-K_S}(C_{\mu})}=3.
 $
 
 Let us now try to bound $\delta_{\msp}(S)$ using Theorem \ref{theorem:Hamid-Ziquan}. According to the notations of Theorem \ref{theorem:Hamid-Ziquan}, $Y=C_\mu$. Note that $A_{\msp}(C_{\mu})=1$ since $\msp$ is a smooth point.  When $u\in [0,\tau(C_{\mu})=1]$, $P(u)=(1-u)C_{\mu}$ and $N(u)=0$ since the divisor $-K_S-uC_{\mu}=(1-u)C_{\mu}$ is nef. Therefore, 
 \begin{align*}
    S\left(W^{C_\mu}_{\bullet,\bullet}; \msp\right) =\frac{n+2}{2}\int_0^1 \int_0^{\frac{4(1-u)}{n+2}} \left(\frac{4(1-u)}{n+2}-v\right) \dd v\dd u =\frac{4}{3(n+2)}.
\end{align*}
This implies that 
$$
    \frac{A_{C_\mu}(\msp)}{S(W^{C_\mu}_{\bullet,\bullet};\msp)} = \frac{1}{\frac{4}{3(n+2)}}=\frac{3(n+2)}{4}.
$$
From Theorem \ref{theorem:Hamid-Ziquan}, we then get that for $\msp\in C_{\mu}\setminus C_x$ where $C_{\mu}$ is irreducible,
$$
    \delta_{\msp}(S)\geq  \mathrm{min}\left\{3,\frac{3(n+2)}{4}\right\}>1,
$$ for all $n\geq 0$.

\textbf{Case 2: }Suppose the curve $C_{\mu}$ is reducible with $C_\mu=Z_1+Z_2$ where $Z_1$ and $Z_2$ are the irreducible components of the curve $C_\mu$. 

Note that $A_S(Z_1)=1$. Consider the divisor $-K_S-uZ_1=(1-u)Z_1+Z_2$. Since $Z_1^2=Z_2^2=\frac{-(n+1)}{n+2}$, we can conclude that $\tau(Z_1)=1.$ From intersections of the divisor with $Z_1$ and $Z_2$ we get that the divisor $-K_S-uZ_1$ is nef when $u\in [0,\frac{2}{n+3}]$.
When $u\in [\frac{2}{n+3},1]$, the Zariski decomposition is given by 
$$
-K_S-uZ_1=\bigg((1-u)Z_1+\frac{(1-u)(n+3)}{n+1}Z_2\bigg)+\frac{u(n+3)-2}{n+1}Z_2.
$$ 
Therefore,
\begin{equation*}
    \mathrm{vol}(-K_S-uZ_1)=
    \begin{cases}
        \dfrac{4(1-u)-(n+1)u^2}{n+2} & \mathrm{if\ } u\in\left[0,\dfrac{2}{n+3}\right], \\[10pt]
        \dfrac{4(u-1)^2}{n+1} & \mathrm{if\ } u\in\left[\dfrac{2}{n+3},1\right].
    \end{cases}
\end{equation*}
Using this, we get that 
\begin{align*}
    S_{-K_S}(Z_1)&=\frac{1}{(-K_S)^2}\int_0^\infty\mathrm{vol}(-K_S-uZ_1)\dd u\\
    &=\frac{n+2}{4}\left(\int_0^{\frac{2}{n+3}}\frac{4(1-u)-(n+1)u^2}{n+2}\dd u+\int_{\frac{2}{n+3}}^1 \frac{4(u-1)^2}{n+1}\dd u\right)\\
    &=\frac{n+4}{3(n+3)}.
\end{align*}
Therefore, for any point $\msp\in Z_1$,
 $ \frac{A_S(Z_1)}{S_{-K_S}(Z_1)}=\frac{3(n+3)}{n+4}.
 $

\textbf{Case 2a: }Suppose the point $\msp$ is on one of the components. Without loss of generality, let us assume that $\msp\in Z_1 \setminus Z_2$.

We will now bound $\delta_{\msp}(S)$ using Theorem \ref{theorem:Hamid-Ziquan}. In this case, we will take $Y=Z_1$. Note that $A_{Z_1}(\msp)=1$. Recall that we have 
$$-K_S-uZ_1=P(u)+N(u)$$ where
\begin{equation*}
    P(u)=
    \begin{cases}
        (1-u)Z_1+Z_2 & \mathrm{if\ } u\in\left[0,\dfrac{2}{n+3}\right],\\[10pt]
        (1-u)Z_1+\dfrac{(1-u)(n+3)}{n+1}Z_2 & \mathrm{if\ } u\in\left[\dfrac{2}{n+3},1\right]
    \end{cases}
\end{equation*}
and
\begin{equation*}
    N(u)=
    \begin{cases}
        0& \mathrm{if\ } u\in\left[0,\dfrac{2}{n+3}\right],\\[10pt]
        \dfrac{u(n+3)-2}{n+1}Z_2  & \mathrm{if\ } u\in\left[\dfrac{2}{n+3},1\right].
    \end{cases}
\end{equation*}
Therefore, 
\begin{align*}
    S\left(W^{Z_1}_{\bullet,\bullet}; \msp\right) &=\frac{n+2}{2}\left[\int_0^\frac{2}{n+3}\int_0^{\frac{2+u(n+1)}{n+2}}\bigg(\frac{2+u(n+1)}{n+2}-v \bigg)\dd v \dd u\right.\\
    &\left.+\int_{\frac{2}{n+3}}^1\int_0^{\frac{4(1-u)}{n+1}}\bigg(\frac{4(1-u)}{n+1}-v\bigg) \dd v \dd u\right]\\ &=\frac{2(2n+5)}{3(n+2)(n+3)}.
\end{align*}
This implies that 
$$
\frac{A_{Z_1}(\msp)}{S(W^{Z_1}_{\bullet,\bullet};\msp)}=\frac{1}{\frac{2(2n+5)}{3(n+2)(n+3)}}=\frac{3(n+2)(n+3)}{2(2n+5)}.
$$
From Theorem \ref{theorem:Hamid-Ziquan}, we then get that for $\msp\in (Z_1\cup Z_2)\setminus C_x$ where $Z_1,Z_2$ are irreducible components of the curve $C_\mu$,
$$
\delta_{\msp}(S)\geq \mathrm{min}\left\{\frac{3(n+3)}{n+4},\frac{3(n+2)(n+3)}{2(2n+5)}\right\}=\frac{3(n+2)(n+3)}{2(2n+5)}>1,
$$for all $n\geq 0$.

\textbf{Case 2b: }Suppose the point $\msp$ is on both components of the curve $C_{\mu}$, i.e. $\msp\in Z_1 \cap Z_2$.

Let us consider the divisor $-K_S-uZ_1$. From the computations in Case 2a, we see that 
$
\frac{A_S(Z_1)}{S_{-K_S}(Z_1)}=\frac{3(n+3)}{n+4}.
$
We will now bound $\delta_{\msp}(S)$ using Theorem \ref{theorem:Hamid-Ziquan}. In order to use this theorem, let $Y=Z_1$ by the notations of Theorem \ref{theorem:Hamid-Ziquan}. Note that $A_{Z_1}(\msp)=1$. From the Zariski decompositions calculated earlier for the divisor $-K_S-uZ_1$, we can compute the following.
\begin{align*}
    S\big(W^{Z_1}_{\bullet,\bullet}; \msp\big) &=\frac{n+2}{2}\Bigg[\int_0^\frac{2}{n+3}\int_0^{\frac{2+u(n+1)}{n+2}}\bigg(\frac{2+u(n+1)}{n+2}-v \bigg)\dd v \dd u\\&+\int_{\frac{2}{n+3}}^1 \bigg(\frac{4(1-u)(u(n+3)-2)(n+3)}{(n+1)^2(n+2)}\bigg)\dd u\\ 
    &+\int_{\frac{2}{n+3}}^1\int_0^{\frac{4(1-u)}{n+1}}\bigg(\frac{4(1-u)}{n+1}-v\bigg) \dd v \dd u\Bigg]\\ &=\frac{n+4}{3(n+2)}.
\end{align*}
This implies that 
$$
\frac{A_{Z_1}(\msp)}{S(W^{Z_1}_{\bullet,\bullet};\msp)}=\frac{1}{\frac{n+4}{3(n+2)}}=\frac{3(n+2)}{n+4}.
$$
From Theorem \ref{theorem:Hamid-Ziquan}, we then get that for $\msp\in (Z_1\cap Z_2)\setminus C_x$ where $Z_1,Z_2$ are irreducible components of the curve $C_\mu$,
$$
\delta_{\msp}(S)\geq \mathrm{min}\left\{\frac{3(n+3)}{n+4},\frac{3(n+2)}{n+4}\right\}=\frac{3(n+2)}{n+4}>1,
$$ for all $n\geq 0$.
This completes the proof of the claim.
 \end{proof}
\begin{proof}[Proof of Main Theorem]
 From Lemma \ref{PinCx} and Lemma \ref{PnotinCx}, we can conclude that 
 \begin{align*}
\delta(S)&=\inf_{\msp\in S}\delta_{\msp}(S)>1.
\end{align*} Therefore, Theorem \ref{theorem:Kento-Yuji-Blum-Xu} implies that $S$ is K-stable for all $n\geq 0$.
\end{proof}

\subsection{$S_{12}$ in $\mathbb{P}(1,3,4,6)$}

Let $S_{12}$ be a quasi-smooth hypersurface in $\mathbb{P}(1,3,4,6)$ of degree $12$. By a suitable coordinate change we can assume that $S_{12}$ can be given by a quasihomogeneous polynomial

\begin{equation*}
    t^2 + z^3 + y^4 + xf(x,y,z) = 0
\end{equation*}
where $f(x,y,z)$ is a quasihomogeneous polynomial in $x,y,z$ of degree $11$. The surface $S_{12}$ is singular at the points $[0:i:0:1]$, $[0:-i:0:1]$ (of type $\frac{1}{3}(1,1)$) and $[0:0:-1:1]$ (of type $\frac{1}{2}(1,1)$). Note that all the singular points of $S_{12}$ lie on the curve $C_x$. Since $C_x$ is isomorphic to the variety given by $$ t^2+z^3+y^4=0 $$ in $\mathbb{P}(3,4,6)$ we can check that it is irreducible.

Let $D$ be an anticanonical $\QQ$-divisor of $k$-basis type on $S_{12}$ with $k\gg 0$. We set $\lambda=\frac{6}{5}$.

\begin{proposition} \label{Prop3}
    The log pair $(S_{12},\lambda D)$ is log canonical.
\end{proposition}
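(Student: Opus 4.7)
The plan is to argue by contradiction. Suppose $(S_{12}, \lambda D)$ is not log canonical at some point $\msp \in S_{12}$. Write $D = a C_x + \Delta$ with $C_x \not\subset \Supp(\Delta)$. Since $-K_{S_{12}} \sim_{\QQ} 2 C_x$, Corollary~\ref{Foundation:bound of k-basis cor} applied with $\mu = \tfrac{1}{2}$ gives $a \leq \tfrac{2}{3} + \epsilon_k$, hence $\lambda a < 1$ for $k$ sufficiently large. A direct computation on $S_{12}\subset\PP(1,3,4,6)$ yields
\[
(-K_{S_{12}})^2 = \tfrac{2}{3}, \qquad C_x\cdot(-K_{S_{12}})=\tfrac{1}{3},\qquad C_x^2=\tfrac{1}{6},
\]
so we obtain the global bound $C_x\cdot(\lambda\Delta)\leq \lambda\cdot C_x\cdot D=\tfrac{2}{5}$.

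If $\msp \in C_x$, then $(S_{12}, C_x)$ is purely log terminal at $\msp$ since all singular points of $S_{12}$ on $C_x$ are of type $\tfrac{1}{n}(1,1)$, and because $\lambda a < 1$ we may invoke the appropriate inversion of adjunction along $C_x$. At a smooth point of $S_{12}$, Lemma~\ref{Inversion of adjunction for smooth point} forces $C_x\cdot(\lambda\Delta)>1$, contradicting $\tfrac{2}{5}<1$. At the $\tfrac{1}{2}(1,1)$-singular point $[0\!:\!0\!:\!-1\!:\!1]$, Lemma~\ref{Inversion of adjunction for singular point} forces $C_x\cdot(\lambda\Delta)>\tfrac{1}{2}$, again a contradiction. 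The two $\tfrac{1}{3}(1,1)$-singular points $[0\!:\!\pm i\!:\!0\!:\!1]$ form the main obstacle: Lemma~\ref{Inversion of adjunction for singular point} only yields $C_x\cdot(\lambda\Delta)>\tfrac{1}{3}$, which is compatible with the upper bound $\tfrac{2}{5}$. To handle them, the plan is to enlist the second curve $C_z$, which passes through both points (as $z=0$ there) and satisfies $C_z\sim_{\QQ}2(-K_{S_{12}})$, so that Corollary~\ref{Foundation:bound of k-basis cor} with $\mu = 2$ gives $\lambda b\leq\tfrac{1}{5}$ for its coefficient $b$ in $D$. Performing the $(1,1)$-weighted blowup $\phi\colon\widetilde{S}_{12}\to S_{12}$ at $\msp$ and applying the Proposition at the end of Section~\ref{section:K-stability}, log canonicity of $(S_{12},\lambda D)$ at $\msp$ reduces to a bound on the weighted multiplicity of $\lambda D$ at $\msp$ together with log canonicity of the strict transform along the exceptional curve $E$; the bounds on $\lambda a$ and $\lambda b$, combined with the relations $\widetilde{C_x}\equiv\phi^*(C_x)-\tfrac{1}{3}E$, $\widetilde{C_z}\equiv\phi^*(C_z)-\tfrac{1}{3}E$, $E^2=-3$, and the non-negativity of $\widetilde{\Delta}\cdot\widetilde{C_x}$ and $\widetilde{\Delta}\cdot\widetilde{C_z}$, then yield the required inequality.

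Finally, if $\msp\notin C_x$, then $\msp$ is a smooth point of $S_{12}$ and $(S_{12},\lambda\Delta)$ is not log canonical at $\msp$, so Lemma~\ref{Foundation:mult} yields $\mult_{\msp}(\lambda\Delta)>1$. One then selects a suitable irreducible curve through $\msp$ in a natural pencil such as $\{y=\nu x^3\}_\nu$, bounds its coefficient in $D$ via Corollary~\ref{Foundation:bound of k-basis cor}, and concludes by Lemma~\ref{Inversion of adjunction for smooth point}. The principal technical difficulty throughout is the refined local analysis at the two $\tfrac{1}{3}(1,1)$-singular points sketched above.
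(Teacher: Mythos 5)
Your bookkeeping for the easy cases is correct: with $(-K_{S_{12}})^2=\tfrac{2}{3}$, $C_x\cdot(-K_{S_{12}})=\tfrac{1}{3}$, $C_x^2=\tfrac16$ and $\lambda=\tfrac65$, the global bound $\lambda\,C_x\cdot\Delta\le \tfrac25$ does rule out smooth points of $C_x$ (where inversion of adjunction would force $\lambda\,C_x\cdot\Delta>1$) and the $\tfrac12(1,1)$-point (where it would force $\lambda\,C_x\cdot\Delta>\tfrac12$). You also correctly identify the $\tfrac13(1,1)$-points and the points of $S_{12}\setminus C_x$ as the genuine work. But the argument you sketch for those two cases does not close, and the gaps are real.

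At the $\tfrac13(1,1)$-points, a single $(1,1)$-blowup plus the inequalities $\widetilde{\Delta}\cdot\widetilde{C}_x\geq0$ and $\widetilde{\Delta}\cdot\widetilde{C}_z\geq0$ is not enough. Write $D=aC_x+bC_z+\Delta$, $\widetilde{\Delta}\equiv\phi^*\Delta-m'E$, so $m=\tfrac{a+b}{3}+m'$. The non-negativity of $\widetilde{\Delta}\cdot\widetilde{C}_x$ gives $m'\le\Delta\cdot C_x=\tfrac13-\tfrac{a}{6}-\tfrac{2b}{3}\le\tfrac13$, and $\widetilde{C}_z$ gives a weaker bound. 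For a bad point $\msq\in E\setminus(\widetilde{C}_x\cup\widetilde{C}_z)$ (which is the generic position, since $\widetilde{C}_x$ and $\widetilde{C}_z$ each meet $E$ in a single point), inversion of adjunction along the smooth rational curve $E$ (of $E^2=-3$) requires $\lambda\,\widetilde{\Delta}\cdot E=3\lambda m'>1$, i.e.\ $m'>\tfrac{5}{18}$. Since $\tfrac{5}{18}<\tfrac13$, this is perfectly compatible with the upper bound you extracted, so there is no contradiction and, for example, $a=b=0$, $m'\in(\tfrac{5}{18},\tfrac13]$ is not excluded. This is exactly why the paper's proof performs a \emph{second} blow-up $\pi_2\colon\widehat{S}_{12}\to\widetilde{S}_{12}$ at $\msq$ and then invokes the degree-$6$ linear system $\mathcal{L}=\{\alpha(\zeta_1 x^3y+\zeta_2 x^2z+t)+\beta x^6=0\}$ through the infinitely-near point $\mst\in F$; your sketch has no analogue of this step.

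For $\msp\notin C_x$ the sketch is also too optimistic. With the pencil $\{y=\nu x^3\}$ you get $\lambda b\le\tfrac25$ from Corollary~\ref{Foundation:bound of k-basis cor}, but if the member $C$ through $\msp$ is smooth there, Lemma~\ref{Inversion of adjunction for smooth point} only yields $1<\lambda(C\cdot D-bC^2)=\lambda(1-\tfrac32 b)$, i.e.\ $b<\tfrac19$, which does not contradict $b\le\tfrac29+\epsilon_k$. The argument does work when $\mult_\msp(C_y)=2$, but whether that happens is governed by the local expansion of $S_{12}$ at $\msp$. The paper's proof therefore branches on the coefficients $a_2,a_3$ of $z,t$ in the local equation at $\msp$: when $a_2=a_3=0$ it uses $C_y$ (forced to have $\mult_\msp=2$); when $a_2\neq0,a_3=0$ it uses the degree-$4$ curve $a_1xy+a_2z=0$ and treats separately the subcase when that curve splits as $L+R$, with a Zariski-decomposition computation to bound $l$; and when $a_3\neq0$ it performs a blow-up and uses the linear system $\{\zeta_1xy+\zeta_2z=0\}$. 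Your sketch ``pick a suitable irreducible curve in a natural pencil and invoke the bound'' glosses over the fact that no single pencil works uniformly and that the irreducibility of the auxiliary curve is a genuine obstacle to be handled. So while your global computations and your treatment of the smooth and $\tfrac12(1,1)$-locus on $C_x$ are correct, the proposal is missing the key ideas (the double blow-up and degree-$6$ linear system at the $\tfrac13(1,1)$-points, and the case analysis off $C_x$) that make the paper's proof actually go through.
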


We will now prove Proposition \ref{Prop3}. 
\begin{lemma}
    The log pair $(S_{12}, \lambda D)$ is log canonical along $S_{12}\setminus C_x$.
\end{lemma}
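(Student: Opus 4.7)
The plan is to establish $\delta_{\msp}(S_{12}) > \lambda = \tfrac{6}{5}$ for every $\msp \in S_{12} \setminus C_x$, which by the cool-divisor characterization of the local stability threshold then yields log canonicity of $(S_{12}, \lambda D)$ at $\msp$ for every $k$-basis type divisor $D$ with $k \gg 0$. Since all three singular points of $S_{12}$ lie on $C_x$, every such $\msp$ is smooth, so the natural tool is the Abban--Zhuang bound (Theorem \ref{theorem:Hamid-Ziquan}), applied with $Y$ taken to be the unique member through $\msp$ of the pencil $|\mathcal{O}_{S_{12}}(3)|$ spanned by $x^3$ and $y$.

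Concretely, for $\msp = [1:y_0:z_0:t_0]$, I would set $\lambda_0 = y_0$ and let $Y := C_{\lambda_0} = S_{12} \cap \{y - \lambda_0 x^3 = 0\}$, so that $Y \sim_{\QQ} \tfrac{3}{2}(-K_{S_{12}})$. Then $-K_{S_{12}} - uY \sim_{\QQ} (1-\tfrac{3u}{2})(-K_{S_{12}})$ has trivial Zariski decomposition on $[0, 2/3]$ and is not pseudo-effective beyond, giving $\tau(Y) = 2/3$. Using $(-K_{S_{12}})^2 = 2/3$ and $Y \cdot (-K_{S_{12}}) = 1$, a direct computation yields
\[
    S_{-K_{S_{12}}}(Y) = \int_0^{2/3}\left(1 - \tfrac{3u}{2}\right)^2 \dd u = \tfrac{2}{9},
\]
so the first term in the Abban--Zhuang bound is $1/S_{-K_{S_{12}}}(Y) = \tfrac{9}{2}$.

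For the local term, Theorem \ref{theorem:Kento-formula} with $N(u) = 0$ and $P(u)|_Y$ a divisor of degree $1 - 3u/2$ on the curve $Y$ simplifies to $h(u) = \tfrac{1}{2}(1 - \tfrac{3u}{2})^2$, whence
\[
    S(W^Y_{\bullet,\bullet}; \msp) = \frac{2}{(-K_{S_{12}})^2}\int_0^{2/3} \tfrac{1}{2}\left(1-\tfrac{3u}{2}\right)^2 \dd u = \tfrac{1}{3}.
\]
Since $\msp$ is smooth, $A_Y(\msp) = 1$, and Theorem \ref{theorem:Hamid-Ziquan} delivers
\[
    \delta_{\msp}(S_{12}) \geq \min\left\{\tfrac{9}{2}, 3\right\} = 3 > \tfrac{6}{5},
\]
as required.

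The main obstacle is verifying the hypothesis of Theorem \ref{theorem:Hamid-Ziquan} that $(S_{12}, Y)$ is purely log terminal at $\msp$, equivalently that $Y$ is irreducible and smooth at $\msp$. In the chart $\{x = 1\}$, the curve $Y$ is defined in the $(z,t)$-plane by $t^2 + z^3 + \lambda_0^4 + f(1,\lambda_0, z) = 0$, whose singularities satisfy $t = 0$ together with $3z^2 + f_z(1, \lambda_0, z) = 0$; this condition fails for generic $\msp$. For the remaining special points, and for those $\lambda_0$ at which $C_{\lambda_0}$ is globally reducible, my contingency is to replace $Y$ by an irreducible component through $\msp$ and redo the Zariski decomposition of $-K_{S_{12}} - uY$, which then acquires a nontrivial negative part supported on the other components; the analogous computation is still expected to produce a lower bound on $\delta_\msp(S_{12})$ exceeding $\tfrac{6}{5}$.
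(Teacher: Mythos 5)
Your route is genuinely different from the paper's. The paper argues by contradiction, choosing local coordinates at a putative non-lc point $\msp=\msp_x$, then performing a case analysis on the linear coefficients $a_2,a_3$ of the defining equation and applying inversion of adjunction and multiplicity estimates to auxiliary curves of degree $3$ or $4$ (including a delicate treatment of the reducible degree-$4$ case and, when $a_3\neq 0$, a blow-up at $\msp$). You instead propose to prove the stronger quantitative statement $\delta_\msp(S_{12})>\tfrac{6}{5}$ via the Abban--Zhuang flag theorem, taking $Y$ to be the member of the pencil $|\mcO_{S_{12}}(3)|$ through $\msp$. Your numerical computation is correct when that theorem applies: $(-K_{S_{12}})^2=\tfrac{2}{3}$, $Y\sim_\QQ\tfrac{3}{2}(-K_{S_{12}})$, $\tau(Y)=\tfrac{2}{3}$, $S_{-K}(Y)=\tfrac{2}{9}$, $S(W^Y_{\bullet,\bullet};\msp)=\tfrac{1}{3}$, so $\delta_\msp\geq\min\{\tfrac{9}{2},3\}=3$. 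In principle this is cleaner than the paper's case analysis.

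However, there is a genuine unresolved gap, and the contingency you offer does not close it. Theorem \ref{theorem:Hamid-Ziquan} needs $(S_{12},Y)$ to be plt at $\msp$, i.e.\ $Y$ to be smooth there; when $Y$ has a node or cusp at $\msp$, one has $A_Y(\msp)\leq 0$ and the bound collapses. In the chart $x=1$ your $Y$ is $t^2=g(z)$ with $g$ a monic cubic (the top coefficient comes from $z^3$ and is independent of $\lambda_0$), so for special $\lambda_0$ the discriminant of $g$ vanishes and $Y$ acquires exactly such a singularity at a smooth point of $S_{12}$ off $C_x$. Crucially, because $g$ has odd degree, $t^2-g(z)$ can never factor, so $Y$ is \emph{always irreducible}: your proposed fix of ``replacing $Y$ by an irreducible component through $\msp$'' never applies, and the only actual failure mode — $Y$ irreducible but singular at $\msp$ — is left unaddressed and merely asserted to be ``expected to'' work. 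To finish, at those finitely many bad $\lambda_0$ you would need a different flag (e.g.\ the exceptional divisor of the blow-up at $\msp$, which is essentially what the paper's $a_3\neq 0$ case does) or a different auxiliary curve family; as written, the argument proves the lemma only at points where $Y$ is smooth, which is not all of $S_{12}\setminus C_x$.
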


\begin{proof}
    Suppose that the log pair $(S_{12}, \lambda D)$ is not log canonical at some point $\msp = [p_1 : p_2 : p_3 : p_4]\in S_{12}\setminus C_x$. By the coordinate change 
    \begin{equation*}
        G(x,y,z,t) = F\left(x, y + \frac{p_2}{p_1^3}x^3, z + \frac{p_3}{p_1^4}x^4, t + \frac{p_4}{p_1^6}x^6\right)
    \end{equation*}
    we can assume that the surface $S_{12}$ is given by the quasihomogeneous polynomial $G(x,y,z,t) = 0$ and $\msp = [1:0:0:0]$. In the chart $U_x$ defined by $x = 1$ it is given by 
    \begin{align*}
        G(1,y,z,t) &= a_1y + a_2z + a_3t + a_4y^2 + a_5yz + a_6yt + a_7z^2 + a_8zt + t^2\\
        &+ a_9 y^3 + a_{10}y^2z + a_{11}y^2t + a_{12}yz^2 + z^3 + y^4 = 0
    \end{align*}
    where $a_1,\ldots, a_{12}$ are constants.

    We consider the linear terms of $G(1,y,z,t)$. We first assume that $a_2 = a_3 = 0$. Then $C_y$ is isomorphic to the variety given by
    \begin{equation*}
        a_7x^4z^2 + a_8x^2zt + t^2 + z^3 = 0
    \end{equation*}
    in $\PP(1,4,6)$. Then $\mult_{\msp}(C_y) = 2$ and $C_y$ is irreducible. Write
    \[
        D = \zeta_y C_y + \Psi_y
    \]
    where $\zeta_y$ is the nonnegative constant and $\Psi_y$ is an effective $\QQ$-divisor such that $C_y\not\subset \Supp(\Psi_y)$. By Lemma \ref{Foundation:inequality mult intersection} we have
    \[
        2\left(\frac{1}{\lambda} - 2\zeta_y\right) < \mult_{\msp}(C_y)\mult_{\msp}(D - \zeta_y C_y) \leq  C_y\cdot(D - \zeta_y C_y) = 1 - \frac{3}{2}\zeta_y.
    \]
    It implies that $\frac{4}{15} < \zeta_y$. Meanwhile, by Corollary \ref{Foundation:bound of k-basis cor} we have $\zeta_y \leq \frac{2}{9} + \epsilon_k$. It is impossible. Thus $a_2\neq 0$ or $a_3\neq 0$.
    
    Next we assume that $a_2\neq 0$ and $a_3 = 0$. Let $C$ be the curve given by $a_1xy + a_2z = 0$. Then $\mult_{\msp}(C) = 2$. We first consider the case that $C$ is irreducible. Write
    \[
        D = \zeta C + \Psi
    \]
    where $\zeta$ is a constant and $\Psi$ is effective $\QQ$-divisor such that $C\not\subset \Supp(\Psi)$. By Lemma \ref{Foundation:inequality mult intersection} we have
    \[
        2\left(\frac{1}{\lambda} - 2\zeta\right) < \mult_{\msp}(C)\mult_{\msp}(D - \zeta C) \leq C\cdot(D - \zeta C) = \frac{4}{3} - \frac{8}{3}\zeta.
    \]
    It implies that $\frac{1}{4} < \zeta$. Meanwhile, by Corollary \ref{Foundation:bound of k-basis cor} we have $\zeta \leq \frac{1}{6} + \epsilon_k$. It is impossible. Thus $C$ must be reducible. Then it is given by
    \begin{equation*}
        a_1xy + a_2z = (t + b_1y^2 + b_2yx^3)(t + b_3y^2 + b_4yx^3) = 0
    \end{equation*}
    where $b_1,\ldots, b_4$ are constants. Write
    \begin{equation*}
        C = L + R
    \end{equation*}
    where $L$ and $R$ are the curves that are given by $a_1xy + a_2z = t + b_1y^2 + b_2yx^3 = 0$ and $a_1xy + a_2z = t + b_3y^2 + b_4yx^3 = 0$ in $\PP(1,3,4,6)$, respectively. We have the following intersection numbers:
    \[
        L\cdot R = 2,\qquad L^2 = R^2 = -\frac{2}{3},\qquad -K_{S_{12}}\cdot L = -K_{S_{12}}\cdot R = \frac{2}{3}.
    \]
    We write
    \[
        D = lL + \Omega
    \]
    where $l$ is non-negative constant and $\Omega$ is an effective $\QQ$-divisor such that $L\not\subset \Supp(\Omega)$. From Theorem \ref{Foundation:bound of k-basis thm} we can obtain
    \[
        l\leq \frac{1}{(-K_{S_{12}})^2}\int_{0}^{\tau(L)} \vol(-K_{S_{12}} - uL) \dd u + \epsilon_k
    \]
    where $\epsilon_k$ is a small constant depending on $k$ such that $\epsilon_k \to 0$ as $k\to \infty$. Since
    \[
        -K_{S_{12}} -uL\equiv \left(\frac{1}{2} -u\right)L + \frac{1}{2}R
    \]
    and $R^2 < 0$, we have $\vol(-K_{S_{12}} - uL) = 0$ for $u\geq \frac{1}{2}$. Thus $\tau(L) = \frac{1}{2}$. We have
    \[
        (-K_{S_{12}} - uL)\cdot R = \frac{2}{3} - 2u.
    \]
    It implies that $-K_{S_{12}} - uL$ is nef for $u\leq \frac{1}{3}$. Moreover the Zariski decomposition of $-K_{S_{12}} - uL$ is 
    \[
        \left(\frac{1}{2} - u\right)\left(L + 3R\right) + (-1 + 3u)R
    \]
    for $\frac{1}{3}\leq u \leq \frac{1}{2}$ where the left part is the positive part. Thus we have
    \[
        \vol(-K_{S_{12}} - uL) = \left\{
        \begin{array}{ll}
            (-K_{S_{12}} - uL)^2 = \frac{2}{3} - \frac{4}{3}u - \frac{2}{3}u^2\qquad \textrm{for~} u\leq \frac{1}{3},\\ \\
            \left(\frac{1}{2} - u\right)^2\left(L + 3R\right)^2 = \frac{28}{3}(\frac{1}{2}-u)^2 \qquad \textrm{for~} \frac{1}{3}\leq u \leq \frac{1}{2}.
        \end{array}\right.
    \]
        From this we have
    \[
        \int_{0}^{\tau(L)} \vol(-K_X - uL)\dd u = \int_{0}^{\frac{1}{3}}\frac{2}{3} - \frac{4}{3}x - \frac{2}{3}x^2 \dd u + \int_{\frac{1}{3}}^{\frac{1}{2}} \frac{28}{3}\left(\frac{1}{2}-x\right)^2 \dd u = \frac{25}{162}.
    \]
    Thus we obtain a bound $l\leq \frac{25}{108} + \epsilon_k$. Since $\lambda l\leq 1$ the log pair $(S_{12}, L + \lambda\Omega)$ is not log canonical at $\msp$. By Corollary \ref{Inversion of adjunction for smooth point} we have
    \[
        \frac{1}{\lambda} < \mult_{\msp}(\Omega|_L) \leq \Omega\cdot L = (D - lL)\cdot L = \frac{2}{3} + \frac{2}{3}l.
    \]
    Then we have $l > \frac{1}{4}$. It is impossible. Thus $a_3\neq 0$.

    Finally we assume that $a_3\neq 0$. Then $y$ and $z$ are local coordinates at $\msp$. Let $\phi\colon Y\to X$ be a blow-up at $\msp$. Then we have
    \[
        \phi^*(D) \equiv \widetilde{D} + \eta E
    \]
    where $\widetilde{D}$ is the strict transform of $D$, $\eta$ is a constant and $E$ is the exceptional divisor of $\phi$. We have
    \[
        K_Y + \lambda \widetilde{D} + (\lambda\eta - 1)E \equiv \phi^*(K_X + \lambda D).
    \]
    By the above equation the log pair $(Y, \lambda \widetilde{D} + (\lambda\eta - 1)E)$ is not log canonical at some point $\msq\in E$. There is the effective divisor $M$ given by $\zeta_1 xy + \zeta_2 z = 0$ such that $\widetilde{M}$ passes through the point $\msq$, where $\zeta_1$ and $\zeta_2$ are constants, where $\widetilde{M}$ is the strict transform of $M$. Then there are two cases. First we assume that $M$ is reducible. Then $M$ is the sum of two curves $R_1$ and $R_2$ that are given by $\zeta_1xy + \zeta_2z = t + c_1y^2 + c_2yx^3 = 0$ and $\zeta_1xy + \zeta_2z = t + c_3y^2 + c_4yx^3 + c_5x^6 = 0$ in $\PP(1,3,4,6)$, respectively. In this situation a procedure to obtain a contradiction is similar to the case that $\msp\in L$. Thus we can see that $M$ is irreducible.
    
    We write 
    \[
        D = mM + \Delta
    \]
    where $m$ is a non-negative constant and $\Delta$ is an effective $\QQ$-divisor such that $M\not\subset \Supp(\Delta)$. By Corollary \ref{Foundation:bound of k-basis cor} we have $\lambda m\leq 1$. Thus the log pair $(Y, \widetilde{M} + \lambda \widetilde{\Delta} + (\lambda \eta - 1)E)$ is not log canonical at $\msq$. By Corollary \ref{Foundation:mult} and the inversion of adjunction formula we have the inequality
    \[
        1 < \mult_{\msq}((\lambda \widetilde{\Delta} + (\lambda \eta - 1)E)|_{\widetilde{M}})\leq (\lambda \widetilde{\Delta} + (\lambda \eta - 1)E)\cdot \widetilde{M}.
    \]
    We have the following intersection numbers.
    \[
        \widetilde{\Delta}\cdot \widetilde{M} = (\widetilde{D} - a\widetilde{M})\cdot \widetilde{M} = \frac{4}{3} - \eta - \frac{5}{3}m,\qquad E\cdot \widetilde{M} = 1.
    \]
    Then the above inequality implies that
    \[
        1 < \lambda\left(\frac{4}{3} - \eta - \frac{5}{3}m\right) + (\lambda \eta - 1).
    \]
    Thus we have $-\frac{1}{5} > m$. It is impossible.
\end{proof}

We write 
\begin{equation*}
    D = aC_x + \Delta
\end{equation*}
where $a$ is a non-negative constant and $\Delta$ is an effective $\QQ$-divisor such that $C_x\not\subset \Supp(\Delta)$. By Corollary \ref{Foundation:bound of k-basis cor} we have 
$a \leq \frac{2}{3} + \epsilon_k$ where $\epsilon_k$ is a small constant depending on $k$ such that $\epsilon_k\to 0$ as $k\to \infty$.
\begin{lemma}
    The log pair $(S_{12}, \lambda D)$ is log canonical along $C_x\setminus \Sing(S_{12})$.
\end{lemma}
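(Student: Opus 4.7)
The plan is to argue by contradiction using the inversion of adjunction Lemma \ref{Inversion of adjunction for smooth point} with $C = C_x$. First I would record the relevant intersection numbers. Since the index is $I = 2$ and $C_x$ is cut out by a degree-$1$ section, $-K_{S_{12}} \sim 2C_x$ and $C_x^2 = \frac{12}{1 \cdot 3 \cdot 4 \cdot 6} = \frac{1}{6}$. Consequently $D \cdot C_x = 2C_x^2 = \frac{1}{3}$ for any $\QQ$-divisor $D \sim_{\QQ} -K_{S_{12}}$.

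Next I need to verify that $C_x$ is smooth at every point of $C_x \setminus \Sing(S_{12})$, so that the hypotheses of Lemma \ref{Inversion of adjunction for smooth point} are satisfied. The curve $C_x$ is cut out in $\PP(3,4,6)$ by $t^2 + z^3 + y^4 = 0$, whose partials $2t$, $3z^2$, $4y^3$ vanish simultaneously only at $y=z=t=0$, which is not a point of the weighted projective space. Hence $C_x$ is quasi-smooth, and the only non-quasi-smooth singularities of $C_x$ can occur at points where $C_x$ meets the singular locus of $\PP(3,4,6)$. A direct check identifies these with the two $\frac{1}{3}(1,1)$ points on the line $\{z=0\}$ and the $\frac{1}{2}(1,1)$ point on the line $\{y=0\}$, which are exactly $\Sing(S_{12}) \cap C_x$. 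Therefore at every $\msp \in C_x \setminus \Sing(S_{12})$ the curve $C_x$ is smooth.

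Now suppose, for contradiction, that $(S_{12}, \lambda D)$ is not log canonical at some such point $\msp$. Write $\lambda D = (\lambda a)C_x + \lambda \Delta$. Using the bound $a \leq \frac{2}{3} + \epsilon_k$ from Corollary \ref{Foundation:bound of k-basis cor}, the coefficient of $C_x$ is
$$
\lambda a \leq \tfrac{6}{5}\bigl(\tfrac{2}{3} + \epsilon_k\bigr) = \tfrac{4}{5} + \tfrac{6}{5}\epsilon_k \leq 1
$$
for $k \gg 0$. So Lemma \ref{Inversion of adjunction for smooth point} applies to $(S_{12}, \lambda D)$ at $\msp$, yielding $1 < \lambda\, C_x \cdot \Delta$, i.e.\ $C_x \cdot \Delta > \frac{5}{6}$.

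This is contradicted by the direct computation
$$
C_x \cdot \Delta = C_x \cdot D - a\, C_x^2 = \tfrac{1}{3} - \tfrac{a}{6} \leq \tfrac{1}{3} < \tfrac{5}{6},
$$
which completes the argument. I do not expect any real obstacle: the whole proof is essentially one application of inversion of adjunction combined with the $k$-basis coefficient bound, and the numerical margin ($\tfrac{1}{3}$ versus $\tfrac{5}{6}$) is quite comfortable. The one point that requires brief justification is the smoothness of $C_x$ at all smooth points of $S_{12}$ on $C_x$, handled in the second paragraph.
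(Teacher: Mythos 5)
Your proof is correct and follows the same route as the paper: apply Lemma \ref{Inversion of adjunction for smooth point} to $C_x$ at $\msp$ (using the coefficient bound from Corollary \ref{Foundation:bound of k-basis cor} to ensure $\lambda a \leq 1$) and derive a contradiction from $C_x \cdot \Delta = \tfrac{1}{3} - \tfrac{a}{6}$. The extra paragraph verifying smoothness of $C_x$ away from $\Sing(S_{12})$ is a welcome justification that the paper leaves implicit, but the argument itself is identical.
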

\begin{proof}
Suppose the log pair $(S_{12}, \lambda D)$ is not log canonical at some point $\msp\in C_x\setminus\Sing(S_{12})$. Note that this is a smooth point of the surface $S_{12}$.

Since $\lambda a <1$, applying Lemma \ref{Inversion of adjunction for smooth point} to $C_x$, we get
\begin{equation*}
  \frac{1}{\lambda}<\mult_{\msp}(C_x\cdot \Delta) \leq C_x \cdot \Delta = \frac{1}{3} - \frac{1}{6}a.
\end{equation*}
This implies that $a<0$ and therefore this is impossible.
\end{proof}

\begin{lemma}
    The log pair $(S_{12}, \lambda D)$ is log canonical at $\msp_z\in\mathrm{Sing}(S_{12})$.
\end{lemma}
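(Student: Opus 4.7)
The plan is to argue by contradiction, mimicking the proof of the previous lemma but invoking the singular-point version of inversion of adjunction at $\msp_z$. Suppose that $(S_{12},\lambda D)$ is not log canonical at $\msp_z$; since every singular point of $S_{12}$ lies on $C_x$, we have $\msp_z\in C_x$, and $C_x$ is irreducible by the earlier observation.

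First, I would verify that the log pair $(S_{12},C_x)$ is purely log terminal at $\msp_z$. This reduces to checking that $C_x$ is smooth at $\msp_z$: indeed, $C_x$ is the curve $t^2+z^3+y^4=0$ in $\PP(3,4,6)$, and in the chart $t\neq 0$ the partial $\partial_z(t^2+z^3+y^4)$ does not vanish at the relevant point. Since the $\frac{1}{2}(1,1)$ quotient has discrepancy $0$ and a smooth curve through it pulls back as $\pi^*C_x = \widetilde{C}_x + \tfrac{1}{2}E$ on the minimal resolution, the log discrepancy of the exceptional divisor $E$ with respect to $(S_{12},C_x)$ equals $\tfrac{1}{2}>0$, which together with the smoothness of $C_x$ at $\msp_z$ delivers plt.

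Second, recall that $D=aC_x+\Delta$ with $a\leq \tfrac{2}{3}+\epsilon_k$, so $\lambda a \leq \tfrac{4}{5}+\tfrac{6\epsilon_k}{5}\leq 1$ for $k\gg 0$. Applying Lemma \ref{Inversion of adjunction for singular point} to the non-log-canonical pair $(S_{12},\lambda a C_x+\lambda\Delta)$ at the $\frac{1}{2}(1,1)$ point $\msp_z$ yields $\tfrac{1}{2} < C_x\cdot(\lambda\Delta) = \lambda\,(C_x\cdot\Delta)$. Direct intersection computations give $C_x^2=\tfrac{1}{6}$ and $-K_{S_{12}}\cdot C_x=\tfrac{1}{3}$ (using $-K_{S_{12}}\sim_\QQ 2C_x$), whence
\[
C_x\cdot\Delta \;=\; C_x\cdot D - a\,C_x^2 \;=\; \frac{2-a}{6}.
\]
Substituting into the inversion-of-adjunction inequality produces $\tfrac{1}{2} < \tfrac{6}{5}\cdot\tfrac{2-a}{6}=\tfrac{2-a}{5}$, which forces $a<-\tfrac{1}{2}$, contradicting $a\geq 0$.

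The main subtlety is the first step: confirming that $(S_{12},C_x)$ is plt at the $\frac{1}{2}(1,1)$ quotient point, since this hypothesis is what activates the singular-point inversion of adjunction and replaces the smooth-point threshold $\tfrac{1}{\lambda}$ by $\tfrac{1}{n}=\tfrac{1}{2}$. Once the plt condition is established, the rest of the argument is essentially the same numerical manipulation as in the smooth-point case of the preceding lemma, and the bound $a\leq \tfrac{2}{3}+\epsilon_k$ is used only to guarantee $\lambda a\leq 1$.
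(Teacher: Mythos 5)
Your proof is correct and follows essentially the same approach as the paper: apply the singular-point inversion-of-adjunction lemma at the $\tfrac{1}{2}(1,1)$ point on $C_x$ to get $\tfrac{1}{2\lambda}<C_x\cdot\Delta=\tfrac{1}{3}-\tfrac{a}{6}$, forcing $a<0$. The only addition is your explicit check that $(S_{12},C_x)$ is purely log terminal at $\msp_z$ via smoothness of $C_x$ there, which the paper invokes implicitly by simply noting that $C_x$ is smooth at $\msp_z$.
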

\begin{proof} The point $\msp_z$ is a singular point of $S_{12}$ of type $\frac{1}{2}(1,1)$. Suppose that $(S_{12}, \lambda D)$ is not log canonical at $\msp_z$. Since $\lambda a<1$ and $C_x$ is smooth at $\msp_z$, using Lemma \ref{Inversion of adjunction for singular point}, we have
        \begin{equation*}
            \frac{1}{2\lambda}<\mult_{\msp}(C_x\cdot \Delta) \leq (C_x \cdot \Delta) = \frac{1}{3} - \frac{1}{6}a. 
        \end{equation*}This implies that $a<0$ which is impossible, thus proving our claim.
\end{proof}

\begin{lemma}
    The log pair $(S_{12}, \lambda D)$ is log canonical at $\msp_y\in\mathrm{Sing}(S_{12})$.
\end{lemma}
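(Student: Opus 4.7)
The plan is to mimic the strategy used for $\msp_z$, but since the direct inversion of adjunction with $C_x$ only yields $a<\frac{1}{3}$—insufficient to contradict the cool-divisor bound $a\leq\frac{2}{3}+\epsilon_k$—I will follow it up with a weighted blow up at $\msp_y$ and a case analysis on the resolution. Assume for contradiction that $(S_{12},\lambda D)$ fails to be log canonical at $\msp_y$. Applying Lemma \ref{Inversion of adjunction for singular point} with $C=C_x$ (which is smooth at $\msp_y$, so that $(S_{12},C_x)$ is purely log terminal there) yields $\frac{1}{3\lambda}<C_x\cdot\Delta=\frac{1}{3}-\frac{a}{6}$, hence $a<\frac{1}{3}$.

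Now perform the $(1,1)$-weighted blow up $\phi\colon\widetilde{S}\to S_{12}$ at $\msp_y$, with exceptional divisor $E$ satisfying $E^2=-3$ and $A_{S_{12}}(E)=\frac{2}{3}$. A Zariski decomposition computation for $\phi^*(-K_{S_{12}})-uE$ (nef on $[0,\frac{1}{3}]$ and with negative part $(6u-2)\widetilde{C}_x$ for $u\in[\frac{1}{3},\frac{2}{3}]$, up to the pseudo-effective threshold $\tau=\frac{2}{3}$) gives $S_{-K_{S_{12}}}(E)=\frac{1}{3}$. Hence the cool-divisor bound yields $m:=\ord_E(D)\leq\frac{1}{3}+\epsilon_k$, so the coefficient $c=\lambda m+\frac{1}{3}$ of $E$ in the log pullback of $\lambda D$ is less than $1$ for $k\gg 0$. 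By the log pullback formula, the pair $(\widetilde{S},\lambda\widetilde{D}+cE)$ must fail to be log canonical at some $\msq\in E$. I split into two cases. If $\msq=\widetilde{C}_x\cap E$, I raise the coefficient of $\widetilde{C}_x$ from $\lambda a<1$ to $1$ and apply inversion of adjunction on $\widetilde{C}_x$, using the global bound $(\widetilde{\Delta}\cdot\widetilde{C}_x)_{\msq}\leq\widetilde{\Delta}\cdot\widetilde{C}_x=\frac{1}{3}-\frac{a}{6}-m_\Delta$ (with $m_\Delta=m-\frac{a}{3}$), which after substituting $c=\lambda m+\frac{1}{3}$ rearranges to $a>\frac{4}{3}$, a contradiction. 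If instead $\msq\in E\setminus\widetilde{C}_x$, I blow up $\msq$ with exceptional $F$ so that $A_{\widetilde{S}}(F)=2$; the log discrepancy $A_{(\widetilde{S},cE+\lambda\widetilde{\Delta})}(F)=2-c-\lambda\mult_{\msq}(\widetilde{\Delta})$ being negative gives $\mult_{\msq}(\widetilde{\Delta})>\frac{2-c}{\lambda}$, and combining with $\mult_{\msq}(\widetilde{\Delta})\leq\widetilde{\Delta}\cdot E=3m-a$ and $m\leq\frac{1}{3}+\epsilon_k$ produces $a<-\frac{1}{18}+4\epsilon_k<0$ for $k\gg 0$.

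The main obstacle is ensuring in the second case that the failure of log canonicity at $\msq$ really is witnessed by the single blow up $F$ and not by a divisor lying over a further blow up; this should be handled by an additional argument, for instance by isolating an irreducible component of $\widetilde{\Delta}$ passing through $\msq$ and iterating inversion of adjunction on its strict transform using the associated cool-divisor bound. The global constraints $a<\frac{1}{3}$ and $m\leq\frac{1}{3}+\epsilon_k$ force the iteration to terminate in finitely many steps.
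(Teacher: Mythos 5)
Your preliminary inversion of adjunction giving $a<\frac13$ is correct but unused in the sequel, and Case 1 ($\msq\in\widetilde C_x$) coincides with the paper's argument. The gap is exactly where you flagged it, and it is a real gap, not a technicality. In Case 2, failure of log canonicity of $\big(\widetilde S_{12},\,cE+\lambda\widetilde\Delta\big)$ at the smooth point $\msq$ does \emph{not} force the single ordinary blow-up $F$ to have negative log discrepancy; it only forces $\mult_{\msq}\big(cE+\lambda\widetilde\Delta\big)>1$, i.e. $\mult_{\msq}(\widetilde\Delta)>\frac{1-c}{\lambda}$, not your $\frac{2-c}{\lambda}$. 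For instance $\big(\AAAA^2,\frac54 L\big)$ is not log canonical at the origin of a line $L$, yet the blow-up $F$ there has $A_{(\AAAA^2,\frac54L)}(F)=\frac34>0$: the non-lc place is $L$ itself, not $F$. Substituting the correct bound $\frac{1-c}{\lambda}$ into your chain of inequalities gives only $4m-a>\frac{2}{3\lambda}=\frac59$, which is perfectly compatible with $m\leq\frac13+\epsilon_k$ and $a\geq 0$, so no contradiction results.

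What the paper does to close this is not the open-ended iteration you gesture at, but one more blow-up together with an explicitly chosen mobile curve. Blowing up $\msq$ with exceptional $F$, the log pullback fails to be log canonical at some $\mst\in F$. If $\mst\in\widehat E\cap F$, inversion of adjunction on $\widehat E$ gives $\lambda(4m-a)>\frac53$, so $m>\frac{25}{72}>\frac13$, impossible. If $\mst\in F\setminus\widehat E$, inversion of adjunction on $F$ gives $n:=\mult_{\msq}(\widetilde D)>\frac1\lambda$; one then chooses $M$ from the pencil $\mathcal L$ spanned by $\zeta_1x^3y+\zeta_2x^2z+t$ and $x^6$, which has finite base locus so $M\not\subset\Supp D$, and so that $\msp_y\in M$, $\msq\in\widetilde M$, $\mst\in\widehat M$. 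The nonnegativity $0\leq\widehat D\cdot\widehat M=2-3m-2n$, combined with $n>\frac1\lambda$ and the bound $m>\frac{1}{3\lambda}$ (which \emph{does} follow from non-lc-ness at $\msq$ by raising the coefficient of $E$ and restricting), forces $\lambda>\frac32$, a contradiction. The missing ingredient in your proposal is precisely this auxiliary mobile linear system tracking the infinitely near points; without it the multiplicity information from the first blow-up alone is too weak.
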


\begin{proof}
    Without loss of generality, consider the point $\msp_y$ which is a singular point of $S_{12}$ of type $\frac{1}{3}(1,1)$. Suppose that $(S_{12}, \lambda D)$ is not log canonical at $\msp_y$.

    Let $\pi_1\colon \widetilde{S}_{12} \to S_{12}$ be the weighted blow-up at $\msp_y$ with weight $(1,1) = (x,z)$. Then we have
\begin{equation*}
        K_{\widetilde{S}_{12}} \equiv \pi_1^*(K_{S_{12}}) - \dfrac{1}{3}E,\qquad
        \widetilde{D}\equiv\pi_1^*(D)-mE,
        \qquad
        \widetilde{C}_{x} \equiv \pi_1^*(C_{x})-\dfrac{1}{3}E
\end{equation*}
where $E$ is the exceptional divisor of $\pi$ and $m$ is a non-negative constant, and $\widetilde{C}_x$ and $\widetilde{\Delta}$ denote the strict transforms of $C_x$ and $\Delta$ on $\widetilde{S}_{12}$, respectively. Note that we denote $\widetilde{D}=a\widetilde{C}_x+\widetilde{\Delta}$.

The log pullback is given by 
\begin{equation*}
  K_{\widetilde{S}_{12}} + \lambda a\widetilde{C}_x + \lambda\widetilde{\Delta} + \left(\lambda m+\frac{1}{3}\right)E\equiv \pi_1^*(K_{S_{12}} + \lambda D).
\end{equation*} Thus, we have that $(\widetilde{S}_{12},\lambda a\widetilde{C}_x+\lambda\widetilde{\Delta} + \left(\lambda m+\frac{1}{3}\right)E)$ is not log canonical at a point $\msq\in E$. Let us bound the value of $m$. 

Consider
\begin{equation*}
    \pi_1^*(-K_{S_{12}}) - uE\equiv \pi_1^*(2C_x) - uE\equiv 2\widetilde{C}_x+\left(\frac{2}{3}-u\right)E.
\end{equation*}
Since $\widetilde{C}_x^2 = -\frac{1}{6} < 0$, $\tau(E) = \frac{2}{3}$. This divisor is nef when $u\in [0,\frac{1}{3}]$. When $u\in [\frac{1}{3},\frac{2}{3}]$, we have the following Zariski decomposition
$$
\pi_1^*(-K_{S_{12}}) - uE=\left((4-6x)\widetilde{C}_x+\left(\frac{2}{3}-u\right)E\right)+(6u-2)\widetilde{C}_x.
$$

Therefore,
\begin{equation*}
    \vol(\pi_1^*(-K_{S_{12}}) - uE) = 
    \begin{cases}
        (\pi_1^*(-K_{S_{12}}) - uE)^2 = \dfrac{2}{3} -3u^2 & \textrm{for~} 0\leq u \leq \dfrac{1}{3}\\[10pt]
        \left(\dfrac{2}{3} - u\right)^2(6\widetilde{C}_x + E)^2 = 3\left(\dfrac{2}{3} - u\right)^2 & \textrm{for~} \dfrac{1}{3} \leq u \leq \dfrac{2}{3}.
    \end{cases}
\end{equation*}
Then, by Theorem \ref{Foundation:bound of k-basis thm},
\begin{align*}
   m &\leq \frac{1}{(-K_{S_{12}})^2}\int_0^{\frac{2}{3}}\vol(\pi_1^*(-K_{S_{12}}) - uE) \dd u+\epsilon_k\\
   &= \frac{3}{2}\left(\int_0^{\frac{1}{3}}\frac{2}{3} - 3u^2\dd u + \int_{\frac{1}{3}}^{\frac{2}{3}} 3\left(\frac{2}{3} - u\right)^2 \dd u\right) + \epsilon_k\\
   &= \frac{1}{3}+\epsilon_k.
\end{align*}
This implies that $\lambda m+\frac{1}{3}<1$. We will now look at various possible positions of the point $\msq$. 
Suppose that $\msq\in \widetilde{C}_x$. Since $\lambda a < 1$, by Lemma \ref{Inversion of adjunction for smooth point} we have
\begin{equation*}
  1<\left(\lambda\widetilde{\Delta} + \left(\lambda m +\frac{1}{3} \right)E\right)\cdot \widetilde{C}_x = \lambda\left(\frac{1}{3} + \frac{a}{6}\right) + \frac{1}{3}.
\end{equation*}
It implies that $\lambda (a + 2) > 4$ and this is a contradiction. Thus $\msq\notin \widetilde{C}_x$. This implies that the log pair
\begin{equation*}
  \left(\widetilde{S}_{12}, \lambda\widetilde{\Delta} + \left(\lambda m+\frac{1}{3}\right)E \right)
\end{equation*}
is not log canonical at $\msq\in E\setminus \widetilde{C}_x$.

Let $\pi_2\colon \widehat{S}_{12} \to \widetilde{S}_{12}$ be the blow-up at $\msq$ with exceptional divisor $F$. Let $\widehat{\Delta}$ and $\widehat{E}$ denote the strict transforms of $\Delta$ and $E$, in $\widehat{S}_{12}$ respectively. Then we have
\begin{equation*}
    K_{\widehat{S}_{12}} \equiv\pi^*(K_{S_{12}})-\frac{1}{3}\widehat{E}+\frac{2}{3}F,\quad
    \widehat{D}\equiv\pi^*(D)-m\widehat{E}-(m+n)F,\quad
    \widehat{C}_x\equiv\pi^*(C_x)-\frac{1}{3}\widehat{E}-\frac{1}{3}F,
\end{equation*}
where $n=\mult_{\msq}(\widetilde{D})$ and $\pi=\pi_1 \circ \pi_2$.

Since 
$$
0\leq \widehat{\Delta}\cdot \widehat{E}=3m-n-a,
$$ we have that $\lambda (m+n)-\frac{2}{3}=\lambda(4m-a)-\frac{2}{3}<1$. Therefore, the log pair 
\begin{equation*}
  \left(\widehat{S}_{12},\lambda a\widehat{C}_x+ \lambda\widehat{\Delta}+\left(\lambda m+ \frac{1}{3}\right)\widehat{E} +  \left(\lambda (m+n)-\frac{2}{3}\right)F\right)
\end{equation*}
is not log canonical at some point $\mst\in F$.  We will now look at different positions of the point $\mst$. If $\mst\in \widehat{E}\cap F$ then applying Lemma \ref{Inversion of adjunction for smooth point} to $\widehat{E}$, we get 
\begin{equation*}
  1<\left(\lambda \widehat{\Delta}+\left(\lambda(m+n)-\frac{2}{3}\right)F\right)\cdot \widehat{E}=\lambda(4m-a)-\frac{2}{3}.
\end{equation*}
This implies that  $m > \frac{5}{12\lambda} + \frac{a}{4}$. This is a contradiction and thus $\mst\in F\setminus \widehat{E}$. Then the log pair 
\begin{equation*}
  \left(\widehat{S}_{12}, \lambda\widehat{\Delta} +  \left(\lambda(m+n)-\frac{2}{3}\right)F\right)
\end{equation*}
is not log canonical at $\mst$. Since $\left(\lambda(m+n)-\frac{2}{3}\right)<1$, using Lemma \ref{Inversion of adjunction for smooth point}, we get 
\[
\frac{1}{\lambda}<\widehat{\Delta}\cdot F=n.
\]

Consider the linear system $\mathcal{L}$ generated by $\alpha(\zeta_1x^3y + \zeta_2x^2z + t) + \beta x^6 = 0$ where $[\alpha:\beta]\in \PP^1$. Since the base locus of $\mcL$ is a finite points set, there is a member $M\in \mcL$ such that $M\not\subset \Supp(D)$. Let $D=\nu M+\Delta$ where $\Delta$ is an effective divisor such that $M \not\subset \mathrm{Supp}(\Delta)$ and $\nu$ is a non-negative constant. Note that $\msp_y\in M$ and we can choose $M$ such that $\widetilde{M}\in \widetilde{S}_{12}$ passes through the point $\msq \in E\setminus \widetilde{C}_x$ and $\widehat{M} \in \widehat{S}_{12}$ passes through the point $\mst\in F \setminus \widehat{E}$.
We then have
\begin{equation*}
    0\leq \widehat{\Delta}\cdot \widehat{M} = \widetilde{\Delta}\cdot \widetilde{M} - 2n = (\widetilde{D} - \widetilde{C}_x)\cdot \widetilde{M} -2n = 2 - 3m - 2n.
\end{equation*}
This implies that $2 > \frac{2}{\lambda} + 3m$. However we have $m > \frac{1}{3\lambda}$. It is impossible, thus proving our claim. 

\end{proof}

This completes the proof of Proposition \ref{Prop3}, which implies the following.
\begin{corollary}
$\delta(S_{12})\geq \frac{6}{5}.$
\end{corollary}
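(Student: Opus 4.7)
The plan is to deduce the corollary as an essentially formal consequence of Proposition \ref{Prop3} via the basis-type divisor characterization of the stability threshold. Recall the Fujita--Odaka/Blum--Jonsson characterization (\cite{FujitaOdaka,Blum}): for a Fano variety $X$ with klt singularities,
\begin{equation*}
\delta(X) = \lim_{k\to\infty} \delta_k(X), \qquad \delta_k(X) = \inf_{D}\, \mathrm{lct}(X, D),
\end{equation*}
where the infimum is taken over all $k$-basis type $\mathbb{Q}$-divisors $D\sim_{\mathbb{Q}} -K_X$. This is precisely the framework set up in the discussion at the start of Section \ref{section:K-stability} and adopted throughout \cite{delta-inv-of-sing-dP}.

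The idea is then the following. Proposition \ref{Prop3} asserts that, for $\lambda = 6/5$ and any $k$-basis type anticanonical $\mathbb{Q}$-divisor $D$ on $S_{12}$ with $k\gg 0$, the log pair $(S_{12}, \lambda D)$ is log canonical. By definition of the log canonical threshold, this is equivalent to
\begin{equation*}
\mathrm{lct}(S_{12}, D) \geq \frac{6}{5}
\end{equation*}
for every such $D$. Taking the infimum over all $k$-basis type divisors, I obtain $\delta_k(S_{12}) \geq 6/5$ for all $k\gg 0$. Passing to the limit $k\to\infty$ yields $\delta(S_{12}) \geq 6/5$, which is exactly the claim of the corollary.

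Since this is a clean formal consequence of the proposition, there is no genuine obstacle to overcome: all of the work has been done in establishing Proposition \ref{Prop3}. The only mild subtlety worth flagging is that the proof of Proposition \ref{Prop3} repeatedly used bounds of the form $a \leq \tfrac{2}{3}+\epsilon_k$, $m \leq \tfrac{1}{3}+\epsilon_k$, etc., coming from Theorem \ref{Foundation:bound of k-basis thm} and Corollary \ref{Foundation:bound of k-basis cor}. These $\epsilon_k$ corrections are harmless in the limit: the log canonical conclusion of Proposition \ref{Prop3} holds for all sufficiently large $k$, and the limit characterization of $\delta$ absorbs these error terms automatically.
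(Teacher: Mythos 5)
Your proof is correct and is exactly the intended deduction; the paper itself leaves it implicit (saying only that Proposition~\ref{Prop3} ``implies the following''), and the explicit route you spell out via $\delta(S_{12})=\lim_k \delta_k(S_{12})$ and $\delta_k=\inf_D \mathrm{lct}(S_{12},D)$ over $k$-basis type divisors is the standard Fujita--Odaka/Blum--Jonsson mechanism the paper is invoking. Your remark about the $\epsilon_k$ terms being absorbed in the limit also matches how the analogous corollary for $S_{22}$ is handled later in the paper.
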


\subsection{$S_{15}$ in $\PP(1,4,5,7)$}

Let $S_{15}$ be a quasi-smooth hypersurface in $\PP(1,4,5,7)$ of degree $15$. By suitable change of coordinates, we can assume that $S_{15}$ is given by a quasihomogeneous polynomial
\begin{equation*}
  t^2x + ty^2 + z^3 + xf(x,y,z) = 0
\end{equation*}
where $f(x,y,z)$ is a quasihomogeneous polynomial of degree $14$. The surface $S_{15}$ is singular at $\msp_y:=[0:1:0:0]$ of type $\frac{1}{4}(1,1)$ and at $\msp_t:=[0:0:0:1]$ of type $\frac{1}{7}(4,5)$. Observe that both singular points lie on the curve $C_x$. Note that $C_x$ is isomorphic to the variety given by $$ty^2+z^3=0$$ in $\PP(4,5,7)$. This shows that the affine cone of the curve $C_x$ is only singular at $\msp_t$ except the vertex and is smooth at $\msp_y$. 

Let $D$ be an anticanonical $\QQ$-divisor of $k$-basis type on $S_{15}$ with $k\gg 0$. We write 
\begin{equation*}
    D = a C_x + \Delta
\end{equation*}
where $a$ is non-negative rational number and $\Delta$ is an effective $\QQ$-divisor such that $C_x\not\subset \Supp(\Delta)$. By Corollary \ref{Foundation:bound of k-basis cor} we have $a\leq \frac{3}{4}$. We set $\lambda = \frac{11}{10}$.

\begin{proposition}\label{Proposition: 4}
    The log pair $(S_{15}, \lambda D)$ is log canonical.
\end{proposition}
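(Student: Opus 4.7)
The strategy is to imitate the proof of Proposition~\ref{Prop3}: assume $(S_{15},\lambda D)$ fails to be log canonical at some point $\msp$ and rule out each possible location of $\msp$, namely (i) $\msp\in S_{15}\setminus C_x$, (ii) $\msp\in C_x\setminus\Sing(S_{15})$, (iii) $\msp=\msp_y$, and (iv) $\msp=\msp_t$. The relevant numerics are $(-K_{S_{15}})^2=\tfrac{3}{7}$, $C_x^2=\tfrac{3}{28}$, and $(-K_{S_{15}})\cdot C_x=\tfrac{3}{14}$, so $C_x\cdot\Delta=\tfrac{3}{14}-\tfrac{3a}{28}$. Together with the bound $\lambda a\leq\tfrac{11}{10}\cdot\tfrac{3}{4}+\epsilon_k<1$ valid for $k\gg 0$, this ensures the hypotheses of the inversion-of-adjunction lemmas hold with $C=C_x$.

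Case (i) follows the template of the $S_{12}$ argument: after a quasi-homogeneity-preserving coordinate change placing $\msp$ at $[1{:}0{:}0{:}0]$, we analyze the local equation in the chart $x=1$ by subcases on which of the linear coefficients of $y,z,t$ vanish. In each subcase we locate an auxiliary curve $M$ through $\msp$—cut out by a suitable linear combination of low-degree weighted monomials—write $D=\nu M+\Psi$, bound $\nu$ via Corollary~\ref{Foundation:bound of k-basis cor} (or Theorem~\ref{Foundation:bound of k-basis thm} with a Zariski-decomposition computation when $M$ is reducible), and apply Lemmas~\ref{Foundation:mult} and~\ref{Inversion of adjunction for smooth point} to reach a contradiction. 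Case (ii) is immediate: Lemma~\ref{Inversion of adjunction for smooth point} yields $\tfrac{1}{\lambda}<C_x\cdot\Delta=\tfrac{3}{14}-\tfrac{3a}{28}$, which forces $a<0$. Case (iii) is equally short: $C_x$ is smooth at $\msp_y$, so $(S_{15},C_x)$ is plt there and Lemma~\ref{Inversion of adjunction for singular point} forces $a<0$ in the same way.

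Case (iv) is the main obstacle. Solving $t^2x+ty^2+z^3+xf=0$ for $x$ in the chart $t=1$ shows that $C_x$ lifts to the cuspidal curve $y^2+z^3=0$ in the $\mu_7$-orbifold cover at $\msp_t$, so $(S_{15},C_x)$ fails to be plt there and Lemma~\ref{Inversion of adjunction for singular point} cannot be invoked directly. The plan is to perform the weighted blow-up $\pi_1\colon\widetilde{S}_{15}\to S_{15}$ at $\msp_t$ with weights $(4,5)$ in $(y,z)$, extracting a single exceptional divisor $E$ with $A_{S_{15}}(E)=\tfrac{9}{7}$ and $E^2=-\tfrac{7}{20}$, together with residual quotient singularities on $E$. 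Writing $\widetilde{D}\equiv\pi_1^*(D)-mE$, we bound $m$ by computing the Zariski decomposition of $\pi_1^*(-K_{S_{15}})-uE$ via intersections with $\widetilde{C}_x$ and identification of the nef and pseudo-effective thresholds, and applying Theorem~\ref{Foundation:bound of k-basis thm} to conclude $\lambda m+\tfrac{2}{7}<1$ for $k\gg 0$. The non-log-canonicity then descends to a point $\msq\in E$, and we close by a case split on the position of $\msq$ (on $\widetilde{C}_x$, at one of the residual singular points of $\widetilde{S}_{15}$ on $E$, or at a smooth stratum of $E$), applying either inversion of adjunction on $\widetilde{C}_x$ or a second (weighted) blow-up paired with a carefully chosen auxiliary linear system $\mathcal{L}$ whose strict transforms pass through $\msq$, mirroring the endgame of the $\msp_y$ case for $S_{12}$. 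The principal difficulty is that the cuspidal behaviour of $C_x$ and the residual quotient singularities on $E$ force a delicate multi-step analysis, and the auxiliary divisors must be chosen with weighted-degree constraints tight enough to bound $\mult_{\msq}(\widetilde{D})$ within the narrow margin afforded by $\lambda=\tfrac{11}{10}$.
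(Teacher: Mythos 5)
Your overall scheme (split by the position of $\msp$, use smooth/singular inversion of adjunction, and resolve the one problematic point $\msp_t$ by a weighted blow-up) is the correct one, and your Cases (ii) and (iii) exactly match the paper's argument. Case (i) as you describe it—changing coordinates and splitting on which linear coefficients vanish—would work but is more labour than necessary: for $S_{15}$ the paper closes Case (i) with a single auxiliary divisor $M\in|\mathcal{O}_{S_{15}}(4)|$ given by $y-\zeta x^4=0$, which is irreducible with $\mult_{\msp}(M)\leq 2$, and two short subcases on $\mult_{\msp}(M)$; the $S_{12}$-style subcase analysis on $a_2,a_3$ is not needed here.

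The real issue is Case (iv). You correctly identify that $C_x$ becomes the cusp $y^2+z^3=0$ in the orbifold chart at $\msp_t$, so $(S_{15},C_x)$ is not plt there and Lemma~\ref{Inversion of adjunction for singular point} cannot be applied directly. But the weighted blow-up you propose—weights $(4,5)$ in $(y,z)$—is the wrong one. First, there is an arithmetic inconsistency: with weights $(4,5)$ you correctly get $A_{S_{15}}(E)=\tfrac{9}{7}$ and $E^2=-\tfrac{7}{20}$, but then the log pullback is $K_{\widetilde S_{15}}+\lambda\widetilde D+\bigl(\lambda m-\tfrac{2}{7}\bigr)E$, so the coefficient of $E$ is $\lambda m-\tfrac{2}{7}$, not $\lambda m+\tfrac{2}{7}$ as you write; the latter is the coefficient for the blow-up the paper actually uses, namely weights $(3,2)$ (the other presentation of $\tfrac{1}{7}(4,5)\cong\tfrac{1}{7}(3,2)$), for which $A_{S_{15}}(E)=\tfrac57$. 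Second, and more seriously, the $(4,5)$ blow-up does not resolve the cusp: $y^2$ has $E$-weight $8$ while $z^3$ has $E$-weight $15$, so the strict transform $\widetilde C_x$ still has a $(2,7)$-cusp at the unique point $\widetilde C_x\cap E$, which moreover sits on the residual $\tfrac15$-singularity of $\widetilde S_{15}$ along $E$. Thus $(\widetilde S_{15},\widetilde C_x)$ is again not plt at $\msq$, inversion of adjunction along $\widetilde C_x$ is still unavailable, and you are indeed forced into the "second blow-up plus auxiliary linear system" endgame you are unsure about. By contrast, with weights $(3,2)$ both $y^2$ and $z^3$ acquire $E$-weight $6$, so the cusp is resolved in a single step, $\widetilde C_x$ is smooth and meets $E$ transversally at a smooth point of $\widetilde S_{15}$ (away from the residual $\tfrac12$- and $\tfrac13$-points), and the two subcases $\msq\in E\setminus\widetilde C_x$ (apply Lemma~\ref{Inversion of adjunction for singular point} to $E$) and $\msq\in E\cap\widetilde C_x$ (a smooth point, apply Lemma~\ref{Inversion of adjunction for smooth point}) close in one blow-up with the bounds $m\leq\tfrac{9}{14}+\epsilon_k$ and $m\leq\tfrac{3}{14}+\tfrac34 a$, as the paper does. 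In short, the choice of weighted blow-up here is not a free parameter: choosing the weights so that the two monomials of the cusp get equal weight is exactly what makes the argument terminate, and with $\lambda=\tfrac{11}{10}$ the numerical margins are too tight to absorb the extra losses from a non-resolving blow-up plus a further blow-up.
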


Let us now prove Proposition \ref{Proposition: 4}.
\begin{lemma}
    The log pair $(S_{15}, \lambda D)$ is log canonical along $S_{15}\setminus C_x$.
\end{lemma}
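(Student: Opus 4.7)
The plan is to mimic the analogous statement's proof for $S_{12}$ in $\PP(1,3,4,6)$, adapted to the weights $(1,4,5,7)$ and degree $15$. Arguing by contradiction, suppose $(S_{15}, \lambda D)$ fails to be log canonical at some $\msp \in S_{15}\setminus C_x$. Since every singular point of $S_{15}$ lies on $C_x$, the point $\msp$ is smooth. After a grading-respecting coordinate change (replacing $y,z,t$ by $y + c_2 x^4$, $z + c_3 x^5$, $t + c_4 x^7 + c_5 x^3 y + c_6 x^2 z$), one may assume $\msp = [1:0:0:0]$; in the chart $U_x = \{x = 1\}$ the defining equation then has the form
\begin{equation*}
G(1,y,z,t) = a_1 y + a_2 z + a_3 t + (\text{higher order terms}),
\end{equation*}
with $(a_1,a_2,a_3) \neq (0,0,0)$ by smoothness of $\msp$. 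The proof splits into three cases by which linear coefficients vanish, mirroring the $S_{12}$ analysis.

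If $a_2 = a_3 = 0$, then $C_y = S_{15}\cap\{y = 0\}$ is cut out in $\PP(1,5,7)$ by $t^2 x + z^3 + xf(x,0,z) = 0$, hence irreducible with $\mult_\msp(C_y) = 2$ since $t^2$ is always present. Writing $D = \zeta_y C_y + \Psi_y$ with $C_y\not\subset\Supp(\Psi_y)$, combining Lemma \ref{Foundation:mult} and Lemma \ref{Foundation:inequality mult intersection} yields
\begin{equation*}
2\bigl(\tfrac{1}{\lambda} - 2\zeta_y\bigr) < \mult_\msp(C_y)\mult_\msp(\Psi_y) \leq C_y\cdot \Psi_y = \tfrac{6}{7} - \tfrac{12}{7}\zeta_y,
\end{equation*}
so $\zeta_y > \tfrac{37}{88}$, which contradicts the bound $\zeta_y \leq \tfrac{1}{6} + \epsilon_k$ obtained from Corollary \ref{Foundation:bound of k-basis cor} applied to $C_y \sim_{\QQ} 2(-K_{S_{15}})$. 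If $a_3 = 0$ and $a_2\neq 0$, I would replace $C_y$ by the weighted-degree-$5$ curve $C = S_{15}\cap\{a_1 xy + a_2 z = 0\}$, which satisfies $\mult_\msp(C)\geq 2$; the irreducible case closes by the same multiplicity/intersection argument, while a reducible splitting $C = L + R$ is handled by computing the Zariski decomposition of $-K_{S_{15}} - uL$ from the intersection data of the two components, bounding the coefficient of $L$ in $D$ via Theorem \ref{Foundation:bound of k-basis thm}, and applying Lemma \ref{Inversion of adjunction for smooth point} to $L$ to reach a contradiction.

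The remaining case $a_3 \neq 0$ is where $(y,z)$ serve as local coordinates at $\msp$. I would blow up $\msp$ via $\phi\colon Y \to S_{15}$ with exceptional divisor $E$, transfer the non-log canonical locus to some $\msq \in E$, and introduce the weighted-degree-$7$ linear system $\mcL$ generated by members of the form $\alpha(\zeta_1 x^3 y + \zeta_2 x^2 z + t) + \beta x^7 = 0$; a general member $M\in \mcL$ passes through $\msp$, is not contained in $\Supp(D)$, and can be chosen so that its strict transform passes through $\msq$. Reducible $M$ is ruled out by the same split-component treatment as above. For irreducible $M$, writing $D = mM + \Delta$ with $\lambda m \leq 1$ from Corollary \ref{Foundation:bound of k-basis cor}, and then performing a second blow-up at $\msq$, inversion of adjunction on $\widetilde{M}$ combined with the inequality $\widehat{\Delta}\cdot\widehat{M}\geq 0$ produces the desired contradiction exactly as in the $S_{12}$ analogue. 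The main obstacle I expect is precisely this last blow-up case: choosing the correct auxiliary linear system and verifying that every reducible specialization is excluded by the numerical bounds is where the argument becomes most delicate.
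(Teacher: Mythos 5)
Your proposal imports the full three-way case split from the $S_{12}$ argument, but that machinery is unnecessary here, and the paper's proof is dramatically shorter as a result. The paper simply takes $M = S_{15}\cap\{y - \zeta x^4 = 0\}$ for the appropriate $\zeta$ (this is just $C_y$ after the coordinate change), notes that it is always irreducible and, because of the monomial $t^2x$, always satisfies $\mult_{\msp}(M)\leq 2$, and then checks both multiplicities directly. The reason the case split was needed for $S_{12}$ but not here is purely numerical: in $\PP(1,4,5,7)$ one has $D\cdot M = \frac{6}{7} < \frac{10}{11} = \frac{1}{\lambda}$, so when $\mult_{\msp}(M)=1$ the inequality $\frac{1}{\lambda} < (D-bM)\cdot M = \frac{6}{7}-\frac{12}{7}b$ immediately forces $b<0$; for $S_{12}$ the analogous intersection number $D\cdot C_y = 1$ exceeds $\frac{1}{\lambda}=\frac{5}{6}$, so the smooth case there does not close and one is pushed into the auxiliary curves $C$, $L+R$, and the double blow-up. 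You should check this threshold before deciding how much of the $S_{12}$ scaffolding to carry over.

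Your Case 1 ($a_2=a_3=0$) is in fact correct and is exactly the $\mult_{\msp}(M)=2$ branch of the paper's argument with identical arithmetic ($\zeta_y > \frac{37}{88}$ against $\zeta_y \leq \frac{1}{6}+\epsilon_k$). But Cases 2 and 3 of your proposal are only sketched, not verified, and they are the bulk of the work if one insists on following the $S_{12}$ template: you would have to establish irreducibility or classify the reducible splittings of the weighted-degree-$5$ curve $\{a_1xy+a_2z=0\}$, compute Zariski decompositions of $-K_{S_{15}}-uL$ for each splitting, and redo the two-step blow-up with the degree-$7$ linear system, none of which you have carried out. So as written the proposal is an incomplete proof of something that admits a one-curve, two-line argument; the genuine gap is the missed observation that $D\cdot C_y < \frac{1}{\lambda}$ collapses all the extra cases.
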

\begin{proof}
    Suppose the log pair $(S_{15}, \lambda D)$ is not log canonical at some point $\msp \in S_{15}\setminus C_x$. Then the point $\msp$ is a smooth point of the surface $S_{15}$. There is a divisor $M\in |\mcO_{S_{15}}(4)|$ passing through $\msp\in M$. Then $M$ is given by the hyperplane section of $y-\zeta x^4 = 0$ in $S_{15}$ where $\zeta$ is a constant. It is isomorphic to the variety given by
\begin{equation*}
  t^2x + t(\zeta x^4)^2 + z^3 + xf(x,\zeta x^4,z) = 0
\end{equation*}
in $\PP(1,5,7)$. Note that the above quasihomogeneous polynomial is irreducible. Thus $M$ is irreducible. Since $M$ has the monomial $t^2x$, we have $\mult_{\msp}(M)\leq 2$. We write
\begin{equation*}
  D = b M + \Omega
\end{equation*}
where $b$ is nonnegative number and $\Omega$ is an effective divisor such that $M\not\subset \Supp(\Omega)$. By Corollary \ref{Foundation:bound of k-basis cor} we have $b\leq \frac{1}{5}$.
If $M$ is smooth at $\msp$ then we have
\begin{equation*}
  \frac{1}{\lambda} <\left (\Omega\cdot M\right)_{\msp} \leq(D - b M)\cdot M= \frac{6}{7} - \frac{12}{7}b .
\end{equation*}
This implies that $b<0$ and therefore, this is impossible. Thus $\mult_{\msp}(M) = 2$. Then we have
\begin{equation*}
  \left( \frac{1}{\lambda} - 2b \right)\cdot 2<\mult_{\msp}\left(D-bM\right) \mult_{\msp}(M)\leq(D - b M)\cdot M=\frac{6}{7} - \frac{12}{7}b.
\end{equation*}
It implies that $b > \frac{7}{8\lambda} - \frac{3}{8}$. This is impossible. Therefore the log pair $(S_{15},\lambda D)$ is log canonical along $S_{15} \setminus C_x$.
\end{proof}

\begin{lemma}
    The log pair $(S_{15}, \lambda D)$ is log canonical along $C_x \setminus \Sing(S_{15})$.
\end{lemma}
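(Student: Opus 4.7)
The plan is to proceed exactly as in the analogous lemma for $S_{12}$ along $C_x\setminus\Sing(S_{12})$: apply inversion of adjunction along $C_x$ and derive a numerical contradiction from the bound $a\leq \frac{3}{4}$. Suppose for contradiction that $(S_{15},\lambda D)$ is not log canonical at some point $\msp\in C_x\setminus\Sing(S_{15})$, so $\msp$ is a smooth point of $S_{15}$. The first step is to verify that $C_x$ itself is smooth at $\msp$. Since $C_x$ is cut out in $\PP(4,5,7)$ by $ty^2+z^3=0$, the Jacobian criterion shows that the only candidate singular point of $C_x$ is $\msp_t=[0:0:0:1]$; but $\msp_t\in\Sing(S_{15})$ is excluded by the assumption on $\msp$. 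Thus both $S_{15}$ and $C_x$ are smooth at $\msp$.

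Next, since $\lambda a\leq \frac{11}{10}\cdot\frac{3}{4}=\frac{33}{40}<1$, I apply Lemma \ref{Inversion of adjunction for smooth point} with $C=C_x$ to the pair $(S_{15},\lambda D)=(S_{15},\lambda a C_x+\lambda\Delta)$, which gives
\[
  \frac{1}{\lambda} < C_x\cdot\Delta.
\]
The right-hand side is a routine computation: from $-K_{S_{15}}\sim_{\QQ}2C_x$ and the degree formula for weighted projective hypersurfaces, $C_x^2 = \frac{15}{1\cdot 4\cdot 5\cdot 7}=\frac{3}{28}$, so $D\cdot C_x=2C_x^2=\frac{3}{14}$, and therefore
\[
  C_x\cdot\Delta=(D-aC_x)\cdot C_x = \frac{3}{14}-\frac{3a}{28}\leq \frac{3}{14}.
\]
Combining with the previous inequality yields $\frac{10}{11}<\frac{3}{14}$, a contradiction. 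This completes the proof.

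There is no real obstacle in this step: once $C_x$ is shown to be smooth at $\msp$, the bound $a\leq\frac{3}{4}$ from Corollary \ref{Foundation:bound of k-basis cor} is more than enough, because the intersection $-K_{S_{15}}\cdot C_x=\frac{3}{14}$ is already far below $\frac{1}{\lambda}=\frac{10}{11}$. The genuine work for $S_{15}$ lies in treating the two singular points $\msp_y$ and $\msp_t$ on $C_x$, where one expects to need weighted blow-ups and volume computations analogous to those carried out for $\msp_y\in\Sing(S_{12})$.
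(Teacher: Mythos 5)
Your proof is correct and follows essentially the same route as the paper: inversion of adjunction along $C_x$ using Lemma \ref{Inversion of adjunction for smooth point}, the bound $\lambda a<1$, and the computation $C_x\cdot\Delta=\frac{3}{14}-\frac{3}{28}a$, which already contradicts $\frac{1}{\lambda}=\frac{10}{11}<C_x\cdot\Delta$ even at $a=0$. The one small addition you make, explicitly checking via the Jacobian criterion that $C_x$ is smooth away from $\msp_t$, is a point the paper records in its setup paragraph rather than in the proof itself, so nothing substantive differs.
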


\begin{proof}
    Suppose that the log pair $(S_{15}, \lambda D)$ is not log canonical at some point $\msp\in S_{15}\setminus \Sing(S_{15})$. Since $\lambda a< 1$, using Lemma \ref{Inversion of adjunction for smooth point}, we get
\begin{equation}\label{inequality:H_x}
  \frac{1}{\lambda}<\mult_{\msp}(\Delta\cdot {C_x}) \leq \Delta\cdot C_x=(D - a C_x)\cdot C_x= \frac{3}{14} - \frac{3}{28}a.
\end{equation}
It implies that $a < 0$ which is impossible. Thus the log pair $(S_{15}, \lambda D)$ is log canonical along $\msp\in S_{15}\setminus \Sing(S_{15})$.
\end{proof}
We will now show that the log pair $(S_{15},\lambda D)$ is log canonical at the singular points $\msp_y$ and $\msp_t$.
\begin{lemma}
   The log pair $(S_{15},\lambda D)$ is log canonical at $\msp_y$.
\end{lemma}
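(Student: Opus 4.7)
I plan to follow the strategy used for $\msp_y \in S_{12}$, adjusted to the $\tfrac{1}{4}(1,1)$-singularity at $\msp_y = [0:1:0:0] \in S_{15}$. I take the weighted blow-up $\pi_1 \colon \widetilde{S}_{15} \to S_{15}$ at $\msp_y$ with weights $(1,1)$ on the local coordinates $(x,z)$, producing an exceptional divisor $E$ with $E^2 = -4$, $K_{\widetilde{S}_{15}} \equiv \pi_1^* K_{S_{15}} - \tfrac{1}{2} E$, $\widetilde{C}_x = \pi_1^* C_x - \tfrac{1}{4} E$, and $\widetilde{D} = \pi_1^* D - mE$. The Zariski decomposition of $\pi_1^*(-K_{S_{15}}) - uE = 2\widetilde{C}_x + (\tfrac{1}{2} - u)E$ is nef on $[0, \tfrac{3}{14}]$ and has negative part a positive multiple of $\widetilde{C}_x$ on $[\tfrac{3}{14}, \tfrac{1}{2}]$; integrating and using $(-K_{S_{15}})^2 = \tfrac{3}{7}$, Theorem \ref{Foundation:bound of k-basis thm} yields $m \leq \tfrac{5}{21} + \epsilon_k$, so that $\lambda m + \tfrac{1}{2} < 1$. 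The pulled-back pair $(\widetilde{S}_{15}, \lambda a \widetilde{C}_x + \lambda \widetilde{\Delta} + (\lambda m + \tfrac{1}{2}) E)$ therefore fails to be log canonical at some $\msq \in E$.

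If $\msq \in \widetilde{C}_x$, which is smooth at $\msq$ because $C_x$ is smooth at $\msp_y$, and since $\lambda a \leq \tfrac{33}{40} < 1$, Lemma \ref{Inversion of adjunction for smooth point} applied to $\widetilde{C}_x$ forces $\widetilde{C}_x \cdot (\lambda \widetilde{\Delta} + (\lambda m + \tfrac{1}{2}) E) > 1$. A direct calculation with $C_x \cdot D = \tfrac{3}{14}$ and $\widetilde{C}_x^2 = -\tfrac{1}{7}$ evaluates this intersection to $\tfrac{3\lambda}{14} + \tfrac{\lambda a}{7} + \tfrac{1}{2} \leq \tfrac{239}{280} < 1$ for $a \leq \tfrac{3}{4}$, a contradiction. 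Otherwise $\msq \in E \setminus \widetilde{C}_x$, and I perform a second blow-up $\pi_2 \colon \widehat{S}_{15} \to \widetilde{S}_{15}$ at $\msq$ with exceptional divisor $F$. Setting $n = \mult_{\msq}(\widetilde{\Delta})$, the inequality $\widehat{\Delta} \cdot \widehat{E} \geq 0$ yields $n \leq 4m - a$, so $c_F = \lambda(m+n) - \tfrac{1}{2} < 1$ and the pair fails to be log canonical at some $\mst \in F$.

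If $\mst \in F \cap \widehat{E}$, inversion of adjunction applied to $\widehat{E}$ (whose coefficient $\lambda m + \tfrac{1}{2}$ is $< 1$) gives $\widehat{E} \cdot (\lambda a \widehat{C}_x + \lambda \widehat{\Delta} + c_F F) > 1$, which using $\widehat{E}^2 = -5$ and $\widehat{E} \cdot F = \widehat{E} \cdot \widehat{C}_x = 1$ simplifies (after the $n$-terms cancel) to $\lambda(5m - a) - \tfrac{1}{2} > 1$, forcing $m > \tfrac{3}{11} + \tfrac{a}{5}$ and contradicting $m \leq \tfrac{5}{21}$. Otherwise $\mst \in F \setminus \widehat{E}$, and I first apply Lemma \ref{Inversion of adjunction for smooth point} to $F$ itself at $\mst$ (both $\widehat{C}_x$ and $\widehat{E}$ miss $\mst$): this gives $n = \widehat{D} \cdot F \geq (\widehat{D} \cdot F)_{\mst} > \tfrac{1}{\lambda}$, which combined with $n \leq 4m - a$ forces $m > \tfrac{5}{22} + \tfrac{a}{4}$.

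To derive the required contradiction, I use the three-dimensional linear system $\mcL \subset |\mcO_{S_{15}}(7)|$ spanned by the four degree-$7$ monomials $t, x^3 y, x^2 z, x^7$. Every element of $\mcL$ vanishes at $\msp_y$, and the base locus on $S_{15}$ is exactly $\{\msp_y\}$; using the local expansion $t = -z^3 + O((x,z)^3)$ obtained by solving the defining equation of $S_{15}$ for $t$ at $\msp_y$, the tangent cone at $\msp_y$ of a member of $\mcL$ is a cubic in $(x,z)$, so a generic member $M$ has $\mult_{\msp_y}(M) = 3$. The three projective parameters of $\mcL$ should suffice to impose the two conditions that $\widetilde{M}$ passes through $\msq$ and $\widehat{M}$ passes through $\mst$, while keeping $M$ irreducible, $M \not\subset \Supp(D)$, and $\widetilde{M}$ smooth at $\msq$. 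Writing $D = \nu M + \Omega$, Corollary \ref{Foundation:bound of k-basis cor} gives $\nu \leq \tfrac{2}{21}$ since $M \sim_{\QQ} \tfrac{7}{2}(-K_{S_{15}})$, and a calculation of intersection numbers on $\widehat{S}_{15}$ yields $\widehat{M}^2 = 2$ and $\widehat{M} \cdot \widehat{D} = \tfrac{3}{2} - 3m - n$, hence $\widehat{M} \cdot \widehat{\Omega} = \tfrac{3}{2} - 3m - n - 2\nu$. Applying Lemma \ref{Inversion of adjunction for smooth point} to $\widehat{M}$ at $\mst$ then forces $m + \nu < \tfrac{3(\lambda - 1)}{4\lambda} = \tfrac{3}{44}$, which contradicts $m > \tfrac{5}{22} = \tfrac{10}{44}$. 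The main obstacle is the auxiliary-curve construction: ensuring the positive-dimensional family of $M \in \mcL$ cut out by the two incidence conditions contains an irreducible member with $\widetilde{M}$ smooth at $\msq$, and that the tight numerical balance in the final contradiction (between $m > \tfrac{10}{44}$ and $m + \nu < \tfrac{3}{44}$) is actually respected by the choice $\lambda = \tfrac{11}{10}$.
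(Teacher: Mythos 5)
Your proposal takes a vastly more complicated route than the paper, and the extra machinery you build is the source of the gap. The paper's actual argument is a one-liner: since $C_x$ is smooth at $\msp_y$ and $\lambda a < 1$, Lemma \ref{Inversion of adjunction for singular point} applied directly at the $\tfrac{1}{4}(1,1)$ point gives
\[
\frac{1}{4\lambda} < C_x\cdot\Delta = (D - aC_x)\cdot C_x = \frac{3}{14} - \frac{3}{28}a.
\]
With $\lambda = \tfrac{11}{10}$ one has $\tfrac{1}{4\lambda} = \tfrac{5}{22}$, and since $\tfrac{5}{22} > \tfrac{3}{14}$ the inequality already fails at $a = 0$ and forces $a < 0$, an immediate contradiction. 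No blow-up is needed. You copied the $S_{12}$ template, but that blow-up machinery is there because at the $\tfrac{1}{3}(1,1)$ point of $S_{12}$ one has $\tfrac{1}{3\lambda} = \tfrac{5}{18} < \tfrac{1}{3} = D\cdot C_x$, so the direct inversion of adjunction gives nothing. Here the corresponding comparison goes the other way, and the whole cascade of blow-ups is unnecessary.

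As for the proposal on its own terms: the Zariski-decomposition bound $m \le \tfrac{5}{21} + \epsilon_k$, the coefficients $c_E = \lambda m + \tfrac12$, $c_F = \lambda(m+n) - \tfrac12$, the bound $n \le 4m - a$, and the numerics in the $\msq\in\widetilde C_x$ and $\mst\in F\cap\widehat E$ subcases all check out. But in the final subcase $\mst\in F\setminus\widehat E$ the conclusion rests entirely on producing an irreducible $M$ from the $3$-parameter system $\mcL = \langle t, x^3y, x^2z, x^7\rangle$ with $\mult_{\msp_y}(M) = 3$, $M\not\subset\Supp(D)$, $\widetilde M$ passing through $\msq$ and smooth there, and $\widehat M$ passing through $\mst$. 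That is two incidence conditions plus an open condition on a $3$-parameter system whose tangent cone at $\msp_y$ depends only on $[\alpha_0:\alpha_1:\alpha_2]$; nothing is verified about whether the locus cut out by the two conditions avoids the reducible and singular-at-$\msq$ members. You flag this yourself as ``the main obstacle,'' and indeed it is a genuine gap: as written, the existence of the required $M$ is asserted, not proved. Since the contradiction you extract from $M$ is numerically tight ($m+\nu < \tfrac{3}{44}$ against $m > \tfrac{10}{44}$), the argument really does hinge on it. In this case the cleanest fix is not to patch the auxiliary-curve construction but to discard the blow-ups entirely and use the one-line inversion of adjunction above.
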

\begin{proof}
    Suppose not, i.e. suppose that the log pair  $(S_{15},\lambda D)$ is not log canonical at $\msp_y$. Since $\msp_y \in C_x$ with $C_x$ smooth at $\msp_y$ and $\lambda a<1$, by Lemma \ref{Inversion of adjunction for singular point} we have
\begin{equation*}
  \frac{1}{4\lambda}<\mult_{\msp}(\Delta\cdot C_x)\leq  \Delta\cdot C_x = (D - a C_x)\cdot C_x =\frac{3}{14} - \frac{3}{28}a.
\end{equation*}
It implies that $a < 0$. It is impossible.
\end{proof}

\begin{lemma}
    The log pair $(S_{15}, \lambda D)$ is log canonical at $\msp_t$.
\end{lemma}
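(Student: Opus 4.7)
The plan is to suppose $(S_{15},\lambda D)$ is not log canonical at $\msp_t$ and derive a contradiction via a $(4,5)$-Kawamata weighted blow-up, in analogy with the treatment of the $\tfrac{1}{3}(1,1)$ point $\msp_y$ of $S_{12}$. Since $\msp_t$ is of type $\tfrac{1}{7}(4,5)$ with local uniformizers $y,z$ on the smooth cover (obtained by solving $t^2x+ty^2+z^3+xf=0$ for $x$ near $t=1$), and since the local equation $y^2+z^3=0$ of $C_x$ at $\msp_t$ has $(4,5)$-weighted multiplicity $\min(8,15)=8$, the blow-up $\pi\colon\widetilde{S}_{15}\to S_{15}$ satisfies
\[
K_{\widetilde{S}_{15}}\equiv\pi^*K_{S_{15}}+\tfrac{2}{7}E,\quad E^2=-\tfrac{7}{20},\quad \widetilde{C}_x\equiv\pi^*C_x-\tfrac{8}{7}E,\quad \widetilde{C}_x\cdot E=\tfrac{2}{5},\quad \widetilde{C}_x^2=-\tfrac{7}{20},
\]
and $\widetilde{S}_{15}$ acquires residual cyclic quotient singularities of types $\tfrac{1}{4}(1,1)$ and $\tfrac{1}{5}(1,2)$ on $E$. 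Writing $\widetilde{D}\equiv\pi^*D-\eta E=a\widetilde{C}_x+\widetilde{\Delta}$, the log pullback
\[
K_{\widetilde{S}_{15}}+\lambda a\widetilde{C}_x+\lambda\widetilde{\Delta}+\bigl(\lambda\eta-\tfrac{2}{7}\bigr)E\equiv\pi^*(K_{S_{15}}+\lambda D)
\]
then fails to be log canonical at some point $\msq\in E$.

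I would next bound $\eta$ via Theorem~\ref{Foundation:bound of k-basis thm} applied to $E$. Since $(\pi^*(-K_{S_{15}})-uE)\cdot\widetilde{C}_x=\tfrac{3}{14}-\tfrac{2u}{5}$, the divisor $\pi^*(-K_{S_{15}})-uE=2\widetilde{C}_x+(\tfrac{16}{7}-u)E$ is nef on $[0,\tfrac{15}{28}]$, admits a Zariski decomposition with negative part $n(u)\widetilde{C}_x$ given by $n(u)=\tfrac{8u}{7}-\tfrac{30}{49}$ on $[\tfrac{15}{28},\tfrac{16}{7}]$, and has pseudo-effective threshold $\tau(E)=\tfrac{16}{7}$. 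Integrating the piecewise volume and dividing by $(-K_{S_{15}})^2=\tfrac{3}{7}$ yields an explicit constant $\eta_0$ with $\eta\leq\eta_0+\epsilon_k$; combined with the bound $a\leq\tfrac{3}{4}+\epsilon_k$ and $\lambda=\tfrac{11}{10}$, one checks $\lambda\eta-\tfrac{2}{7}<1$ and $\lambda a<1$, so the non-log-canonical locus on $\widetilde{S}_{15}$ reduces to an isolated point $\msq\in E$.

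Finally, I would distinguish two cases. If $\msq\in\widetilde{C}_x\cap E$, the $z$-chart calculation identifies $\msq$ as the $\tfrac{1}{5}(1,2)$ singular point of $\widetilde{S}_{15}$, where $\widetilde{C}_x$ is cuspidal (locally $w^2+s^7=0$); applying Lemma~\ref{Inversion of adjunction for singular point} to $E$ at $\msq$ gives $\tfrac{1}{5}<(\lambda a\widetilde{C}_x+\lambda\widetilde{\Delta})\cdot E=\tfrac{7\lambda\eta}{20}$, and combining this lower bound on $\eta$ with a sharper upper bound---obtained via a further $(1,2)$-weighted blow-up at $\msq$ and the corresponding volume estimate---should produce the desired contradiction. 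If instead $\msq\in E\setminus\widetilde{C}_x$, I would pick an auxiliary pencil of sections of $\mcO_{S_{15}}(7)$ of the form $t+\alpha x^2z+\beta x^3y$ through $\msp_t$, choose an irreducible member $M\not\subset\Supp D$ whose strict transform $\widetilde{M}$ passes through $\msq$, write $D=\nu M+\Delta_M$ with $\nu$ bounded via Corollary~\ref{Foundation:bound of k-basis cor}, and close the argument by inversion of adjunction on $\widetilde{M}$ at $\msq$, reaching a contradiction through $\widetilde{\Delta}_M\cdot\widetilde{M}<0$. The principal obstacle is the case $\msq\in E\setminus\widetilde{C}_x$: the auxiliary pencil must be selected so that its general member is irreducible, its strict transform passes through the prescribed $\msq$, and the resulting intersection numbers on the singular surface $\widetilde{S}_{15}$ are small enough to force the contradiction.
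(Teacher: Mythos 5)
The paper takes a crucially different blow-up than you do. You perform the weighted blow-up at the $\tfrac{1}{7}(4,5)$ point with weights $(\wt(y),\wt(z))=(4,5)$, which gives $K_{\widetilde{S}_{15}}\equiv\pi^*K_{S_{15}}+\tfrac{2}{7}E$ (log discrepancy $A(E)=\tfrac{9}{7}$) and $\widetilde{C}_x\equiv\pi^*C_x-\tfrac{8}{7}E$ since $\mult_E(y^2+z^3)=\min(8,15)=8$. The paper instead blows up with weights $(\wt(y),\wt(z))=(3,2)$, noting that $(3,2)\equiv 6\cdot(4,5)\pmod 7$ is also an admissible choice. This gives $K_{\widetilde{S}_{15}}\equiv\pi^*K_{S_{15}}-\tfrac{2}{7}E$, so $A(E)=\tfrac{5}{7}$, and now both monomials $y^2,z^3$ have the same weighted degree $6$, so $\mult_E(C_x)=\tfrac{6}{7}$, $E^2=-\tfrac{7}{6}$ and $\widetilde{C}_x\cdot E=1$. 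The payoff is decisive: $\widetilde{C}_x$ meets $E$ transversally at a \emph{smooth} point of $\widetilde{S}_{15}$, away from the two orbifold points (which have orders only $3$ and $2$). Both cases $\msq\in E\cap\widetilde{C}_x$ and $\msq\in E\setminus\widetilde{C}_x$ are then handled by a single application of inversion of adjunction on $E$, using the elementary bound $0\leq\widetilde{\Delta}\cdot\widetilde{C}_x$ to get $m\leq\tfrac{3}{14}+\tfrac{3}{4}a$ and hence $\widetilde{\Delta}\cdot E=\tfrac{7}{6}m-a\leq\tfrac{1}{4}-\tfrac{a}{8}\leq\tfrac{1}{4}<\tfrac{1}{3\lambda}$. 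No second blow-up or auxiliary pencil is required.

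Your proposal has two concrete gaps beyond the extra complexity. First, the case $\msq\in E\setminus\widetilde{C}_x$ genuinely does not close by a direct inversion of adjunction with your weights: the analogue of the bound above gives $\widetilde{\Delta}\cdot E\leq\tfrac{75(2-a)}{98}$, which at $a=0$ is about $1.53$ and at $a=\tfrac{3}{4}$ is about $0.96$, so it never contradicts the required lower bound $\tfrac{1}{r\lambda}$, whether $\msq$ is smooth ($r=1$) or the $\tfrac{1}{4}$-point ($r=4$). Second, the auxiliary net you propose, $t+\alpha x^2z+\beta x^3y=0$, does \emph{not} pass through $\msp_t=[0:0:0:1]$ (evaluating gives $1\neq 0$), so its strict transform cannot pass through any $\msq\in E$. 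To run a pencil argument you would need sections vanishing at $\msp_t$; the degree-$7$ monomials that do so are $x^7,x^3y,x^2z$, all divisible by $x^2$, so that linear system contains $C_x$ in its base locus. Together with the cuspidal intersection $\widetilde{C}_x\cap E$ at a $\tfrac{1}{5}$-point, which requires yet another blow-up that you leave uncalculated, the $(4,5)$-route is substantially harder, and it is not at all clear it can be made to work. The broader lesson is that one should choose the blow-up weights to \emph{minimize} the log discrepancy $A(E)$ and to arrange the strict transform of the relevant curve to be smooth and transverse to $E$ at a smooth point, rather than automatically matching the quotient type $(4,5)$; the paper's $(3,2)$ choice accomplishes both.
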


\begin{proof}
Let $\pi\colon \widetilde{S}_{15}\to S_{15}$ be the weighted blow-up at $\msp_t$ with weights $\mathrm{wt}(y)=3$ and $\mathrm{wt}(z)=2$. Then we have the following.
\begin{equation*}
  K_{\widetilde{S}_{15}} \equiv \pi^*(K_{S_{15}}) - \frac{2}{7}E,\qquad
  \widetilde{D} \equiv \pi^*(D)-mE,\qquad
  \widetilde{C}_x \equiv \pi^*(C_x) - \frac{6}{7}E,
\end{equation*}
where $E$ is the exceptional divisor and $m$ is any non-negative rational number. Using the above equivalences, we have
\begin{equation}\label{Lemma 14:inequality of m}
 0 \leq \widetilde{\Delta}\cdot \widetilde{C}_x= (\widetilde{D} - a\widetilde{C}_x)\cdot \widetilde{C}_x =\frac{3}{14} - m + \frac{3}{4}a.
\end{equation} From the above equations we have
\begin{equation*}
  K_{\widetilde{S}_{15}} + \lambda \widetilde{D} + \left(\lambda m + \frac{2}{7}\right)E \equiv \pi^*(K_{S_{15}} + \lambda D).
\end{equation*}
Thus the log pair $\left( \widetilde{S}_{15},  \lambda \widetilde{D} + \left(\lambda m + \frac{2}{7}\right)E\right)$ is not log canonical at some point $\msq\in E$.  

We will now bound $m$ by computing the volume of the pseudoeffective divisor $\pi^*(-K_{S_{15}})-uE$. 
Consider

\begin{equation*}
  \pi^*(-K_{S_{15}})-uE=\pi^*(2C_x) - uE = 2\widetilde{C}_x + \left( \frac{12}{7} - u \right)E.
\end{equation*}
Note that if $0\leq u \leq \frac{3}{14}$ then $\pi^*(2C_x) - uE$ is nef. Moreover if $\frac{3}{14}\leq u \leq \frac{12}{7}$ then the Zariski Decomposition is given by 
\begin{equation*}
  \pi^*(-K_{S_{15}})-uE=\pi^*(2C_x) - uE=\bigg(\left( \frac{16}{7} - \frac{4}{3}u \right)\widetilde{C}_x + \left( \frac{12}{7} - u \right)E\bigg)+\left(\frac{4x}{3}-\frac{2}{7}\right)\widetilde{C}_x.
\end{equation*}
We then have the following.
\begin{equation*}
    \mathrm{vol}(\pi^*(-K_{S_{15}})-uE)=
    \begin{cases}
        \dfrac{3}{7} - \dfrac{7}{6}u^2 & \textrm{for~} 0\leq u \leq \dfrac{3}{14},\\[10pt]
        \dfrac{u^2}{6}-\dfrac{4u}{7}+\dfrac{24}{49} & \textrm{for~} \dfrac{3}{14}\leq u \leq \dfrac{12}{7}.
    \end{cases}
\end{equation*}

Therefore, 
\begin{align*}
    m&\leq \frac{1}{(-K_{S_{15}})^2} \int_0^{\frac{12}{7}}\mathrm{vol}(\pi^*(-K_{S_{15}})-uE)\dd u+\epsilon_k\\
    &=\frac{7}{3}\left(\int_0^{\frac{3}{14}}\frac{3}{7} - \frac{7}{6}u^2 \dd u +\int_{\frac{3}{14}}^{\frac{12}{7}}\frac{u^2}{6}-\frac{4u}{7}+\frac{24}{49} \dd u\right)+\epsilon_k,\\
    &=\frac{9}{14}+\epsilon_k
\end{align*}
It implies that $ \lambda m + \frac{2}{7} < 1.$ Now we will consider different positions of the point $\msq$.

Suppose $\msq\in E \setminus\widetilde{C}_x$. Then the point $\msq$ is of type $\frac{1}{r}(\msa,\msb)$ for $r\leq 3$.
We have that the log pair
\begin{equation*}
  \left(\widetilde{S}_{15}, \lambda \widetilde{\Delta} + \left(\lambda m + \frac{2}{7}\right)E\right)
\end{equation*}
is not log canonical at $\msq$. Since $\lambda m+\frac{2}{7}<1$, using Lemma \ref{Inversion of adjunction for singular point}, we have
\begin{equation*}
 \frac{1}{3\lambda}< \mult_{\msq}\left(\widetilde{\Delta}\cdot E \right) \leq  (\widetilde{\Delta}\cdot E)=(\widetilde{D} - a\widetilde{C}_x)\cdot E=\frac{7}{6}m - a.
\end{equation*} Using \eqref{Lemma 14:inequality of m} and that $m\leq \frac{9}{14}+\epsilon_k$, we get a contradiction. Hence $\msq \in E\cap \widetilde{C}_x$. This implies that $\msq$ is a smooth point of $\widetilde{S}_{15}$. Then we have
\begin{equation*}
 \frac{1}{\lambda}< \mult_{\msq}(\widetilde{D}\cdot E)= \frac{7}{6}m \leq \frac{7}{6}\left(\frac{3}{14}+\frac{3a}{4}\right)=\frac{1}{4}+\frac{7a}{8}.
\end{equation*} Note that we have used the bound for $m$ from \eqref{Lemma 14:inequality of m}.

This implies that $a > \frac{8}{7\lambda} - \frac{2}{7}$, but $a\leq \frac{3}{4}$ by assumption. Thus, this gives a contradiction, hence proving our claim.
\end{proof}
This completes the proof of Proposition \ref{Proposition: 4}, which implies the following.
\begin{corollary}
  $\delta(S_{15}) \geq \frac{11}{10}$.
\end{corollary}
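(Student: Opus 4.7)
The plan is to deduce $\delta(S_{15}) \geq \frac{11}{10}$ formally from Proposition \ref{Proposition: 4}, since the corollary is a packaging statement once log canonicity has been established for all $k$-basis type divisors. I would begin by recalling from the framework in Section \ref{section:K-stability} that
$$\delta(S_{15}) = \inf_{\msp \in S_{15}} \delta_{\msp}(S_{15}),$$
and that the local threshold $\delta_{\msp}(S_{15})$ can be characterised, in the limit $k \to \infty$, as the supremum over $\lambda \in \QQ$ such that $(S_{15}, \lambda D)$ is log canonical at $\msp$ for every $k$-basis type anticanonical $\QQ$-divisor $D$; the small error $\epsilon_k \to 0$ is absorbed as $k$ grows, exactly as in the description preceding Theorem \ref{Foundation:bound of k-basis thm}.

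Next, fix $\lambda = \frac{11}{10}$ and let $D$ be an arbitrary $k$-basis type anticanonical $\QQ$-divisor on $S_{15}$ with $k \gg 0$. Proposition \ref{Proposition: 4} asserts that $(S_{15}, \lambda D)$ is log canonical on all of $S_{15}$; in particular the pair is log canonical at every point $\msp \in S_{15}$, which yields $\delta_{\msp}(S_{15}) \geq \lambda$ after passing to the limit in $k$. Taking the infimum over $\msp$ gives $\delta(S_{15}) \geq \frac{11}{10}$, as claimed.

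All of the substantive work — the case analysis away from $C_x$, along $C_x$ outside the singular locus, and at the cyclic quotient points $\msp_y$ and $\msp_t$ of types $\frac{1}{4}(1,1)$ and $\frac{1}{7}(4,5)$ respectively — is contained in the four preceding lemmas, each of which combines Corollary \ref{Foundation:bound of k-basis cor} with inversion of adjunction (Lemma \ref{Inversion of adjunction for smooth point} or Lemma \ref{Inversion of adjunction for singular point}) to derive a contradiction. The only conceivable obstacle is controlling the $\epsilon_k$ uniformly across all points, but since every strict inequality invoked in those lemmas has room to spare and $\epsilon_k \to 0$, the limit procedure is clean and no further difficulty arises in extracting the corollary.
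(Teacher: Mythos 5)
Your proposal is correct and follows the same route as the paper: the paper states the corollary as an immediate consequence of Proposition \ref{Proposition: 4} (the log canonicity of $(S_{15},\frac{11}{10}D)$ for $k$-basis type $D$), and you have merely spelled out the standard passage from that proposition, via the characterisation of $\delta_{\msp}$ in terms of log canonicity against $k$-basis type (equivalently cool) divisors, to the lower bound on $\delta(S_{15})$.
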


\subsection{$S_{16}$ in $\PP(1,4,5,8)$}
Let $S_{16}$ be a quasi-smooth hypersurface in $\PP(1,4,5,8)$ of degree $16$. By a suitable coordinate change we can assume that $S_{16}$ is given by a quasihomogeneous polynomial
\begin{equation*}
    t^2 - y^4 + xz^3 + x^2f_{14}(x,y,z) = 0
\end{equation*}
where $f_{14}(x,y,z)$ is a quasihomogeneous polynomial of degree $14$. The surface $S_{16}$ is singular at $\msp_z:=[0:0:1:0]$ of type $\frac{1}{5}(4,3)$ and at $Q_1\coloneqq [0:1:0:-1]$ and $Q_2\coloneqq [0:1:0:1]$, of type $\frac{1}{4}(1,1).$ Note that all these singular points lie on the curve $C_x$ which is isomorphic to the variety given by $(t + y^2)(t- y^2) = 0$ in $\PP(4,5,8)$. Thus we have
\begin{equation*}
    C_x = L_1 + L_2
\end{equation*}
where $L_1$ is given by $x = t + y^2 = 0$ and $L_2$ is given by $x = t - y^2 = 0$ in $\PP(4,5,8)$. In particular, the affine cone of $C_x$ is smooth at the points $Q_1$ and $Q_2$ and is singular at $\msp_z$. We have the following intersection numbers
\begin{equation*}
    L_1\cdot L_2 = \frac{2}{5},\qquad L_i^2 = -\frac{7}{20},\qquad C_x\cdot L_i = \frac{1}{20},\qquad C_x^2 = \frac{1}{10}.
\end{equation*}

Let $D$ be an anticanonical $\QQ$-divisor of $k$-basis type on $S_{16}$ with $k\gg 0$. We write
\begin{equation*}
    D = \alpha L_1 + \beta L_2 + \Delta
\end{equation*}
where $\alpha$ and $\beta$ are non-negative constants and $\Delta$ is an effective $\QQ$-divisor on the surface $S_{16}$ whose support does not contain the curves $L_1$ and $L_2$. We set $\lambda=\frac{40}{39}$. To determine that the log pair $(S_{16}, \lambda D)$ is log canonical we need some bounds and inequalities for $\alpha$ and $\beta$.

Since $L_2\not\subset \Supp(\Delta)$, we have
\begin{equation*}
    \frac{2}{5}\alpha = \alpha L_1\cdot L_2 \leq (D - \beta L_2)\cdot L_2 = \frac{1}{10} + \frac{7}{20}\beta. 
\end{equation*}
It implies that
\begin{equation}\label{bound on alpha and beta}
    \alpha \leq \frac{1}{4} + \frac{7}{8}\beta.
\end{equation}
This will come in handy for computations later. We will now bound the values of $\alpha$ and $\beta$.

\begin{lemma}
    $\alpha \leq \frac{17}{24}+\epsilon_k$ and $\beta \leq \frac{17}{24}+\epsilon_k$ where $\epsilon_k$ is a small constant depending on $k$ such that $\epsilon_k \to 0$ as $k\to \infty$.
\end{lemma}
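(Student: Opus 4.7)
The plan is to apply Theorem \ref{Foundation:bound of k-basis thm} directly to the curve $L_1$ (and then, by the complete symmetry of the setup, to $L_2$). This tells us that
\[
  \alpha \leq \frac{1}{(-K_{S_{16}})^2}\int_0^{\tau(L_1)}\vol\bigl(-K_{S_{16}}-uL_1\bigr)\,\dd u + \epsilon_k,
\]
and since $-K_{S_{16}}^2 = 4 C_x^2 = \tfrac{2}{5}$ and $-K_{S_{16}}\sim 2C_x = 2L_1+2L_2$, we need to compute the Zariski decomposition of
\[
  -K_{S_{16}} - uL_1 \equiv (2-u)L_1 + 2L_2
\]
as a function of $u$ and then integrate.

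Using the given intersection numbers $L_1^2=L_2^2=-\tfrac{7}{20}$ and $L_1\cdot L_2=\tfrac{2}{5}$, I would test nefness by intersecting with $L_1$ and $L_2$. The intersection with $L_1$ is $\tfrac{1}{10}+\tfrac{7u}{20}$, always nonnegative; the intersection with $L_2$ is $\tfrac{1}{10}-\tfrac{2u}{5}$, which is nonnegative exactly for $u\in[0,\tfrac{1}{4}]$. For $u\in[\tfrac{1}{4},2]$, I would solve for the coefficient $\gamma(u)$ of $L_2$ in the negative part by imposing $P(u)\cdot L_2=0$, obtaining $\gamma(u)=\tfrac{8u-2}{7}$ and $P(u)=\tfrac{2-u}{7}(7L_1+8L_2)$; the condition $P(u)\cdot L_1\geq 0$ yields $\tau(L_1)=2$.

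Feeding these into the volume formula gives
\[
  \vol\bigl(-K_{S_{16}}-uL_1\bigr)=
  \begin{cases}
    \dfrac{8-4u-7u^2}{20} & \text{for } u\in[0,\tfrac{1}{4}],\\[6pt]
    \dfrac{3(2-u)^2}{28} & \text{for } u\in[\tfrac{1}{4},2].
  \end{cases}
\]
A direct integration produces
\[
  \int_0^2\vol\bigl(-K_{S_{16}}-uL_1\bigr)\,\dd u = \frac{353}{3840}+\frac{343}{1792}=\frac{119}{420},
\]
and dividing by $(-K_{S_{16}})^2=\tfrac{2}{5}$ gives the desired bound $\alpha\leq \tfrac{17}{24}+\epsilon_k$. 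The analogous statement for $\beta$ follows by swapping the roles of $L_1$ and $L_2$, since $L_1$ and $L_2$ play symmetric roles in all intersection numbers.

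The main technical obstacle is locating the break point of the Zariski decomposition and carrying out the piecewise integration without arithmetic slips; beyond that, the argument is purely mechanical, and the inequality \eqref{bound on alpha and beta} already recorded before the lemma serves as an internal consistency check.
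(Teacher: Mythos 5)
Your proof is correct and follows essentially the same approach as the paper, which bounds $\beta$ first by integrating $\vol(2C_x-uL_2)$ and then invokes the $L_1\leftrightarrow L_2$ symmetry for $\alpha$; you simply do it in the opposite order. The Zariski decomposition, the break point $u=\tfrac{1}{4}$, the threshold $\tau=2$, and the final values all agree (note $\tfrac{343}{1792}=\tfrac{49}{256}$ and $\tfrac{119}{420}=\tfrac{17}{60}$, matching the paper's unreduced fractions).
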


\begin{proof}
Consider
\begin{equation*}
    2C_x - uL_2 = 2L_1 + (2 - u)L_2.
\end{equation*}
Since $L_2^2 < 0$, we have $2\geq u$. By multiplying the above equation by $L_1$, we have
\begin{equation*}
    (2C_x - uL_2)\cdot L_1 = \frac{1}{10} - \frac{2}{5}u.
\end{equation*}
Thus $\frac{1}{4}\geq u$ implies that $2C_x - uL_2$ is nef. For the case that $2\geq u\geq \frac{1}{4}$, we have
\begin{equation*}
    2C_x - uL_2 = (2-u)\left( \frac{8}{7}L_1 + L_2 \right) + \frac{8u-2}{7} L_1.
\end{equation*}
Thus
\begin{equation*}
    \vol(2C_x - uL_2) = 
    \begin{cases}
        -\dfrac{7}{20}u^2 - \dfrac{1}{5}u + \dfrac{2}{5} &\textrm{for~} 0\leq u \leq \dfrac{1}{4},\\[10pt]
        \dfrac{3}{28}(2-u)^2 &\textrm{for~}\dfrac{1}{4}\leq u \leq 2.
    \end{cases}
\end{equation*}

We have
\begin{align*}
    \beta &\leq \frac{1}{(-K_X)^2}\int_0^{2} \vol(2L_1+(2-u)L_2) \dd u +\epsilon_k \\
    &= \frac{5}{2}\bigg( \int_0^{\frac{1}{4}}-\frac{7}{20}u^2 - \frac{1}{5}u + \frac{2}{5} \dd u+\int_{\frac{1}{4}}^{2} \frac{3}{28}(2-u)^2\dd u\bigg)+\epsilon_k,\\
    &=\frac{5}{2}\bigg(\frac{353}{3840} + \frac{49}{256}\bigg)+\epsilon_k= \frac{17}{24}+\epsilon_k.
\end{align*}
Similar computations show that $\alpha \leq \frac{17}{24}+\epsilon_k.$
\end{proof}
This implies that $\lambda \alpha < 1$ and $\lambda \beta < 1$.

\begin{proposition}\label{Proposition: 5}
    The log pair $(S_{16},\lambda D)$  is log canonical. 
\end{proposition}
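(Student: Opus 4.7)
The plan is to verify log canonicity of $(S_{16},\lambda D)$ separately at every point $\msp\in S_{16}$, partitioned by the position of $\msp$ relative to $L_1$, $L_2$ and the three singular points. The essential inputs are the already-established bounds $\alpha,\beta\leq\tfrac{17}{24}+\epsilon_k$ (so $\lambda\alpha,\lambda\beta<1$) and the compatibility $\alpha\leq\tfrac14+\tfrac78\beta$ together with its $\beta$-analogue, which together allow inversion of adjunction to be applied freely along $L_1$ or $L_2$.

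For $\msp\in S_{16}\setminus C_x$ (automatically a smooth point), I would mimic the $S_{15}$ argument by taking the member $M\in|\mathcal{O}_{S_{16}}(4)|$ of the form $y-\zeta x^4=0$ through $\msp$. Its restriction to $S_{16}$ is an irreducible curve in $\PP(1,5,8)$ (the equation contains $t^2$) with $\mult_\msp(M)\leq 2$, and since $M\sim 2(-K_{S_{16}})$ Corollary \ref{Foundation:bound of k-basis cor} forces the coefficient $b$ of $M$ in $D=bM+\Omega$ to satisfy $b\leq\tfrac16+\epsilon_k$. The two sub-cases $\mult_\msp(M)=1$ and $\mult_\msp(M)=2$ are dispatched by Lemmas \ref{Foundation:mult} and \ref{Foundation:inequality mult intersection} combined with $M\cdot\Omega=\tfrac{4}{5}-\tfrac{8b}{5}$, each producing a lower bound on $b$ incompatible with $b\leq\tfrac16$. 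For a smooth $\msp\in C_x\setminus\{Q_1,Q_2,\msp_z\}$, by the symmetry $L_1\leftrightarrow L_2$ I may assume $\msp\in L_1\setminus L_2$; regrouping $D=\alpha L_1+(\beta L_2+\Delta)$ and applying Lemma \ref{Inversion of adjunction for smooth point} along $L_1$ gives
\[
\tfrac{1}{\lambda}\;<\;L_1\cdot(\beta L_2+\Delta)\;=\;\tfrac{1}{10}+\tfrac{7\alpha}{20},
\]
which forces $\alpha>\tfrac{5}{2}$, impossible since $\alpha\leq\tfrac{17}{24}$.

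For the singular points $Q_1,Q_2$ of type $\tfrac14(1,1)$ (say $Q_1\in L_1$, with $Q_2\in L_2$ handled symmetrically), I would perform the weighted blow-up $\pi\colon\widetilde S_{16}\to S_{16}$ at $Q_1$ with weights $(1,1)$, producing $E^2=-4$, $A_{S_{16}}(E)=\tfrac12$, $\widetilde L_1\equiv\pi^*L_1-\tfrac14 E$, and $\widetilde D\equiv\pi^*D-mE$. A Zariski decomposition of $\pi^*(-K_{S_{16}})-uE$ on the intervals $[0,\tfrac{1}{10}]$, $[\tfrac{1}{10},\tfrac14]$, $[\tfrac14,\tfrac12]$, combined with Theorem \ref{Foundation:bound of k-basis thm}, should produce $m\leq\tfrac{29}{120}+\epsilon_k$. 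A non-log canonical point $\msq\in E$ is then handled in two sub-cases: if $\msq\in E\setminus\widetilde L_1$, Lemma \ref{Foundation:mult} applied to $E$ together with $E\cdot\widetilde D=4m$ yields $1<4\lambda m\leq\tfrac{116}{117}$, which is absurd; if $\msq\in E\cap\widetilde L_1$, inversion along $\widetilde L_1$ gives $\alpha>\tfrac{5-\lambda}{6\lambda}=\tfrac{31}{48}$, and this residual gap against $\alpha\leq\tfrac{17}{24}$ is closed by an additional ordinary blow-up at $\msq$, using the compatibility $\alpha\leq\tfrac14+\tfrac78\beta$, along the lines of the $\msp_y$ argument in the $S_{12}$ proof.

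The hardest case is $\msp=\msp_z$ of type $\tfrac15(4,3)$, through which both $L_1$ and $L_2$ pass. I would perform the weighted blow-up with weights $(1,2)$ in the local coordinates $(y,t)$, obtaining $E\cong\PP(1,2)$ with $E^2=-\tfrac52$, $A_{S_{16}}(E)=\tfrac35$, and $\widetilde L_i\equiv\pi^*L_i-\tfrac25 E$. The crucial structural feature is that the proper transforms $\widetilde L_1$ and $\widetilde L_2$ separate, meeting $E$ at two distinct smooth points of $\widetilde S_{16}$, while $\widetilde S_{16}$ acquires a single residual $\tfrac12(1,1)$ singularity elsewhere on $E$. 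After computing the Zariski decomposition of $\pi^*(-K_{S_{16}})-uE$ and bounding $m$, I would exclude each possible position of the non-log canonical point $\msq\in E$ by inversion of adjunction along $\widetilde L_1$, $\widetilde L_2$, or $E$ (as appropriate), with a further weighted blow-up at the residual $\tfrac12(1,1)$ singularity if $\msq$ lands there. The delicate part, and the main technical obstacle I anticipate, will be combining the local inequalities produced by these successive inversions with both of the global compatibilities $\alpha\leq\tfrac14+\tfrac78\beta$ and $\beta\leq\tfrac14+\tfrac78\alpha$ so that every subcase closes; it is precisely this coupling of the two global constraints that makes $\msp_z$ significantly harder than the other singular points.
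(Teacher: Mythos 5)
Your breakdown of the problem into the four location types is correct, and your treatment of $\msp\in S_{16}\setminus C_x$ and of smooth points of $C_x$ matches the paper's. However, your plan diverges from the paper's at the singular points, and in ways that leave gaps.

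For $Q_1,Q_2$ the paper does not blow up at all: it applies Lemma \ref{Inversion of adjunction for singular point} directly to $L_1$ at $Q_1$, getting $\tfrac{1}{4\lambda}<\Delta\cdot L_1=\tfrac{1}{10}+\tfrac{7}{20}\alpha-\tfrac{2}{5}\beta$, and then substitutes the compatibility $\alpha\leq\tfrac14+\tfrac78\beta$ to obtain $\tfrac{1}{4\lambda}<\tfrac{3}{16}\bigl(1-\tfrac{\beta}{2}\bigr)$, which forces $\beta<0$. No blow-up and no interval-wise Zariski decomposition is needed. Your weighted blow-up route is unnecessarily heavy; moreover the inequality you obtain when $\msq\in E\cap\widetilde L_1$ is actually $\alpha>\tfrac{31}{48}+\tfrac{2}{3}\beta$ (you dropped the $\beta$-term), and this, combined with $\alpha\leq\tfrac14+\tfrac78\beta$, already yields $\beta>\tfrac{19}{10}$, a contradiction --- so the ``additional ordinary blow-up'' you invoke is not in fact needed, but you would have had to carry the $\beta$-dependence to see that.

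For $\msp_z$ your sketch omits the step that the paper performs first and on which the rest of the argument leans: since both $L_1$ and $L_2$ pass through and are smooth at $\msp_z$, a direct application of Lemma \ref{Inversion of adjunction for singular point} at $\msp_z$ along each $L_i$ gives the \emph{lower} bounds $\tfrac{1}{5\lambda}<\tfrac{7}{20}\alpha+\tfrac{1}{10}$ and $\tfrac{1}{5\lambda}<\tfrac{7}{20}\beta+\tfrac{1}{10}$ (the paper's inequality~(4.3)). Your plan invokes only the \emph{upper} compatibilities $\alpha\leq\tfrac14+\tfrac78\beta$ and $\beta\leq\tfrac14+\tfrac78\alpha$; those alone will not close the cases on the exceptional curve. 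The paper also uses \emph{two} weighted blow-ups at $\msp_z$, first with weights $(4,3)$ (under which $\widetilde L_1$ and $\widetilde L_2$ meet $E$ at the \emph{same} $\tfrac14(1,1)$ point) and only then with the $(1,2)$ weights you propose; your single $(1,2)$ blow-up, while correctly set up (the numerical data $E^2=-\tfrac52$, $A(E)=\tfrac35$, $\widetilde L_i\equiv\pi^*L_i-\tfrac25E$, separation of $\widetilde L_1,\widetilde L_2$, residual $\tfrac12(1,1)$ point are all right), does not by itself reproduce the paper's elimination of the various positions of $\msq$. You correctly flag $\msp_z$ as the obstruction, but the piece you are missing is the preliminary derivation of lower bounds for $\alpha,\beta$ rather than the coupling of the two upper bounds.
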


Let us now prove Proposition \ref{Proposition: 5}.

\begin{lemma}
    The log pair $(S_{16}, \lambda D)$ is log canonical along $S_{16}\setminus C_x$.
\end{lemma}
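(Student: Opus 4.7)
The plan is to mirror the analogous argument for $S_{15}\setminus C_x$ in the previous subsection. Suppose $(S_{16},\lambda D)$ fails to be log canonical at some $\msp\in S_{16}\setminus C_x$; since every singular point of $S_{16}$ lies on $C_x$, the point $\msp$ is smooth. My strategy is to produce an irreducible curve $M$ through $\msp$ from a low-degree linear system on $S_{16}$, bound its coefficient in $D$ via Corollary~\ref{Foundation:bound of k-basis cor}, and then derive a contradiction via inversion of adjunction.

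Concretely, I would take $M\in |\mcO_{S_{16}}(4)|$ cut out on $S_{16}$ by $y-\zeta x^4=0$ with $\zeta=y(\msp)/x(\msp)^4$, so that $\msp\in M$. Substituting $y=\zeta x^4$ into the defining equation of $S_{16}$, the curve $M$ is isomorphic to the curve in $\PP(1,5,8)$ given by
\[
t^2 - \zeta^4 x^{16} + xz^3 + x^2 f_{14}(x,\zeta x^4, z) = 0.
\]
Because this polynomial is quadratic in $t$, any non-trivial factorization would take the form $(t-h(x,z))(t+h(x,z))$ with $h\in\CC[x,z]$ weighted homogeneous of degree $8$; but then $h=ax^8+bx^3 z$ and $h^2=a^2 x^{16}+2ab\,x^{11}z+b^2 x^6 z^2$ contains no $xz^3$-monomial, contradicting the presence of $xz^3$ in the equation. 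Hence $M$ is irreducible, and the $t^2$-monomial forces $\mult_{\msp}(M)\leq 2$.

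I then write $D=bM+\Omega$ with $M\not\subset\Supp(\Omega)$. Since $M\sim_{\QQ}4C_x\sim_{\QQ}2(-K_{S_{16}})$, Corollary~\ref{Foundation:bound of k-basis cor} gives $b\leq \tfrac{1}{6}+\epsilon_k$, so $\lambda b<1$ for $k\gg 0$, and a direct computation yields $M\cdot\Omega = \tfrac{4}{5}-\tfrac{8}{5}b$. If $M$ is smooth at $\msp$, applying Lemma~\ref{Inversion of adjunction for smooth point} to $(S_{16},M+\lambda\Omega)$ gives $\tfrac{1}{\lambda}<M\cdot\Omega=\tfrac{4}{5}-\tfrac{8}{5}b$, forcing $b<0$. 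If $\mult_{\msp}(M)=2$, Lemma~\ref{Foundation:mult} yields $\mult_{\msp}(D)>\tfrac{1}{\lambda}$, hence $\mult_{\msp}(\Omega)>\tfrac{1}{\lambda}-2b$; Lemma~\ref{Foundation:inequality mult intersection} (with $n=1$) gives $2\mult_{\msp}(\Omega)\leq M\cdot\Omega$, and combining yields $b>\tfrac{23}{48}$, contradicting $b\leq \tfrac{1}{6}+\epsilon_k$.

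The only genuinely structural step is the irreducibility of $M$; the rest is the two-case inversion-of-adjunction routine already used in the $S_{15}$ and $S_{12}$ subsections, with the numerics working out cleanly since $\lambda=\tfrac{40}{39}$ is only slightly larger than $1$.
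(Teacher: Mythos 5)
Your proof is correct and follows essentially the same approach as the paper's: take a degree-$4$ curve through $\msp$, bound its coefficient in $D$ via Corollary~\ref{Foundation:bound of k-basis cor}, and run the two-case inversion-of-adjunction argument according to whether the curve is smooth or has multiplicity $2$ at $\msp$. Your version is in fact slightly more careful than the paper's: you make explicit the coordinate-change/choice of $\zeta$ that ensures the curve actually passes through $\msp$ (the paper writes $C_y$ but implicitly assumes a translation so that $\msp\in C_y$), you spell out the irreducibility via the $t^2+g(x,z)$ factorization analysis rather than just pointing to the $xz^3$ monomial, you use the correct multiple $\mu=2$ giving $b\leq\tfrac{1}{6}+\epsilon_k$ (the paper writes $\tfrac{1}{5}$, which is a harmless slip), and your final bound $b>\tfrac{23}{48}$ is the correctly simplified version of the paper's garbled concluding line.
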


\begin{proof}
Suppose the log pair $(S_{16}, \lambda D)$ is not log canonical at a point $\msp\in S_{16}\setminus C_x$. This implies that $\msp$ is a smooth point of the surface $S_{16}$.
Note that $C_y$ is isomorphic to the variety given by 
\begin{equation}\label{polynomial of H_y}
    t^2 + xz^3 + x^2f_{14}(x,0,z) = 0
\end{equation}
in $\PP(1,5,8)$. Since the quasihomogeneous polynomial (\ref{polynomial of H_y}) has the monomial term $xz^3$, it is irreducible. It implies that $C_y$ is irreducible. Write
\begin{equation*}
  D = bC_y + \Omega
\end{equation*}
where $b$ is a nonnegative real number and $\Omega$ is an effective $\QQ$-divisor such that $C_y\not\subset \Supp(\Omega)$. From Corollary \ref{Foundation:bound of k-basis cor}, we see that $b \leq \frac{1}{5}$ and this implies that $\lambda b < 1$.

We know that $\mult_{\msp}(C_y)\leq 2$. Suppose that $C_y$ is smooth at $\msp$, that is $\mult_{\msp}(C_y)=1$. Then we have
\begin{equation*}
    \frac{1}{\lambda}<\Omega\cdot C_y=(D - bC_y)\cdot C_y = \frac{4}{5} - \frac{8}{5}b.
\end{equation*}
This implies that $b<0$ and this is a contradiction. 
 Thus we have $\mult_{\msp}(C_y)=2$. Since $\msp$ is a smooth point of $S_{16}$, we have that 
 $$\frac{1}{\lambda}<\mult_{\msp} (D) =\mult_{\msp}(bC_y+\Omega)=2b+\mult_{\msp}(\Omega).$$
 
 Using this we have, 
\begin{equation*}
    \left(\frac{1}{\lambda} - 2b\right)2 < \mult_{\msp}(\Omega) \mult_{\msp} (C_y) \leq \Omega\cdot C_y = \frac{4}{5} - \frac{8}{5}b.
\end{equation*}

This implies that
\begin{equation*}
    \left(\frac{2}{\lambda} - \frac{4}{5}\right) < b \leq  \frac{1}{5}.
\end{equation*}
From this we have $\frac{25}{16}<\lambda=\frac{5}{4}$, which is absurd.
\end{proof}

\begin{lemma}
    The log pair $(S_{16}, \lambda D)$ is log canonical along $C_x\setminus \Sing(S_{16})$.
\end{lemma}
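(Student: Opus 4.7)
The plan is to mimic the inversion-of-adjunction argument used in the previous subsections for curves through the point in question, but with an important twist: here $C_x$ is reducible as $L_1+L_2$, so I cannot apply Lemma \ref{Inversion of adjunction for smooth point} directly to the reducible divisor $C_x$. Instead I will work with one of its irreducible components. Suppose for contradiction that $(S_{16},\lambda D)$ is not log canonical at some point $\msp\in C_x\setminus\Sing(S_{16})$. Since $\msp_z,Q_1,Q_2$ are the only singular points of $S_{16}$ and since $L_1\cap L_2=\{\msp_z\}$ (which is excluded), the point $\msp$ lies on exactly one of $L_1,L_2$, say on $L_1$; furthermore $L_1$ is smooth at $\msp$ and $\msp\notin L_2$.

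Next I will apply Lemma \ref{Inversion of adjunction for smooth point} to the decomposition $\lambda D=(\lambda\alpha)L_1+(\lambda\beta L_2+\lambda\Delta)$. The coefficient $\lambda\alpha<1$ is provided by the bound $\alpha\leq\frac{17}{24}+\epsilon_k$ established in the preceding lemma, and $L_1$ is not contained in the support of $\lambda\beta L_2+\lambda\Delta$. Inversion of adjunction therefore yields
\[
1<\bigl(L_1\cdot(\lambda\beta L_2+\lambda\Delta)\bigr)_{\msp}.
\]
Since $\msp\notin L_2$, we have $(L_1\cdot L_2)_{\msp}=0$, so this reduces to $\frac{1}{\lambda}<(L_1\cdot\Delta)_{\msp}\leq L_1\cdot\Delta$. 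Using $D\equiv -K_{S_{16}}$ together with the intersection numbers $L_1^2=-\frac{7}{20}$, $L_1\cdot L_2=\frac{2}{5}$ and $-K_{S_{16}}\cdot L_1=2C_x\cdot L_1=\frac{1}{10}$, I compute
\[
L_1\cdot\Delta=L_1\cdot D-\alpha L_1^2-\beta L_1\cdot L_2=\frac{1}{10}+\frac{7\alpha}{20}-\frac{2\beta}{5}.
\]

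The final step is to observe that the resulting inequality $\frac{39}{40}<\frac{1}{10}+\frac{7\alpha}{20}-\frac{2\beta}{5}$ cannot hold. Dropping the nonpositive $-\frac{2\beta}{5}$ and inserting the bound $\alpha\leq\frac{17}{24}+\epsilon_k$ from the previous lemma gives a right-hand side bounded by $\frac{1}{10}+\frac{7}{20}\bigl(\frac{17}{24}+\epsilon_k\bigr)$, which is well under $\frac{1}{2}$ for $k\gg 0$, contradicting $\frac{1}{\lambda}=\frac{39}{40}$. The argument is symmetric in $L_1,L_2$, so the same reasoning handles the case $\msp\in L_2\setminus\{\msp_z,Q_2\}$. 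I do not foresee any genuine obstacle; all the substantive work was packaged into the prior lemma that bounds $\alpha$ and $\beta$.
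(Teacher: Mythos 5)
Your proposal is correct and follows the same route as the paper: reduce to a single component $L_1$ away from $L_2$ (since $L_1\cap L_2=\{\msp_z\}$ is excluded), apply Lemma \ref{Inversion of adjunction for smooth point} with coefficient $\lambda\alpha<1$, and derive a contradiction from $\frac{1}{\lambda}<L_1\cdot\Delta=\frac{1}{10}+\frac{7\alpha}{20}-\frac{2\beta}{5}$ using the bound $\alpha\leq\frac{17}{24}+\epsilon_k$. The paper phrases the final step as $\alpha>\frac{20}{7}\left(\frac{1}{\lambda}-\frac{1}{10}\right)=\frac{5}{2}$, but this is the same numerical contradiction you reach by bounding the right-hand side directly.
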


\begin{proof}
We consider the case that $\msp\in C_x\setminus \Sing(S_{16})$. Without loss of generality we can assume that $\msp\in L_1$. Then the log pair
\begin{equation*}
  (S_{16},\lambda \alpha L_1+\lambda \Delta)
\end{equation*}
is not log canonical at $\msp$. Since $\lambda \alpha<1$, using Lemma \ref{Inversion of adjunction for smooth point} we get that 
\begin{equation*}
    \frac{1}{\lambda}<\Delta\cdot L_1=(D-\alpha L_1-\beta L_2)\cdot L_1 <\frac{1}{10}+\frac{7\alpha}{20}
\end{equation*}
Thus we have $\frac{20}{7}(\frac{1}{\lambda} - \frac{1}{10})<\alpha$. This is a contradiction.
\end{proof}

Now we will prove that the log pair $(S_{16},\lambda D)$ is log canonical at the singular points of $S_{16}$. 

\begin{lemma}
    The log pair $(S_{16}, \lambda D)$ is log canonical at $Q_1$ and $Q_2$.
\end{lemma}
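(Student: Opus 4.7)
By the symmetry of the defining equation under $t \mapsto -t$, which interchanges $L_1 \leftrightarrow L_2$ and $Q_1 \leftrightarrow Q_2$, it suffices to treat $Q_1$, which lies on $L_1$ but not on $L_2$. Suppose for contradiction that $(S_{16}, \lambda D)$ fails to be log canonical at $Q_1$. The direct application of Lemma~\ref{Inversion of adjunction for singular point} along $L_1$ yields only $\alpha > \tfrac{23}{56}$, which is not incompatible with the a priori bound $\alpha \leq \tfrac{17}{24} + \epsilon_k$. My plan therefore is to pass to a weighted blow-up that resolves the $\tfrac{1}{4}(1,1)$ singularity, bound the resulting discrepancy $m$ sharply via an integrated volume, and then conclude by a single inversion-of-adjunction step on the exceptional divisor.

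Let $\pi \colon \widetilde{S} \to S_{16}$ be the weighted blow-up at $Q_1$ with weights $(1,1)$ on the local coordinates $(x,z)$. Then $\widetilde{S}$ is smooth, its exceptional divisor $E \cong \PP^1$ satisfies $E^2 = -4$, and
\[
K_{\widetilde{S}} \equiv \pi^* K_{S_{16}} - \tfrac{1}{2} E, \qquad \widetilde{L}_1 \equiv \pi^* L_1 - \tfrac{1}{4} E, \qquad \widetilde{L}_1^2 = -\tfrac{3}{5}, \qquad \widetilde{L}_1 \cdot E = 1.
\]
Writing $\widetilde{D} = \pi^* D - m E$ with $m \geq 0$, the log pullback reads $\pi^*(K_{S_{16}} + \lambda D) \equiv K_{\widetilde{S}} + \lambda \widetilde{D} + (\lambda m + \tfrac{1}{2}) E$, so the blown-up pair fails log canonicity at some point $\msq \in E$.

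Next I would bound $m$ via Theorem~\ref{Foundation:bound of k-basis thm}. The divisor $\pi^*(-K_{S_{16}}) - uE = 2\widetilde{L}_1 + 2\widetilde{L}_2 + (\tfrac{1}{2}-u)E$ has intersection $\tfrac{1}{10} - u$ with $\widetilde{L}_1$, so it is nef on $[0,\tfrac{1}{10}]$; for $u \in [\tfrac{1}{10}, \tau(E)]$ the Zariski negative part is $\tfrac{10u-1}{6}\widetilde{L}_1$, and $P(u)^2 = 0$ yields $\tau(E) = \tfrac{5}{14}$. Integrating the resulting two-piece volume function (equal to $\tfrac{2}{5} - 4u^2$ on the first interval and $\tfrac{5}{12} - \tfrac{u}{3} - \tfrac{7u^2}{3}$ on the second) and dividing by $(-K_{S_{16}})^2 = \tfrac{2}{5}$ gives $m \leq \tfrac{673}{2940} + \epsilon_k$; in particular $\lambda m + \tfrac{1}{2} < 1$.

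Finally, because $\widetilde{S}$ is smooth and $E$ is not contained in $\Supp(\widetilde{D})$, Lemma~\ref{Inversion of adjunction for smooth point} applied to the pair $(\widetilde{S}, (\lambda m + \tfrac{1}{2})E + \lambda \widetilde{D})$ with $C = E$ and $a = \lambda m + \tfrac{1}{2}$ yields
\[
1 < (E \cdot \lambda \widetilde{D})_{\msq} \leq \lambda\, E \cdot \widetilde{D} = -\lambda m\, E^2 = 4\lambda m,
\]
so $m > \tfrac{1}{4\lambda} = \tfrac{39}{160}$. Since $\tfrac{673}{2940} < \tfrac{39}{160}$, this contradicts the volume bound for $k$ sufficiently large. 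The main obstacle is precisely the numerical balance here: the value $\lambda = \tfrac{40}{39}$ is chosen so that $\tfrac{1}{4\lambda}$ strictly exceeds the bound on $m$ coming from integrating the volume function over the two phases of the Zariski decomposition, which allows one inversion-of-adjunction step on $E$ to close the argument uniformly over all $\msq \in E$, with no secondary case split on whether $\msq$ lies on $\widetilde{L}_1$.
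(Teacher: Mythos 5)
Your initial diagnosis that the direct inversion-of-adjunction step fails is not correct, and as a result your route is unnecessarily long; and the long route itself has a computational gap in the Zariski decomposition.

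\textbf{On the direct approach.} You computed $L_1\cdot\Delta = \tfrac{1}{10} + \tfrac{7}{20}\alpha - \tfrac{2}{5}\beta$, dropped the $\beta$-term, and concluded $\alpha > \tfrac{23}{56}$, which does not contradict $\alpha\leq\tfrac{17}{24}+\epsilon_k$. But the paper derived in \eqref{bound on alpha and beta} the coupling $\alpha\leq\tfrac{1}{4}+\tfrac{7}{8}\beta$ (coming from $L_2\not\subset\Supp(\Delta)$). Substituting this into $L_1\cdot\Delta$ gives
\[
\frac{1}{4\lambda}< L_1\cdot\Delta \leq \frac{1}{10}+\frac{7}{20}\Bigl(\frac{1}{4}+\frac{7}{8}\beta\Bigr)-\frac{2}{5}\beta = \frac{3}{16}\Bigl(1-\frac{\beta}{2}\Bigr)\leq \frac{3}{16},
\]
and $\tfrac{1}{4\lambda}=\tfrac{39}{160}>\tfrac{3}{16}=\tfrac{30}{160}$ already gives $\beta<0$, a contradiction. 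So the single inversion-of-adjunction step at $Q_1$ does close the argument; that is the paper's proof.

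\textbf{On your blow-up computation.} Your Zariski decomposition of $\pi^*(-K_{S_{16}})-uE$ is incorrect for $u>\tfrac{1}{4}$. With $N(u)=\tfrac{10u-1}{6}\widetilde{L}_1$ one has $P(u)\cdot\widetilde{L}_2 = \tfrac{1}{10}-\tfrac{2}{5}\cdot\tfrac{10u-1}{6}$, which becomes negative exactly at $u=\tfrac{1}{4}$. For $u\in[\tfrac{1}{4},\tau]$ the negative part must also contain $\widetilde{L}_2$; solving $P(u)\cdot\widetilde{L}_1 = P(u)\cdot\widetilde{L}_2 = 0$ gives $N(u)=(7u-\tfrac{3}{2})\widetilde{L}_1+(8u-2)\widetilde{L}_2$, hence $P(u)^2 = 3(u-\tfrac{1}{2})^2$ and $\tau(E)=\tfrac{1}{2}$, not $\tfrac{5}{14}$. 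Indeed $\pi^*(-K_{S_{16}})-\tfrac{1}{2}E = 2\widetilde{L}_1+2\widetilde{L}_2$, which has trivial positive part. Because your formula underestimates $\vol$ on $[\tfrac{1}{4},\tfrac{1}{2}]$ and truncates the integral too early, the bound $m\leq\tfrac{673}{2940}+\epsilon_k$ is not justified. The corrected integral gives
\[
m\leq \frac{5}{2}\Bigl(\frac{29}{750}+\frac{3051}{72000}+\frac{1}{64}\Bigr)+\epsilon_k=\frac{29}{120}+\epsilon_k,
\]
and since $\tfrac{29}{120}=\tfrac{116}{480}<\tfrac{117}{480}=\tfrac{39}{160}=\tfrac{1}{4\lambda}$, the contradiction survives, but only by $\tfrac{1}{480}$. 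So your approach can be rescued, but as written the volume computation is wrong, and in any case the paper's one-line argument via \eqref{bound on alpha and beta} is both shorter and less fragile.
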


\begin{proof}

Observe that $Q_i$ lies on $L_i$. We first consider $Q_1\in L_1$. So we have that the log pair $(S_{16},\lambda \alpha L_1+\lambda \Delta)$ is not log canonical at $Q_1$.

Since $\alpha L_1<1$, using Lemma \ref{Inversion of adjunction for singular point} and the inequality \eqref{bound on alpha and beta}, we get that 
\begin{align*}
    \frac{1}{4\lambda}<(\Delta\cdot L_1)_{Q_1}\leq \Delta\cdot L_1 &=(D-\alpha L_1-\beta L_2)\cdot L_1\\
    &=\left(\frac{1}{10} + \frac{7}{20}\alpha - \frac{2}{5}\beta\right)\\
    &\leq \left(\frac{1}{10} + \frac{7}{20}\bigg(\frac{1}{4}+\frac{7\beta}{20}\bigg)- \frac{2}{5}\beta\right)\\
    &=\frac{3}{16}\bigg(1-\frac{\beta}{2}\bigg).
\end{align*}
This implies that $\beta<0$. Similarly, for $Q_2\in L_2$ we can obtain $\alpha < 0$. They are absurd. 
\end{proof}

\begin{lemma}
    The log pair $(S_{16}, \lambda D)$ is log canonical at the point $\msp_z$ .
\end{lemma}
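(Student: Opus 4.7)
The plan is to mimic the weighted blow-up argument used for the previous $\frac{1}{n}$ singular points, and in particular for $\msp_t$ on $S_{15}$ in the preceding subsection. Assume for contradiction that the log pair $(S_{16},\lambda D)$ fails to be log canonical at $\msp_z$.

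First I will perform a weighted blow-up $\pi\colon \widetilde{S}_{16}\to S_{16}$ at $\msp_z$ with weights $(1,2)$ on local coordinates $(y,t)$; these are compatible with the singularity type $\frac{1}{5}(4,3)\simeq\frac{1}{5}(1,2)$. The standard formulas yield
\[
K_{\widetilde{S}_{16}}\equiv \pi^{*}K_{S_{16}}-\frac{2}{5}E,\quad E^{2}=-\frac{5}{2},\quad \widetilde{L}_{i}\equiv \pi^{*}L_{i}-\frac{2}{5}E,\quad \widetilde{L}_{i}\cdot E=1,
\]
from which $\widetilde{L}_{1}^{2}=\widetilde{L}_{2}^{2}=-\frac{3}{4}$ and, crucially, $\widetilde{L}_{1}\cdot\widetilde{L}_{2}=0$, so the two components of $C_x$ are separated by $\pi$. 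The exceptional divisor $E\cong \PP(1,2)$ contains a single singular point $\msq_0$ of $\widetilde{S}_{16}$, of type $\frac{1}{2}(1,1)$, while the two points $\widetilde{L}_i\cap E$ are smooth points of $\widetilde{S}_{16}$ distinct from $\msq_0$.

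Writing $\widetilde{D}\equiv\pi^{*}D-mE$ and letting $m_{\Delta}$ be defined by $\widetilde{\Delta}\equiv\pi^{*}\Delta-m_{\Delta}E$, so that $m=\frac{2(\alpha+\beta)}{5}+m_{\Delta}$, I will establish two bounds. The volume bound of Theorem~\ref{Foundation:bound of k-basis thm} gives $m\leq \frac{17}{30}+\epsilon_k$: the divisor $\pi^{*}(-K_{S_{16}})-uE$ is nef on $[0,\frac{1}{10}]$, and for $u\in[\frac{1}{10},\frac{8}{5}]$ its Zariski decomposition subtracts $\left(\frac{4u}{3}-\frac{2}{15}\right)(\widetilde{L}_1+\widetilde{L}_2)$, producing $\vol=\frac{(5u-8)^{2}}{150}$; the total integral is $\frac{17}{75}$, and dividing by $(-K_{S_{16}})^{2}=\frac{2}{5}$ gives the stated bound. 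In particular $\lambda m+\frac{2}{5}<1$, so any failure of log canonicity must occur at some point $\msq\in E$. Secondly, $\widetilde{\Delta}\cdot\widetilde{L}_i\geq 0$ for $i=1,2$ translates to $m\leq \frac{1}{10}+\frac{3\alpha}{4}$ and $m\leq \frac{1}{10}+\frac{3\beta}{4}$; averaging and substituting for $m_\Delta$ gives the auxiliary bound $m_{\Delta}\leq \frac{1}{10}-\frac{\alpha+\beta}{40}\leq \frac{1}{10}$. I then split into four cases according to the position of $\msq$. If $\msq$ is a smooth point of $\widetilde{S}_{16}$ not on $\widetilde{L}_1\cup\widetilde{L}_2$, inversion of adjunction along $E$ forces $\lambda(\widetilde{\Delta}\cdot E)_{\msq}>1$; but $\widetilde{\Delta}\cdot E=\frac{5m_{\Delta}}{2}\leq\frac{1}{4}<\frac{1}{\lambda}$, contradiction. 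If $\msq=\msq_0$, then since neither $\widetilde{L}_i$ passes through $\msq_0$ the inversion of adjunction at the singular point gives $(\lambda\widetilde{\Delta}\cdot E)_{\msq_0}>\frac{1}{2}$, whence $m_{\Delta}>\frac{1}{5\lambda}=\frac{39}{200}>\frac{1}{10}$, again a contradiction.

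The main obstacle is the remaining symmetric pair of cases $\msq=\widetilde{L}_i\cap E$: a direct inversion of adjunction along $\widetilde{L}_1$ only yields $\alpha>\frac{97}{150}$, which is weaker than the bound $\alpha\leq \frac{17}{24}+\epsilon_k$ established in the preceding lemma. The fix is to apply inversion of adjunction along $E$ itself: using $(\widetilde{L}_1\cdot E)_{\msq}=1$ and $(\widetilde{L}_2\cdot E)_{\msq}=0$, one obtains
\[
\alpha+\frac{5m_{\Delta}}{2}>\frac{1}{\lambda}=\frac{39}{40},
\]
and combined with $\frac{5m_{\Delta}}{2}\leq \frac{1}{4}$ this forces $\alpha>\frac{29}{40}>\frac{17}{24}$, the desired contradiction. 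It is precisely the strong negativity $E^{2}=-\frac{5}{2}$ coming from the weight choice $(1,2)$ that makes the inversion along $E$ decisive here; the alternative choice of weights $(4,3)$ would leave $\widetilde{L}_1$ and $\widetilde{L}_2$ meeting each other, complicating this step.
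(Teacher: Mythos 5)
Your argument is correct and in fact streamlines the paper's own proof. The paper first performs a weighted blow-up of $\msp_z$ with weights $(4,3)$ in the $(y,t)$-chart, uses it (together with the lower bounds on $\alpha,\beta$ from inversion of adjunction along $L_1,L_2$) to rule out positions $\msq\in E\setminus(\widetilde{L}_1\cup\widetilde{L}_2)$, and then, for the remaining case $\msq\in\widetilde{L}_1\cap\widetilde{L}_2\cap E$, switches to the alternative blow-up with weights $(1,2)$ over $\msp_z$ and redoes a full case analysis on $F$. You correctly recognize that the $(1,2)$ blow-up alone suffices: with those weights one has $\widetilde{L}_1\cdot\widetilde{L}_2=0$, $\widetilde{L}_i^2=-\tfrac34$, $E^2=-\tfrac52$, and $E$ carries a single singular point of type $\tfrac12(1,1)$ away from the $\widetilde{L}_i$, so every position of $\msq$ on $E$ can be handled directly by inversion of adjunction along $E$. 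The key quantitative input you isolate, namely $\widetilde{\Delta}\cdot E=\tfrac{5m_\Delta}{2}\leq\tfrac14$ obtained by averaging $\widetilde{\Delta}\cdot\widetilde{L}_i\geq 0$ and subtracting the $\widetilde{L}_i$-contributions from $m$, is also what the paper needs in its last two cases but leaves implicit. Your numerical checks (the volume bound $m\leq \tfrac{17}{30}+\epsilon_k$, the thresholds $m_\Delta>\tfrac{39}{200}>\tfrac1{10}$ in the $\tfrac12$-point case, and $\alpha>\tfrac{29}{40}>\tfrac{17}{24}$ in the $\widetilde{L}_i\cap E$ case against the bound $\alpha\leq\tfrac{17}{24}+\epsilon_k$ from the previous lemma) all hold, and the observation that inversion along $\widetilde{L}_1$ would only yield $\alpha>\tfrac{97}{150}$ and hence fail is accurate. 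In short, you reach the same contradiction by the same mechanism as the paper's second blow-up, while correctly noting that the paper's first blow-up with weights $(4,3)$ is logically superfluous.
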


\begin{proof}
Suppose that the log pair $(S_{16}, \lambda D)$ is not log canonical at the point $\msp_z$. Since $\msp_z \in L_1 \cap L_2$, $L_1$, $L_2$ are smooth at $\msp_z$ and $\lambda \alpha<1$, $\lambda \beta<1$, using Lemma \ref{Inversion of adjunction for singular point}, we have 
\begin{align}\label{lemma 18:bounds on alpha and beta}
    \frac{1}{5\lambda}<\left(D-\alpha L_1\right)\cdot L_1&=\left (\beta L_2+\Delta\right)\cdot L_1=\frac{7\alpha}{20}+\frac{1}{10},\\
    \frac{1}{5\lambda}<\left(D-\beta L_2\right)\cdot L_2&=\left (\alpha L_1+\Delta\right)\cdot L_2=\frac{7\beta}{20}+\frac{1}{10}.
\end{align} These inequalities will come in handy in the computations that follow.

Let $\pi \colon \widetilde{S}_{16}\to S_{16}$ be the weighted blow-up at $\msp_z$ with weights $\mathrm{wt}(y)=4$ and $\mathrm{wt}(t)=3$. Then we have
\begin{equation*}
    \begin{array}{ll}
        K_{\widetilde{S}_{16}}\equiv \pi^*(K_{S_{16}}) + \dfrac{2}{5}E,&
        \widetilde{D}\equiv \pi^*(D)-mE,\\[10pt]
        \widetilde{L}_1\equiv \pi^*(L_1)-\dfrac{3}{5}E,&
        \widetilde{L}_2\equiv \pi^*(L_2)-\dfrac{3}{5}E,
    \end{array}
\end{equation*}
where $E$ is the exceptional divisor of $\pi$. The log pair
\begin{equation*}
  \left(\widetilde{S}_{16}, \lambda \widetilde{D} + \left(\lambda m - \frac{2}{5}\right)E \right)
\end{equation*}
is not log canonical at some point $\msq\in E$.

In order to bound the value of $m$, we will compute the volume of the pseudoeffective divisor $\pi^*(-K_{S_{16}})-uE$. This is given by 
\begin{equation*}
  \pi^*(2C_x)-uE\equiv 2\widetilde{L}_1 + 2\widetilde{L}_2 + \left(\frac{12}{5}-u\right)E.
\end{equation*}
Since $\widetilde{L}_1$ and $\widetilde{L}_2$ are negative definite $\tau(E) = \frac{12}{5}$. This divisor is nef when $u \in [0,\frac{2}{5}]$ and the Zariski Decomposition of the divisor when $u \in [\frac{2}{5},\frac{12}{5}]$ is given by 
$$
    2\widetilde{L}_1 + 2\widetilde{L}_2 + \left(\frac{12}{5}-u\right)E=  \left(\widetilde{L}_1 + \widetilde{L}_2 + E\right)\left(\frac{12}{5} - u\right)+\left(u-\frac{2}{5}\right)\left(\widetilde{L}_1+\widetilde{L}_2\right).
$$

Using this we then have
\begin{equation*}
  \vol(\pi^*(-K_{S_{16}}) - uE) =
    \begin{cases}
      (\pi^*(-K_{S_{16}})- uE)^2 = \dfrac{2}{5} - \dfrac{5}{12}u^2 & \textrm{for~} 0\leq u \leq \dfrac{2}{5},\\[10pt]
      \left(\widetilde{L}_1 + \widetilde{L}_2 + E\right)^2\left(\dfrac{12}{5} - u\right)^2 = \dfrac{1}{60}\left(\dfrac{12}{5} -u\right)^2 & \textrm{for~} \dfrac{2}{5}\leq u \leq \dfrac{12}{5}.
    \end{cases}
\end{equation*}
Then
\begin{equation*}
m \leq \frac{1}{\left(-K_{S_{16}}\right)^2}\int_0^{\tau(E)}\vol(-K_{S_{16}} - uE) \dd u + \epsilon_k = \frac{14}{15} + \epsilon_k.
\end{equation*}
It implies that $\lambda m-\frac{2}{5} < 1$. The point $\msq \in E$ is of type $\frac{1}{r}(\msa,\msb)$ where $r\leq 4$. Since $\widetilde{L_i}\cdot E = \frac{1}{4}$, $\widetilde{L}_i\cap E$ is the point with type $\frac{1}{4}(1,1)$.

Suppose $\msq \in E \setminus (\widetilde{L}_1\cup \widetilde{L}_2)$. Then $\msq$ is either a smooth point or a singular point of type $\frac{1}{3}(1,1)$.

Since we have that the log pair $(\widetilde{S}_{16}, \lambda \widetilde{\Delta}+(\lambda m-\frac{2}{5})E)$ is not log canonical at $\msq$ and $\left(\lambda m-\frac{2}{5}\right)<1$, using Lemma \ref{Inversion of adjunction for singular point}, we get

\begin{align*}
    \frac{1}{3\lambda}<\frac{1}{r\lambda}<\widetilde{\Delta}\cdot E=\frac{5m}{12}-\frac{\alpha}{4}-\frac{\beta}{4}.
\end{align*} From \eqref{lemma 18:bounds on alpha and beta}, this gives a contradiction, thus showing that $\msq \in \widetilde{L}_1\cap \widetilde{L}_2 \cap E$. Note that  $E\cap\widetilde{L}_1 = E\cap\widetilde{L}_2$ and hence this is the only remaining position of the point $\msq$.

Then we have that the log pair $(\widetilde{S}_{16}, \lambda \alpha \widetilde{L}_1+\beta \widetilde{L}_2+\lambda \widetilde{\Delta}+(\lambda m-\frac{2}{5})E)$ is not log canonical at $\msq$. We will consider a weighted blow-up $\pi_2\colon \widehat{S}_{16}\to S_{16}$ at $\msp_z$ with different weights, that is, let $\mathrm{wt}(y)=1$ and $\mathrm{wt}(z)=2$. From this we have
\begin{equation*}
    \begin{array}{ll}
      K_{\widehat{S}_{16}} \equiv \pi_2^*(K_{S_{16}}) - \dfrac{2}{5}F,&
     \widehat{D}\equiv \pi_2^*(D) - n F,\\[10pt]
      \widehat{L}_1\equiv \pi_2^*(L_1)-\dfrac{2}{5}F,&
      \widehat{L}_2\equiv \pi_2^*(L_2)-\dfrac{2}{5}F,
    \end{array}
\end{equation*}
where $F$ is the exceptional divisor of $\pi_2$.
The log pair
\begin{equation*}
  \left(\widehat{S}_{16}, \lambda\left(\alpha\widehat{L}_1 + \beta\widehat{L}_2 + \widehat{\Delta}\right) + \left(\lambda n + \frac{2}{5}\right)F \right)
\end{equation*}
is not log canonical at some point $\msq\in F$.

We will now bound $n$ using Corollary \ref{Foundation:bound of k-basis cor} for the divisor $\pi_2^*(-K_{\widehat{S}_{16}})-uF$.

\begin{equation*}
\pi_2^*(-K_{\widehat{S}_{16}})-uF\equiv 2\widehat{L}_1 + 2\widehat{L}_2 + \left(\frac{8}{5}-u\right)F.
\end{equation*}
Since $\widehat{L}_1$ and $\widehat{L}_2$ are negative definite we have $\tau(F) = \frac{8}{5}$.
This divisor is nef when $u\in [0,\frac{1}{10}]$. Therefore, the Zariski decomposition of the divisor when $u\in [\frac{1}{10},\frac{8}{5}]$ is given by 
\[
\pi_2^*(-K_{\widehat{S}_{16}})-uF=\left(\frac{32}{15}-\frac{4u}{3}\right)\left(\widehat{L}_1+\widehat{L}_2\right)+\left(\frac{8}{5}-u\right)F+\left(\frac{4u}{3}-\frac{2}{15}\right)\left(\widehat{L}_1+\widehat{L}_2\right).
\]
We also have
\begin{equation*}
  \vol(-K_{S_{16}} - uF) = 
    \begin{cases}
      (-K_{S_{16}} - uF)^2 = \dfrac{2}{5} - \dfrac{5}{2}u^2 & \textrm{for~} 0\leq u \leq \dfrac{1}{10}\\[10pt]
      \left(\dfrac{4}{3}\widehat{L}_1 + \dfrac{4}{3}\widehat{L}_2 + F\right)^2\left(\dfrac{8}{5} - u\right)^2 = \dfrac{1}{30}\left(\dfrac{8}{5} - u\right)^2 & \textrm{for~} \dfrac{1}{10}\leq u \leq \dfrac{8}{5}.
    \end{cases}
\end{equation*}
Then
\begin{equation*}
  n\leq \frac{1}{\left(-K_{S_{16}}\right)^2}\int_0^{\tau(F)}\vol(-K_{S_{16}} - uF) \dd u + \epsilon_k = \frac{17}{30} + \epsilon_k.
\end{equation*}
It implies that $\lambda n +\frac{2}{5}< 1$.

If $\msq \in F \setminus \left(\widehat{L}_1\cup \widehat{L}_2\right)$, then we have that the log pair $\left(\widehat{S}_{16}, \lambda \widehat{\Delta} + \left(\lambda n + \frac{2}{5}\right)F \right)$ is not log canonical at the point $\msq$. Using Lemma \ref{Inversion of adjunction for singular point}, we get 
\[
\frac{1}{2\lambda}<\widehat{\Delta}\cdot F=\frac{5n}{2}-\alpha-\beta
\] This gives a contradiction.

Therefore, $F \in \widehat{L}_1\cup \widehat{L}_2$. Without loss of generality, let us assume that $\msq \in F\cup \widehat{L}_1$. Then the log pair $\left(\widehat{S}_{16}, \lambda\left(\alpha\widehat{L}_1+ \widehat{\Delta}\right) + \left(\lambda n + \frac{2}{5}\right)F \right)$ is not log canonical at $\msq$. Using Lemma \ref{Inversion of adjunction for singular point}, we get 
\[
\frac{1}{\lambda}<\left(\alpha\widehat{L}_1+ \widehat{\Delta}\right)\cdot F=\frac{5n}{2}-\beta
\] Again, this gives a contradiction thus proving our claim.
\end{proof}
This completes the proof of Proposition \ref{Proposition: 5}, which implies the following.
\begin{corollary}
  $\delta(S_{16})\geq \frac{40}{39}$.
\end{corollary}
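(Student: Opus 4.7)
The plan is to deduce the lower bound on $\delta(S_{16})$ as an essentially formal consequence of Proposition \ref{Proposition: 5} together with the variational characterization of the stability threshold via $k$-basis type divisors developed in Section \ref{section:K-stability}. Recall from the proposition immediately following Definition \ref{cool-divisor} that $\delta_{\msp}(S)$ equals the supremum of those $\lambda \in \QQ$ for which $(S,\lambda D)$ is log canonical at $\msp$ for every cool $\QQ$-divisor $D \sim_{\QQ} -K_S$, while Theorem \ref{Foundation:bound of k-basis thm} and Corollary \ref{Foundation:bound of k-basis cor} show that a $k$-basis type divisor $D$ with $k \gg 0$ satisfies the cool-divisor inequalities up to an error $\epsilon_k \to 0$.

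First I would invoke Proposition \ref{Proposition: 5}, which asserts that for $\lambda = \frac{40}{39}$ and every anticanonical $\QQ$-divisor $D$ of $k$-basis type on $S_{16}$ with $k \gg 0$, the log pair $(S_{16},\lambda D)$ is log canonical at every point of $S_{16}$. Applied pointwise, this gives the bound $\delta_{\msp}(S_{16}) \geq \frac{40}{39} - \epsilon_k$ at every $\msp \in S_{16}$, and then taking the infimum over $\msp$ produces $\delta(S_{16}) \geq \frac{40}{39} - \epsilon_k$. Letting $k \to \infty$ drives $\epsilon_k$ to zero and yields the claimed inequality $\delta(S_{16}) \geq \frac{40}{39}$.

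The only point requiring comment is that the proof of Proposition \ref{Proposition: 5} is organized around strict inequalities --- for instance $\frac{1}{\lambda} < \Delta \cdot L_1$ and $\frac{1}{5\lambda} < \cdots$ --- so the small errors $\epsilon_k$ appearing in the bounds on $\alpha$, $\beta$, $b$, $m$, and $n$ are absorbed harmlessly for $k$ large, and there is no boundary case to analyze separately. Consequently there is no real obstacle: the corollary is a direct limiting statement, exactly parallel to the analogous corollaries following Proposition \ref{Prop3} and Proposition \ref{Proposition: 4}.
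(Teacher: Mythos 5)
Your proposal is essentially correct and mirrors the paper's (implicit) reasoning: the corollary is indeed an immediate consequence of Proposition~\ref{Proposition: 5} via the Fujita--Odaka characterization of $\delta$ through $k$-basis type divisors, just as the analogous corollaries for $S_{12}$, $S_{15}$, $S_{22}$, $S_{30}$, $S_{36}$, $S_{40}$ follow from their respective propositions. One small imprecision in the phrasing: $\delta_{\msp}(S_{16})$ and $\delta(S_{16})$ do not depend on $k$, so it is cleaner to say that the proposition gives $\delta_k(S_{16})=\inf_{D}\lct(S_{16};D)\geq \tfrac{40}{39}$ over $k$-basis type $D$ for all $k\gg 0$ (the $\epsilon_k$'s are already absorbed into the conclusion of the proposition because it stipulates $k\gg 0$), and then $\delta(S_{16})=\lim_{k}\delta_k(S_{16})\geq \tfrac{40}{39}$; but this is a matter of presentation, not of mathematical content.
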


\subsection{$S_{18}$ in $\PP(1,4,6,9)$}

Let $S_{18}$ be a quasi-smooth hypersurface in $\PP(1,4,6,9)$ of degree $18$. By a suitable coordinate change we can assume that $S_{18}$ is given by a quasihomogeneous polynomial

\begin{equation*}
    t^2 + z^3 + y^3z + xf(x,y,z) = 0
\end{equation*}
where $f(x,y,z)$ is a quasihomogeneous polynomial of degree $17$. The singular points of $S_{18}$ are $\msp_y$, $[0:-1:1:0]$ and $[0:0:-1:1]$, of types $\frac{1}{4}(1,1)$, $\frac{1}{2}(1,1)$ and $\frac{1}{3}(1,1)$ respectively. Note that all the singular points of $S_{18}$ lie on $C_x$. Since $C_x$ is isomorphic to the variety given by the quasihomogeneous polynomial 
\begin{equation*}
    t^2 + z^3 + y^3z = 0
\end{equation*}
in $\PP(4,6,9)$ we can check that $C_x$ is quasi-smooth.  

Let $D$ be an anticanonical $\QQ$-divisor of $k$-basis type on $S_{18}$ with $k\gg 0$. We write
\begin{equation*}
    D = a C_x + \Delta
\end{equation*}
where $a$ is non-negative rational number and $\Delta$ is an effective $\QQ$-divisor such that $C_x\not\subset \Supp(\Delta)$. We set $\lambda = \frac{4}{3}$. 

\begin{proposition}\label{Proposition 6}
    The log pair  $(S_{18}, \lambda D)$ is log canonical.
\end{proposition}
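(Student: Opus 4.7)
The plan is to follow the strategy used for $S_{12}$ and $S_{15}$: we verify log canonicity of $(S_{18},\lambda D)$ at every point, splitting into the cases $\msp\in S_{18}\setminus C_x$, $\msp\in C_x\setminus\Sing(S_{18})$, and $\msp\in\Sing(S_{18})$. The key intersection numbers are $(-K_{S_{18}})^2=\tfrac{1}{3}$, $(-K_{S_{18}})\cdot C_x=\tfrac{1}{6}$, and $C_x^2=\tfrac{1}{12}$. Since $C_x\sim_{\QQ}\tfrac{1}{2}(-K_{S_{18}})$, Corollary~\ref{Foundation:bound of k-basis cor} gives $a\leq\tfrac{2}{3}+\epsilon_k$, so $\lambda a=\tfrac{4a}{3}<1$ for $k\gg 0$.

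The singular points will be the easy part. A direct local computation in the orbifold uniformizing charts shows that $C_x$ is smooth at each of $\msp_y$, $[0:-1:1:0]$, $[0:0:-1:1]$: after setting the dominant coordinate to $1$, the equation of $S_{18}$ has a nonvanishing linear term in one of $z$, $y$, $z$ respectively, so $\{x=0\}$ is cut out by a local parameter. Hence $(S_{18},C_x)$ is purely log terminal at each such point. Applying Lemma~\ref{Inversion of adjunction for singular point} to the decomposition $\lambda D=\lambda a C_x+\lambda\Delta$ then yields
\[
\tfrac{1}{n\lambda}<C_x\cdot\Delta=\tfrac{1}{6}-\tfrac{a}{12},
\]
for $n=4,2,3$ at the respective singular points. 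Since $\tfrac{1}{n\lambda}\in\{\tfrac{3}{16},\tfrac{3}{8},\tfrac{1}{4}\}$ all strictly exceed $\tfrac{1}{6}$ when $\lambda=\tfrac{4}{3}$, each case forces $a<0$, a contradiction. The case of a smooth point on $C_x$ is essentially identical, using Lemma~\ref{Inversion of adjunction for smooth point} to obtain $\tfrac{3}{4}<\tfrac{1}{6}-\tfrac{a}{12}$, again impossible.

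For $\msp\in S_{18}\setminus C_x$, the plan is to use the pencil $|\mcO(4)|$, following the $S_{15}$ argument. After a change of coordinates of the form $y\mapsto y+\tfrac{p_2}{p_1^4}x^4$, $z\mapsto z+\tfrac{p_3}{p_1^6}x^6$, $t\mapsto t+\tfrac{p_4}{p_1^9}x^9$, we move $\msp$ to $[1:0:0:0]$ and take $C=\{y=0\}\cap S_{18}$. Restricting the defining equation yields a quasi-homogeneous polynomial of degree $18$ in $\PP(1,6,9)$ containing both $t^2$ and $z^3$, so $C$ is irreducible; moreover the $t^2$ monomial survives any Taylor shift, forcing $\mult_\msp(C)\leq 2$. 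Writing $D=bC+\Omega$ with $C\not\subset\Supp(\Omega)$, Corollary~\ref{Foundation:bound of k-basis cor} gives $b\leq\tfrac{1}{6}+\epsilon_k$ (using $C\sim_{\QQ}2D$), whence $\lambda b<1$. The relevant intersection numbers are $C^2=\tfrac{4}{3}$ and $C\cdot(-K_{S_{18}})=\tfrac{2}{3}$. If $C$ is smooth at $\msp$, Lemma~\ref{Inversion of adjunction for smooth point} yields $\tfrac{3}{4}<\Omega\cdot C=\tfrac{2}{3}-\tfrac{4b}{3}$, forcing $b<-\tfrac{1}{16}$, a contradiction; if $\mult_\msp(C)=2$, combining Lemma~\ref{Foundation:mult} with Lemma~\ref{Foundation:inequality mult intersection} gives $2(\tfrac{3}{4}-2b)<\tfrac{2}{3}-\tfrac{4b}{3}$, forcing $b>\tfrac{5}{16}$, again contradicting $b\leq\tfrac{1}{6}$.

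The main obstacle I expect is the off-$C_x$ analysis, where the irreducibility and multiplicity bound for the pencil member $C$ must be verified; both hinge on the $t^2$ and $z^3$ monomials in the defining equation. The singular-point cases are much softer and reduce cleanly to a single inversion-of-adjunction inequality along $C_x$, since the numerical thresholds $\tfrac{1}{n\lambda}$ already exceed $\tfrac{1}{6}$ for $n\leq 4$ and $\lambda=\tfrac{4}{3}$. In particular, no blow-up or multi-step argument will be needed, in contrast to the $S_{12}$ case where the $\frac{1}{3}(1,1)$ singularity required a detailed weighted blow-up analysis.
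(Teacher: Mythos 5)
Your proposal is correct and follows essentially the same approach as the paper: reduce to inversion of adjunction along $C_x$ at every point of $C_x$ (the paper streamlines this by uniformly bounding $1/(r\lambda)$ for $r\leq 4$, covering smooth and singular points in one stroke, while you treat each singular type separately), and off $C_x$ use the irreducible degree-$18$ member $C_y$ of $|\mathcal O(4)|$ with $\mult_\msp \leq 2$ and the bound $b \leq \tfrac16 + \epsilon_k$. The paper's constants in the multiplicity-$2$ computation differ slightly from yours (it reaches $b>\tfrac12$ where you reach $b>\tfrac{5}{16}$), but both yield the same contradiction, and the structure of the argument is identical.
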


Let us now prove Proposition \ref{Proposition 6}.

\begin{lemma}
    The log pair $(S_{18}, \lambda D)$ is log canonical along $S_{18}\setminus C_x$.
\end{lemma}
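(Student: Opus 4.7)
The plan is to follow the strategy of the preceding lemma for $S_{15}\setminus C_x$. Suppose toward contradiction that $(S_{18},\lambda D)$ fails to be log canonical at some $\msp\in S_{18}\setminus C_x$. Since every singular point of $S_{18}$ lies on $C_x$, the point $\msp$ is a smooth point of $S_{18}$, so the inversion-of-adjunction machinery of Lemmas \ref{Foundation:mult}, \ref{Inversion of adjunction for smooth point}, and \ref{Foundation:inequality mult intersection} is available through a carefully chosen test curve.

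I would pick a member $M$ of the pencil $|\mcO_{S_{18}}(4)|$ passing through $\msp$; after the coordinate change $y\mapsto y-\zeta x^4$ this is the hyperplane section $y=0$, whose defining equation in $\PP(1,6,9)$ has the form
$$t^2+z^3+\zeta^3 x^{12}z+xf(x,\zeta x^4,z)=0.$$
Because the monomials $t^2$ and $z^3$ are both present, a local argument in the chart $x=1$ shows that $M$ is irreducible (the $t^2$ term prevents a factorization in which only one factor involves $t$, while $z^3$ rules out two factors each linear in $t$) and that $\mult_{\msp}(M)\le 2$ (after translating the affine coordinates, the $t^2$ monomial still contributes to the degree-two part of the local equation). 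I would then write $D=bM+\Omega$ with $b\ge 0$ and $M\not\subset\Supp(\Omega)$; using $M\sim_{\QQ}2(-K_{S_{18}})$ together with $(-K_{S_{18}})^2=\tfrac{1}{3}$, $M\cdot(-K_{S_{18}})=\tfrac{2}{3}$, $M^2=\tfrac{4}{3}$, Corollary \ref{Foundation:bound of k-basis cor} gives $b\le \tfrac{1}{6}+\epsilon_k$, and in particular $\lambda b<1$.

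I would then split into two cases based on $\mult_{\msp}(M)$. If $M$ is smooth at $\msp$, Lemma \ref{Inversion of adjunction for smooth point} applied to $M$ gives
$$\frac{1}{\lambda}<\Omega\cdot M=\frac{2}{3}-\frac{4}{3}b,$$
which forces $b<0$, a contradiction. If $\mult_{\msp}(M)=2$, then $\mult_{\msp}(\lambda D)>1$ combined with Lemma \ref{Foundation:inequality mult intersection} gives
$$2\Bigl(\frac{1}{\lambda}-2b\Bigr)<\mult_{\msp}(M)\,\mult_{\msp}(\Omega)\le \Omega\cdot M=\frac{2}{3}-\frac{4}{3}b,$$
which forces $b>\tfrac{5}{16}$, contradicting $b\le \tfrac{1}{6}+\epsilon_k$. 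The main obstacle is the structural analysis of $M$: establishing irreducibility and the multiplicity bound $\mult_{\msp}(M)\le 2$ at every $\msp\in M$, both of which rest on the presence of the $t^2$ monomial and require explicit polynomial case analysis in the chart $x=1$. Once these structural facts are in hand, the intersection-number arithmetic with $\lambda=\tfrac{4}{3}$ is tuned exactly so that both sub-cases produce a contradiction.
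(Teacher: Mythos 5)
Your proposal is correct and follows essentially the same route as the paper: both choose the degree-$4$ hyperplane curve (the paper takes $C_y$ after normalizing $\msp = \msp_x$, which is your $M$), split on $\mult_{\msp}$ being $1$ or $2$, and derive contradictions from the intersection-number bounds against $b\leq\frac{1}{6}+\epsilon_k$ from Corollary~\ref{Foundation:bound of k-basis cor}. Your arithmetic for the multiplicity-two case ($b>\frac{5}{16}$) is in fact cleaner than the paper's (which writes $b>\frac12$, apparently due to a slip of $\frac{8}{3}$ for $2$ on the left side of the inequality), but both versions give the required contradiction.
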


\begin{proof}
    Suppose that the log pair $(S_{18}, \lambda D)$ is not log canonical at some point $\msp\in S_{18}\setminus C_x$. Then $S_{18}$ is smooth at $\msp$. By a suitable coordinate change we can assume that $\msp = \msp_x$. Note that $C_y$ is isomorphic to the variety given by the equation
    \begin{equation*}
        t^2 + a_1x^9t + z^3 + a_2x^6z^2 + a_3x^{12}z = 0
    \end{equation*}
    in $\PP(1, 6, 9)$ where $a_1$, $a_2$ and $a_3$ are constants. It is irreducible and $\mult_{\msp}(C_y)\leq 2$. Write
    \begin{equation*}
        D = b C_y + \Omega
    \end{equation*}
    where $b$ is non-negative constant and $\Omega$ is an effective $\QQ$-divisor such that $C_y\not\subset \Supp(\Omega)$. From Corollary \ref{Foundation:bound of k-basis cor}, we have that $b \leq \frac{1}{5}$.
    
    If $\mult_{\msp}(C_y) = 1$ then, by Lemma \ref{Inversion of adjunction for smooth point} we have
    \begin{equation*}
        1 < \lambda\Omega\cdot C_y = \lambda \left(D-bC_y\right)\cdot C_y= \frac{8}{9} - \frac{16}{9}b. 
    \end{equation*} 
    This implies that $b<0$.
    
    Thus $\mult_{\msp}(C_y) = 2$. Then we have
    \begin{equation*}
        \frac{8}{3} - \frac{16}{3}b < \mult_{\msp}(\lambda\Omega)\mult_{\msp}(C_y) \leq \lambda \Omega\cdot C_y = \frac{8}{9} - \frac{16}{9}b.
    \end{equation*}
    It implies that $\frac{1}{2} < b$. By Corollary \ref{Foundation:bound of k-basis cor}, it is impossible. Therefore the log pair $(S_{18}, \lambda D)$ is log canonical along $S_{18}\setminus C_x$.
\end{proof}

\begin{lemma}
     The log pair $(S_{18}, \lambda D)$ is log canonical along $C_x$.
\end{lemma}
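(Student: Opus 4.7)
The plan is to rule out failure of log canonicity at any point of $C_x$ by a case analysis combining a bound on the coefficient $a$ with inversion of adjunction. First I would apply Corollary \ref{Foundation:bound of k-basis cor} with $C_x\sim_{\QQ}\tfrac{1}{2}(-K_{S_{18}})$, i.e.\ $\mu=\tfrac12$, to obtain $a\leq \tfrac{2}{3}+\epsilon_k$. Since $\lambda=\tfrac{4}{3}$, this gives $\lambda a<1$ for $k\gg 0$. The relevant intersection numbers are $(-K_{S_{18}})^2=\tfrac{1}{3}$, $-K_{S_{18}}\cdot C_x=\tfrac{1}{6}$ and $C_x^{2}=\tfrac{1}{12}$, so in particular
\[
\Delta\cdot C_x \;=\; (D-aC_x)\cdot C_x \;=\; \tfrac{1}{6}-\tfrac{a}{12}.
\]

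Suppose $(S_{18},\lambda D)$ fails to be log canonical at a smooth point $\msp\in C_x\setminus\Sing(S_{18})$. Applying Lemma \ref{Inversion of adjunction for smooth point} to $\lambda D=\lambda a C_x+\lambda\Delta$, valid because $\lambda a<1$ and $C_x$ is smooth at $\msp$, yields
\[
\tfrac{1}{\lambda}\;<\;(\Delta\cdot C_x)_{\msp}\;\leq\;\tfrac{1}{6}-\tfrac{a}{12},
\]
which forces $a<-7$, contradicting $a\geq 0$.

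For the three singular points on $C_x$, namely $\msp_y$ of type $\tfrac{1}{4}(1,1)$, $[0:-1:1:0]$ of type $\tfrac{1}{2}(1,1)$, and $[0:0:-1:1]$ of type $\tfrac{1}{3}(1,1)$, the quasi-smoothness of $C_x\subset\PP(4,6,9)$ shown above implies that $C_x$ is smooth at each of these points in the orbifold chart, so that $(S_{18},C_x)$ is purely log terminal there. Combined with $\lambda a<1$, Lemma \ref{Inversion of adjunction for singular point} applied to $\lambda D$ at a point of type $\tfrac{1}{n}(1,1)$ gives
\[
\tfrac{1}{n\lambda}\;<\;C_x\cdot\Delta\;=\;\tfrac{1}{6}-\tfrac{a}{12}.
\]
Plugging in $n=4,2,3$ and $\lambda=\tfrac{4}{3}$ respectively forces $a<-\tfrac{1}{4}$, $a<-\tfrac{5}{2}$, $a<-1$, all impossible. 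This rules out every point of $C_x$ and completes the proof.

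The only subtle step is the plt claim at the three cyclic quotient singularities; however, since $C_x$ is cut out by a single coordinate hyperplane and is quasi-smooth in the ambient $\PP(4,6,9)$, it can be taken as one of the two local orbifold coordinates at each singularity, from which plt-ness of $(S_{18},C_x)$ is immediate. Once this is in hand, the remainder is pure numerical bookkeeping as outlined above.
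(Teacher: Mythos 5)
Your proposal is correct and follows essentially the same route as the paper: bound $a$ via Corollary~\ref{Foundation:bound of k-basis cor} so that $\lambda a<1$, compute $\Delta\cdot C_x=\tfrac16-\tfrac{a}{12}$, and use inversion of adjunction (Lemma~\ref{Inversion of adjunction for smooth point} at smooth points, Lemma~\ref{Inversion of adjunction for singular point} at the three quotient singularities on $C_x$) to derive $a<0$, a contradiction. The paper compresses all cases into the single inequality $\tfrac{1}{4\lambda}\leq\tfrac{1}{r\lambda}<\Delta\cdot C_x$ for $r\leq 4$ (where $r=1$ covers smooth points), whereas you treat each index separately and additionally spell out the plt verification for $(S_{18},C_x)$; both are the same argument, with yours slightly more explicit about which inversion-of-adjunction lemma applies at each point and about the provenance of the bound $\lambda a<1$.
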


\begin{proof}
    Suppose that the log pair $(S_{18}, \lambda D)$ is not log canonical at some point $\msp\in C_x$. Then the singular point $\msp$ is of type $\frac{1}{r}(\msa,\msb)$ where $r\leq 4$. Since $\lambda a \leq 1$, by Lemma \ref{Inversion of adjunction for smooth point}, we have 
    \begin{equation*}
        \frac{1}{4\lambda} \leq \frac{1}{r\lambda} < \Delta\cdot C_x =(D - a C_x)\cdot C_x = \frac{1}{6} - \frac{a}{12}.
    \end{equation*}
  This implies that $a<0$ which is absurd. Thus the log pair $(S, \lambda D)$ is log canonical along $C_x$. 
\end{proof}

This completes the proof of Proposition \ref{Proposition 6}, which implies that the following.

\begin{corollary}
  $\delta(S_{18}) \geq \frac{4}{3}$.
\end{corollary}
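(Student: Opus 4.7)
The plan is to derive the bound on $\delta(S_{18})$ directly from Proposition \ref{Proposition 6} by invoking the Fujita--Odaka characterization of the stability threshold in terms of $k$-basis type divisors. Recall that $\delta(S_{18}) = \lim_{k\to\infty} \delta_k(S_{18})$, where
\[
\delta_k(S_{18}) = \inf_{D}\, \lct(S_{18}; D),
\]
with infimum taken over all anticanonical $\QQ$-divisors $D$ of $k$-basis type. Proposition \ref{Proposition 6} asserts that for every such $D$ with $k \gg 0$, the log pair $(S_{18}, \tfrac{4}{3} D)$ is log canonical, i.e.\ $\lct(S_{18}; D) \geq \tfrac{4}{3}$. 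Hence $\delta_k(S_{18}) \geq \tfrac{4}{3}$ for all sufficiently large $k$, and passing to the limit $k \to \infty$ yields $\delta(S_{18}) \geq \tfrac{4}{3}$.

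Since Proposition \ref{Proposition 6} has already been established (separately along $S_{18} \setminus C_x$ and along $C_x$), there is no genuine obstacle left. The only point worth flagging is that the proof of the proposition used bounds coming from Corollary \ref{Foundation:bound of k-basis cor}, which carry $\epsilon_k$ error terms tending to $0$ as $k \to \infty$. Strictly speaking, for each $\lambda' < \tfrac{4}{3}$ the argument of Proposition \ref{Proposition 6} shows log canonicality of $(S_{18}, \lambda' D)$ for every $k$-basis type divisor $D$ once $k$ is large enough, so $\delta_k(S_{18}) \geq \lambda'$ and therefore $\delta(S_{18}) \geq \lambda'$. Letting $\lambda' \nearrow \tfrac{4}{3}$ then gives the corollary.
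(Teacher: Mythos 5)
Your argument is correct and is exactly what the paper leaves implicit when it writes ``which implies the following'' after Proposition~\ref{Proposition 6}: since that proposition asserts log canonicality of $(S_{18},\tfrac{4}{3}D)$ for every $k$-basis type anticanonical $\QQ$-divisor $D$ with $k\gg 0$, the Blum--Jonsson description $\delta(S_{18})=\lim_k\delta_k(S_{18})$ immediately yields $\delta(S_{18})\geq\tfrac{4}{3}$. Your extra refinement with $\lambda'\nearrow\tfrac{4}{3}$ is safe but not actually needed: in the proof of Proposition~\ref{Proposition 6} the $\epsilon_k$ bounds from Corollary~\ref{Foundation:bound of k-basis cor} are only used to produce strict contradictions (e.g.\ $a\leq\tfrac{2}{3}+\epsilon_k$ forces $\lambda a<1$ once $k$ is large), so log canonicality holds at $\lambda=\tfrac{4}{3}$ itself for all $k\gg 0$, not merely for $\lambda'<\tfrac{4}{3}$.
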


\subsection{$S_{22}$ in $\PP(1,5,7,11)$} 

Let $S_{22}$ be a quasi-smooth hypersurface in $\PP(1,5,7,11)$ of degree $22$. By a suitable coordinate change we can assume that $S_{22}$ is given by a quasihomogeneous polynomial
\begin{equation*}
    t^2 + y^3z + xf(x,y,z) = 0
\end{equation*}
where $f(x,y,z)$ is a quasihomogeneous polynomial of degree $21$. The surface $S_{22}$ is singular at the points $\msp_y$ and $\msp_z$, of type $\frac{1}{5}(1,1)$ and $\frac{1}{7}(5,4)$, respectively. Note that the singular points of $S_{21}$ lie on $C_x$. Since $C_x$ is isomorphic to the variety given by the quasihomogeneous polynomial 
\begin{equation*}
    t^2 + y^3z = 0
\end{equation*}
in $\PP(5,7,11)$ we can check that it is irreducible. And the affine cone of $C_x$ is singular at $\msp_z$ except the vertex.

To determine the K-stability of $S_{22}$ it is sufficient to consider the following $\QQ$-divisors.

Let $D$ be an effective $\QQ$-Cartier divisor of $S_{22}$ such that $D\sim_{\QQ} -K_{S_{22}}$. We write
\begin{equation*}
    D = a C_x + \Delta
\end{equation*}
where $a$ is non-negative rational number and $\Delta$ is an effective $\QQ$-divisor such that $C_x\not\subset \Supp(\Delta)$. We set $\lambda = \frac{18}{17}$.

\begin{proposition}\label{Prop7}
    If $a\leq \frac{353}{504}$ then the log pair $(S_{22}, \lambda D)$ is log canonical. 
\end{proposition}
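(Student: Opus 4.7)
The plan is to mirror the structure of Propositions \ref{Prop3}, \ref{Proposition: 4}, and \ref{Proposition 6}: assume for contradiction that $(S_{22},\lambda D)$ fails to be log canonical at some $\msp\in S_{22}$ and derive an impossible inequality in each of the four cases $\msp\in S_{22}\setminus C_x$, $\msp\in C_x\setminus\Sing(S_{22})$, $\msp=\msp_y$, and $\msp=\msp_z$. The hypothesis $a\le \tfrac{353}{504}$ will enter only at $\msp_z$.

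The first three cases should be routine. For $\msp\in S_{22}\setminus C_x$, take the auxiliary curve $C_y=\{y=0\}\cap S_{22}$ (cut out by $t^2+xf(x,0,z)=0$ in $\PP(1,7,11)$, irreducible thanks to the monomial $t^2$) or, if $\msp\notin C_y$, a member of the pencil $y=\zeta x^5$ through $\msp$; write $D=bC_y+\Omega$ with $b\le \tfrac{1}{5}+\epsilon_k$ from Corollary \ref{Foundation:bound of k-basis cor}, and apply Lemma \ref{Foundation:inequality mult intersection} with $\mult_{\msp}(C_y)\le 2$ to derive a contradiction as in the $S_{12}$ and $S_{18}$ subsections. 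For $\msp\in C_x\setminus\Sing(S_{22})$, Lemma \ref{Inversion of adjunction for smooth point} yields $\tfrac{17}{18}=\tfrac{1}{\lambda}<\Delta\cdot C_x=\tfrac{4-2a}{35}$, which fails for every $a\ge 0$. For $\msp=\msp_y$ of type $\tfrac{1}{5}(1,1)$, $C_x$ is smooth at $\msp_y$ in the orbifold sense, so Lemma \ref{Inversion of adjunction for singular point} gives $\tfrac{17}{90}=\tfrac{1}{5\lambda}<\tfrac{4-2a}{35}$, again impossible.

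The substantive case is $\msp=\msp_z$. Direct inversion of adjunction on $C_x$ is unavailable because quasi-smoothness at $\msp_z$ forces the coefficient of $z^3$ in $f$ to be nonzero, so on the orbifold chart $C_x$ is locally $t^2+y^3=0$, a cuspidal curve, and $(S_{22},C_x)$ is not purely log terminal at $\msp_z$. I would instead perform the Kawamata-type weighted blow-up $\pi\colon\widetilde{S}_{22}\to S_{22}$ at $\msp_z$ with $\wt(y)=3$ and $\wt(t)=2$, paralleling the $S_{15}$ analysis at $\msp_t$: this yields $A_{S_{22}}(E)=\tfrac{5}{7}$, $E^2=-\tfrac{7}{6}$, and, from the weighted order $\min(2\cdot 2,3\cdot 3)=4$ of $t^2+y^3$, the relation $\widetilde{C}_x\equiv\pi^*(C_x)-\tfrac{4}{7}E$, so $\widetilde{C}_x^2=-\tfrac{34}{105}$ and $\widetilde{C}_x\cdot E=\tfrac{2}{3}$. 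Writing $\widetilde{D}\equiv\pi^*(D)-mE$, I would bound $m$ via Theorem \ref{Foundation:bound of k-basis thm} from the Zariski decomposition of $\pi^*(-K_{S_{22}})-uE=2\widetilde{C}_x+(\tfrac{8}{7}-u)E$ (nef on $[0,\tfrac{6}{35}]$ and with $\widetilde{C}_x$ entering the negative part on $[\tfrac{6}{35},\tfrac{8}{7}]$, where $\tau(E)=\tfrac{8}{7}$), and further sharpen $m$ by the auxiliary constraint $\widetilde{\Delta}\cdot\widetilde{C}_x\ge 0$, which gives $m\le \tfrac{18+51a}{105}$. Together these bounds keep the pullback coefficient $\lambda m+\tfrac{2}{7}$ below $1$.

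A non-log-canonical point $\msq\in E$ of the pair $\bigl(\widetilde{S}_{22},\lambda a\widetilde{C}_x+\lambda\widetilde{\Delta}+(\lambda m+\tfrac{2}{7})E\bigr)$ is then either a smooth point, the type-$\tfrac{1}{2}(1,1)$ quotient point in the $t$-chart (off $\widetilde{C}_x$), or the type-$\tfrac{1}{3}(1,1)$ quotient point at $E\cap\widetilde{C}_x$ in the $y$-chart (where $\widetilde{C}_x$ remains singular with local equation $V^2+U^5=0$). For the first two positions Lemma \ref{Inversion of adjunction for singular point} applied to $E$ forces $\tfrac{1}{r\lambda}<\widetilde{\Delta}\cdot E=\tfrac{7m}{6}-\tfrac{2a}{3}$, a lower bound on $m$ that collides with the $\widetilde{\Delta}\cdot\widetilde{C}_x\ge 0$ upper bound for every $a\ge 0$. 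The hard subcase is $\msq=E\cap\widetilde{C}_x$: there inversion on $E$ yields only $m>\tfrac{17}{63}$, which is compatible with both the volume bound $m\le\tfrac{46}{105}$ and the $\widetilde{\Delta}\cdot\widetilde{C}_x\ge 0$ bound once $a\ge \tfrac{31}{153}$, and inversion on $\widetilde{C}_x$ is unavailable because $\widetilde{C}_x$ is still cuspidal. To close, I would perform a secondary weighted blow-up at $E\cap\widetilde{C}_x$, bookkeep discrepancies and multiplicities as in the $S_{12}$ analysis of $\msp_y$, and invoke $a\le\tfrac{353}{504}$ at the final inequality. The main obstacle is precisely this secondary argument: the threshold $353/504$ is almost certainly pinned down as the equality case of the combined upper and lower bounds on $m$ after the second blow-up, and tracking the arithmetic through both blow-ups is the delicate step.
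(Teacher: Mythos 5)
Your treatment of the case $\msp=\msp_z$ uses the wrong weights, and the resulting difficulties are symptoms of this error, not features of the problem. In the local coordinates $(y,t)$ the singularity is of type $\frac{1}{7}(5,4)$; rescaling the generator by $6$ gives $6\cdot 5\equiv 2$ and $6\cdot 4\equiv 3\pmod{7}$, so the type is $\frac{1}{7}(2,3)$, and a Kawamata-type blow-up in the paper's Section~2 framework must use $\wt(y)=2$, $\wt(t)=3$. You took $(3,2)$ by analogy with the $S_{15}$ case, but there $\msp_t$ has type $\frac{1}{7}(4,5)\cong\frac{1}{7}(3,2)$ in the coordinates $(y,z)$ --- the \emph{transpose} of the present situation, so the weights transpose too. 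With your weights, $\frac{1}{7}(3,2)$ does not lie in the lattice $\ZZ^2+\ZZ\cdot\frac{1}{7}(2,3)$, so the formulas you write down, $A_{S_{22}}(E)=\frac{5}{7}$, $E^2=-\frac{7}{6}$, $\widetilde{C}_x\equiv\pi^*(C_x)-\frac{4}{7}E$, are not those of the blow-up you describe; the fact that the strict transform of $t^2+y^3=0$ comes out as a worse cusp $V^2+U^5=0$ should have been a red flag.

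With the correct weights $(2,3)$, the weighted order of $t^2+y^3$ is $\min(3\cdot 2,\,2\cdot 3)=6$, so $\widetilde{C}_x\equiv\pi^*(C_x)-\frac{6}{7}E$, $\widetilde{C}_x\cdot E=1$, $\widetilde{C}_x^2=-\frac{4}{5}$, and $\widetilde{C}_x$ meets $E$ transversally at a \emph{smooth} point of $\widetilde{S}_{22}$ (the zero of the initial form $T^2+Y^3$ on $\PP(2,3)$, away from the $\frac{1}{2}$- and $\frac{1}{3}$-corners). No secondary blow-up is needed, and the hypothesis $a\le\frac{353}{504}$ enters directly in the subcase $\msq\in E\cap\widetilde{C}_x$: inversion of adjunction on $\widetilde{C}_x$ combined with $m\le\frac{4}{35}+\frac{4}{5}a$ (from $\widetilde{\Delta}\cdot\widetilde{C}_x\ge 0$) gives
$1<\bigl(\lambda\widetilde{\Delta}+(\lambda m+\tfrac{2}{7})E\bigr)\cdot\widetilde{C}_x=\lambda\bigl(\tfrac{4}{35}+\tfrac{4}{5}a\bigr)+\tfrac{2}{7}$,
and the right-hand side is $\le 1$ precisely when $a\le\frac{353}{504}$, a contradiction. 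The subcase $\msq\in E\setminus\widetilde{C}_x$, where $\msq$ has type $\frac{1}{r}$ with $r\le 3$, gives $a<0$ unconditionally. Your iterated-blow-up route for $\msq=E\cap\widetilde{C}_x$, which you acknowledge you cannot carry through, is therefore both unnecessary and misdirected.
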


Let us now prove Proposition \ref{Prop7}.
\begin{lemma}
    The log pair $(S_{22}, \lambda D)$ is log canonical along $S_{22}\setminus C_x$.
\end{lemma}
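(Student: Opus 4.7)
The plan is to follow the template used in the $S_{15}$ and $S_{18}$ subsections: suppose for contradiction that $(S_{22},\lambda D)$ is not log canonical at some $\msp\in S_{22}\setminus C_x$, note that $\msp$ must then be a smooth point of $S_{22}$ since $C_x$ contains the whole singular locus, and by a weighted change of coordinates of the form $y\mapsto y+c_1x^5$, $z\mapsto z+c_2x^7$, $t\mapsto t+c_3x^{11}$ (followed by completing the square in $t$) I would move $\msp$ to $\msp_x$ while keeping the equation of $S_{22}$ in the same normal form $t^2+y^3z+xf(x,y,z)=0$. In particular $\msp_x\in C_y$, so the curve $C_y=\{y=0\}\cap S_{22}\cong\{t^2+xf(x,0,z)=0\}\subset \PP(1,7,11)$ is available as a test curve through $\msp$.

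The first substantive step is to show that $C_y$ is irreducible. A factorisation $C_y=(t-h)(t+h)$ would require $xf(x,0,z)=-h(x,z)^2$ with $h\in\CC\langle x^{11},x^4z\rangle$, and any such $h^2$ has no monomial divisible by $z^3$; however, the coefficient of $z^3$ in $f$ must be nonzero since otherwise every partial derivative of the defining polynomial vanishes at $\msp_z$, contradicting quasi-smoothness. Granting irreducibility, the presence of the $t^2$ term gives $\mult_{\msp_x}(C_y)\leq 2$.

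Next I would write $D=bC_y+\Omega$ with $C_y\not\subset\Supp(\Omega)$, and use the intersection numbers $-K_{S_{22}}\cdot C_y=\tfrac{4}{7}$ and $C_y^2=\tfrac{10}{7}$ together with $\Omega\cdot C_y\geq 0$ to deduce $b\leq \tfrac{2}{5}$, which in particular ensures $\lambda b<1$. In the case $\mult_{\msp_x}(C_y)=1$, Lemma \ref{Inversion of adjunction for smooth point} applied to $C_y$ forces $\tfrac{1}{\lambda}<\Omega\cdot C_y=\tfrac{4}{7}-\tfrac{10b}{7}$, which already gives $b<0$, a contradiction. In the case $\mult_{\msp_x}(C_y)=2$, Lemma \ref{Foundation:mult} gives $\mult_{\msp_x}(D)>\tfrac{1}{\lambda}$, and combining this with Lemma \ref{Foundation:inequality mult intersection} at the smooth point $\msp_x$ yields $2\bigl(\tfrac{1}{\lambda}-2b\bigr)<\tfrac{4}{7}-\tfrac{10b}{7}$, which with $\lambda=\tfrac{18}{17}$ simplifies to $b>\tfrac{83}{162}$, in direct conflict with $b\leq\tfrac{2}{5}$.

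The hard part will be the irreducibility step: if the quasi-smoothness argument fails to give it cleanly after the coordinate change (for instance because cancellations introduced while completing the square obscure the $z^3$ coefficient), I would need to split $C_y$ into its two components $C_y=R_1+R_2$ and run the same inversion-of-adjunction mechanism on one component, estimating the coefficient via a volume computation on $\widetilde{S}_{22}$ in the style of the reducible cases treated for $S_{12}$ and $S_{16}$. Apart from this, the lemma is essentially a routine combination of the local bounds from Section~\ref{section:K-stability} with the numerical inputs specific to $S_{22}$; notably the hypothesis $a\leq\tfrac{353}{504}$ of Proposition \ref{Prop7} plays no role here, since the analysis is local at a point off $C_x$.
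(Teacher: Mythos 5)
Your proof is correct but takes a genuinely different route from the paper's. The paper works with the degree-$7$ linear system $\mcL$ cut out by $\alpha x^2y+\beta z=0$: every member of $\mcL$ passes through $\msp_x$, so after choosing $M\in\mcL$ with $M\not\subset\Supp(D)$ (possible since the pencil has infinitely many members and only finitely many can lie in $\Supp(D)$), Lemmas \ref{Foundation:mult} and \ref{Foundation:inequality mult intersection} give $1<\lambda D\cdot M=\frac{72}{85}$, an immediate contradiction with no decomposition of $D$ needed. You instead use the single degree-$5$ curve $C_y$, the only member of $\{y-cx^5=0\}$ through $\msp_x$, and so must allow $C_y\subset\Supp(D)$; this forces the decomposition $D=bC_y+\Omega$ and the two-case analysis on $\mult_{\msp_x}(C_y)$. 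Your intersection numbers $-K_{S_{22}}\cdot C_y=\frac{4}{7}$ and $C_y^2=\frac{10}{7}$, the bound $b\leq\frac{2}{5}$ from $\Omega\cdot C_y\geq 0$, and the numerical contradictions in both the $\mult=1$ case ($b<0$) and the $\mult=2$ case ($b>\frac{83}{162}$) all check out. The paper's device of picking a pencil having $\msp_x$ as a base point and degree low enough that $\lambda D\cdot M<1$ is the main simplification you did not find, and it lets one bypass the irreducibility question entirely.

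Two small caveats on your write-up. The claim that the translation followed by completing the square ``keeps the equation in the same normal form $t^2+y^3z+xf(x,y,z)=0$'' is not quite accurate: the shift $t\mapsto t+c_3x^{11}$ introduces the $t$-linear term $2c_3x^{11}t$, and completing the square to remove it undoes that shift, so one cannot simultaneously have $\msp=\msp_x$ and $f$ independent of $t$ unless $\msp$ has zero $t$-coordinate. This does not hurt the argument: after the translation the equation takes the form $t^2+y^3z+xg(x,y,z,t)$ with $g$ at worst linear in $t$, and the restriction to $\{y=0\}$ is $t^2+(\text{linear in }t)+x g_0(x,z)$ whose $xz^3$-coefficient still equals the (nonzero, by quasi-smoothness at $\msp_z$) $z^3$-coefficient of the original $f$; any factorisation $(t-a)(t-b)$ with $a,b$ spanned by $x^{11}$ and $x^4z$ has $ab$ with no $z^3$ term, so irreducibility holds in all cases. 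Consequently the fallback branch for a reducible $C_y$ that you sketched at the end is never actually needed; you should replace that hedging with the corrected version of the irreducibility argument.
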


\begin{proof}
    Suppose that the log pair $(S_{22}, \lambda D)$ is not log canonical at some point $\msp\in S_{22}\setminus C_x$. The surface $S_{22}$ is smooth at $\msp$. By a suitable coordinate change we can assume that $\msp = \msp_x$. Let $\mcL$ be the linear system that is given by $\alpha x^2y + \beta z = 0$ with $[\alpha:\beta]\in \PP^1$. Since the base locus of $\mcL$ is the finite points set, there is an effective divisor $M\in\mcL$ such that $M\not\subset \Supp(D)$. We have the inequality
    \begin{equation*}
        1 < \lambda D\cdot M = \frac{72}{85}.
    \end{equation*}
    It is impossible. Therefore the log pair $(S_{22}, \lambda D)$ is log canonical along $S_{22}\setminus C_x$.
\end{proof}

\begin{lemma}
  The log pair $(S_{22}, \lambda D)$ is log canonical along $C_x\setminus \{\msp_z\}$.
\end{lemma}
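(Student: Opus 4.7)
The plan is to mirror the analogous lemmas for $S_{12}$, $S_{15}$ and $S_{18}$ in the preceding subsections, applying inversion of adjunction along the curve $C_x$. Arguing by contradiction, I would assume that $(S_{22},\lambda D)$ fails to be log canonical at some $\msp\in C_x\setminus\{\msp_z\}$. The first step is to check that $\lambda a<1$; this follows immediately from the hypothesis $a\leq \tfrac{353}{504}$ together with $\lambda=\tfrac{18}{17}$ (a one-line numerical check: $\tfrac{18\cdot 353}{17\cdot 504}<1$), and is precisely what allows the pair $(S_{22},\lambda a C_x+\lambda\Delta)$ to be fed into the inversion-of-adjunction lemmas.

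Next I would split into two cases according to the type of the point. Since the affine cone of $C_x$ is smooth away from the vertex and $\msp_z$ (observed in the preceding lemma), the curve $C_x$ is orbifold-smooth at every point of $C_x\setminus\{\msp_z\}$. Hence $\msp$ is either a smooth point of $S_{22}$ or the quotient singularity $\msp_y$ of type $\tfrac{1}{5}(1,1)$, and $(S_{22},C_x)$ is purely log terminal at $\msp$ in both cases. Thus Lemma~\ref{Inversion of adjunction for smooth point} applies in the smooth case and Lemma~\ref{Inversion of adjunction for singular point} (with $n=5$) applies at $\msp_y$, giving lower bounds of $\tfrac{1}{\lambda}=\tfrac{17}{18}$ and $\tfrac{1}{5\lambda}=\tfrac{17}{90}$ respectively on $\Delta\cdot C_x$.

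The key numerical input is the intersection
\begin{equation*}
\Delta\cdot C_x=(D-aC_x)\cdot C_x=-K_{S_{22}}\cdot C_x-a\,C_x^2=\frac{4-2a}{35},
\end{equation*}
computed from $C_x^2=\tfrac{22}{1\cdot 5\cdot 7\cdot 11}=\tfrac{2}{35}$. Feeding this into either inversion-of-adjunction inequality gives a linear constraint on $a$ of the form $a<(\text{large negative number})$, which contradicts $a\geq 0$. In each case one contradiction closes the corresponding case.

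I do not anticipate any serious obstacle. The only mildly subtle point is the PLT check at $\msp_y$, which reduces to noting that on the orbifold cover around $\msp_y$ the local equation of $C_x$ is $t^2+z=0$ (set $y=1$ in $t^2+y^3z=0$) and is therefore smooth. Apart from this routine verification, the whole lemma is a direct adaptation of its counterparts in the $S_{12}$, $S_{15}$ and $S_{18}$ subsections, with only the specific numerical values changing.
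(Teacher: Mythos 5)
Your proposal is correct and follows essentially the same route as the paper: inversion of adjunction along $C_x$ (Lemma~\ref{Inversion of adjunction for smooth point} at a smooth point, Lemma~\ref{Inversion of adjunction for singular point} with $n=5$ at $\msp_y$), the intersection $\Delta\cdot C_x=\frac{4-2a}{35}$, and the hypothesis $a\leq\frac{353}{504}$ (so $\lambda a<1$) to force $a<0$, a contradiction. The paper compresses the two cases into the single bound $\frac{1}{5}\leq\frac{1}{r}$ with $r\leq 5$ and omits the routine plt verification at $\msp_y$, which you carry out explicitly, but the argument is the same.
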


\begin{proof}
    Suppose that the log pair $(S_{22}, \lambda D)$ is not log canonical at some point $\msp\in C_x\setminus \{\msp_z\}$. Then the point $\msp$ is of type $\frac{1}{r}(\msa,\msb)$ where $r\leq 5$. Since $\lambda a < 1$, by Lemma \ref{Inversion of adjunction for smooth point}, we have 
    \begin{equation*}
        \frac{1}{5} \leq \frac{1}{r} < \lambda\Delta\cdot C_x = \lambda(D - a C_x)\cdot C_x = \frac{72}{595} - \frac{36}{595}a.
    \end{equation*}
    It implies that $a<0$ which is impossible. Thus the log pair $(S, \lambda D)$ is log canonical along $C_x\setminus \{\msp_z\}$.
\end{proof}

\begin{lemma}
    The log pair $(S_{22}, \lambda D)$ is log canonical at $\msp_z$.
\end{lemma}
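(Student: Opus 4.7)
The proof will closely parallel the treatment of $\msp_t\in S_{15}$ given earlier in this paper, since $\msp_z$ is a cyclic quotient singularity of the same type $\tfrac{1}{7}(5,4)$ lying on a cuspidal anticanonical curve $C_x$, which in local coordinates at $\msp_z$ is cut out by $t^2+y^3=0$. The plan is to suppose, for contradiction, that $(S_{22},\lambda D)$ is not log canonical at $\msp_z$ and then combine a tailored weighted blow-up with a case analysis on the resulting non-lc center.

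First I would take $\pi\colon \widetilde{S}_{22}\to S_{22}$ to be the weighted blow-up at $\msp_z$ with weights $\wt(y)=2$ and $\wt(t)=3$, chosen so that the cusp of $C_x$ at $\msp_z$ is resolved. Standard formulas yield $K_{\widetilde{S}_{22}}\equiv \pi^*(K_{S_{22}})-\tfrac{2}{7}E$, $\widetilde{C}_x\equiv \pi^*(C_x)-\tfrac{6}{7}E$, $E^2=-\tfrac{7}{6}$, $\widetilde{C}_x\cdot E=1$, and $\widetilde{C}_x^2=-\tfrac{4}{5}$. Writing $\widetilde{D}\equiv \pi^*(D)-mE$ and pulling back the log pair shows that $(\widetilde{S}_{22},\lambda\widetilde{D}+(\lambda m+\tfrac{2}{7})E)$ fails to be log canonical at some point $\msq\in E$.

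Next I would bound $m$ in two complementary ways. On the one hand, from $\widetilde{\Delta}\cdot\widetilde{C}_x\geq 0$ and the intersection numbers above one extracts the inequality $m\leq \tfrac{4}{35}+\tfrac{4a}{5}$. On the other hand, an application of Corollary~\ref{Foundation:bound of k-basis cor} requires the Zariski decomposition of $\pi^*(-K_{S_{22}})-uE = 2\widetilde{C}_x+(\tfrac{12}{7}-u)E$: it should be nef on $[0,\tfrac{4}{35}]$ and then acquire $\widetilde{C}_x$ as its negative part on $[\tfrac{4}{35},\tfrac{12}{7}]$, where $\tfrac{12}{7}$ is the pseudoeffective threshold. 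Integrating the resulting piecewise-defined volume against $1/(-K_{S_{22}})^2=\tfrac{35}{8}$ produces an absolute bound of the form $m\leq \tfrac{64}{105}+\epsilon_k$, which in particular forces $\lambda m+\tfrac{2}{7}<1$ so that the coefficient of $E$ in the log pullback is admissible.

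The proof then splits into two cases for the position of $\msq\in E$. In Case A, with $\msq\in E\setminus \widetilde{C}_x$, the point $\msq$ is either smooth or a cyclic quotient singularity of index $r\in\{2,3\}$ coming from the blow-up weights; Lemma~\ref{Inversion of adjunction for singular point} applied to $E$ gives $\tfrac{1}{r\lambda}<\widetilde{\Delta}\cdot E=\tfrac{7m}{6}-a$, and substituting $m\leq \tfrac{4}{35}+\tfrac{4a}{5}$ reduces this to $\tfrac{1}{r\lambda}<\tfrac{2-a}{15}$, forcing $a<0$. In Case B, the resolution of the cusp makes $\msq=E\cap \widetilde{C}_x$ a single smooth point of $\widetilde{S}_{22}$, and inversion of adjunction along the smooth curve $E$ yields $\tfrac{1}{\lambda}<\widetilde{D}\cdot E=\tfrac{7m}{6}$; combining this again with $m\leq \tfrac{4}{35}+\tfrac{4a}{5}$ forces $a>\tfrac{73}{84}=\tfrac{438}{504}$, contradicting the standing hypothesis $a\leq \tfrac{353}{504}$.

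The main technical obstacle is correctly carrying out the Zariski decomposition of $\pi^*(-K_{S_{22}})-uE$ across its pseudoeffective range so that the volume bound on $m$ is obtained, and in verifying that the $a$-linear bound coming from $\widetilde{\Delta}\cdot\widetilde{C}_x\geq 0$ is exactly what is needed in Case B to close the argument against the explicit numerical hypothesis on $a$; Case A, by contrast, is insensitive to $a$ and falls out automatically from the same $a$-linear bound.
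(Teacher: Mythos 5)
Your proposal is correct and follows essentially the same strategy as the paper: the same weighted $(2,3)$ blow-up at $\msp_z$, the same numerics $K_{\widetilde{S}_{22}}\equiv\pi^*(K_{S_{22}})-\tfrac27E$, $\widetilde{C}_x\equiv\pi^*(C_x)-\tfrac67E$, $E^2=-\tfrac76$, $\widetilde{C}_x^2=-\tfrac45$, the same $a$-linear bound $m\leq\tfrac{4}{35}+\tfrac{4a}{5}$ from $\widetilde{\Delta}\cdot\widetilde{C}_x\geq 0$, and the same dichotomy on $\msq\in E$. The one genuine difference is in your Case B: you invert adjunction along the smooth rational curve $E$ to obtain $\tfrac1\lambda<\widetilde{D}\cdot E=\tfrac76m$ and then feed this into $m\leq\tfrac{4}{35}+\tfrac{4a}{5}$ to get $a>\tfrac{73}{84}=\tfrac{438}{504}$, whereas the paper inverts adjunction along $\widetilde{C}_x$ to read off directly $1<\lambda\bigl(\tfrac{4}{35}+\tfrac{4a}{5}\bigr)+\tfrac27\leq 1$. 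Both close cleanly against the hypothesis $a\leq\tfrac{353}{504}$; your variant needs to verify $\lambda m+\tfrac27\leq 1$ first (so that the coefficient of $E$ is admissible), which the paper's route also needs and which in both cases already follows from the $a$-linear bound on $m$ together with $a\leq\tfrac{353}{504}$, giving $m\leq\tfrac{85}{126}$ and hence $\lambda m+\tfrac27\leq 1$ exactly. Your separate Zariski-decomposition computation of the absolute bound $m\leq\tfrac{64}{105}+\epsilon_k$ is correct but unnecessary for this lemma (and, technically, for $m=\operatorname{ord}_E(\widetilde D)$ the right reference is the general $k$-basis-type bound of Theorem~\ref{Foundation:bound of k-basis thm} applied to the exceptional valuation, not Corollary~\ref{Foundation:bound of k-basis cor}); the paper omits it and relies solely on the $a$-dependent bound, which is the cleaner route.
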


\begin{proof}
    Let $\pi\colon \widetilde{S}_{22}\to S_{22}$ be the weighted blow-up at $\msp_z$ with weights $\wt(y) = 2$ and $\wt(t) = 3$. Then we have
\begin{equation*}
    K_{\widetilde{S}_{22}} \equiv \pi^*(K_{S_{22}}) - \frac{2}{7}E,\qquad
    \widetilde{D} \equiv \pi^*(D)- mE,\qquad
    \widetilde{C}_x \equiv\pi^*(C_x)-\frac{6}{7}E
\end{equation*}
where $E$ is the exceptional divisor of $\pi$. From the above equations we obtain
\begin{equation*}
    K_{\widetilde{S}_{22}} + \lambda \bar{D} + \left( \lambda m + \frac{2}{7} \right)E\equiv \pi^*\left(K_{S_{22}} + \lambda  D\right).
\end{equation*}
Then the log pair $\left(\widetilde{S}_{22}, \lambda \widetilde{D} + \left( \lambda m + \frac{2}{7} \right)E\right)$ is not log canonical at some point $\msq\in E$.

We have the following intersection numbers:
\begin{equation*}
    E^2 = -\frac{7}{6},\qquad \widetilde{C}_x^2 = -\frac{4}{5},\qquad\widetilde{D}\cdot \widetilde{C}_x = \frac{4}{35} - m,\qquad \widetilde{D}\cdot E = \frac{7}{6}m.
\end{equation*}

Meanwhile, the inequality
\begin{equation*}
    0\leq \widetilde{\Delta}\cdot \widetilde{C}_x = (\widetilde{D} - a\widetilde{C}_x)\cdot \widetilde{C}_x = \frac{4}{35} - m + \frac{4}{5}a
\end{equation*}
implies that 
\begin{equation}\label{No. 9:bound of alpha}
    m \leq \frac{4}{35} + \frac{4}{5}a.
\end{equation}

If $\msq\in \widetilde{C}_x$ then, by Lemma \ref{Inversion of adjunction for smooth point} we have the following inequality.
\begin{equation*}
    1 < \bigg(\lambda\widetilde{\Delta}+\big(\lambda m+\frac{2}{7}\big)E\bigg)\cdot \widetilde{C}_x = \lambda\left(\frac{4}{35} + \frac{4}{5}a\right)+\frac{2}{7}\leq 1,
\end{equation*}
which is absurd. Thus $\msq\not\in \widetilde{C}_x$.

The point $\msq \in E \setminus \widetilde{C}_x$ is of type $\frac{1}{r}(\msa,\msb)$ with $r\leq 3$. Since $\lambda m + \frac{2}{7} \leq 1$, by Lemma \ref{Inversion of adjunction for singular point} and the inequality (\ref{No. 9:bound of alpha}) we have the following:
\begin{equation*}
    \frac{1}{3\lambda }\leq \frac{1}{r\lambda} < \widetilde{\Delta}\cdot E =  \frac{7}{6}m - a \leq \frac{2}{15} - \frac{a}{15}.
\end{equation*}
This implies that $a<0$, which is absurd. Thus the log pair $(S_{22}, \lambda D)$ is log canonical at $\msp_z$.
\end{proof}

This completes the proof of Proposition \ref{Prop7}, which implies the following. 

\begin{corollary}\label{Cor:estimation of delta by proposition}
$\delta(S_{22}) \geq \frac{18}{17}$.
\end{corollary}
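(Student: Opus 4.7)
The corollary is a direct consequence of Proposition~\ref{Prop7}: it only remains to verify that the hypothesis $a \leq \tfrac{353}{504}$ is automatic once $D$ is restricted to be of $k$-basis type. So I would take $D$ to be an anticanonical $\QQ$-divisor of $k$-basis type on $S_{22}$ with $k \gg 0$, decompose $D = aC_x + \Delta$ with $C_x \not\subset \Supp(\Delta)$, and use the general relation (for $k$-basis divisors) to bound $a$ from above.

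The key step is Corollary~\ref{Foundation:bound of k-basis cor}. Since $S_{22}$ has index $2$, we have $-K_{S_{22}} \sim_{\QQ} 2C_x$, so that $C_x \sim_{\QQ} \tfrac{1}{2}\,D$ for any $D \sim_{\QQ} -K_{S_{22}}$. Plugging $\mu = \tfrac{1}{2}$ into Corollary~\ref{Foundation:bound of k-basis cor} gives
\[
a \;\leq\; \tfrac{1}{3\mu} + \epsilon_k \;=\; \tfrac{2}{3} + \epsilon_k,
\]
with $\epsilon_k \to 0$ as $k \to \infty$. Since $\tfrac{2}{3} = \tfrac{336}{504} < \tfrac{353}{504}$, for all sufficiently large $k$ we have $a \leq \tfrac{353}{504}$, so Proposition~\ref{Prop7} applies and $(S_{22}, \lambda D)$ is log canonical with $\lambda = \tfrac{18}{17}$.

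Finally, by the characterization of the stability threshold via $k$-basis type divisors (as recalled in Section~\ref{section:K-stability}; see also \cite{FujitaOdaka,Blum}), the existence of such a uniform $\lambda$ forces $\delta(S_{22}) \geq \tfrac{18}{17}$ upon letting $k \to \infty$. There is essentially no obstacle here: all the serious work is already inside Proposition~\ref{Prop7}, and the present deduction is just a bookkeeping check that the hypothesis $a \leq \tfrac{353}{504}$ is comfortably accommodated by the $k$-basis type bound coming from the index $2$ relation $C_x \sim_{\QQ} \tfrac{1}{2}(-K_{S_{22}})$.
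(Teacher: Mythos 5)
Your proposal is correct and follows essentially the same route as the paper: apply Corollary~\ref{Foundation:bound of k-basis cor} with $\mu = \tfrac{1}{2}$ (coming from $C_x \sim_{\QQ} \tfrac{1}{2}(-K_{S_{22}})$) to get $a \leq \tfrac{2}{3} + \epsilon_k$, observe this is below $\tfrac{353}{504}$, and invoke Proposition~\ref{Prop7}. You merely spell out the arithmetic check $\tfrac{2}{3} = \tfrac{336}{504} < \tfrac{353}{504}$ which the paper leaves implicit.
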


\begin{proof}
    Suppose that $D$ is $k$-basis type on $S_{22}$ with $k\gg 0$. By Corollary \ref{Foundation:bound of k-basis cor} we have $a\leq \frac{2}{3} + \epsilon_k$ where $\epsilon_k$ is a small constant depending on $k$ such that $\epsilon_k \to 0$ as $k\to \infty$. By Proposition \ref{Prop7} the log pair $(S_{22}, \lambda D)$ is log canonical.
\end{proof}

\subsection{$S_{30}$ in $\PP(1,6,10,15)$} 

Let $S_{30}$ be a quasi-smooth hypersurface in $\PP(1,6,10,15)$ of degree $30$. By a suitable coordinate change we can assume that $S_{30}$ is given by a quasihomogeneous polynomial
\begin{equation*}
    t^2 + z^3 + y^5 + xf(x,y,z) = 0
\end{equation*}
where $f(x,y,z)$ is a quasihomogeneous polynomial of degree $29$. The surface $S_{30}$ is singular at the points $[0:-1:1:0]$, $[0:-1:0:1]$ and $[0:0:-1:1]$ of types $\frac{1}{2}(1,1)$, $\frac{1}{3}(1,1)$ and $\frac{1}{5}(1,1)$, respectively. Note that all the singular points of $S_{30}$ lie on the curve $C_x$. Since $C_{x}$ is isomorphic to the variety given by the quasihomogeneous polynomial 
\begin{equation*}
    t^2 + z^3 + y^5 = 0
\end{equation*}
in $\PP(6,10,15)$ we can check that $C_x$ is quasi-smooth.

To determine the K-stability of $S_{30}$ it is sufficient to consider the following $\QQ$-divisors.
Let $D$ be an effective $\QQ$-Cartier divisor of $S_{30}$ such that $D\sim_{\QQ} -K_{S_{30}}$. We write
\begin{equation*}
    D = aC_x + \Delta
\end{equation*}
where $a$ is non-negative rational number and $\Delta$ is an effective $\QQ$-divisor such that $C_x\not\subset \Supp(\Delta)$. We set $\lambda =\frac{4}{3}$.

\begin{proposition}\label{Proposition 8}
    If $a\leq \frac{3}{4}$ then the log pair $(S_{30},\lambda D)$ is log canonical. 
\end{proposition}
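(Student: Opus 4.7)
The plan is to mirror the $S_{18}$ and $S_{22}$ propositions and split the argument according to the position of a hypothetical non-log-canonical point $\msp$ into three regions: (i) $\msp\in S_{30}\setminus C_x$, (ii) $\msp\in C_x\setminus\Sing(S_{30})$, and (iii) $\msp\in\Sing(S_{30})$. The numerical inputs are $(-K_{S_{30}})^2=\tfrac{2}{15}$, $C_x^2=\tfrac{1}{30}$, and $C_x\cdot(-K_{S_{30}})=\tfrac{1}{15}$, which give $\Delta\cdot C_x=\tfrac{2-a}{30}$; crucially, the hypothesis $a\leq\tfrac{3}{4}$ is exactly $\lambda a\leq 1$, which is what is needed for the inversion-of-adjunction lemmas of Section~2 to apply to the boundary $\lambda a\,C_x+\lambda\Delta$.

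In region (ii), $C_x$ is smooth at $\msp$ and Lemma~\ref{Inversion of adjunction for smooth point} would force $1<\lambda\,\Delta\cdot C_x=\tfrac{2(2-a)}{45}\leq\tfrac{4}{45}$, which is absurd. In region (iii), the three singular points are cyclic quotients of types $\tfrac{1}{n}(1,1)$ with $n\in\{2,3,5\}$, all lying on $C_x$; since $C_x$ is quasi-smooth as noted in the excerpt, $C_x$ is smooth in the orbifold chart at each such point, so $(S_{30},C_x)$ is purely log terminal there, and Lemma~\ref{Inversion of adjunction for singular point} forces $\tfrac{1}{n}<\tfrac{2(2-a)}{45}$. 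For $n\leq 5$ and $a\geq 0$ this inequality fails, giving the desired contradiction.

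The one case requiring an actual choice is region (i). Since $\msp\notin C_x$ has nonzero $x$-coordinate, the coordinate change $y\mapsto y-p_1x^6$, $z\mapsto z-p_2x^{10}$, $t\mapsto t-p_3x^{15}$ reduces to $\msp=\msp_x=[1:0:0:0]$. I would then use the pencil $\mcL=\{\alpha x^4y+\beta z=0\mid[\alpha:\beta]\in\PP^1\}$ of divisors of degree $10$: every member of $\mcL$ contains $\msp_x$, and its base locus on $S_{30}$ is contained in the finite set $S_{30}\cap\{z=0\}\cap(\{x=0\}\cup\{y=0\})$. Picking any $M\in\mcL$ with $M\not\subset\Supp(D)$, one computes $\lambda D\cdot M=\lambda\cdot\tfrac{10}{15}=\tfrac{8}{9}$; on the other hand, Lemma~\ref{Foundation:mult} together with $\mult_{\msp_x}(\lambda D)\cdot\mult_{\msp_x}(M)\leq\lambda D\cdot M$ would yield $1<\tfrac{8}{9}$, a contradiction.

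The main (mild) obstacle is identifying the correct pencil for region (i): its degree must be small enough that $\lambda D\cdot M<1$, its base locus must contain $\msp_x$, and its base locus on $S_{30}$ must be finite. The pencil $\alpha x^4y+\beta z$ is the smallest-degree pencil satisfying all three constraints, and no reducibility or irreducibility analysis of $M$ is required because the intersection-multiplicity bound only uses $\msp_x\in M$ and $M\not\subset\Supp(D)$. Everything else is bookkeeping with Lemmas~\ref{Foundation:mult}, \ref{Inversion of adjunction for smooth point}, and~\ref{Inversion of adjunction for singular point}.
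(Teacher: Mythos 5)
Your argument is correct and follows essentially the same route as the paper's proof: the same pencil $\alpha x^4 y+\beta z=0$ handles the region $S_{30}\setminus C_x$ via $1<\lambda D\cdot M=\tfrac{8}{9}$, and the same intersection number $\Delta\cdot C_x=\tfrac{1}{15}-\tfrac{a}{30}$ together with inversion of adjunction rules out all points of $C_x$. The only cosmetic difference is that you split $C_x$ into its smooth and singular parts, whereas the paper's Lemma handles them in one stroke via the uniform bound $\tfrac{1}{r\lambda}<\Delta\cdot C_x$ for $r\leq 5$; the substance is identical.
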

We will now prove Proposition \ref{Proposition 8}.
\begin{lemma}
    The log pair $(S_{30}, \lambda D)$ is log canonical along $S_{30}\setminus C_x$.
\end{lemma}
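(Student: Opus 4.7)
The plan is to mirror the analogous lemma for $S_{18}$: assume for contradiction that $(S_{30},\lambda D)$ fails log canonicity at some $\msp\in S_{30}\setminus C_x$. Since every singular point of $S_{30}$ lies on $C_x$, this $\msp$ is a smooth point of $S_{30}$, and a suitable coordinate change on $\PP(1,6,10,15)$ (of the form $y\mapsto y-c_1 x^6$, $z\mapsto z-c_2 x^{10}$, $t\mapsto t-c_3 x^{15}$) moves $\msp$ to $\msp_x=[1:0:0:0]$ while preserving the overall shape of the defining equation.

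The key auxiliary curve is the hyperplane section $C_y=\{y=0\}\cap S_{30}$, cut out in $\PP(1,10,15)$ by $F(x,0,z,t)=0$ and passing through $\msp_x$. I would establish two structural facts about $C_y$. First, $C_y$ is irreducible: any factorisation $(t+g)(t-g)=0$ of its equation would require $g^2=-z^3-\cdots$ for some $g$ of weighted degree $15$ in $\mathbb{C}[x,z]$, but the only such monomials are $x^{15}$ and $x^5 z$, so $g^2$ contains no $z^3$ term. Second, $\mult_{\msp}(C_y)\leq 2$, because in the affine chart $x=1$ the equation of $C_y$ contains $t^2$ with nonzero coefficient as a quadratic summand, and this monomial survives the coordinate change above. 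The relevant intersection numbers are $C_y^2=\tfrac{6}{5}$, $(-K_{S_{30}})\cdot C_y=\tfrac{2}{5}$, and $C_y\sim_{\QQ} 3(-K_{S_{30}})$.

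Writing $D=bC_y+\Omega$ with $b\geq 0$ and $C_y\not\subset\Supp(\Omega)$, since $C_y\sim_{\QQ}3D$, Corollary \ref{Foundation:bound of k-basis cor} gives $b\leq \tfrac{1}{9}+\epsilon_k$, so in particular $\lambda b<1$. If $\mult_{\msp}(C_y)=1$, then Lemma \ref{Inversion of adjunction for smooth point} applied to $C_y$ yields $1<\lambda\,\Omega\cdot C_y=\tfrac{4}{3}\left(\tfrac{2}{5}-\tfrac{6}{5}b\right)$, which forces $b<-\tfrac{7}{24}$, a contradiction. If $\mult_{\msp}(C_y)=2$, then combining Lemma \ref{Foundation:inequality mult intersection} (which gives $2\mult_{\msp}(\Omega)\leq\Omega\cdot C_y=\tfrac{2}{5}-\tfrac{6}{5}b$) with the inequality $\mult_{\msp}(\Omega)>\tfrac{3}{4}-2b$ (coming from $\mult_{\msp}(\lambda D)>1$ via Lemma \ref{Foundation:mult} and $\lambda=\tfrac{4}{3}$) rearranges to $b>\tfrac{11}{28}$, again contradicting $b\leq \tfrac{1}{9}+\epsilon_k$.

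The main obstacle is verifying the two structural properties of $C_y$ after the coordinate change---most importantly, tracking the $t^2$ monomial so that the multiplicity bound $\mult_{\msp}(C_y)\leq 2$ holds uniformly at every smooth point of $S_{30}\setminus C_x$. Once these are in place, the rest is routine intersection-theoretic bookkeeping, exactly parallel to the earlier treatment of $S_{18}$.
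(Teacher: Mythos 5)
Your argument is correct, but it takes a genuinely different route from the paper's. The paper handles $S_{30}$ with the shortest available tool: it passes to $\msp=\msp_x$ and considers the pencil $\mcL$ generated by $x^4y$ and $z$ (both of weighted degree $10$), whose base locus on $S_{30}$ is finite. Since some member $M\in\mcL$ is not contained in $\Supp(D)$ and $M$ passes through $\msp_x$, the failure of log canonicity at the smooth point $\msp_x$ gives $1<\lambda D\cdot M$, while the intersection number is simply $\lambda D\cdot M=\tfrac{4}{3}\cdot\tfrac{2}{3}=\tfrac{8}{9}<1$. No decomposition of $D$, no multiplicity or irreducibility analysis of an auxiliary curve is needed.

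You instead ported the $S_{18}$ argument wholesale: decompose $D=bC_y+\Omega$, bound $b\leq\tfrac{1}{9}+\epsilon_k$ using $C_y\sim_{\QQ}3(-K_{S_{30}})$, establish that $C_y$ is irreducible with $\mult_\msp(C_y)\leq 2$, and split into the two multiplicity cases. The numbers you obtain are all correct: in the $\mult=1$ case $b<-\tfrac{7}{24}$ and in the $\mult=2$ case $b>\tfrac{11}{28}$, both incompatible with $b\in[0,\tfrac{1}{9}+\epsilon_k]$. The irreducibility claim also holds after the coordinate change, since the restricted equation remains of the form $t^2+(\text{cubic in }z\text{ with leading coefficient }1)=0$ in $\PP(1,10,15)$, which a factor $t+g$ (with $g$ of weight $15$ in $x,z$, hence at most quadratic in $z$) can never produce. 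The only thing you lose relative to the paper is economy: the pencil argument works here precisely because $\lambda D\cdot M<1$ (it would fail for $S_{18}$, where the analogous number is $\tfrac{4}{3}$, which is why the $C_y$ decomposition is genuinely needed there). Your method is the more general one; the paper's is the sharper shortcut available in this specific family.
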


\begin{proof}
    Suppose that the log pair $(S_{30}, \lambda D)$ is not log canonical at some point $\msp\in S_{30}\setminus C_x$. The surface $S_{30}$ is smooth at $\msp$. By a suitable coordinate change we can assume that $\msp = \msp_x$. Let $\mcL$ be the linear system that is given by $\alpha x^4y + \beta z = 0$ with $[\alpha:\beta]\in \PP^1$. Since the base locus of $\mcL$ is the finite points set, there is an effective divisor $M\in\mcL$ such that $M\not\subset \Supp(D)$. We have the inequality
    \begin{equation*}
        1< \lambda D\cdot M = \frac{8}{9}. 
    \end{equation*}
    It is impossible. Therefore the log pair $(S_{30}, \lambda D)$ is log canonical along $S_{30}\setminus C_x$.
\end{proof}

\begin{lemma}
    The log pair $(S_{30}, \lambda D)$ is log canonical along $C_x$.
\end{lemma}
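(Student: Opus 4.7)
The plan is to follow the inversion-of-adjunction strategy used for the analogous statement in the $S_{18}$ argument. Assume for contradiction that $(S_{30},\lambda D)$ is not log canonical at some point $\msp\in C_x$. Each of the three singular points of $S_{30}$ lying on $C_x$ is of type $\frac{1}{r}(1,1)$ with $r\in\{2,3,5\}$, and because $C_x$ is quasi-smooth it appears locally as a coordinate axis in the orbifold chart at each such point; consequently $(S_{30},C_x)$ is purely log terminal at every point of $C_x$. Since $a\leq \frac{3}{4}$ and $\lambda=\frac{4}{3}$, we have $\lambda a\leq 1$, so the coefficient hypothesis of Lemma \ref{Inversion of adjunction for smooth point} and Lemma \ref{Inversion of adjunction for singular point} is satisfied.

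Next I would compute the one intersection number needed. Since $-K_{S_{30}}\sim_{\QQ} 2C_x$ and $C_x^2=\frac{30}{1\cdot 6\cdot 10\cdot 15}=\frac{1}{30}$, we obtain
\begin{equation*}
\Delta\cdot C_x=(D-aC_x)\cdot C_x=\frac{2-a}{30}\leq \frac{1}{15}.
\end{equation*}
Splitting into cases: if $\msp$ is a smooth point of $S_{30}$, then Lemma \ref{Inversion of adjunction for smooth point} forces $\frac{3}{4}=\frac{1}{\lambda}<\Delta\cdot C_x\leq \frac{1}{15}$, which is absurd; if $\msp$ is a singular point of type $\frac{1}{r}(1,1)$ with $r\leq 5$, Lemma \ref{Inversion of adjunction for singular point} yields $\frac{3}{20}\leq \frac{1}{r\lambda}<\Delta\cdot C_x\leq \frac{1}{15}$, which is again a contradiction because $\frac{3}{20}>\frac{1}{15}$. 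These two cases exhaust $C_x$, completing the proof.

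The routine arithmetic here is straightforward; the only point that could require minor care is the verification that $(S_{30},C_x)$ is purely log terminal at the three singular points, but this follows immediately from the quasi-smoothness of $C_x$ together with the fact that each singularity is of the simple form $\frac{1}{r}(1,1)$. I do not anticipate any real obstacle, and the structure parallels the $S_{18}$ proof precisely.
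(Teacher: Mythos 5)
Your proposal is correct and follows essentially the same approach as the paper: both apply inversion of adjunction along $C_x$ (Lemmas \ref{Inversion of adjunction for smooth point} and \ref{Inversion of adjunction for singular point}) to obtain $\frac{1}{r\lambda}<\Delta\cdot C_x=\frac{2-a}{30}$ and derive the contradiction $a<0$. The only cosmetic difference is that you split the smooth and singular cases explicitly, whereas the paper handles them uniformly with the bound $r\leq 5$.
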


\begin{proof}
    Suppose that the log pair $(S_{30}, \lambda D)$ is not log canonical at some point $\msp\in C_x$. Then the  point $\msp$ is of type $\frac{1}{r}(\msa,\msb)$ where $r\leq 5$. Since $\lambda a \leq 1$, by Lemma \ref{Inversion of adjunction for smooth point}, we have the inequality
    \begin{equation*}
        \frac{1}{5\lambda} \leq \frac{1}{r\lambda} < \Delta\cdot C_x = (D - aC_x)\cdot C_x = \left(\frac{1}{15}-\frac{1}{30}a\right).
    \end{equation*}
    It implies that $a<0$. It is impossible. Thus the log pair $(S, \lambda D)$ is log canonical along $C_x$.
\end{proof}

This completes the proof of Proposition \ref{Proposition 8} which implies  the following.
\begin{corollary}
    $\delta(S_{30})\geq \frac{4}{3}$.
\end{corollary}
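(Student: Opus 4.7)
The plan is to deduce the corollary in exactly the same manner as Corollary \ref{Cor:estimation of delta by proposition} was deduced from Proposition \ref{Prop7} in the $S_{22}$ case. Fix $k \gg 0$ and let $D$ be an arbitrary anticanonical $\QQ$-divisor of $k$-basis type on $S_{30}$. Decompose $D = aC_x + \Delta$ as in the hypothesis of Proposition \ref{Proposition 8}, with $C_x \not\subset \Supp(\Delta)$. The task reduces to verifying the hypothesis $a \leq \tfrac{3}{4}$, for then Proposition \ref{Proposition 8} immediately yields that $(S_{30}, \tfrac{4}{3} D)$ is log canonical.

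To bound $a$, I would invoke Corollary \ref{Foundation:bound of k-basis cor}. Since $S_{30}$ has index $2$, one has $-K_{S_{30}} \sim \mcO_{S_{30}}(2)$ and $C_x \sim \mcO_{S_{30}}(1)$, so $C_x \sim_{\QQ} \tfrac{1}{2}(-K_{S_{30}}) \sim_{\QQ} \tfrac{1}{2} D$. Taking $\mu = \tfrac{1}{2}$ in that corollary gives $a \leq \tfrac{2}{3} + \epsilon_k$. For $k$ sufficiently large, $\epsilon_k < \tfrac{1}{12}$, so indeed $a \leq \tfrac{3}{4}$ as required.

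Combining the two observations, the log pair $(S_{30}, \tfrac{4}{3} D)$ is log canonical for every $k$-basis type anticanonical divisor $D$ with $k \gg 0$. Since $\delta(S_{30})$ is characterized as the supremum of constants $\lambda$ for which this log canonicity holds universally in such $D$ (as in the basis-type formulation discussed after Theorem \ref{Foundation:bound of k-basis thm} and the cool-divisor characterization of $\delta_{\msp}$), we conclude $\delta(S_{30}) \geq \tfrac{4}{3}$. There is no real obstacle at this stage: all the geometric work has already been absorbed into Proposition \ref{Proposition 8}, and this final step is a purely formal consequence of the basis-type bound together with the index-$2$ relation $-K_{S_{30}} \sim 2 C_x$.
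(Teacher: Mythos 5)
Your proposal is correct and follows exactly the argument the paper intends: the paper's own proof simply says it is ``similar to the proof of Corollary \ref{Cor:estimation of delta by proposition}'', which is precisely the reduction you carry out (bounding $a$ via Corollary \ref{Foundation:bound of k-basis cor} with $\mu=\tfrac12$ from $-K_{S_{30}}\sim 2C_x$, then invoking Proposition \ref{Proposition 8}). Nothing more is needed.
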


\begin{proof}
    The proof of this Corollary is similar to the proof of Corollary \ref{Cor:estimation of delta by proposition}. 
\end{proof}

\subsection{$S_{36}$ in $\PP(1,7,12,18)$}

Let $S_{36}$ be a quasi-smooth hypersurface in $\PP(1,7,12,18)$ of degree $36$. By a suitable coordinate change we can assume that $S_{36}$ is given by a quasihomogeneous polynomial
\begin{equation*}
    t^2 + z^3 + xf(x,y,z) = 0
\end{equation*}
where $f(x,y,z)$ is a quasihomogeneous polynomial of degree $35$. The surface $S_{36}$ is singular at the points $\msp_y$ and $[0:0:-1:1]$, of type $\frac{1}{7}(2,3)$ and $\frac{1}{6}(1,1)$ respectively. Note that all singular points of $S_{36}$ lie on the curve $C_x$. Since $C_x$  is isomorphic to the variety given by the quasihomogeneous polynomial 
\begin{equation*}
    t^2 + z^3 = 0
\end{equation*}
in $\PP(7,12,18)$ we can check that $C_x$ is irreducible. 

Let $D$ be an effective $\QQ$-Cartier divisor of $S_{36}$ such that $D\sim_{\QQ} -K_{S_{36}}$. We can write
\begin{equation*}
    D = a C_x + \Delta
\end{equation*}
where $a$ is non-negative rational number and $\Delta$ is an effective $\QQ$-divisor such that $C_x\not\subset \Supp(\Delta)$. We set $\lambda=\frac{8}{7}$

\begin{proposition}\label{Proposition:9}
    If $a\leq \frac{11}{16}$ then the log pair $(S_{36},\lambda D)$ is log canonical. 
\end{proposition}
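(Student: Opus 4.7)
The plan mirrors the three-region strategy of Propositions~\ref{Prop7} and \ref{Proposition 8}. I will show $(S_{36},\lambda D)$ is log canonical successively on $S_{36}\setminus C_x$, on $C_x\setminus\{\msp_y\}$, and at $\msp_y$ itself. Away from $C_x$, I mimic the linear-system argument of Proposition~\ref{Prop7}: the pencil cut out by $\alpha x^{5}y+\beta z=0$ (both monomials of weighted degree $12$) has finite base locus on $S_{36}$, so after a coordinate change placing the hypothetical non-LC point at $\msp_x$ one finds a member $M$ through it with $M\not\subset\Supp(D)$, and the intersection $\lambda D\cdot M=\tfrac{8}{7}\cdot\tfrac{4}{7}=\tfrac{32}{49}<1$ contradicts the lower bound $\lambda D\cdot M>1$ that a non-LC smooth point on $M$ would require. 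On $C_x\setminus\{\msp_y\}$ the only non-smooth possibility is the $\tfrac{1}{6}(1,1)$ point $[0:0:-1:1]$, at which $C_x$ itself remains smooth; since $\lambda a\leq\tfrac{11}{14}<1$, Lemma~\ref{Inversion of adjunction for singular point} (or Lemma~\ref{Inversion of adjunction for smooth point}) yields $\tfrac{7}{48}\leq\tfrac{1}{r\lambda}<\Delta\cdot C_x=\tfrac{2-a}{42}$ with $r\leq 6$, which is impossible for $a\geq 0$.

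The substantial step is the singular point $\msp_y$. Since $C_x$ has a cusp $t^{2}+z^{3}=0$ there, the previous inversion argument is not available, and I perform the weighted blow-up $\pi\colon\widetilde{S}_{36}\to S_{36}$ at $\msp_y$ with $\wt(z)=2$, $\wt(t)=3$. Standard formulas give
\[
K_{\widetilde{S}_{36}}\equiv\pi^{*}K_{S_{36}}-\tfrac{2}{7}E,\qquad \widetilde{C}_x\equiv\pi^{*}C_x-\tfrac{6}{7}E,\qquad E^{2}=-\tfrac{7}{6},
\]
where the coefficient $\tfrac{6}{7}$ reflects the weighted multiplicity $6$ of the cusp. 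Writing $\widetilde{D}\equiv\pi^{*}D-mE$ with $m\geq 0$ and using $\widetilde{C}_x^{2}=-\tfrac{5}{6}$, the nonnegativity $\widetilde{\Delta}\cdot\widetilde{C}_x\geq 0$ yields the key bound $m\leq\tfrac{1}{21}+\tfrac{5a}{6}$; combined with $a\leq\tfrac{11}{16}$ this produces $m<\tfrac{5}{8}=\tfrac{5}{7\lambda}$, so the coefficient $\lambda m+\tfrac{2}{7}$ in the log-pullback is strictly less than $1$.

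Now let $\msq\in E$ be any point at which $(\widetilde{S}_{36},\lambda\widetilde{D}+(\lambda m+\tfrac{2}{7})E)$ fails to be log canonical. If $\msq\in\widetilde{C}_x\cap E$, then $\msq$ is a smooth point of $\widetilde{S}_{36}$, since the cuspidal direction lies away from the two orbifold poles of $E\cong\PP(2,3)$; applying Lemma~\ref{Inversion of adjunction for smooth point} to $E$ gives $1<\lambda\widetilde{D}\cdot E=\tfrac{4m}{3}$, i.e.\ $m>\tfrac{3}{4}$, contradicting $m<\tfrac{5}{8}$. If instead $\msq\in E\setminus\widetilde{C}_x$, then $\msq$ is a cyclic-quotient point of order $r\leq 3$, and Lemma~\ref{Inversion of adjunction for singular point} applied to $E$ gives $\tfrac{7}{24}\leq\tfrac{1}{r\lambda}<\widetilde{\Delta}\cdot E=\tfrac{7m}{6}-a\leq\tfrac{1}{18}-\tfrac{a}{36}$, which is impossible for any $a\geq 0$.

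The principal obstacle is the numerical tightness of the bound $m<\tfrac{5}{8}$: the hypothesis $a\leq\tfrac{11}{16}$ is chosen precisely so that this inequality holds, and it is exactly this inequality that defeats the case $\msq\in\widetilde{C}_x\cap E$; a weaker hypothesis on $a$ would force a second, finer weighted blow-up to close the argument.
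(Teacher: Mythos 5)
Your proposal is correct and follows essentially the same three-region strategy as the paper (off $C_x$, on $C_x$ away from $\msp_y$, and at $\msp_y$ via the $(2,3)$-weighted blow-up), with all the key numerics — the $\frac{32}{49}$ intersection, the bound $m\le\frac{1}{21}+\frac{5a}{6}$, and the $\frac{7}{6}m-a$ computation on $E\setminus\widetilde{C}_x$ — matching the paper. The only variation is in the sub-case $\msq\in\widetilde{C}_x\cap E$: you apply inversion of adjunction to $E$ (obtaining $m>\frac{3}{4}$, against $m<\frac{5}{8}$), whereas the paper applies it to $\widetilde{C}_x$ (obtaining $1<\lambda(\frac{1}{21}+\frac{5a}{6})+\frac{2}{7}\le\frac{195}{196}$); both are valid and both use the hypothesis $a\le\frac{11}{16}$ to close the case.
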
 

We will now prove Proposition \ref{Proposition:9}.

\begin{lemma}
    The log pair $(S_{36}, \lambda D)$ is log canonical along $S_{36}\setminus C_x$.
\end{lemma}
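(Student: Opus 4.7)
The plan is to mimic the proofs of the corresponding lemmas for $S_{22}$ (Subsection 4.7) and $S_{30}$ (Subsection 4.8): exhibit a pencil of low-degree divisors through any candidate non-log-canonical point $\msp\in S_{36}\setminus C_x$, pick a member $M$ of the pencil that avoids the support of $D$, and show that the intersection $\lambda D\cdot M$ is strictly less than $1$, which contradicts Lemma \ref{Foundation:mult}.

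To begin, I would assume for contradiction that $(S_{36},\lambda D)$ is not log canonical at some $\msp\in S_{36}\setminus C_x$. All singular points of $S_{36}$ lie on $C_x$, so $\msp$ is a smooth point of $S_{36}$. A coordinate change of the form $y\mapsto y+c_1 x^7$, $z\mapsto z+c_2 x^{12}$, $t\mapsto t+c_3 x^{18}$ preserves the normal form $t^2+z^3+xf(x,y,z)=0$, so I may assume $\msp=\msp_x=[1:0:0:0]$.

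Next I take the pencil $\mcL$ defined by $\alpha x^5 y+\beta z=0$ for $[\alpha:\beta]\in\PP^1$. Each member is a divisor on $S_{36}$ of degree $12$ passing through $\msp_x$. The base locus of $\mcL$ in $\PP(1,7,12,18)$ is $\{x=z=0\}\cup\{y=z=0\}$; intersecting with $S_{36}$ yields $\{t^2=0\}$ on the first component and a weighted hypersurface inside $\PP(1,18)$ on the second, hence a finite set of points. So I can choose $M\in\mcL$ such that $M\not\subset\Supp(D)$. A direct intersection computation gives
\begin{equation*}
\lambda D\cdot M=\frac{8}{7}\cdot\frac{2\cdot 12\cdot 36}{1\cdot 7\cdot 12\cdot 18}=\frac{32}{49}.
\end{equation*}
Since $M\not\subset\Supp(D)$ and $\msp\in M$ is a smooth point of $S_{36}$, the inequality $\mult_\msp(\lambda D)>1$ from Lemma \ref{Foundation:mult} then gives
\begin{equation*}
1<\mult_\msp(\lambda D)\leq\mult_\msp(\lambda D)\cdot\mult_\msp(M)\leq\lambda D\cdot M=\frac{32}{49},
\end{equation*}
which is absurd.

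The only genuinely non-routine step is selecting the pencil so that the numerics close: with $\lambda=\frac{8}{7}$ and weights $(1,7,12,18)$, a degree-$12$ pencil yields $\lambda D\cdot M=\frac{32}{49}<1$ with a modest but sufficient margin, and the monomials $x^5 y$ and $z$ are the natural degree-$12$ choices producing a pencil with finite base locus on $S_{36}$ whose members all contain $\msp_x$. Beyond that, the verification is purely mechanical.
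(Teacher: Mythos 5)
Your proof is correct and takes essentially the same route as the paper: normalize the non-log-canonical point to $\msp_x$, use the degree-$12$ pencil $\mcL$ spanned by $x^5 y$ and $z$ (which has finite base locus on $S_{36}$), pick $M\in\mcL$ avoiding $\Supp(D)$, and contradict Lemmas \ref{Foundation:mult} and \ref{Foundation:inequality mult intersection} via $\lambda D\cdot M=\tfrac{32}{49}<1$. The paper states the conclusion with the same pencil and the same intersection number, so the approaches coincide.
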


\begin{proof}
    Suppose that the log pair $(S_{36}, \lambda D)$ is not log canonical at some point $\msp\in S_{36}\setminus C_x$. By a suitable coordinate change we can assume that $\msp = \msp_x$. Let $\mcL$ be the linear system that is given by $\alpha x^5y + \beta z = 0$ with $[\alpha:\beta]\in \PP^1$. Since the base locus of $\mcL$ is the finite points set, there is an effective divisor $M\in\mcL$ such that $M\not\subset \Supp(D)$. Since $S_{36}$ is smooth at $\msp$ by Lemmas \ref{Foundation:mult} and \ref{Foundation:inequality mult intersection} we have the inequality
    \begin{equation*}
        1 < \lambda D\cdot M = \frac{32}{49}.
    \end{equation*}
    It is impossible. Therefore the log pair $(S_{36}, \lambda D)$ is log canonical along $S_{36}\setminus C_x$.
\end{proof}

\begin{lemma}
    The log pair $(S_{36}, \lambda D)$ is log canonical along $C_x\setminus \{\msp_y\}$.
\end{lemma}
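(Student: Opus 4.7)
The plan is to follow the template established in the analogous lemmas for $S_{22}$ and $S_{30}$, deriving a contradiction from intersection theory on $S_{36}$. Suppose for contradiction that $(S_{36},\lambda D)$ fails to be log canonical at some point $\msp \in C_x \setminus \{\msp_y\}$. Since the only singular point of $S_{36}$ lying on $C_x \setminus \{\msp_y\}$ is $[0:0:-1:1]$ of type $\tfrac{1}{6}(1,1)$, the point $\msp$ is either a smooth point of $S_{36}$ (so the local cyclic-quotient order is $r=1$) or this quotient singularity (so $r=6$).

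The key preparatory check is that $C_x$ is smooth at $\msp$ in the orbifold sense, so that $(S_{36},C_x)$ is purely log terminal there and Lemma \ref{Inversion of adjunction for singular point} applies. This is immediate from the defining equation of $C_x$ in $\PP(7,12,18)$: the Jacobian $(0,3z^{2},2t)$ of $t^{2}+z^{3}$ with respect to $(y,z,t)$ vanishes only along the $y$-axis, so the affine cone of $C_x$ is smooth away from the preimage of $\msp_y$. In particular $C_x$ is smooth at $[0:0:-1:1]$ in the orbifold chart.

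Since $a \leq \tfrac{11}{16}$ by hypothesis and $\lambda = \tfrac{8}{7}$, we have $\lambda a \leq \tfrac{11}{14} < 1$. Applying Lemma \ref{Inversion of adjunction for smooth point} when $r=1$ and Lemma \ref{Inversion of adjunction for singular point} when $r=6$ to the pair $(S_{36},\,\lambda a C_x + \lambda\Delta)$ yields
\[
\frac{1}{r\lambda} < \Delta \cdot C_x = (D - aC_x)\cdot C_x = -K_{S_{36}} \cdot C_x - a\,C_x^{2} = \frac{1}{21} - \frac{a}{42}.
\]
But $\tfrac{1}{r\lambda} \geq \tfrac{1}{6\lambda} = \tfrac{7}{48} > \tfrac{1}{21}$, which forces $a<0$, contradicting $a\geq 0$. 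I do not anticipate any serious obstacle: every step is a direct adaptation of the arguments already used for $S_{22}$ and $S_{30}$, and the only mildly delicate ingredient — the orbifold smoothness of $C_x$ at the $\tfrac{1}{6}(1,1)$ singularity — is settled immediately by the Jacobian criterion above.
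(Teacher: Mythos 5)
Your proof is correct and takes essentially the same approach as the paper: apply inversion of adjunction on $C_x$ at $\msp$ (treating $r=1$ and $r=6$ together) and use the intersection numbers $-K_{S_{36}}\cdot C_x = \tfrac{1}{21}$, $C_x^2 = \tfrac{1}{42}$ to force $a<0$, a contradiction. The verification of orbifold smoothness of $C_x$ at the $\tfrac{1}{6}(1,1)$ point via the Jacobian, which you supply explicitly, is left implicit in the paper.
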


\begin{proof}
    Suppose that the log pair $(S_{36}, \lambda D)$ is not log canonical at some point $\msp\in C_x\setminus \{\msp_y\}$. Then the point $\msp$ is of type $\frac{1}{r}(\msa,\msb)$ where $r\leq 6$. Since $\lambda a \leq 1$, by Lemma \ref{Inversion of adjunction for smooth point}, we have the inequality
    \begin{equation*}
        \frac{1}{6} \leq \frac{1}{r} < \lambda \Delta\cdot C_x = \lambda(D - a C_x)\cdot C_x = \lambda\left(\frac{1}{21}-\frac{1}{42}a \right).
    \end{equation*}
    It implies that $a <0$. It is impossible. Thus the log pair $(S, \lambda D)$ is log canonical along $C_x\setminus \{\msp_y\}$.
\end{proof}

\begin{lemma}
    The log pair $(S_{36}, \lambda D)$ is log canonical at $\msp_y$.
\end{lemma}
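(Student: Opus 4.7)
The proof will parallel the argument for $\msp_z\in S_{22}$ given in the preceding subsection. I first perform the Kawamata blow-up $\pi\colon \widetilde{S}_{36}\to S_{36}$ at $\msp_y$ with weights $\wt(z)=2$, $\wt(t)=3$, which matches the identification $\tfrac{1}{7}(5,4)\cong \tfrac{1}{7}(2,3)$ of the singularity type. The local equation $t^2+z^3=0$ of $C_x$ at $\msp_y$ has weighted multiplicity $6$, so with exceptional divisor $E$ one has
\[
K_{\widetilde{S}_{36}}\equiv \pi^*(K_{S_{36}}) - \tfrac{2}{7}E,\qquad
\widetilde{D}\equiv \pi^*(D) - mE,\qquad
\widetilde{C}_x\equiv \pi^*(C_x) - \tfrac{6}{7}E,
\]
together with $E^2=-\tfrac{7}{6}$, $\widetilde{C}_x\cdot E=1$, $\widetilde{C}_x^2=-\tfrac{5}{6}$, $\widetilde{D}\cdot E=\tfrac{7m}{6}$, and $\widetilde{D}\cdot \widetilde{C}_x=\tfrac{1}{21}-m$.

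Assuming for contradiction that $(S_{36},\lambda D)$ is not log canonical at $\msp_y$, the log-pullback yields that the pair $\bigl(\widetilde{S}_{36},\lambda a \widetilde{C}_x+\lambda \widetilde{\Delta}+(\lambda m+\tfrac{2}{7})E\bigr)$ is not log canonical at some $\msq\in E$. The inequality $\widetilde{\Delta}\cdot \widetilde{C}_x\geq 0$ produces the crucial bound $m\leq \tfrac{1}{21}+\tfrac{5a}{6}$, and together with $a\leq \tfrac{11}{16}$ and $\lambda=\tfrac{8}{7}$ a quick numerical check gives $\lambda m+\tfrac{2}{7}<1$, so the boundary divisor has coefficients in $[0,1]$.

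I then split on the location of $\msq$. If $\msq\in \widetilde{C}_x$, noting that the $(2,3)$-weighted blow-up resolves the cusp of $C_x$ at a smooth point of $\widetilde{S}_{36}$ away from the $\tfrac{1}{2}$- and $\tfrac{1}{3}$-singularities sitting on $E$, Lemma \ref{Inversion of adjunction for smooth point} applied to $\widetilde{C}_x$ yields
\[
1<\bigl(\lambda \widetilde{\Delta}+(\lambda m+\tfrac{2}{7})E\bigr)\cdot \widetilde{C}_x=\lambda\bigl(\tfrac{1}{21}+\tfrac{5a}{6}\bigr)+\tfrac{2}{7},
\]
whose right-hand side is strictly less than $1$ under the assumed bounds, a contradiction. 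Otherwise $\msq\in E\setminus \widetilde{C}_x$ is at worst a $\tfrac{1}{r}$-point with $r\leq 3$; applying Lemma \ref{Inversion of adjunction for singular point} to $E$ (which is plt since $E$ is a Kawamata exceptional divisor) gives
\[
\tfrac{1}{r\lambda}<\widetilde{\Delta}\cdot E=\tfrac{7m}{6}-a\leq \tfrac{1}{18}-\tfrac{a}{36},
\]
which forces $a<0$, again a contradiction.

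The main obstacle is not conceptual but arithmetic: the whole argument rests on a delicate concordance among the weighted multiplicity $6=\wmult_{\msp_y}(C_x)$, the Kawamata weights $(2,3)$ for the $\tfrac{1}{7}$-singularity, the threshold $\lambda=\tfrac{8}{7}$, and the hypothesis $a\leq \tfrac{11}{16}$ of Proposition \ref{Proposition:9}. The cuspidal form of $C_x$ at $\msp_y$ is decisive, as it forces $\widetilde{C}_x\cdot E=1$ exactly and drives the favorable bound in Case 1; if $C_x$ had weaker tangency with $E$, the right-hand side of the Case 1 inequality could exceed $1$ and the argument would fail.
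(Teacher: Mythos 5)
Your proposal follows exactly the same route as the paper: the $(2,3)$-Kawamata blow-up at the $\tfrac{1}{7}(2,3)$ point, the same discrepancy and intersection data, the bound $m\leq \tfrac{1}{21}+\tfrac{5}{6}a$ from $\widetilde{\Delta}\cdot\widetilde{C}_x\geq 0$, and the same two-case split on $\msq\in\widetilde{C}_x$ versus $\msq\in E\setminus\widetilde{C}_x$ using the smooth and singular inversion-of-adjunction lemmas to reach contradictions. Your extra observation that the cuspidal leading form $t^2+z^3$ places $\widetilde{C}_x\cap E$ away from the $\tfrac{1}{2}$- and $\tfrac{1}{3}$-points of $E$ is correct and makes explicit a fact the paper uses implicitly when applying Lemma \ref{Inversion of adjunction for smooth point} in that case.
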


\begin{proof}
    Let $\pi\colon \widetilde{S}_{36}\to S_{36}$ be the weighted blow-up at $\msp_y$ with weights $\wt(z) = 2$ and $\wt(t) = 3$. Then we have
\begin{equation*}
    K_{\widetilde{S}_{36}} \equiv \pi^*(K_{S_{36}}) - \frac{2}{7}E,\qquad
   \widetilde{D} \equiv \pi^*(D)- mE,\qquad
   \widetilde{C}_x \equiv \pi^*(C_x)- \frac{6}{7}E 
\end{equation*}
where $E$ is the exceptional divisor of $\pi$. From the above equations we obtain
\begin{equation*}
    K_{\widetilde{S}_{36}} + \lambda\widetilde{D} + \left( \lambda m+ \frac{2}{7} \right)E \equiv \pi^*\left(K_{S_{36}} + \lambda D\right).
\end{equation*}
Then the log pair $\left(\widetilde{S}_{36}, \lambda\widetilde{D} + \left( \lambda m + \frac{2}{7} \right)E\right)$ is not log canonical at some point $\msq\in E$.

We have the following intersection numbers:
\begin{equation*}
    E^2 = -\frac{7}{6},\qquad \widetilde{C}_x^2 = -\frac{5}{6},\qquad\widetilde{D}\cdot \widetilde{C}_x = \frac{1}{21} -m,\qquad \widetilde{D}\cdot E = \frac{7}{6}m.
\end{equation*}

Meanwhile, the inequality
\begin{equation*}
    0 \leq (\widetilde{D} - a \widetilde{C}_x)\cdot \widetilde{C}_x = \frac{1}{21} - m + \frac{5}{6}a
\end{equation*}
implies that 
\begin{equation}\label{in of no. 12}
    m \leq \frac{1}{21} + \frac{5}{6}a.
\end{equation}

If $\msq\in \widetilde{C}_x$ then, by Lemma \ref{Inversion of adjunction for smooth point} we have the following inequality.
\begin{equation*}
    1 < \widetilde{C}_x\cdot \left(\lambda\widetilde{\Delta}+\big(\lambda m+\frac{2}{7}E\big)\right) = \lambda\left(\frac{1}{21}+ \frac{5}{6}a\right)+\frac{2}{7}\leq \frac{195}{196},
\end{equation*}
which is absurd. Thus $\msq\not\in \widetilde{C}_x$. It implies that the log pair $(\widetilde{S}_{36},\lambda \widetilde{\Delta}+(\lambda m+\frac{2}{7}E))$ is not log canonical at $\msq$. The point $\msq$ is a point of type $\frac{1}{r}(\msa,\msb)$ with $r\leq 3$. Since $\lambda m + \frac{2}{7} \leq 1$, by Lemma \ref{Inversion of adjunction for singular point} and the inequality (\ref{in of no. 12}) we have
\begin{equation*}
    \frac{1}{3\lambda}\leq \frac{1}{r\lambda} < \widetilde{\Delta}\cdot E = \big(\widetilde{D}-a\widetilde{C}_x\big)\cdot E=\frac{7}{6}m - a\leq \frac{1}{18} - \frac{a}{36},
\end{equation*}
which implies that $a<0$. Therefore the log pair $(S_{36}, \lambda D)$ is log canonical at $\msp_y$.
\end{proof}
This completes the proof of Proposition \ref{Proposition:9}, which implies that 
\begin{corollary}
    $\delta(S)\geq \frac{8}{7}$.
\end{corollary}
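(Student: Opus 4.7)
The plan is to imitate the proof of Corollary \ref{Cor:estimation of delta by proposition} verbatim, promoting Proposition \ref{Proposition:9} from a statement about individual divisors satisfying a coefficient bound to a lower bound on $\delta(S_{36})$ via the $k$-basis type characterization. Concretely, let $D$ be any anticanonical $\QQ$-divisor of $k$-basis type on $S_{36}$ with $k \gg 0$, and write $D = a C_x + \Delta$ as in the setup preceding Proposition \ref{Proposition:9}. The job is to verify that $a$ satisfies the hypothesis $a \leq \frac{11}{16}$ of Proposition \ref{Proposition:9} for all such $D$, up to a vanishing error.

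The bound comes directly from Corollary \ref{Foundation:bound of k-basis cor}. Since $S_{36}$ has index $I = 2$, the relation $-K_{S_{36}} \sim_{\QQ} \mathcal{O}_{S_{36}}(2)$ combined with $C_x \sim \mathcal{O}_{S_{36}}(1)$ gives $C_x \sim_{\QQ} \frac{1}{2}(-K_{S_{36}}) \sim_{\QQ} \frac{1}{2} D$, so $\mu = \frac{1}{2}$ in the notation of Corollary \ref{Foundation:bound of k-basis cor}. That corollary then yields
\[
a \;\leq\; \frac{1}{3\mu} + \epsilon_k \;=\; \frac{2}{3} + \epsilon_k,
\]
with $\epsilon_k \to 0$ as $k \to \infty$. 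Since $\frac{2}{3} < \frac{11}{16}$, for all $k$ sufficiently large the estimate $a \leq \frac{11}{16}$ holds, so Proposition \ref{Proposition:9} applies and $(S_{36}, \tfrac{8}{7} D)$ is log canonical.

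To finish, I invoke the characterization of the stability threshold via $k$-basis type divisors used throughout this paper (as in \cite[Theorem 2.9]{delta-inv-of-sing-dP} and the discussion in Section \ref{section:K-stability}): since $(S_{36}, \tfrac{8}{7} D)$ is log canonical for every $k$-basis type $D \equiv -K_{S_{36}}$ with $k \gg 0$, passing to the limit yields $\delta(S_{36}) \geq \tfrac{8}{7}$, which is exactly the corollary. There is no real obstacle to overcome here; the only quantitative check is the numerical inequality $\frac{2}{3} < \frac{11}{16}$, which is precisely the compatibility that makes the threshold $\lambda = \frac{8}{7}$ chosen in Proposition \ref{Proposition:9} work with the generic coefficient bound coming from Corollary \ref{Foundation:bound of k-basis cor}. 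The entire content of this corollary is already encoded in Proposition \ref{Proposition:9}, with the proof above serving only to bridge between pointwise log canonicity of $(S_{36}, \lambda D)$ and the valuative $\delta$-invariant.
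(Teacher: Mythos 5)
Your proof is correct and matches the paper's approach exactly: the paper's own proof of this corollary simply says it is similar to Corollary~\ref{Cor:estimation of delta by proposition}, and you have carried out that template faithfully, using Corollary~\ref{Foundation:bound of k-basis cor} with $\mu = \tfrac{1}{2}$ to get $a \leq \tfrac{2}{3} + \epsilon_k$, checking $\tfrac{2}{3} < \tfrac{11}{16}$, and then invoking Proposition~\ref{Proposition:9}.
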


\begin{proof}
    The proof of this Corollary is similar to the proof of Corollary \ref{Cor:estimation of delta by proposition}. 
\end{proof}

\subsection{$S_{40}$ in $\PP(1,8,13,20)$}

Let $S_{40}$ be a quasi-smooth hypersurface of degree $40$ in $\PP(1,8,13,20)$ . By a suitable coordinate change we can assume that $S_{40}$ is given by a quasihomogeneous polynomial
\begin{equation*}
    t^2 + y^5 + xf(x,y,z) = 0
\end{equation*}
where $f(x,y,z)$ is a quasihomogeneous polynomial of degree $39$. The surface $S_{40}$ is singular at the points $\msp_z$ of type $\frac{1}{13}(2,5)$ and $Q:=[0:-1:0:1]$ of type $\frac{1}{4}(1,1)$. Note that the hyperplane section $C_x$ that is cut out by the equation $x=0$ in $S_{40}$, is isomorphic to the variety given by the quasihomogeneous polynomial 
\begin{equation*}
    t^2 + y^5 = 0
\end{equation*}
in $\PP(8,13,20)$, and contains both the singular points of the surface $S_{40}$. From the equation of $C_x$ we can also check that $C_x$ is irreducible. 

Let $D$ be an anticanonical $\QQ$-divisor of $k$-basis type on $S_{40}$ with $k\gg 0$. We write 
\begin{equation*}
    D = aC_x + \Delta
\end{equation*}
where $a$ is a non-zero constant and $\Delta$ is an effective $\QQ$-divisor such that $C_x\not\subset \Supp(\Delta)$. We set $\lambda=\frac{79}{78}$.

\begin{proposition}\label{proposition:No10}
The log pair $(S_{40}, \lambda D)$ is log canonical.
\end{proposition}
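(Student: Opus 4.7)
The plan is to follow the strategy of Propositions \ref{Prop7}, \ref{Proposition 8}, and \ref{Proposition:9}: split the argument into three regions, namely $S_{40}\setminus C_x$, the smooth locus of $C_x$ together with the $\frac{1}{4}(1,1)$-point $Q$, and the $\frac{1}{13}(2,5)$-point $\msp_z$. The relevant numerical data are $(-K_{S_{40}})^2=\frac{1}{13}$, $C_x^2=\frac{1}{52}$, $C_x\cdot(-K_{S_{40}})=\frac{1}{26}$, so $\Delta\cdot C_x=\frac{2-a}{52}$. Since $C_x\sim_{\QQ}\frac{1}{2}(-K_{S_{40}})$, Corollary \ref{Foundation:bound of k-basis cor} gives $a\leq\frac{2}{3}+\epsilon_k$, and hence $\lambda a<1$ for $k\gg 0$.

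For the complement of $C_x$, after translating the putative non-log canonical point to $\msp_x$, I would use the pencil generated by $x^5y$ and $z$ (both of weighted degree $13$) to select a member $M$ through $\msp_x$ with $M\not\subset\Supp(D)$; then $\lambda D\cdot M=\frac{79}{156}<1$ contradicts $1<\mult_{\msp_x}(\lambda D)\leq \lambda D\cdot M$ coming from Lemmas \ref{Foundation:mult} and \ref{Foundation:inequality mult intersection}. For the smooth points of $C_x$ and for $Q$, applying Lemmas \ref{Inversion of adjunction for smooth point} and \ref{Inversion of adjunction for singular point} with $C=C_x$ yields $\frac{1}{r\lambda}<\frac{2-a}{52}$ with $r\in\{1,4\}$, which immediately forces $a<0$.

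The substantive case is $\msp_z$. Note that $(S_{40}, C_x)$ is not even log canonical at $\msp_z$ (the log discrepancy of the $(2,5)$-weighted exceptional divisor is $-\frac{3}{13}$), so direct inversion of adjunction along $C_x$ is unavailable and a weighted blow-up is needed. Let $\pi\colon\widetilde{S}_{40}\to S_{40}$ be the $(2,5)$-weighted blow-up at $\msp_z$ with weights on $(y,t)$ and exceptional divisor $E$. One computes
\[
K_{\widetilde{S}_{40}}\equiv\pi^*K_{S_{40}}-\frac{6}{13}E,\qquad \widetilde{C}_x\equiv\pi^*C_x-\frac{10}{13}E,
\]
together with $E^2=-\frac{13}{10}$, $\widetilde{C}_x\cdot E=1$ and $\widetilde{C}_x^2=-\frac{3}{4}$. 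Writing $\widetilde{D}=\pi^*D-mE$, the Zariski decomposition of $\pi^*(-K_{S_{40}})-uE=2\widetilde{C}_x+(\frac{20}{13}-u)E$ is nef on $[0,\frac{1}{26}]$ with square $\frac{1}{13}-\frac{13u^2}{10}$, and for $u\in[\frac{1}{26},\frac{20}{13}]$ has positive part $(\frac{20}{13}-u)(\frac{4}{3}\widetilde{C}_x+E)$ of square $\frac{1}{30}(\frac{20}{13}-u)^2$. Integrating and invoking Theorem \ref{Foundation:bound of k-basis thm} yields $m\leq\frac{41}{78}+\epsilon_k$, so $\lambda m+\frac{6}{13}<1$, and the log pair $(\widetilde{S}_{40},\lambda\widetilde{D}+(\lambda m+\frac{6}{13})E)$ fails to be log canonical at some $\msq\in E$.

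A chart computation shows $\widetilde{C}_x\cap E$ is a single transversal intersection at a smooth point of $\widetilde{S}_{40}$ (the zero locus of $t^2+y^5$ in $\PP(2,5)$ is one orbit lying away from the two orbifold points). If $\msq\in E\setminus\widetilde{C}_x$, the constraint $m\leq\frac{1}{26}+\frac{3a}{4}$ (from $\widetilde{\Delta}\cdot\widetilde{C}_x\geq 0$) gives $\widetilde{\Delta}\cdot E\leq\frac{2-a}{40}$, and applying Lemma \ref{Inversion of adjunction for singular point} to $E$ at a point of type $\frac{1}{r}$ with $r\leq 5$ (the worst end singularity of $E$) forces $\frac{1}{5\lambda}<\frac{2-a}{40}$, hence $a<0$. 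The main obstacle is the subcase $\msq\in\widetilde{C}_x\cap E$: the direct analogue of the Proposition \ref{Prop7} argument (inversion along $\widetilde{C}_x$) would only contradict a bound on $a$ strictly less than $\frac{2}{3}$, which is not provided by Corollary \ref{Foundation:bound of k-basis cor}. I will instead apply Lemma \ref{Inversion of adjunction for smooth point} with $C=E$ (smooth at $\msq$, coefficient $\lambda m+\frac{6}{13}<1$), giving $1<(\lambda a\widetilde{C}_x+\lambda\widetilde{\Delta})\cdot E=\frac{13\lambda m}{10}$. This forces $m>\frac{60}{79}$, contradicting $m\leq\frac{41}{78}+\epsilon_k$ since $41\cdot 79=3239<4680=60\cdot 78$.
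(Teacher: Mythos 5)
Your proposal is correct and follows essentially the same route as the paper: the same three-part case split (off $C_x$; on $C_x$ away from $\msp_z$; at $\msp_z$ via the $(2,5)$-weighted blow-up), the same pencil $\alpha x^5y+\beta z$ for the first case, the same inversion-of-adjunction inequalities, and the same Zariski decomposition giving $m\le \frac{41}{78}+\epsilon_k$. The one cosmetic divergence is in the subcase $\msq\in\widetilde{C}_x\cap E$: the paper closes it by combining inversion along $E$ with the geometric bound $m\le \frac{1}{26}+\frac{3}{4}a$ and $a\le\frac{3}{4}$, whereas you close it by inversion along $E$ combined directly with the volume bound $m\le\frac{41}{78}+\epsilon_k$; both contradictions are valid, and your variant has the minor advantage of not needing any bound on $a$ in that subcase. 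Your explicit chart check that $\widetilde{C}_x\cap E$ is a single transversal intersection at a smooth point (away from the $\frac{1}{2}$ and $\frac{1}{5}$ end points of $E$), and your observation that inversion along $\widetilde{C}_x$ in the style of Proposition \ref{Prop7} would demand a bound on $a$ strictly below $\frac{2}{3}$, are both correct and usefully make explicit what the paper leaves implicit.
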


We will now prove Proposition \ref{proposition:No10}.

\begin{lemma}
    The log pair $(S_{40}, \lambda D)$ is log canonical along $S_{40}\setminus C_x$.
\end{lemma}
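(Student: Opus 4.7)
The plan is to follow the same template used in the preceding lemmas for $S_{22}$, $S_{30}$, and $S_{36}$: at a putative non-log-canonical point in $S_{40}\setminus C_x$, exhibit an effective curve $M$ with $\lambda\,M\cdot D<1$ and derive a contradiction from the standard multiplicity bound.

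First I would observe that the two singular points of $S_{40}$, namely $\msp_z$ and $Q$, both lie on $C_x$, so any $\msp\in S_{40}\setminus C_x$ is automatically a smooth point of the surface. After a linear change of the coordinates $y$, $z$, $t$, I may assume $\msp=\msp_x=[1:0:0:0]$.

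Next I would introduce the pencil $\mcL$ cut out on $S_{40}$ by $\alpha x^5 y+\beta z=0$ with $[\alpha:\beta]\in\PP^1$; the exponent $5$ is forced by the weight matching $5\cdot 1+8=13=\wt(z)$. Every member of $\mcL$ contains $\msp_x$, and a direct inspection of $\{x=z=0\}\cap S_{40}$ and $\{y=z=0\}\cap S_{40}$, using the defining equation $t^2+y^5+xf=0$, shows that the base locus of $\mcL$ meets $S_{40}$ in only finitely many points. Hence I can choose $M\in\mcL$ with $M\not\subset\Supp(D)$. Computing $D\cdot M$ as $\mathcal{O}_{S_{40}}(2)\cdot \mathcal{O}_{S_{40}}(13)$ on the weighted hypersurface yields
\[
D\cdot M=\frac{2\cdot 13\cdot 40}{1\cdot 8\cdot 13\cdot 20}=\frac{1}{2},
\]
so that $\lambda\, D\cdot M=\tfrac{79}{156}<1$.

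Finally, applying Lemma \ref{Foundation:mult} and Lemma \ref{Foundation:inequality mult intersection} at the smooth point $\msp=\msp_x$, together with $\mult_{\msp}(M)\geq 1$, produces the chain
\[
1<\lambda\,\mult_{\msp}(D)\leq \lambda\,\mult_{\msp}(M)\,\mult_{\msp}(D)\leq \lambda\, D\cdot M=\frac{79}{156},
\]
which is absurd. I do not expect any real obstacle here: the argument is a mechanical translation of the $S_{36}$ case, and the only thing requiring even routine verification is the zero-dimensionality of $\mcL\cap S_{40}$ as a base locus, which is immediate from the shape of the defining polynomial.
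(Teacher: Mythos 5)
Your proof is correct and takes essentially the same route as the paper: the same pencil $\mcL$ given by $\alpha x^5 y+\beta z=0$, the same choice of $M\in\mcL$ not contained in $\Supp(D)$ (justified by the finiteness of the base locus), the same intersection computation $\lambda D\cdot M=\frac{79}{156}$, and the same contradiction via Lemmas \ref{Foundation:mult} and \ref{Foundation:inequality mult intersection} at the smooth point $\msp$. The only difference is that you spell out the chain of multiplicity inequalities more explicitly, which the paper leaves implicit.
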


\begin{proof}
    Suppose that the log pair $(S_{40}, \lambda D)$ is not log canonical at some point $\msp\in S_{40}\setminus C_x$. By a suitable coordinate change we can assume that $\msp = \msp_x$. Let $\mcL$ be the linear system that is given by $\alpha x^5y + \beta z = 0$ with $[\alpha:\beta]\in \PP^1$. Since the base locus of $\mcL$ is the finite points set, there is an effective divisor $M\in\mcL$ such that $M\not\subset \Supp(D)$. Since $S_{40}$ is smooth at $\msp$ by Lemmas \ref{Foundation:mult} and \ref{Foundation:inequality mult intersection} we have the inequality
    \begin{equation*}
        1 < \lambda D\cdot M = \frac{79}{156}.
    \end{equation*}
    It is impossible. Therefore the log pair $(S_{40}, \lambda D)$ is log canonical along $S_{40}\setminus C_x$.
\end{proof}

\begin{lemma}
  The log pair $(S_{40}, \lambda D)$ is log canonical along $C_x\setminus \{\msp_z\}$.
\end{lemma}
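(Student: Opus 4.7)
The plan is to copy the template of the analogous lemmas for $S_{22}$ on $C_x\setminus\{\msp_z\}$ and $S_{36}$ on $C_x\setminus\{\msp_y\}$. The only singular points of $S_{40}$ are $\msp_z$ and $Q$, so restricting to $C_x\setminus\{\msp_z\}$ leaves at worst the singular point $Q$ of type $\frac{1}{4}(1,1)$, at which $C_x$ itself is smooth (under the isomorphism of $C_x$ with $\{t^2+y^5=0\}\subset\PP(8,13,20)$, the only singular point of the curve is $\msp_z$). Hence every $\msp\in C_x\setminus\{\msp_z\}$ is of type $\frac{1}{r}(\msa,\msb)$ with $r\leq 4$, and $C_x$ is smooth at $\msp$.

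First I would bound $a$. Since $D$ is of $k$-basis type with $D\sim_{\QQ}-K_{S_{40}}$ and $C_x\sim_{\QQ}\frac{1}{2}(-K_{S_{40}})$, Corollary \ref{Foundation:bound of k-basis cor} applied with $\mu=\frac{1}{2}$ gives $a\leq \frac{2}{3}+\epsilon_k$, so $\lambda a<1$ for $\lambda=\frac{79}{78}$ and $k\gg 0$.

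Assuming for contradiction that $(S_{40},\lambda D)$ is not log canonical at some $\msp\in C_x\setminus\{\msp_z\}$, I would apply Lemma \ref{Inversion of adjunction for smooth point} in the smooth case, and Lemma \ref{Inversion of adjunction for singular point} at $\msp=Q$, to obtain
$$
\frac{1}{r\lambda}<C_x\cdot\Delta=C_x\cdot(D-aC_x)=\frac{1}{26}-\frac{a}{52},
$$
where I have used $(-K_{S_{40}})^2=\frac{1}{13}$ together with $-K_{S_{40}}\sim 2C_x$, which give $C_x^2=\frac{1}{52}$ and $C_x\cdot D=\frac{1}{26}$. For $r\leq 4$ the left-hand side is at least $\frac{1}{4\lambda}=\frac{39}{158}$, which is far larger than $\frac{1}{26}$; the displayed inequality therefore forces $a<0$, a contradiction.

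There is no real obstacle here: this is a verbatim reuse of the template used for the earlier $C_x\setminus\{\text{worst singularity}\}$ lemmas, and the numerical gap between $\frac{1}{r\lambda}$ and $C_x\cdot D$ leaves ample slack. The only point worth checking is that $C_x$ is smooth at $Q$, which is immediate from its weighted-projective description.
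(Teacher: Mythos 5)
Your proof is correct and follows the paper's argument essentially verbatim: same decomposition $D = aC_x + \Delta$, same appeal to Corollary \ref{Foundation:bound of k-basis cor} (with $\mu = \tfrac12$) to get $\lambda a < 1$, and the same inversion-of-adjunction contradiction $\frac{1}{4\lambda} < C_x\cdot\Delta = \frac{1}{26}-\frac{a}{52}$ forcing $a<0$. The only cosmetic difference is that you spell out the bound $a \leq \frac{2}{3}+\epsilon_k$ and check smoothness of $C_x$ at $Q$ explicitly, which the paper leaves implicit.
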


\begin{proof}
    Suppose that the log pair $(S_{40}, \lambda D)$ is not log canonical at some point $\msp\in C_x\setminus \{\msp_z\}$. Then the point $\msp$ is either a smooth point of $S_{40}$ or the singular point $Q$. By Lemma \ref{Foundation:bound of k-basis cor} we have $\lambda a \leq 1$. By Lemma \ref{Inversion of adjunction for singular point}, we have the inequality
    \begin{equation*}
        \frac{1}{4} < \lambda\Delta\cdot C_x =\lambda \bigg(\frac{1}{26}-\frac{a}{52}\bigg).
    \end{equation*}
    It implies that $a<0$. It is impossible. Thus the log pair $(S, \lambda D)$ is log canonical along $C_x\setminus \{\msp_z\}$.
\end{proof}

\begin{lemma}
    The log pair $(S_{40}, \lambda D)$ is log canonical at $\msp_z$.
\end{lemma}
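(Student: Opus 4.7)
I plan to follow the strategy used at $\msp_z\in S_{22}$: perform the weighted blow-up $\pi\colon \widetilde{S}_{40}\to S_{40}$ at $\msp_z$ with weights $\wt(y)=2$ and $\wt(t)=5$, matching the singularity type $\frac{1}{13}(2,5)$. Since the local equation of $C_x$ at $\msp_z$ is $t^2+y^5=0$, of quasi-weight $10$, I obtain
\[
K_{\widetilde{S}_{40}}\equiv \pi^*(K_{S_{40}})-\tfrac{6}{13}E,\qquad \widetilde{C}_x\equiv \pi^*(C_x)-\tfrac{10}{13}E,\qquad \widetilde{D}\equiv \pi^*(D)-mE,
\]
with intersection numbers $E^2=-\tfrac{13}{10}$, $\widetilde{C}_x^2=-\tfrac{3}{4}$, $\widetilde{C}_x\cdot E=1$, and $\widetilde{C}_x\cdot \widetilde{D}=\tfrac{1}{26}-m$. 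The log pull-back reads
\[
K_{\widetilde{S}_{40}}+\lambda a\widetilde{C}_x+\lambda \widetilde{\Delta}+\Big(\lambda m+\tfrac{6}{13}\Big)E\equiv \pi^*(K_{S_{40}}+\lambda D).
\]
The relation $\widetilde{\Delta}\cdot \widetilde{C}_x\geq 0$ gives $m\leq \tfrac{1}{26}+\tfrac{3a}{4}$, which substituted into $\widetilde{\Delta}\cdot E=\tfrac{13m}{10}-a$ yields the key estimate $\widetilde{\Delta}\cdot E\leq \tfrac{1}{20}-\tfrac{a}{40}\leq \tfrac{1}{20}$.

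\textbf{Sharper bound on $m$.} To ensure $\lambda m+\tfrac{6}{13}\leq 1$, I apply Theorem~\ref{Foundation:bound of k-basis thm} to $E$ on $\widetilde{S}_{40}$. The divisor $\pi^*(-K_{S_{40}})-uE=2\widetilde{C}_x+(\tfrac{20}{13}-u)E$ is nef on $[0,\tfrac{1}{26}]$ with volume $\tfrac{1}{13}-\tfrac{13u^2}{10}$, and on $[\tfrac{1}{26},\tfrac{20}{13}]$ its Zariski decomposition has negative part $\tfrac{4}{3}(u-\tfrac{1}{26})\widetilde{C}_x$ and positive-part volume $\tfrac{(20-13u)^2}{5070}$, with $\tau(E)=\tfrac{20}{13}$. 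Integrating the two pieces and dividing by $(-K_{S_{40}})^2=\tfrac{1}{13}$ yields $m\leq \tfrac{41}{78}+\epsilon_k<\tfrac{42}{79}$, so indeed $\lambda m+\tfrac{6}{13}\leq 1$ for $k\gg 0$.

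\textbf{Case analysis.} Suppose for contradiction that the pulled-back pair is not log canonical at some $\msq\in E$. The exceptional $E\cong \PP(2,5)$ carries two orbifold corners of $\widetilde{S}_{40}$, of types $\tfrac{1}{2}(1,1)$ and $\tfrac{1}{5}(1,3)$; because the cusp $t^2+y^5=0$ is unfolded by the $(2,5)$-blow-up and then identified by the induced $\mu_2$-action on the chart, $\widetilde{C}_x$ is smooth and meets $E$ at a single smooth point $\msq_0$ away from both corners, with $(\widetilde{C}_x\cdot E)_{\msq_0}=1$. If $\msq\neq \msq_0$, applying Lemma~\ref{Inversion of adjunction for smooth point} (resp.\ Lemma~\ref{Inversion of adjunction for singular point}) with $C=E$ gives $\tfrac{1}{r}<\lambda \widetilde{\Delta}\cdot E\leq \tfrac{\lambda}{20}=\tfrac{79}{1560}$, where $r\in\{1,2,5\}$ is the orbifold index at $\msq$; this contradicts $\tfrac{1}{5}>\tfrac{79}{1560}$. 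If $\msq=\msq_0$, Lemma~\ref{Inversion of adjunction for smooth point} with $C=E$ gives
\[
1<\lambda a+\lambda\, \widetilde{\Delta}\cdot E\leq \lambda\Big(\tfrac{39a}{40}+\tfrac{1}{20}\Big)\leq \tfrac{7\lambda}{10}<1
\]
using $a\leq \tfrac{2}{3}+\epsilon_k$ (Corollary~\ref{Foundation:bound of k-basis cor}), another contradiction.

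\textbf{Main obstacle.} The delicate step is the Zariski-decomposition volume computation that drives $m$ below $\tfrac{42}{79}$: the naive bound $m\leq \tfrac{7}{13}$ coming from $\widetilde{\Delta}\cdot \widetilde{C}_x\geq 0$ together with $a\leq \tfrac{2}{3}$ is too weak, and without the sharper estimate the coefficient of $E$ in the log pull-back might exceed $1$, disabling the inversion-of-adjunction arguments at the two orbifold corners of $E$. Verifying the geometry $\widetilde{C}_x\cap E=\{\msq_0\}$ (a single smooth point, off the corners) from the unfolding of the cusp is the other subtle point.
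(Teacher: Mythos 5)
Your proposal is correct and follows essentially the same route as the paper: the same $(2,5)$-weighted blow-up at $\msp_z$, the same Zariski-decomposition bound $m\leq \tfrac{41}{78}+\epsilon_k$, and the same two-case inversion-of-adjunction argument on $E$ (at the unique smooth point $\widetilde{C}_x\cap E$ and at points of $E\setminus\widetilde{C}_x$, using the orbifold index $r\leq 5$). The only differences are cosmetic: you spell out the local geometry of $E\cong\PP(2,5)$ that the paper leaves implicit, and you use the slightly sharper $a\leq\tfrac{2}{3}+\epsilon_k$ from Corollary~\ref{Foundation:bound of k-basis cor} where the paper quotes the looser bound $a\leq\tfrac{3}{4}$ (both suffice); your stated corner type $\tfrac{1}{5}(1,3)$ appears to actually be $\tfrac{1}{5}(1,1)$, but only the index $r$ enters the estimate, so this is immaterial.
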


\begin{proof}
    Let $\pi\colon \widetilde{S}_{40}\to S_{40}$ be the weighted blow-up at $\msp_z$ with weights $\wt(y) = 2$ and $\wt(t) = 5$. Then
\begin{equation*}
     K_{\widetilde{S}_{40}} \equiv \pi^*(K_{S_{40}}) - \frac{6}{13}E,\qquad
     \widetilde{D}\equiv\pi^*(D) - m E,\qquad
     \widetilde{C}_x\equiv \pi^*(C_x) - \frac{10}{13}E.
\end{equation*}

where $m$ is a non-negative constant and $E$ is the exceptional divisor of $\pi$. From the above equations, we have
    \begin{equation*}
        K_{\widetilde{S}_{40}} + \lambda \widetilde{D} + \left( \lambda m + \frac{6}{13} \right)E \equiv \pi^*\left(K_{S_{40}} + \lambda D\right).
    \end{equation*}
Then the log pair $\left(\widetilde{S}_{40}, \lambda \widetilde{D} + \left( \lambda m + \frac{6}{13} \right)E\right)$ is not log canonical at some $\msq\in E$.

We have the following intersection numbers:
\begin{equation*}
    E^2 = -\frac{13}{10},\qquad \widetilde{C}_x^2 = -\frac{3}{4},\qquad\widetilde{D}\cdot \widetilde{C}_x = \frac{1}{26} - m,\qquad \widetilde{D}\cdot E = \frac{13}{10}m.
\end{equation*}

By Corollary \ref{Foundation:bound of k-basis cor} we have $a \leq \frac{3}{4}$ for $k\gg 0$. 
To find a bound of the constant $m$ we compute the volume of the divisor $\pi^*(-K_{S_{40}}) - uE$ where $u$ is a non-negative real number. Consider
\begin{equation*}
    \pi^*(-K_{S_{40}}) - uE\equiv 2\widetilde{C}_x + \left(\frac{20}{13} - u\right)E.
\end{equation*}
Since $\widetilde{C}_x^2 < 0$, we have $\tau(E) =  \frac{20}{13}$, and $\vol(\pi^*(-K_{S_{40}}) - uE) = 0$ for $u> \tau(E)$. Since
\begin{equation*}
    \left(2\widetilde{C}_x + \left(\frac{20}{13} - u\right)E\right)\cdot \widetilde{C}_x = \frac{1}{26} - u,
\end{equation*}
$\pi^*(-K_{S_{40}}) - uE$ is nef for $0\leq u \leq \frac{1}{26}$. Thus we have
\begin{equation*}
    \vol(\pi^*(-K_{S_{40}}) - uE) = (\pi^*(-K_{S_{40}}) - uE)^2 = \frac{1}{13} - \frac{13}{10}u^2
\end{equation*}
for $0\leq u \leq \frac{1}{26}$. We consider the case when $\frac{1}{26}\leq u \leq \tau(E)$. Since 
\begin{equation*}
    \left(\frac{20}{13} - u\right)\left(\frac{4}{3}\widetilde{C}_x + E\right)\cdot \widetilde{C}_x = 0,
\end{equation*}
the Zariski decomposition of $\pi^*(-K_{S_{40}}) - uE$, in this interval, is 
\begin{equation*}
    \left(\frac{20}{13} - u\right)\left(\frac{4}{3}\widetilde{C}_x + E\right) + \left(-\frac{2}{39} + \frac{4}{3}u\right)\widetilde{C}_x.
\end{equation*}
From this we have
\begin{equation*}
    \vol(\pi^*(-K_{S_{40}}) - uE) = \left(\frac{20}{13} - u\right)^2\left(\frac{4}{3}\widetilde{C}_x + E\right)^2 = \left(\frac{20}{13} - u\right)^2\frac{1}{30}.
\end{equation*}
Consequently, we have
\begin{multline*}
    m \leq \frac{1}{(-K_{S_{40}})^2}\int_{0}^{\frac{20}{13}} \vol(\pi^*(-K_{S_{40}}) - uE) \dd u + \epsilon_k \\ = 13\left(\int_{0}^{\frac{1}{26}} \frac{1}{13} - \frac{13}{10}u^2 \dd u + \int_{\frac{1}{26}}^{\tau(E)} \left(\frac{20}{13} - u\right)^2\frac{1}{30} \dd u \right) + \epsilon_k = \frac{41}{78} + \epsilon_k
\end{multline*}
where $\epsilon_k$ is a small constant depending on $k$ such that $\epsilon_k\to 0$ as $k\to \infty$. For $k\gg 0$ we can assume that 
\begin{equation*}
    m\leq \frac{42}{79}.
\end{equation*}
It implies that $\lambda m + \frac{6}{13} \leq 1$.

Meanwhile, the inequality
\begin{equation*}
    0 \leq (\widetilde{D} - a \widetilde{C}_x)\cdot \widetilde{C}_x = \frac{1}{26} - m + \frac{3}{4}a
\end{equation*}
implies that 
\begin{equation*}
    m \leq \frac{1}{26} + \frac{3}{4}a.
\end{equation*}

If $\msq\in \widetilde{C}_x$ then, by Lemma \ref{Inversion of adjunction for smooth point} we have
\begin{equation*}
    \frac{1}{\lambda} < \widetilde{D}\cdot E = \frac{13}{10}m\leq \frac{13}{10}\left(\frac{1}{26} + \frac{3}{4}a\right).
\end{equation*}
It is impossible. Thus $\msq\not\in \widetilde{C}_x$. It implies that the log pair $(\widetilde{S}_{40}, \lambda\widetilde{\Delta} + ( \lambda m + \frac{6}{13} )E)$ is not log canonical at the point $\msq\in E\setminus \widetilde{C}_x$.

Since the point $\msq$ is of type $\frac{1}{r}(a,b)$ with $r\leq 5$, by Lemma \ref{Inversion of adjunction for singular point} and the above inequality, we have 
\begin{equation*}
    \frac{1}{5\lambda}\leq \frac{1}{r \lambda} < \widetilde{\Delta}\cdot E = \frac{13}{10}m- a\leq \frac{1}{20} - \frac{a}{40}.
\end{equation*} 
It is impossible. Therefore the log pair $(S_{40}, \lambda D)$ is log canonical at $\msp_z$.
\end{proof}

This completes the proof of Proposition \ref{proposition:No10}, which also implies the following.
\begin{corollary}
  $\delta(S_{40}) \geq \frac{79}{78}$.
\end{corollary}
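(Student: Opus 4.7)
The plan is to derive this bound directly from Proposition \ref{proposition:No10} by invoking the $k$-basis type characterization of the stability threshold. Recall, as in the discussion surrounding Theorem \ref{Foundation:bound of k-basis thm}, that
$$
\delta(S_{40}) = \liminf_{k \to \infty} \delta_k(S_{40}),
$$
where $\delta_k(S_{40})$ denotes the supremum of rationals $\mu > 0$ such that $(S_{40}, \mu D)$ is log canonical for every anticanonical $k$-basis type $\QQ$-divisor $D$ on $S_{40}$.

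First I would fix an arbitrary anticanonical $k$-basis type $\QQ$-divisor $D$ on $S_{40}$, with $k \gg 0$. Proposition \ref{proposition:No10} asserts precisely that the pair $(S_{40}, \tfrac{79}{78} D)$ is log canonical. Since $D$ was arbitrary among such divisors, we obtain $\delta_k(S_{40}) \geq \tfrac{79}{78}$ for all sufficiently large $k$, and passing to the limit yields $\delta(S_{40}) \geq \tfrac{79}{78}$. This mirrors the short arguments given earlier for the analogous corollaries to Propositions \ref{Prop7}, \ref{Proposition 8}, and \ref{Proposition:9}.

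There is no further geometric obstacle at this stage. All the substantive work, in particular the volume computation bounding the multiplicity $m$ of $\widetilde{D}$ along the exceptional divisor over $\msp_z$ by $\tfrac{42}{79}$ for $k \gg 0$, and the bound $a \leq \tfrac{3}{4}$ derived from Corollary \ref{Foundation:bound of k-basis cor} (using that $C_x \sim_{\QQ} \tfrac{1}{2}(-K_{S_{40}})$), has already been carried out inside the proof of Proposition \ref{proposition:No10}. The corollary is therefore a purely formal consequence of that Proposition together with the standard limit characterization of $\delta$, and needs no additional case analysis.
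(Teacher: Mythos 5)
Your argument is correct and matches the paper's intent. The paper treats this corollary as an immediate consequence of Proposition~\ref{proposition:No10} (which is stated directly for anticanonical $k$-basis type divisors $D$ with $\lambda = \tfrac{79}{78}$), and your unpacking via the $k$-basis type characterization $\delta(S_{40}) = \lim_{k\to\infty}\delta_k(S_{40})$ is exactly the step the authors leave implicit; no new idea is needed beyond what you wrote.
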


\begin{Ack}
    The authors would like to thank Prof. Ivan Cheltsov for his comments and useful discussions. The first author was supported by the National Research Foundation of Korea (NRF) grant funded by the Korea government (MSIP) (NRF-2020R1A2C4002510). The second author was supported by the EPSRC New Horizons Grant No.EP/V048619/1. The third author was supported by the National Research Foundation of Korea (NRF-2020R1A2C1A01008018).
\end{Ack}

\end{document}